\setlist[enumerate]{itemsep=.3mm}
\setlist[itemize]{itemsep=.3mm}
\tikzset{cross/.style={cross out, draw=black, minimum size=2.5*(#1-\pgflinewidth), inner sep=2pt, outer sep=0.5pt},
	cross/.default={1pt}}
\newcommand{\boundellipse}[3]
{(#1) ellipse (#2 and #3)
}
\newcommand\R{\mathbb {R}}
\newcommand\E{\mathbb {E}}
\renewcommand\P{\mathbb {P}}
\newcommand{\ep}{\varepsilon}
\newcommand{\la}{\lambda}
\newcommand{\La}{\Lambda}
\newcommand{\Z}{\mathbb Z}
 \newcommand{\N}{\mathbb N}
  \newcommand{\cp}{\text{CP}}
\newcommand{\Pcp}{\mathbb{P}_{\textsc{cp}}}
\newcommand{\Pgw}{\mathbb{P}_{\textsc{gw}}}
\renewcommand{\deg}{\text{deg}}
\newcommand{\one}{\textnormal{\textbf{1}}}
\newcommand{\zero}{\textnormal{\textbf{0}}}
\newcommand{\gw}{\textsf{\footnotesize{\textbf{GW}}}}
\newcommand{\GW}{\textsf{\footnotesize{\textbf{GW}}}}
\newcommand{\gwc}{\textsf{\footnotesize{\textbf{GWC}}}}
\newcommand{\egw}{\textsf{\footnotesize{\textbf{EGW}}}}
\newcommand{\CP}{\textsc{cp}}
\newcommand{\Exp}{\textnormal{Exp}}
\newcommand{\bm}{\mathbf{M}}
\newcommand{\bs}{\mathbf{S}}
\newcommand{\bu}{\mathbf{U}}
\newcommand{\cH}{\mathcal H}
\newcommand{\wt}{w}
\theoremstyle{plain}
\newtheorem{theorem}{Theorem}[section]
\newtheorem{lemma}[theorem]{Lemma}
\newtheorem{proposition}[theorem]{Proposition}
\newtheorem{definition}[theorem]{Definition}
\newtheorem{remark}[theorem]{Remark}
\renewcommand{\ep}{\varepsilon}
\newcommand{\T}{\mathcal T}
\newcommand{\whp}{\textsf{whp}}
\newcommand{\A}{\mathcal A}
\newcommand{\Pois}{\text{Pois}}
\newcommand{\prf}{\textit{Proof.}\hspace{2mm}}
\newcommand{\no}{\nonumber\\}
\newcommand{\bS}{\textbf{S}}
\renewcommand{\L}{\mathcal L}
\def\l@subsection{\@tocline{2}{0pt}{2.5pc}{5pc}{}}
\begin{document}
	\title{Subcritical epidemics on random graphs}
	\author{Oanh Nguyen}
	\author{Allan Sly}
	\address{Division of Applied Mathematics\\ Brown University\\  Providence, RI 02906}
	\email{oanh\_nguyen1@brown.edu}
	\address{Department of Mathematics\\ Princeton University\\  Princeton, NJ 08544}
	\email{asly@princeton.edu}
	\thanks{Nguyen is supported by NSF grants DMS-1954174 and DMS-2246575. Sly is partially supported in part by NSF grant DMS-1855527, a Simons Investigator grant and a MacArthur Fellowship.}
	
	\maketitle
\begin{abstract}
We study the contact process on random graphs with low infection rate $\lambda$. For random $d$-regular graphs, it is known that the survival time is $O(\log n)$ below the critical $\lambda_c$.  By contrast, on the Erd\H{o}s-R\'enyi random graphs $\mathcal G(n,d/n)$, rare high-degree vertices result in much longer survival times. We show that the survival time is governed by high-density local configurations. In particular, we show that there is a long string of high-degree vertices on which the infection lasts for time $n^{\lambda^{2+o(1)}}$.  To establish a matching upper bound, we introduce a modified version of the contact process which ignores infections that do not lead to further infections and allows for a sharper recursive analysis on branching process trees, the local-weak limit of the graph. Our methods, moreover, generalize to random graphs with given degree distributions that have exponential moments.
\end{abstract}
\tableofcontents
	\section{Introduction}
Stochastic processes on random networks can be affected by both global properties of the graph like expansion but also local properties like the local-weak limit and the presence of rare local neighbourhoods.  Our goal in this paper is to investigate what properties of sparse Erd\H{o}s-R\'enyi random graphs determine the survival time for infection processes.

	 The contact process is a model for the spread of infections on a graph $G = (V, E)$. For a given $\lambda>0$, the contact process $(X_t)_{t\ge 0}$ with infection rate $\la$ and recovery rate $1$ is a continuous-time Markov chain, with state space $\{0,1\}^V$, tracking whether a vertex $v$ is either infected ($X_t(v)=1$) or healthy ($X_t(v)=0$). The process evolves according to the following rules:
	\begin{itemize}	
		\item Each infected vertex infects each of its neighbors independently at Poisson rate $\lambda$ and is healed at Poisson rate $1$;
		
		\item Infection and recovery events in the process happen independently.
	\end{itemize}

On finite graphs, the contact process has a single absorbing state which is no infections and we define the time to reach it as the \emph{survival time}.  The primary question  is how quickly this occurs and how it depends on $\lambda$. We can ask if the behaviour on Erd\H{o}s-R\'enyi random graphs is the same as for random $d$-regular graphs. In terms of the threshold of $\la$ between fast (polynomial) and slow (exponential) survival times, the answer is yes, both are at approximately $\frac1{d}$ (see \cite{NNS, p92}) which is determined by the global branching rate of the process.
When $\lambda$ is small, the contact process on random regular graphs survives for time $O(\log n)$ (see \cite{ls17,mv16}). One can ask whether the survival time is still logarithmic for other sparse random graphs, in particular the Erd\H{o}s-R\'enyi random graph $\mathcal G(n, d/n)$?
The answer turns out to be negative for a surprisingly simple reason. Since the largest degree in an Erd\H{o}s-R\'enyi random graph is asymptotically $\frac{\log n}{\log \log n}$, this directly gives a lower bound of $\exp(\frac{c\log n}{\log \log n})$ which far exceeds $\log n$.

It is then natural to ask whether $\exp(\frac{c\log n}{\log \log n})$ gives the correct order of the survival time.  The convergence to equilibrium of the high-temperature Glauber dynamics of the Ising model gives an interesting comparison.  Its mixing time is determined by coupling the all-plus and all-minus initial conditions and the set of disagreements can be thought of as a kind of infection process where updates can remove disagreements or spread them to neighbours. In \cite{AllanExact}, it was shown that the mixing time of the Glauber dynamics for the high-temperature continuous-time ferromagnetic Ising model on $\mathcal G(n, d/n)$ is $\exp(\frac{\Theta(1)\log n}{\log \log n})$.  It is essentially determined by the time to mix around the maximal-degree vertices in the graph.  We show, by contrast, that the contact process survives for time polynomial in $n$, much longer than would be predicted by just the high-degree vertices. In the following theorem we, moreover, determine the exponent of the polynomial up to the log factors.
		\begin{theorem} \label{thm:ER}
	Let $d>1$ and $G_n\sim \mathcal G(n, d/n)$.	There exist positive constants $\lambda_*(d), C(d)$ such that the contact process on $G_n$ starting at all vertices infected and infection rate $\lambda < \lambda_*$ has survival time $T_{\la, n}$ that satisfies
		\begin{equation}
		n^{c\la^{2}/\log^{2}(\la^{-1})} \le T_{\la, n}\le n^{C\la^{2}\log(\la^{-1})} \qquad  \nonumber
		\end{equation}
		 with probability $1-o(1)$ as $n\to \infty$ where $c=10^{-9}$.
	\end{theorem}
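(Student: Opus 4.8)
My plan is to show that both bounds are governed by a single local object: a \emph{string of stars}, i.e.\ a path $v_0,v_1,\dots,v_L$ in which each $v_i$ additionally carries about $D$ pendant neighbours. For the lower bound I would exhibit such a subgraph inside $G_n$ and show the contact process on it survives for time $n^{\Theta(\la^2/\log^2(\la^{-1}))}$; for the upper bound I would argue that no local configuration present in $\G(n,d/n)$ survives essentially longer. Throughout I would lean on two soft facts: the contact process is monotone under adding edges (and under enlarging the initial infected set), and in the graphical representation $\xi^{A\cup B}_t=\xi^A_t\cup\xi^B_t$, so that the survival time from the all-infected state equals $\max_v\tau^v$, where $\tau^v$ is the survival time from the single seed $v$.

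\textbf{Lower bound.} I would take $D\asymp\la^{-2}\log(\la^{-1})$ (large implicit constant) and $L\asymp\la^2\log n/\log^2(\la^{-1})$ (small implicit constant). Since a $\mathrm{Poisson}(d)$ variable exceeds $D$ with probability $p_D=\exp(-\Theta(D\log D))=\exp(-\Theta(\la^{-2}\log^2(\la^{-1})))$, the expected number of length-$L$ paths in $\G(n,d/n)$ all of whose vertices have degree at least $D$ is of order $n(d\,p_D)^L=n\exp(-\Theta(L\la^{-2}\log^2(\la^{-1})))=n^{\Omega(1)}$, and a second-moment estimate upgrades this to existence with probability $1-o(1)$. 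Keeping only the path edges and $D$ pendant edges at each $v_i$ yields a subgraph $H$, and by monotonicity it suffices to bound from below the survival time of the contact process on $H$. On a single star with $D$ leaves the process from all-infected settles into a metastable state with $\Theta(\la D)$ infected leaves --- the centre being infected a $\Theta(1)$-fraction of the time --- which persists for time $\tau=\exp(\Theta(\la^2 D))=\la^{-\Theta(1)}$, and a single infection delivered to a neighbouring centre reaches that star's metastable state with probability bounded below. Hence the $L$ stars, viewed as sites, should stochastically dominate a supercritical contact process on the segment $\{0,\dots,L\}$ with infection parameter $\asymp\la\tau\to\infty$ and time unit $\tau$ (the domination being a standard block argument / comparison with oriented percolation). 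Such a process takes time $\exp(\Omega(L))$ to die, i.e.\ real time $\tau\exp(\Omega(L))$, which is $\exp(\Omega(L))=n^{\Omega(\la^2/\log^2(\la^{-1}))}$; this is the lower bound, with $c=10^{-9}$ after fixing the constants.

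\textbf{Upper bound.} Here I would show $\sum_v\P(\tau^v\ge T)=o(1)$ for $T=n^{C\la^2\log(\la^{-1})}$. First I would restrict to the event, of probability $1-o(1)$, that $\Delta(G_n)\le 2\log n/\log\log n$, that degree counts match $\mathrm{Poisson}(d)$, and that every connected set of at most $n^{1/100}$ vertices spans only a bounded surplus of edges; this lets me couple the infection's exploration from $v$ with the contact process on a $\mathrm{Poisson}(d)$ Galton--Watson tree $\T$ rooted at $v$ --- the local weak limit of $G_n$ --- as long as the explored region stays of polynomial size, which the analysis shows it does. On $\T$ I would pass to the modified process $\tcp$, which discards any infection that never itself triggers a further infection; $\tcp$ lives on a connected subtree, and the pruning makes the law of $\tcp$ in the subtree below a vertex $u$ depend on the rest of $\T$ only through a bounded ``demand'' at $u$, producing a closed recursion over children. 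Running the recursion should attach to each explored region $R$ a weight $w(R)$ --- essentially the surplus degree $\sum_{u\in R}(\deg_R u-2)_+$, adjusted for how far apart its high-degree vertices sit --- such that $\{\tau^v\ge t\}$ forces a connected $R\ni v$ in $\T$ with $w(R)\gtrsim\log t/(\la^2\log(\la^{-1}))$ (the separate event that the subcritical bulk survives time $t$ unaided costs $e^{-\Omega(t)}$). Since in $\T$ the expected number of connected subtrees rooted at $v$ of weight at least $w$ is $\exp(-\Omega(w))$ --- realising weight $w$ through a single high-degree vertex costs $\exp(-\Omega(w\log w))$ and through a string of stars only $\exp(-\Omega(w))$, while the branching factor $d$ does not overturn this --- I would conclude $\P(\tau^v\ge T)\le\exp(-\Omega(\log T/(\la^2\log(\la^{-1}))))\le n^{-2}$ for $C$ large, and sum over $v$.

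\textbf{Main obstacle.} I expect the crux to be the recursion on $\T$: the contact process on a tree is re-entrant (the infection can leave a subtree and later return), so the subtree below a vertex cannot be treated in isolation, and only after passing to the pruned process $\tcp$ does a self-contained recursion over children appear. Converting it into the quantitative claim that a tree-like region of weight $w$ cannot hold infection beyond $\exp(O(\la^2\log(\la^{-1})w))$ will require sharp large-deviation bounds for the contact process on finite stars and on short strings of stars, and the slack there is exactly what leaves the gap between the exponents $\la^2/\log^2(\la^{-1})$ and $\la^2\log(\la^{-1})$ --- so the theorem pins down the exponent only up to a factor $\la^{o(1)}$.
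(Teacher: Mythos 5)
Your overall architecture matches the paper's --- for the lower bound, finding a logarithmic-length path of vertices of degree $\asymp \la^{-2}\log(\la^{-1})$, treating each as a star that holds the infection in a metastable state for time $\exp(\Theta(\la^2 D))$, and comparing the string of stars with a supercritical one-dimensional process; for the upper bound, passing to a pruned process that discards infections which never themselves infect (the paper calls it the starred process), running a recursion over Galton--Watson subtrees, and using local weak convergence plus a union bound. The crucial insight --- that a weak infection at $u$ occurs with probability only $\la$, so effective transmission rates scale like $\la^2 d_u$ rather than $\la d_u$, and this is exactly what closes the gap from $n^{\la^{1+o(1)}}$ to $n^{\la^{2+o(1)}}$ --- is also in your proposal. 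So you have genuinely found the right road.

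There are, however, two places where what you wrote would not become a proof as stated, and in both the paper does something different and more careful.

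First, in the lower bound you invoke a second-moment method for the existence of a length-$L$ path all of whose vertices have degree $\ge D$. The obstruction is the correlation between paths sharing vertices, which is not obviously negligible: the second moment would need to control pairs of paths with every possible overlap pattern, and a high-degree vertex boosts the count of paths through it multiplicatively. The paper sidesteps this entirely by an exploration argument (the cut-off-line algorithm in Proposition~\ref{prop:path}): it runs $n^{1-\ep}$ independent greedy rounds, each of which finds the whole path by revealing only $O(AL)$ half-edges, and just needs one round to succeed; the probability each round succeeds is computed from the empirical degree distribution of unexplored vertices, which stays close to $\mu$. This replaces variance control with a renewal-type union bound and is the cleaner route. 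I'd encourage you to check whether the second moment can be made to work before committing to it.

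Second, and more importantly, your upper bound replaces the paper's actual quantitative engine with a heuristic. You posit a weight $w(R)$ on connected regions such that $\{\tau^v\ge t\}$ forces the existence of a region of weight $\gtrsim\log t/(\la^2\log(\la^{-1}))$. This is an appealing picture but the paper does not prove anything of this combinatorial form, and I do not see how you would: the survival time is not a function of a static subgraph, and the event that the contact process survives long is not equivalent to the existence of any deterministic structure. What the paper proves instead is a tail bound on the \emph{quenched conditional expectation} $S(\mathcal{T}_{\le l})$ of the excursion time and on the expected number $M_l(\mathcal{T}_{\le l})$ of strong infections reaching depth $l$, both over the randomness of the Galton--Watson tree: these have a good tail in the sense of Definition~\ref{def:good:tail}, with exponential decay for small arguments and a power law $t^{-q}(\log(eBt\la))^{-2}$ with $q\asymp 1/(\la^2\log(\la^{-1}))$ beyond $1/(B\la)$. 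That power-law exponent $q$ is what produces $n^{C\la^2\log(\la^{-1})}$ after the union bound $\P(S_v> n^\delta)\lesssim n^{-\delta q}$. The proof of the tail bound (Lemma~\ref{lm:tail}) is the technical heart of the paper and is nothing like counting heavy connected subtrees; it is a delicate multiplicative large-deviation estimate for $\prod_i(1+\la X_i)-1$ over a Poissonian number of factors with heavy-tailed $X_i$, and the logarithmic correction in the definition of ``good tail'' is there precisely to close the recursion. Your proposal has no analogue of this step.

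Relatedly, you say that you would couple the exploration with a GW tree ``as long as the explored region stays of polynomial size, which the analysis shows it does,'' but you do not say how. The paper does not stop at a union bound over vertices: because the infection can escape the ball $B_v$ of radius $\asymp\la^2\log(\la^{-1})\log n$, the argument decomposes the process on $G_n$ into a branching collection of copies on local balls, spawned each time a strong infection reaches the boundary $\mathcal{L}(B_u)$. The tail bound on $M_v$ gives $\E M_v\le 1/\log n$ with high probability over the graph (Lemma~\ref{lm:Bv:Mv}), which makes this branching process subcritical, and Lemmas~\ref{lm:Bv:Mv:1}--\ref{lm:u} then control the total number of spawned copies by $n^{O(\delta)}$. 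Without this, a long survival time could in principle be achieved by the infection drifting far away and returning, and the bound $\P(S_v> n^\delta)\le n^{-3/2}$ for the \emph{local} survival time would say nothing about it.

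Your diagnosis of the ``main obstacle'' --- that re-entrance breaks any naive subtree recursion and that pruning is what restores it --- is exactly right, and your estimate that the stated method can only pin down the exponent to a factor $\la^{o(1)}$ agrees with the theorem. But to get a proof you would need to (i) replace the second-moment count by an exploration or make the correlation analysis explicit, (ii) replace the $w(R)$-heuristic by tail bounds on $S$ and $M_l$ via a recursive large-deviation lemma of the type of Lemma~\ref{lm:tail}, and (iii) add the block decomposition that handles escape from the local ball.
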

This result is a special case of our more general results on random graphs with given degree distributions described later in the section.  As we will describe, the survival time comes from large connected components of vertices of large constant degree.  The discrepancy between the contact process and the Glauber dynamics can be explained by the existence of a phase for the contact process in dimension one but not for the Glauber dynamics.  As a result, it is different rare local structures which determine the survival times.

\subsection{Background}\label{bg} The contact process was first introduced by Harris \cite{harris74} who studied the process on the lattice $\Z^{d}$. Among other things, he studied the phase diagrams of the contact process which since then has attracted intensive research. For an infinite rooted graph $G$, there are three phases that are of particular interest:
 \begin{itemize}
 	\item (\textit{Extinction}) the infection becomes extinct in finite time almost surely;
 	
 	\item (\textit{Weak survival}) the infection survives forever with positive probability, but the root is infected only finitely many times almost surely;
 	
 	\item (\textit{Strong survival}) the infection survives forever and the root gets infected infinitely many times with positive probability.
 \end{itemize}
 In that sense, we denote the extinction-survival threshold by
 $$\lambda_1(G) = \inf\{\lambda: (X_t) \text{ survives forever with positive probability}\}$$
 and the weak-strong survival threshold by
 $$\lambda_2(G) = \inf\{\lambda: (X_t) \text{ survives strongly}\}.$$

 For lattices, when the origin is initially infected, it is well-known that there is no weak survival phase (see \cite{harris74}, Bezuidenhout-Grimmett \cite{bezuidenhout1990critical}, the books of Liggett \cite{liggett:sis}, \cite{liggett:ips} and the references therein).

Beyond lattices, another natural family of graphs to study the contact process is the family of infinite trees. For the infinite $d$-regular tree $\mathbb T_{d}$ with $d\ge 3$, the contact process with the root initially infected has two distinct phase transitions with $0<\lambda_1(\mathbb T_{d})<\lambda_2(\mathbb{T}_d)<\infty$, by a series of beautiful works by Pemantle \cite{p92} (for $d\ge 4$), Liggett \cite{l96} (for $d=3$), and Stacey \cite{s96}.  For a Galton-Watson tree $\T$ with offspring distribution $\xi$, it is not difficult to see that $\lambda_1(\T)$ and $\lambda_2(\T)$ are constants depending only on $\xi$, conditioned on $|\T|=\infty$. Huang and Durrett \cite{hd18} proved that on  $\T\sim \GW(\xi)$ with the root initially infected, $\lambda_2(\T)=0$ if the offspring distribution $\xi$ is subexponential, i.e., $\E e^{c\xi}=\infty$ for all $c>0$. So in this case, there is only the strong survival phase.

 By contrast, if the offspring distribution $\xi$  has an exponential tail, i.e., $\E e^{c\xi} <\infty$ for some $c>0$, Bhamidi, Nam and the authors \cite{BNNS} showed that there is an extinction phase: $\lambda_1(\T)\geq \lambda_0(\xi)$  for some constant $\lambda_0(\xi)>0$.  In \cite{NNS}, Nam et al. established the first-order asymptotics on $\lambda_1^{\textsc{gw}}(\xi)$ for $\xi$ sufficiently well \textit{concentrated} around its mean, in particular the case of Poisson degrees. The asymptotics turns out to have the same form as for regular trees.

 Aside from infinite graphs, there has been considerable work studying the phase transitions of survival times on large finite graphs. Stacey \cite{s01} and Cranston-Mountford-Mourrat-Valesin \cite{cmmv14} studied the contact process on the $d$-ary tree $\mathbb{T}_d^h$ of depth $h$ starting from the all-infected state, and their results show that the survival time $T_h$, as $h\rightarrow \infty$, satisfies (i) $T_h/h \rightarrow \gamma_1$ in probability if $\lambda < \lambda_2(\mathbb{T}_d)$; (ii) $|\mathbb{T}_d^h|^{-1}{\log \E T_h} \rightarrow \gamma_2$ in probability and $T_h/\E T_h \overset{d}{\rightarrow} \Exp(1)$ if $\lambda >\lambda_2(\mathbb{T}_d)$, where $\gamma_1,\gamma_2$ are constants depending on $d,\lambda$. In \cite{dl88, ds88, m93}, similar results were established for the case of the lattice cube $\{1,\ldots,n \}^d$.

 Recently in work of  Mourrat-Valesin~\cite{mv16} and Lalley-Su~\cite{ls17}, it was shown that for any $d\geq 3$, the contact process on the random $d$-regular graph, whose initial configuration is the all-infected state, exhibits the following phase transition:

 \begin{itemize}
 	\item (\textit{Short survival}) For $\lambda <\lambda_1(\mathbb{T}_d)$, the process survives for $O(\log n)$-time \textsf{whp}.
 	
 	\item (\textit{Long survival}) For $\lambda >\lambda_1(\mathbb{T}_d)$, the process survives for $e^{\Theta(n)}$-time \textsf{whp}.
 \end{itemize}

 For related results, we refer to  \cite{cs15, cd09, ls17, ms16, mmvy16, mvy13, mv16, sv2017} and the references therein.

Let us move on to finite graphs that are non-regular. We focus on the contact process on random graphs with a given degree distribution. Let $\mu$ be a degree distribution and  $G_n\sim \mathcal{G}(n,\mu)$ be the configuration model with degree distribution $\mu$, assuming that there exists a unique giant component (for details, see Section \ref{subsubsec:rgprelim}). Consider the contact process on $G_n$ where all vertices are initially infected. In \cite{BNNS}, it was shown that if  $\E_{D\sim \mu} e^{cD}<\infty$ {\footnote{Here, $\E_{D\sim \mu} f(D)$ denotes the expectation of $f(D)$ where $D$ is distributed according to $\mu$.}} for some constant $c>0$, then there exist constants $0<\underline{\lambda}(\mu) \le\overline{\lambda}(\mu) <\infty$ such that the survival time $T_{\lambda,n}$  of the process, which is the first time the process reaches the all-healthy state, satisfies the following:
 \begin{enumerate}
 	\item [(1)] For all $\lambda <\underline{\lambda}$,  $T_{\lambda,n}\leq n^{1+ o(1)}$ \textsf{whp};
 	
 	\item [(2)] For all $\lambda> \overline{\lambda}$, $T_{\lambda,n}\geq e^{\Theta(n)}$ \textsf{whp}.
 \end{enumerate}

 Based on this result, the short- and long-survival thresholds $\lambda_c^-(\mu)$, $\lambda_c^+(\mu)$ are defined as 
 \begin{equation*}
 \begin{split}
 \lambda_c^-(\mu) &:= \lim_{\alpha\to \infty}  \sup \Big\{\lambda: \;\lim_{n\to \infty}\P (T_{\lambda,n} \leq n^\alpha) = 1 \Big\};\\
 \lambda_c^+(\mu) &:= \lim_{\beta \to 0}
 \;\inf \left\{\lambda: \;\; \lim_{n\rightarrow \infty}\P(T_{\lambda,n} \geq e^{\beta n} )=1 \right\}.
 \end{split}
 \end{equation*}

 The first-order asymptotics for these thresholds are achieved \cite[Theorem 2]{NNS}:
 \begin{equation}
 \lambda_c^-(\mu) \sim \frac{1}{\E_{D\sim \tilde \mu} D}\quad\text{and}\quad \lambda_c^+(\mu) \sim \frac{1}{\E_{D\sim \tilde \mu} D},\label{eq:critical}
 \end{equation}
as $\E_{D\sim \tilde \mu} D$ tends to infinity where $\tilde \mu$ is the size-biased distribution of $\mu$ (see \eqref{eq:def:sizebiased} for the precise definition).

 On the other hand, if $\mu$ has a heavier than exponential tail, the story is quite simple (\cite{BNNS}): \textsf{whp} there is no short survival phase and so, $\lambda_c^-(\mu) = \lambda_c^+(\mu) = 0$. Therefore, we shall only restrict to measures with exponential tails in this paper. For these measures,
while it is now known that the survival time for $\la>\lambda_c^+(\mu)$ is exponential $e^{\Theta(n)}$, little is known about the survival time for small $\la$ except that it is of order $n^{O(1)}$.

\subsection{Main results} As promised, we now present generalizations of Theorem \ref{thm:ER} for random graphs with given degree distributions. In the following, we state the lower and upper bounds for the survival time separately because the assumptions needed for the lower bound are more relaxed.
		\begin{theorem}[Lower bound]\label{thm:lower:gen}
	 Let $\mu$ be any probability measure satisfying $\E_{D\sim\mu} D(D-2) >0$. Let $G_n\sim \mathcal G(n, \mu)$. Let $\la>0$ be sufficiently small so that $\la<1/20$ and
		\begin{equation}\label{def:A:lam}
		A = \frac{10^6}{\la^{2}} \log \frac{1}{\la} \ge 16\E _{D\sim \mu} D.
		\end{equation}
		Consider the contact process on $G_n$ where all vertices are initially infected and infection rate $\la$. Then this process still survives at time $n^{-c /\log (\mu[A, \infty))}$ with probability $1 - o(1)$ where $c = 1/120$ and the implicit constant in the $o(1)$ may depend on $\la$ and $\mu$.
	\end{theorem}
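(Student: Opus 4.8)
The plan is to find inside $G_n$ a subgraph on which the contact process already survives for time $n^{-c/\log\mu[A,\infty)}$ and then invoke monotonicity: under the Harris graphical coupling, deleting edges can only shorten the survival time, so it suffices to analyse the contact process, started from all vertices infected, on a convenient subgraph. The subgraph I would use is a \emph{necklace of stars}: a self-avoiding path $v_1\sim v_2\sim\cdots\sim v_\Lambda$ in $G_n$, every vertex of which has degree at least $A$, together with, for each $i$, a set $L_i$ of $A-2$ further neighbours of $v_i$, the $L_i$ pairwise disjoint and disjoint from the path. Here $A$ is the fixed threshold from \eqref{def:A:lam} and $\Lambda=(1-\delta)\log n/|\log\mu[A,\infty)|$ for a small $\delta=\delta(\la,\mu)>0$; the dependence of the $o(1)$ on $\la,\mu$ enters through $\delta$.

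The first step is a first- and second-moment argument, carried out directly on $\mathcal G(n,\mu)$, showing such a necklace exists \whp. The expected number of length-$\Lambda$ self-avoiding paths all of whose vertices have degree at least $A$ is, up to degree-sequence concentration, at least $n\,\rho^{\Lambda}$, where $\rho$ is the expected number of degree-$\ge A$ neighbours available to extend such a path by one step; since $v_i$ has $\ge A-1$ further neighbours and each has (size-biased) degree $\ge A$ with probability $\ge A\,\mu[A,\infty)/\E_{D\sim\mu}D$, the hypothesis $A\ge16\,\E_{D\sim\mu}D$ gives $\rho\ge\mu[A,\infty)$, hence $|\log\rho|\le|\log\mu[A,\infty)|$ and the expectation is at least $n^{\delta}\to\infty$. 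The condition $\E_{D\sim\mu}D(D-2)>0$ guarantees a giant component, so paths of this length are indeed available. A path being a rigid structure, in a sparse graph its second moment is comparable to the square of its first moment (self-intersecting configurations are lower order), so a necklace appears \whp; sparsity also lets one thin to pairwise disjoint pendant sets $L_i$, yielding a genuine subgraph that is ``a path with an $A$-star at every vertex''.

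The second step analyses the contact process on the necklace, from all-infected. The key input is the star lemma: once $\la^2 A$ is large enough, a star whose centre is infected keeps the infection alive — and keeps its centre infected a positive fraction of the time — for time at least $\exp(\kappa\la^2 A)$ \whp. With $A$ as in \eqref{def:A:lam} one has $\la^2 A=10^6\log(1/\la)$, so each star is an extremely stable ``particle'' of lifetime at least $\la^{-\kappa\cdot10^6}$; the role of the constant $10^6$ is precisely to make this dwarf the $\la^{-O(1)}$ time needed to push the infection across the single connector edge to an adjacent centre. Coarse-graining space-time into blocks of height $\exp(\kappa\la^2 A)$, I would compare the induced dynamics on the $\Lambda$ stars with one-dimensional supercritical oriented percolation, following Bezuidenhout--Grimmett \cite{bezuidenhout1990critical}: while a star is alive it makes of order $\la\exp(\kappa\la^2 A)$ attempts to ignite each neighbour, each succeeding with probability at least $\la^{O(1)}$, and a freshly hit centre re-ignites its own star with probability bounded below, so — since $\la^2 A$ dominates the connector cost — the net per-block ignition probability exceeds a universal constant and the set of alive stars dominates a supercritical oriented percolation. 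The standard survival estimate for such percolation on a segment of length $\Lambda$ then gives survival for time at least $\exp(c_0\Lambda)$ \whp, which for the chosen $\Lambda$ is $n^{c_0(1-\delta)/|\log\mu[A,\infty)|}\ge n^{-c/\log\mu[A,\infty)}$; tracking the (fixed, positive) constants one may take $c=1/120$.

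The main obstacle is the renormalisation in the second step: the ``alive'' status of a star is not a Markov process and neighbouring stars are dependent, so the good events must be phrased to depend only on the graphical representation inside a bounded space-time neighbourhood and each shown to have probability close to $1$ — this is where the star lemma must be used quantitatively, exploiting a genuine scale separation between the star lifetime $\exp(\kappa\la^2 A)$ and the $O(\la^{-O(1)})$ ignition time — before the oriented-percolation comparison theorem can be invoked. A secondary, more routine point, needed for the full generality of $\mathcal G(n,\mu)$ beyond the Erd\H{o}s--R\'enyi case, is the control of the degree sequence in the moment computation, which is standard. The genuinely new device of the paper — the modified contact process and its recursive analysis on the branching-process tree — is what powers the matching upper bound and is not needed here.
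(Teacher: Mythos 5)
Your proposal follows essentially the same strategy as the paper: locate a logarithmically long path of degree-$\ge A$ vertices with disjoint pendant stars (the paper's Proposition~\ref{prop:path}), then coarse-grain the contact process on this necklace to a supercritical one-dimensional percolation whose survival time is exponential in the path length (the paper's Lemmas~\ref{lm:dominate:percolation} and~\ref{lm:percolation:survival}, built on the star Lemma~\ref{lm:star}). The only differences --- a second-moment count for path existence in place of the paper's cut-off-line greedy exploration, and citing a standard oriented-percolation comparison in place of the paper's explicit Peierls argument --- are implementation choices rather than a genuinely different route.
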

When $\mu$ has a truly exponential tail $\mu[A, \infty)\approx \exp(-A^{1+o(1)})$ for $A$ sufficiently large, this theorem concludes that the survival time is $\ge n^{c\la^{2+o(1)}}$ with high probability.

Concerning the upper bound, we achieve the same power for the survival time.
\begin{theorem} [Upper bound]\label{thm:uper}
		Let $\mu$ be a distribution satisfying $\E_{D\sim\mu} D(D-2) >0$ and
		\begin{equation}\label{cond:mu}
		 \E_{D\sim \mu} e^{10 D}<\infty.
		\end{equation}
		There exists a constant $C=C(\mu)>0$ such that for all $\la<1/C$, the contact process on $G_n\sim \mathcal G(n, \mu)$ starting at all vertices infected and infection rate $\lambda$ dies before time $n^{C\la^{2}\log(\la^{-1})}$ with probability $1 - o (1)$ as $n\to \infty$ where the implicit constant may depend on $\la$ and $\mu$.
		\end{theorem}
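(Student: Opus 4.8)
The plan is to prove the upper bound via the local-weak limit, reducing the survival time on $G_n \sim \mathcal G(n,\mu)$ to a statement about the contact process on Galton--Watson trees with size-biased offspring distribution. First I would set up the standard exploration coupling: with high probability $G_n$ looks locally like a \GW-tree rooted at a size-biased vertex, and the infection started from all vertices can be dominated by the union of contact processes launched from each vertex, each of which lives in its own local neighborhood until the neighborhoods begin to overlap. The condition $\E_{D\sim\mu} e^{10D}<\infty$ guarantees exponential tails for the degrees and hence a well-behaved branching process; it is also what is needed so that the relevant truncations below incur only negligible error. The key quantity to control is: for the contact process on a \GW-tree $\T$ started from all vertices infected, what is the probability that it survives up to time $t$? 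If I can show this probability is at most $n^{-1-\delta}$ for $t = n^{C\la^2\log(\la^{-1})}$ and some $\delta>0$, then a union bound over the $n$ roots (together with control of the exceptional events where local neighborhoods are too large or too dense) finishes the argument.

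The heart of the matter, as the introduction advertises, is a \emph{recursive analysis on branching-process trees} using a modified contact process ($\tcp$ or $\ddp$ in the paper's notation) that ignores infections which do not lead to further infections. The idea I would pursue: define, for each vertex $v$ of the tree, a modified process in which an infection of a child is only "counted" if that child will in turn infect one of \emph{its} children before recovering; this prunes the branching and makes the process on the subtree rooted at $v$ stochastically smaller, hence easier to bound, while preserving the survival time up to constants. Writing $q_v(t)$ for the probability that the (modified) process rooted at the subtree of $v$ survives to time $t$, one sets up a recursion over the tree: $v$ stays infected only if it is either re-infected from above or keeps re-infecting from below, and the latter requires at least one child whose subtree process is still alive. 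Because each individual infection is at rate $\la$ and recovery at rate $1$, a single vertex passes the infection down a given edge with probability $O(\la)$, so in a subtree of depth $k$ the infection needs to traverse $\sim k$ levels, each traversal costing a factor $\la$; balancing the $\la^k$ decay against the growth $(\bar d)^k$ of the tree (where $\bar d = \E_{D\sim\tilde\mu} D$ is the branching ratio) shows the infection can only persist on atypically high-degree paths. The telescoping of this recursion should yield that surviving to time $t$ on a \GW-tree requires, roughly, a path of $\Theta(\log t / \log(\la^{-1}))$ consecutive vertices each of degree $\gtrsim \la^{-2}$, an event of probability $\exp(-\Theta(\log t))$ by the exponential-tail assumption; choosing the constant $C$ appropriately makes this $\le n^{-1-\delta}$.

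The main obstacle I expect is making the recursion on the \GW-tree \emph{rigorous and self-bounding}: the contact process has long-range-in-time dependence (a vertex can be reinfected from below much later), so the subtree survival events are not independent across children, and one must set up a supermartingale or a carefully chosen functional that decouples the levels — this is presumably exactly where the modified process $\ddp$ earns its keep, since discarding non-productive infections is what breaks the problematic feedback and lets the recursion close. A secondary technical point is transferring the tree estimate back to $G_n$: one must handle the overlap of local neighborhoods (vertices within distance $O(\log\log n)$ of each other), the $O(\log n/\log\log n)$ maximum degree, and the possibility of short cycles creating locally denser-than-tree structure; the exponential moment bound \eqref{cond:mu} controls the number of vertices of each degree and the number of short cycles, so these contribute only $o(1)$ to the failure probability. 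I would also need a crude a priori bound — e.g. the $n^{1+o(1)}$ bound from \cite{BNNS} quoted in the introduction, or a direct large-deviation estimate — to rule out the process surviving via a genuinely global mechanism rather than the local high-degree-path mechanism, after which the union bound over roots is the clean final step.
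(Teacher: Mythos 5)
Your high-level strategy matches the paper's: introduce a pruned process (the paper's \emph{starred} contact process) that ignores infections which will not propagate further, run a recursive analysis on Galton--Watson trees, and transfer back to $G_n$ via the local weak limit. But the specific recursion you propose — on the survival probability $q_v(t)$ — is not what the paper uses, and I think you would find it very hard to close. The paper's key device, which you don't identify, is to consider the \emph{root-added} process: attach a permanently infected parent $\rho^+$ above the root, turning the process into an ergodic Markov chain with a stationary distribution $\pi$ that factorizes as a product over the subtrees hanging from $\rho$. This product structure is what makes the recursion multiplicative: the paper tracks the \emph{expected excursion time} $S(T)$ (time for the root-added process to return to all-healthy) and the \emph{expected number of strong infections at the leaves} $M_l(T)$, and derives bounds of the form $S(T) \lesssim \wt(T)^{-1} \prod_i (1 + \lambda\, \wt(T_i) S(T_i))$, exactly because the stationary measure on the product chain is the product of the subtree stationary measures. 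Recursing on $q_v(t)$ directly keeps $t$ as a parameter and does not decouple across children — you correctly flag this as "the main obstacle," and the supermartingale idea you gesture at does not obviously resolve it. A small but consequential inaccuracy: you define a strong infection as one where the child goes on to infect one of \emph{its own} children; the paper's definition is that the newly infected vertex will pass an infection to \emph{any} neighbor (including back to its parent) before healing, which is what produces the clean rate $\lambda \cdot \frac{\lambda(d_v+1)}{1 + \lambda(d_v+1)} \approx \lambda^2(d_v+1)$ for a strong infection down an edge.

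Your sketch of the graph-to-tree transfer is also substantially vaguer than what is actually required. A naive union bound over $n$ roots, each with an estimate on a local tree, does not work because an infection can exit a local neighborhood and re-enter elsewhere, and because local neighborhoods contain cycles. The paper decomposes the process into processes on balls $B_v$ of radius $\delta \log n$, handles the (\textsf{whp} at most one) cycle inside each ball by coupling with an edge-added Galton--Watson process, and then — crucially — uses the tail bound on $M_l$ to show that the expected number of strong infections exiting each ball is $\le 1/\log n$. Infections that exit spawn new ball processes, so the collection of launched balls is dominated by a \emph{subcritical} branching process; an Azuma argument then bounds its total size by $n^{O(\delta)}$, and combining with the $n^{\delta}$ bound on each ball's expected excursion time gives the $n^{C\lambda^2\log(\lambda^{-1})}$ total. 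The "crude a priori bound from \cite{BNNS}" you invoke does not substitute for this: the $n^{1+o(1)}$ estimate there is far too weak, and ruling out "genuinely global" survival is precisely what the subcritical-branching decomposition accomplishes. So the skeleton of your argument is right, but the two load-bearing mechanisms — the stationary-distribution product structure for the tree recursion, and the ball-decomposition/subcritical-branching argument for the graph — are both missing from the proposal.
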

\begin{remark}
	 The constant $10$ in \eqref{cond:mu} may not be optimal. It would be an interesting and challenging task to show a matching upper and lower bound, taking the $\log(\la^{-1})$ factor into account. 
\end{remark}

\subsection{Main ideas} \label{sec:idea}

In the first part of the paper, we prove the lower bound in Theorem \ref{thm:lower:gen}.
It is well-known that the contact process survives for a long time on dense sub-graphs.  The simplest such example is a large star graph, a vertex with $k$ neighbors.  It is easy to verify that the expected survival length behaves like $e^{ck\lambda^2}$ (see Lemma \ref{lm:star}).  Since the largest degree in an Erd\H{o}s-R\'enyi random graph is asymptotically $\frac{\log n}{\log \log n}$, this directly gives a lower bound of $\exp(\frac{c\log n}{\log \log n})$ which far exceeds the $\log n$ survival time in the random regular graph.  However, these highest-degree vertices are not the optimal local structure.  Instead we find that for paths of vertices of degree $10^6 \la^{-2} \log \tfrac{1}{\la}$, the contact process behaves effectively like a supercritical one-dimensional contact process.  This has an infection time that is exponential in the length of the path and we show that with high probability there are paths of logarithmic length which gives the bound in Theorem \ref{thm:lower:gen}.

The second, and more challenging, part is to prove the matching upper bound of Theorem \ref{thm:uper}. As in~\cite{BNNS,NNS}, we first control the contact process on Galton-Watson Branching process trees which are the local weak limits of the random graphs.  In~\cite{BNNS}, we developed a recursive scheme to show that the expected survival time of an infection started from the root is constant, independent on the depth of the tree.  This was further refined in~\cite{NNS}, giving tail bounds on the conditional expected survival time given the tree as well as for the expected number of infections that reach the leaves.  These tail bounds, however, would only translate into an upper bound on the expected survival time on the random graph of $n^{\lambda^{1+o(1)}}$.  To explain where this analysis was deficient, it effectively coupled the process with a modifed process where the root of the tree must remain infected until all its subtrees became uninfected.  The advantage of this coupling is that the processes in the subtrees become independent.  However, if the degree of the root is $D$ then in the modified process, it will take time $\exp(c\la D)$ rather than $\exp\left (c\la^{2} D\right )$ until all the children of the root become uninfected.  What this analysis misses is that each infected child has only probability $\lambda$ of sending an infection back to its parent.

To give a sharp analysis, we introduce a modified dynamics that stochastically dominates the original where particles are strongly infected if they will send an infection to a neighbour before healing and weakly infected otherwise.  Weak infections can then be effectively ignored.  A vertex $u$ sends a strong infection to a neighbour $v$ with rate $\frac{\lambda^2 d_v}{1+\lambda d_v}$ which is roughly $C\lambda^{2}$ when the degree of $v$ is constant.  With this formulation, we get sharper bounds on the tail of the expected survival time and number of particles to hit the leaves.

To complete the proof, we use this analysis to show by a union bound that for any vertex $v$, starting with $v$ infected, the expected number of infections to exit a ball of radius $C\la^{2}\log (\la^{-1})\log n$ is $o(1)$.  When infections leave the ball, we analyse them as new contact processes started from the exit point and stochastically dominate this process by a subcritical branching process which with high probability terminates in $n^{C\la^{2}\log(\la^{-1})}$ steps.  Furthermore, we again use our recursive bounds to show that the maximum expected survival time on balls of radius $C\la^{2}\log (\la^{-1})\log n$ in the graph is at most $n^{C\la^{2}\log(\la^{-1})}$ with high probability.  Combining these results establishes Theorem~\ref{thm:uper}.

\subsection{Notation} The $\mathcal{G}(n,\mu)$ denotes the configuration model with degree distribution $\mu$ while  $\mathcal G(n, d/n)$ denotes the Erd\H{o}s-R\'enyi random graph.

For a tree $T$ and a depth $l$, we denote the set of vertices of $T$ at depth $l$ by $T_{l}$. We use $T_{\le l}$ to denote the set of vertices of depth at most $l$. In particular, $\T_{\le l}\sim\gw(\xi)_{\le l}$ denotes the Galton-Watson tree generated up to depth $l$, while the infinite Galton-Watson tree is denoted by $\T\sim\gw(\xi)$.

Throughout the paper, we often work with the contact process defined on a (fixed) graph generated at random.  To distinguish between the two randomness of different nature, we introduce the following notations:
\begin{itemize}
	\item $\Pcp$ and $\E_{\textsc{cp}}$ denote the probability and the expectation, respectively, with respect to the randomness from  contact processes.
	
	\item $\Pgw$ and $\P_\textsc{rg}$ denote the probability   with respect to the randomness from the underlying graph, when the graph is a Galton-Watson tree and  a random graph $\mathcal{G}(n,\mu)$, respectively. We write $\E_\textsc{gw}$ and $\E_\textsc{rg}$ similarly for expectations.
	
	\item $\P$ and $\E$ denote the probability and expectation, respectively, with respect to the combined randomness over both the process and the graph. That is, for instance, $\E[\cdot] = \E_\textsc{gw} [\E_\textsc{cp} [\cdot] ]$, if the underlying graph is a Galton-Watson tree.
\end{itemize}

\subsection{Organization} In Section \ref{sec:prelm}, we include the preliminaries for the graphical construction of the contact process (Section \ref{sec:graphical}), the models of random graphs (Section \ref{subsubsec:rgprelim}). Several useful results on the contact process on star graphs are presented in Section \ref{sec:star}. We prove Theorem \ref{thm:lower:gen} on the lower bound in Section \ref{sec:proof:lower}. In Section \ref{sec:upper:tree}, we establish the analog of Theorem \ref{thm:uper} for trees. And finally, we prove Theorem \ref{thm:uper} in Section \ref{sec:upper}.

\section{Preliminaries}\label{sec:prelm}
\subsection{Graphical representation of contact processes}\label{sec:graphical}

A standard construction of the contact processes uses the  \textit{graphical representation}, see for example  \cite[Chapter 3, section 6]{liggett:ips}. The idea is to record the infections and recoveries of the contact process on a graph $G$ on the space-time domain $G \times \mathbb{R}_+$. Define i.i.d. Poisson processes $\{N_v(t)\}_{v\in V}$ with rate $1$ (that represent the healing clock rings at vertex $v$) and i.i.d. Poisson processes $\{N_{\vec{uv}}(t) \}_{\vec{uv}\in \overrightarrow{E}}$ with rate $\lambda$ (that represent the clock rings for the infection from $u$ to $v$), where we consider $\overrightarrow{E}=\{\vec{uv},\;\vec{vu}:(uv)\in E \}$ to be the set of directed edges. Further, we let $\{N_v(t)\}_{v\in V}$ and $\{N_{\vec{uv}}(t) \}_{\vec{uv}\in \overrightarrow{E}}$ be mutually independent. Then the graphical representation is defined as follows:
\begin{enumerate}
	\item Initially, we have the \textit{empty} domain $V \times \mathbb{R}_+$.
	
	\item For each $v\in V$, mark  $\times$ at the point $(v,t)$, at each event time $t$ of $N_v(\cdot)$.
	
	\item For each $\vec{uv}\in \overrightarrow{E}$, add an arrow from $(u,t)$ to $(v,t)$, at each event time $t$ of $N_{\vec{uv}}(\cdot)$.
\end{enumerate}

\begin{figure}[H]
	\centering
	\begin{tikzpicture}[thick,scale=1.1, every node/.style={transform shape}]
	\foreach \x in {0,...,4}{
		\draw[black] (\x,0) -- (\x,3.8);
	}
	\draw[black] (0,0)--(4,0);
	\draw[dashed] (0,3.8)--(4,3.8);
	\draw (0,.8) node[cross=2.2pt,black]{};
	\draw (0,3.2) node[cross=2.2pt,black]{};
	\draw (1,2.4) node[cross=2.2pt,black]{};
	\draw (2,.4) node[cross=2.2pt,black]{};
	\draw (3,1.36) node[cross=2.2pt,black]{};
	\draw (4,.88) node[cross=2.2pt,black]{};
	\draw (4,2.8) node[cross=2.2pt,black]{};
	
	\draw[->,line width=.6mm,black] (0,2.96)--(.57,2.96);
	\draw[black] (0,2.96)--(1,2.96);
	\draw[->,line width=.6mm,black] (1,1.44)--(.43,1.44);
	\draw[black] (1,1.44)--(0,1.44);
	\draw[->,line width=.6mm,black] (1,2.08)--(1.57,2.08);
	\draw[black] (1,2.08)--(2,2.08);
	\draw[->,black] (2,1.2)--(1.43,1.2);
	\draw[black] (1,1.2)--(2,1.2);		
	\draw[->,black] (2,1.84)--(2.57,1.84);
	\draw[black] (2,1.84)--(3,1.84);
	\draw[->,black] (3,2.72)--(2.43,2.72);
	\draw[black] (2,2.72)--(3,2.72);		
	\draw[->,line width=.6mm,black] (3,.56)--(3.57,.56);
	\draw[black] (3,.56)--(4,.56);
	\draw[->,black] (3,3.28)--(3.57,3.28);
	\draw[black] (3,3.28)--(4,3.28);
	\draw[->,black] (4,2.16)--(3.43,2.16);
	\draw[black] (3,2.16)--(4,2.16);			
	\node at (4.7,0.1) {$t=0$};
	\node at (4.7,3.8) {$t=s$};
	\node at (0,-.3) {$1$};
	\node at (1,-.3) {$2$};
	\node at (2,-.3) {$3$};
	\node at (3,-.3) {$4$};
	\node at (4,-.3) {$5$};
	\draw [line width=.07cm, blue] (0,0) -- (0,.8);
	\draw [line width=.07cm, blue] (1,0) -- (1,2.4);	
	\draw [line width=.07cm, blue] (1,1.44)--(0,1.44) -- (0,3.2);
	\draw [line width=.07cm, blue] (0,2.96) -- (1,2.96)--(1,3.8);
	\draw [line width=.07cm, blue] (1,2.08) -- (2,2.08)--(2,3.8);	
	\draw [line width=.07cm, blue] (2,0) -- (2,.4);
	\draw [line width=.07cm, blue] (3,1.36) -- (3,0);
	\draw [line width=.07cm, blue] (4,0) -- (4,.88);
	\draw [line width=.07cm, blue] (3,.56) -- (4,.56);			
	
	\end{tikzpicture}
	\caption{A realization of the contact process on the interval $V=\{1,\ldots,5\}$, with initial condition $X_0=\one_V$. The blue lines describe the spread of infection. We see that $X_s = \one_{\{2,3\}}$.} \label{fig1}
\end{figure}

\noindent This gives a geometric picture of the contact process, and further provides a coupling of the processes over all possible initial states. Figure \ref{fig1} tells us how to interpret the infections at time $t$ based on this graphical representation. The following are two simple yet important observations using this construction (ref. \cite[Chapter 3, section 6]{liggett:ips}).

\begin{lemma} \label{lem:graph rep1}
	Suppose that we have the aforementioned coupling among the contact processes on a graph $G$. Let $T_v$ be the survival time of the contact process on $G$ with $v$ initially infected. Let $T_G$ be the survival time of the contact process on $G$ with all vertices initially infected.  Then we have $T_G = \max \{T_v:v\in G \}$.
\end{lemma}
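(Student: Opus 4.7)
The plan is to exploit two structural properties of the graphical construction: \emph{additivity} and \emph{monotonicity} of the coupling. Write $X^A_t$ for the set of infected vertices at time $t$ in the contact process started from initial state $\one_A$, all realized on the same Poisson marks $\{N_v\}$ and arrows $\{N_{\vec{uv}}\}$. The central observation is that a vertex $w$ belongs to $X^A_t$ if and only if there exists a vertex $v\in A$ and an \emph{infection path} in the graphical picture from $(v,0)$ to $(w,t)$, i.e.\ a path that moves upward in time along vertical lines without crossing any $\times$ mark and may traverse arrows in their prescribed direction. This characterization is purely a property of the common environment and of the starting set $A$.

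From the infection-path characterization, one immediately reads off additivity: since an infection path originates at a single source vertex, $X^{A\cup B}_t = X^A_t\cup X^B_t$ for all $t\ge 0$, and hence by induction $X^V_t=\bigcup_{v\in V} X^{\{v\}}_t$. Monotonicity ($A\subseteq B\Rightarrow X^A_t\subseteq X^B_t$) follows as a special case. I would then note that the empty set is absorbing for every contact process (no infection path can originate after time $0$), so $T_A=\inf\{t\ge 0: X^A_t=\emptyset\}$ is characterized by $X^A_s=\emptyset$ for all $s\ge T_A$.

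Combining these ingredients, $X^V_t=\emptyset$ if and only if $X^{\{v\}}_t=\emptyset$ for every $v\in V$, so
\[
T_G \;=\; \inf\{t: X^V_t=\emptyset\} \;=\; \inf\{t: X^{\{v\}}_t=\emptyset \text{ for all } v\in V\} \;=\; \max_{v\in V}\, T_v,
\]
where the last equality uses that each individual process, once extinct, remains extinct, so the collective extinction time is the latest individual extinction time. Since $V$ is finite, the maximum is attained, completing the proof.

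There is no genuine obstacle here — the only thing to be careful about is justifying the additivity claim rigorously from the infection-path description, which amounts to checking that an infection path into $(w,t)$ ends at a well-defined source at time $0$ and that concatenation of paths is respected by unions of initial sets. Everything else is immediate from the definition of the graphical construction recalled above.
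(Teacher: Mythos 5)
Your proof is correct and is exactly the standard argument: the paper itself gives no proof but cites Liggett, and the additivity property $X^{A\cup B}_t = X^A_t\cup X^B_t$ derived from the infection-path characterization of the graphical construction is precisely the argument used there. Nothing to add.
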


\begin{lemma} \label{lem:graph rep2}
	For a given graph $G=(V,E)$ and any $A\subset V$, let $(X_t)$ be the contact process on $G$ with $A$ initially infected. Consider any (random) subset $\mathcal{I}$ of $ \mathbb{R}_+$, and  define $(X_t')$ to be the coupled process of $(X_t)$ that has the same initial state, infections and recoveries, except that the recoveries at a fixed vertex $v$ are ignored at times $t\in \mathcal{I}$. Then for any $t\geq 0$, we have $X_t \leq X_t'$, i.e., $X_t(v) \leq  X_t'(v)$ for all $v$.
\end{lemma}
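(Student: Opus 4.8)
The plan is to argue directly from the graphical representation of Section~\ref{sec:graphical}, working separately with each fixed realization of the underlying Poisson clocks and of the set $\mathcal{I}$. The main tool is the standard percolation description of the contact process (see \cite[Chapter~3]{liggett:ips}, and Figure~\ref{fig1}): if the process starts from a set $A\subset V$, then $X_t(w)=1$ if and only if, in the space-time picture $G\times\R_+$, there is an \emph{active infection path} from $(u,0)$ to $(w,t)$ for some $u\in A$ --- a path that proceeds upward in time along vertical fibers, switches fibers only by crossing infection arrows in the direction of the arrow, and never passes through a recovery mark $\times$. This description is valid for an arbitrary graph $G$, with no finiteness assumption, which is why it is the convenient starting point here.

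Now fix a realization $\omega$; this fixes the locations of all $\times$ marks and all arrows as well as the set $\mathcal{I}=\mathcal{I}(\omega)\subset\R_+$. By definition $(X'_t)$ is built from exactly the same graphical data, with the $\times$ marks at the single vertex $v$ at heights $t\in\mathcal{I}$ deleted; equivalently, an active path for $(X'_t)$ is constrained exactly as for $(X_t)$ except that it is additionally permitted to pass through those deleted marks. The initial set $A$ is the same for both processes, so the admissible ``sources'' $(u,0)$, $u\in A$, coincide, and the set of recovery marks that can cut a path for $(X'_t)$ is a subset of the corresponding set for $(X_t)$. Hence every active path from some $(u,0)$ to $(w,t)$ with respect to $(X_t)$ is a fortiori an active path with respect to $(X'_t)$, so $X_t(w)=1$ implies $X'_t(w)=1$; since moreover $X_0=X'_0=\one_A$, we conclude $X_t(w)\le X'_t(w)$ for all $w\in V$ and all $t\ge 0$, which is the assertion.

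Alternatively, when $G$ is locally finite one can phrase the same argument as an induction over the (a.s.\ locally finite) jump times $0=t_0<t_1<t_2<\cdots$ of the clocks relevant to a given finite time horizon: the inequality $X_s\le X'_s$ is preserved trivially between jump times; at an infection event along an arrow from $u$ to $w$ both processes replace the $w$-coordinate by the maximum of its current value and the $u$-coordinate, which is monotone; at every recovery event other than a deleted one both processes zero out the same coordinate; and at a deleted recovery mark (at $v$, at a time in $\mathcal{I}$) the process $(X_t)$ zeros the $v$-coordinate while $(X'_t)$ leaves it unchanged, so $0\le X'_{t}(v)$ keeps the inequality. Either route is elementary; the only point deserving care --- and the closest thing to an obstacle --- is that the statement is genuinely a \emph{pathwise} domination, valid for any (possibly random) $\mathcal{I}$, precisely because $(X'_t)$ is \emph{defined} from the modified graphical picture rather than through a rate description, so no measurability or well-posedness issue intervenes.
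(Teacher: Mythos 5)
Your argument is correct. The paper does not reproduce a proof of this lemma (it simply cites \cite{liggett:ips}), so there is no in-paper proof to compare against; your percolation-path argument from the graphical representation is the standard one, and your alternative induction over jump times is an equally valid recasting. Both cleanly handle the random $\mathcal{I}$ because the domination is established pathwise: on each fixed realization $\omega$, deleting recovery marks at $v$ at times in $\mathcal{I}(\omega)$ only enlarges the set of admissible space-time infection paths, so every $(u,0)\to(w,t)$ path witnessing $X_t(w)=1$ also witnesses $X_t'(w)=1$, giving $X_t\le X_t'$ coordinatewise.
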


\subsection{Random graphs and their limiting structure}\label{subsubsec:rgprelim}

Let $\mu$ be a probability distribution on $\mathbb{N}$. The random graph $G_n \sim \mathcal{G}(n,\mu)$ with degree distribution $\mu$ is defined as follows:

\begin{itemize}
	\item Let $d_i \sim $ i.i.d.$\;\mu$ for $i=1,\ldots, n$, conditioned on $\sum_{i=1}^n d_i \equiv 0$ mod $2$. The numbers $d_i$ refer to the number of \textit{half-edges} attached to vertex $i$.
	
	\item Generate the graph $G_n$ by pairing all half-edges uniformly at random.
\end{itemize}
The resulting graph $G_n$ is also called the \textit{configuration model}. One may also be interested in the \textit{uniform model} $G^{\textsf{u}}_n \sim \mathcal{G}^{\textsf{u}}(n,\mu)$, which picks a uniformly random simple graph among all simple graphs with degree sequence $\{d_i\}_{i\in[n]} \sim $ i.i.d.$\;\mu$. It is well-known that if $\mu$ has a finite second moment, then the two laws $\mathcal{G}(n,\mu)$ and $\mathcal{G}^{\textsf{u}}(n,\mu)$ are \textit{contiguous}, in the sense that for any subset $A_n$ of graphs with $n$ vertices,
\begin{equation*}
\P_{G_n\sim \mathcal{G}(n,\mu) } \left(G_n \in A_n \right) \rightarrow 0
\quad
\textnormal{implies}
\quad
\P_{G^{\textsf{u}}_n\sim \mathcal{G}^{\textsf{u}}(n,\mu) } \left(G^{\textsf{u}}_n \in A_n \right) \rightarrow 0.
\end{equation*}
For details, we refer the reader to Chapter 7 of \cite{vanderhofstad17} or to \cite{j09}. We remark that when $\mu=\textnormal{Pois}(d)$, the random graph $G_n\sim \mathcal{G}(n,\mu)$ is contiguous to the Erd\H{o}s-R\'enyi random graph $G_n^{\textsc{er}}\sim \mathcal{G}_{\textsc{er}} (n,d/n)$ as shown in \cite{k06}, Theorem 1.1.

Furthermore, it is also well-known that the random graph $G_n \sim \mathcal{G}(n,\mu)$ is \textit{locally tree-like}, and the local neighborhoods converge \textit{locally weakly} to Galton-Watson trees. To explain this precisely, let us denote the law of Galton-Watson tree with offspring distribution $\mu$ by $\gw(\mu)$, and let $\gw(\mu)_{\le l}$ be the law of $\gw(\mu)$ truncated at depth $l$, that is, the vertices with distance $>l$ from the root are removed. Further, let $\widetilde \mu$ denote the \textit{size-biased} distribution of $\mu$ defined by
\begin{equation}\label{eq:def:sizebiased}
\widetilde{\mu}(k-1) := \frac{k\mu(k)}{\sum_{k'=1}^\infty k'\mu(k') }, \quad k=1,2,\ldots . \footnote{This differs from a possibly  more common definition of size-biased distribution where the left-hand side would be the mass at $k$.}
\end{equation}
Note that if $\mu=$Pois$(d)$, then $\widetilde{\mu}= \mu$. Lastly, define $\gw(\mu, \widetilde{\mu})_{\le l}$ to be the Galton-Watson process truncated at depth $l$, such that the root has offspring distribution $\mu$ while all other vertices have offspring distribution $\widetilde{\mu}$. Then the following lemma shows the convergence of local neighborhoods of $G_n$.

\begin{lemma}[\cite{DemboMontanari2010}, Section 2.1]\label{lem:lwc} Suppose that $\mu$ has a finite mean. Let $l>0$ and let $v$ denote the vertex in $G_n\sim \mathcal{G}(n,\mu)$ chosen uniformly at random. Then for any rooted tree $(T, x )$ of depth $l$, we have
	\begin{equation*}
	\lim_{n\rightarrow \infty}\P((N(v,l),v) \cong (T,x)  ) =
	\P_{(\mathcal{T},\rho )\sim \gw(\mu,\widetilde{\mu})_{\le l}} ((\mathcal{T},\rho) \cong (T,x)  ),
	\end{equation*}
	where $N(v,l)$ is the $l$-neighborhood of $v$ in $G_n$ and $\cong$ denotes the  isomorphism of rooted graphs. We say that $G_n$ converges locally weakly to $\gw(\mu,\widetilde{\mu})$.
\end{lemma}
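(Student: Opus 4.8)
The plan is to run a breadth-first exploration of the ball $N(v,l)$ in the configuration model, revealing the pairings of half-edges one at a time, and to compare it step by step with the corresponding exploration of $\gw(\mu,\widetilde\mu)_{\le l}$. Since $(N(v,l),v)$ and the truncated Galton-Watson tree both take values in the countable set of finite rooted graphs, and the latter is almost surely a finite tree, it suffices to prove that for every fixed rooted tree $(T,x)$ of depth $\le l$ one has $\P((N(v,l),v)\cong (T,x)) \to \Pgw((\mathcal T,\rho)\cong(T,x))$, together with $\P(N(v,l)\text{ contains a cycle})\to 0$. Indeed, for convergence in distribution on a countable space it is enough to have pointwise convergence of the mass functions and that the limit is a genuine probability measure, which holds here because $\gw(\mu,\widetilde\mu)_{\le l}$ is a.s. a finite tree.

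Fix such a $(T,x)$ and set $m=|V(T)|$, a bounded constant; all degrees appearing in $T$ are bounded. First, the degree $d_v$ of the uniformly chosen vertex $v$ converges in distribution to $\mu$: the only subtlety is the conditioning $\sum_i d_i\equiv 0 \ (\mathrm{mod}\ 2)$, which perturbs individual probabilities by $O(1/n)$ and is harmless. Next condition on the degree sequence $(d_i)_{i\le n}$; with probability $1-o(1)$ it satisfies $L_n:=\sum_i d_i=(1+o(1))\,\overline d\, n$ with $\overline d=\E_{D\sim\mu}D$, and $n_k:=\#\{i:d_i=k\}=(1+o(1))\mu(k)n$ for each fixed $k$. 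Now run BFS from $v$: maintain the set of active (unpaired, already-revealed) half-edges; repeatedly pick one, pair it with a uniformly random not-yet-paired half-edge, and if the partner lies in a previously unseen vertex $u$, declare $u$ a new tree vertex contributing $d_u-1$ new active half-edges. Realizing $(T,x)$ requires only $m-1$ pairings involving vertices of total degree $2(m-1)=O(1)$, so throughout the entire exploration the number of unpaired half-edges is $L_n-O(1)$. Hence: (a) at each of the $O(1)$ steps the partner lands in an already-explored vertex with probability $O(1/n)$, so $\P(N(v,l)\text{ not a tree})\to 0$; and (b) conditioned on reaching a fresh vertex, it has degree $k$ with probability $\frac{k(n_k-O(1))}{L_n-O(1)} \to \frac{k\mu(k)}{\overline d}=\widetilde\mu(k-1)$, so its number of further offspring is asymptotically $\widetilde\mu$-distributed, the $O(1/n)$ depletion corrections accumulating to $o(1)$ over the bounded number of steps.

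Multiplying these step probabilities along the exploration, the probability that BFS produces a graph isomorphic to $(T,x)$ converges to the product of a $\mu$-factor at the root and $\widetilde\mu$-factors at the other $m-1$ vertices; the combinatorial factors counting which active half-edge is paired at each step and the automorphism factors of $T$ appear identically in the Galton-Watson exploration, so the limit is exactly $\Pgw((\mathcal T,\rho)\cong(T,x))$. Combined with the cycle-avoidance estimate this gives the claim.

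The main obstacle is not a single hard estimate but the bookkeeping in the last two steps: one must arrange the BFS so that the sequential-pairing probabilities genuinely factorize in the limit into independent $\widetilde\mu$ draws — controlling the simultaneous depletion of half-edges and of degree-$k$ vertices uniformly over the $O(1)$ steps — and so that the resulting product matches the Galton-Watson branching probabilities including all symmetry factors. The finite-mean hypothesis is exactly what is used, via $L_n\asymp n$ and $n_k\asymp n$; because the finite target tree $T$ is fixed before exploring, one never needs to bound an unbounded number of revealed half-edges, which is what would otherwise force a stronger moment assumption.
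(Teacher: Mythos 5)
The paper cites this result from Dembo--Montanari (and it also appears in van der Hofstad's book) and gives no internal proof, so there is no paper argument to compare against. Your breadth-first exploration argument --- conditioning on a typical degree sequence, pairing half-edges sequentially, observing that a freshly reached vertex has asymptotically size-biased degree (hence $\widetilde\mu$-distributed offspring), and that both depletion of half-edges/degree classes and collisions with already-revealed vertices contribute $O(1/n)$ per step over a bounded number of steps --- is exactly the standard proof of local weak convergence for the configuration model, and it is correct as outlined; the only place requiring real care is the final bookkeeping matching labeled-exploration probabilities to the isomorphism-class probability on both sides, which you rightly flag.
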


\noindent We remark that the same holds for a fixed vertex $v\in G_n$. Moreover, we also stress that the condition for $\gw(\mu, \widetilde{\mu})$ to be supercritical is equivalent to the condition for $\mathcal{G}(n,\mu)$ to have the unique giant component \textsf{whp} (see e.g., \cite{mr95}, or \cite{durrett:rgd}, Section 3 for details), which can be addressed as
\begin{equation}\label{eq:condition on mu}
\E_{D\sim\mu} D(D-2) >0.
\end{equation}

\section{Contact process on star graphs}\label{sec:star}
In this section, we discuss several results about the contact process on star graphs that will be useful later. For a positive integer $k$, let $S_{k}$ be a star graph with a root, which we usually denote by $\rho$, and $k$ leaves. Our goal is to provide a lower bound for the survival time of the contact process on $S_{k}$ which we state in Lemma \ref{lm:star}. Before doing that, we shall need some preparation. Firstly, we show that the root infects many leaves before its recovery.

\begin{lemma}\label{lm:star:ini}
Consider the contact process $(X_t)$ with infection rate $\la<1$ on the star graph $S_k$. Let $\ell = \la k$. Assume that the root is infected at time $0$. Let $C\le \frac{\ell}{32\log \ell}$ be a positive number. Then the probability that $\rho$ infects at least $C\log \ell$ leaves who remain infected during time $[\frac{16C\log \ell}{\ell}, 1]\supset [1/2, 1]$ is at least $1 - \frac{16C\log \ell}{\ell} - 4\ell^{-C/4}$.
\end{lemma}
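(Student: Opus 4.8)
The plan is to analyze the early phase of the contact process on $S_k$ in two stages: first, run the process on the root alone (ignoring back-infections from leaves, which only help, by Lemma~\ref{lem:graph rep2}) up to a small time $t_0 = \frac{16C\log\ell}{\ell}$, and show that during $[0,t_0]$ the root stays infected and emits at least $2C\log\ell$ infection arrows to distinct leaves; second, show that among those infected leaves, at least $C\log\ell$ of them experience no healing ring during the whole interval $[t_0,1]$. Both are standard Poisson/Chernoff estimates, and the $o(1)$-type error terms $\frac{16C\log\ell}{\ell}$ and $4\ell^{-C/4}$ will come out of these two steps respectively.

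For the first step, I would condition on the root's healing clock: the probability that $N_\rho$ rings before time $t_0$ is $1-e^{-t_0}\le t_0 = \frac{16C\log\ell}{\ell}$, which accounts for the first error term; on the complement the root is infected throughout $[0,t_0]$. Given that, the number of infection arrows $\rho\to(\text{leaf }i)$ that fire in $[0,t_0]$ is, across the $k$ leaves, a sum of $k$ i.i.d.\ variables that are $\mathrm{Pois}(\lambda t_0)$-distributed, hence stochastically dominates a $\mathrm{Bin}(k, p)$ with $p = 1-e^{-\lambda t_0}\ge \tfrac12\lambda t_0$ for the relevant range; its mean is $\gtrsim k\lambda t_0/2 = \ell t_0/2 = 8C\log\ell$. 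A Chernoff bound gives that with probability $\ge 1-\exp(-c\,C\log\ell) = 1-\ell^{-cC}$ at least, say, $4C\log\ell$ leaves receive an arrow in $[0,t_0]$; picking the constant so that this error is absorbed into $2\ell^{-C/4}$. (One must be slightly careful that "receives an arrow" does not immediately mean "is infected at time $t_0$", since the leaf could heal; but since we only need the leaf to be infected at \emph{some} point in $[0,t_0]$ and then survive $[t_0,1]$, it is cleaner to say: for each such leaf, let $s_i\in[0,t_0]$ be the first time it receives an arrow from the (infected) root, so it is infected at $s_i$, and then ask for no healing ring in $[s_i,1]\supseteq$ nothing clean — so instead I restart the clock and only demand the leaf has no healing ring in all of $[0,1]$, which has probability $e^{-1}$; this loses a constant but keeps independence clean.)

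For the second step, condition on the set $L$ of leaves that received a root-arrow in $[0,t_0]$, with $|L|\ge 4C\log\ell$. Each leaf in $L$ is infected at its first arrival time $s_i\le t_0$, and it remains infected throughout $[t_0,1]$ provided its healing clock $N_i$ has no ring in $[s_i,1]$; it suffices that $N_i$ has no ring in $[0,1]$, an event of probability $e^{-1}$, independent across $i$ and independent of everything used in step one. Thus the number of "good" leaves dominates $\mathrm{Bin}(|L|, e^{-1})$ with mean $\ge 4C\log\ell/e > C\log\ell$, and a Chernoff bound again gives that it is $\ge C\log\ell$ except with probability $\exp(-c'\,C\log\ell)\le \ell^{-c'C}$; tuning constants so the two Chernoff errors together are at most $4\ell^{-C/4}$. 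A union bound over the root-healing event, the step-one Chernoff event, and the step-two Chernoff event yields the claimed failure probability $\le \frac{16C\log\ell}{\ell} + 4\ell^{-C/4}$; the hypothesis $C\le \frac{\ell}{32\log\ell}$ (equivalently $t_0\le 1/2$) is exactly what makes $[t_0,1]\supseteq[1/2,1]$ and keeps $\lambda t_0$ small enough for the $1-e^{-x}\ge x/2$ bounds.

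The main obstacle is bookkeeping rather than conceptual: getting the constants in the two Chernoff bounds to combine into the stated $4\ell^{-C/4}$ while the Binomial parameters degrade (from $k$ leaves to the random subset $L$, and from "arrow received" to "infected and survives"), and making sure the independence structure is genuinely clean — in particular that the step-two survival events use only the leaves' healing clocks, which are independent of the root's healing clock and of the root$\to$leaf infection clocks used in step one. I would handle this by working directly on the graphical representation: the events "$N_\rho$ has no ring in $[0,t_0]$", "$\{N_{\vec{\rho i}} \text{ rings in } [0,t_0]\}$ for each $i$", and "$N_i$ has no ring in $[0,1]$ for each $i$" are functions of disjoint families of Poisson clocks, so the only dependence is through the random index set $L$, which can be handled by conditioning on $L$ and using that the step-two event is a product over $i\in L$ of independent $e^{-1}$-events.
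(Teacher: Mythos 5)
Your proposal is correct and follows essentially the same three-stage decomposition as the paper's proof (root stays infected on $[0,t_0]$; many distinct leaves receive infection arrows from the root; those leaves have no healing ring in $[0,1]$), with the same union bound and Poisson/Chernoff estimates and the same deliberate loosening to ``no healing ring in all of $[0,1]$'' to keep independence clean. The only minor difference is that you treat the number of distinct infected leaves directly as a $\mathrm{Bin}(k,\,1-e^{-\lambda t_0})$ random variable, whereas the paper first counts total rings as $\mathrm{Pois}(\ell s)$ and then bounds the deduplication to distinct leaves combinatorially; your variant bypasses the deduplication step but is otherwise the same argument.
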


\begin{proof}
 Let $s = \frac{16C\log \ell}{\ell}$.
 The probability that the root is still infected  until time $s$ is $\P(\Pois(s) = 0) = e^{-s}\ge 1 - s = 1 - \frac{16C\log \ell}{\ell}$.

 The probability that  there are at least $8C\log \ell$ infection rings from $\rho$ to its leaves in time $[0, s]$ is
 $$\P(\Pois(\ell s)\ge 8C\log \ell) = \P(\Pois(16C\log \ell)\ge 8C\log \ell) \ge 1 - 2\exp(-8C\log \ell/6) \ge 1 - 2\ell^{-C},$$
 where we use the following standard tail bound for Poisson distribution (see, for instance, \cite[Chapter 2]{massartconcentrationbook}):
 \begin{equation}\label{eq:poisson}
 \P(|\Pois(\La) - \La|\ge x)\le 2\exp\left (-\frac{x^{2}}{2(\La+x)}\right ), \quad \forall x>0.
 \end{equation}

 Under this event, letting $M$ be the integer $M=4[C\log \ell]$, the probability that these rings correspond to at least $M$ distinct leaves is at least
 $$1-\frac{{k\choose M} M^{2M}}{k^{2M}}\ge 1-\left (\frac{ek M^{2} }{Mk^{2}}\right )^{M}=1-\left (\frac{4e C\log \ell}{k}\right )^{4C\log \ell}\ge 1- \exp(-4C\log \ell) = 1-\ell^{-4C},$$
 where we used $C\le k/(32\log \ell)$.

For each leaf infected before time $s$, the probability that it remains infected until time $1$ is at least $\P(\Pois(1) = 0) = e^{-1}$. Thus, given that there are $\ge M$ leaves infected before time $s$, at least $M/4$ of them remain infected until time $1$ with probability at least $1 - \exp(-M/16) = 1-\ell^{-C/4}$ where
we used the Chernoff inequality
\begin{equation}\label{eq:chernoff}
\P(\text{Bin}(M, q) \le Mq\delta)\le \exp\left (-\frac{(1-\delta)^{2}Mq}{2}\right ), \quad \forall M\in \mathbb N, \text{ } q, \delta\in (0, 1).
\end{equation}

Combine all of these events and the union bound, we obtain the desired statement.
 \end{proof}
The next two lemmas show that once many leaves are infected, they keep the root well-infected which then keeps the number of infected leaves from decreasing.
\begin{lemma}\label{lm:star:rho}
	 Consider the contact process $(X_t)$ with infection rate $\la<1$ on the star graph $S_k$. Let $M$ be an integer satisfying $\frac{8}{\la}\le M\le k$. Assume that at least $M$ leaves are infected at time $t$ and remain infected until time $t+1/2$. The probability that $\rho$ is infected for at least half of the time interval $[t, t+1/2]$ is at least $1 - 2 \exp(-\la M/32)$.
\end{lemma}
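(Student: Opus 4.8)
The plan is to track the infection status of the root $\rho$ throughout $[t,t+1/2]$ and to show that its total \emph{healthy} time $H$ satisfies $\Pcp(H>1/4)\le 2\exp(-\la M/32)$; since $\rho$ is infected on $[t,t+1/2]$ for total time $1/2-H$, this is exactly the assertion. Shifting time so that $t=0$, I would first cut down the relevant randomness using the graphical construction of Section~\ref{sec:graphical}: $\rho$ is infected at time $s$ precisely when some neighbour of $\rho$ that is infected at some time $s'\le s$ sends an arrow to $\rho$ at $s'$ with no recovery ring of $\rho$ on $(s',s]$, so the set of times at which $\rho$ is healthy is a nonincreasing function of the set of infection arrows into $\rho$. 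Because the $M$ prescribed leaves are infected for all of $[0,1/2]$, discarding all arrows into $\rho$ except those sent by these $M$ leaves only increases $H$, and the retained arrows form a Poisson process $\Pi$ of rate $\la M$ on $[0,1/2]$, independent of the rate-$1$ recovery clock $\mathcal R$ at $\rho$. So it suffices to bound $H$ in the reduced process where $\rho$ heals at the rings of $\mathcal R$ (there are $N\sim\Pois(1/2)$ of them) and is re-infected exactly at the points of $\Pi$.

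Next I would expose a renewal structure by conditioning on $\mathcal R$. Each healthy period of $\rho$ inside $[0,1/2]$ begins either at time $0$ (if $\rho$ is healthy there) or at a ring of $\mathcal R$, and ends at the first point of $\Pi$ after its start --- a ring of $\mathcal R$ landing inside a healthy period has no effect --- or at time $1/2$, whichever comes first; in particular the number $K$ of healthy periods is at most $1+N$. Since the start of each healthy period is a stopping time for $\Pi$, the memorylessness of $\Pi$ makes the successive healthy-period lengths $L_1,L_2,\dots$, conditionally on $\mathcal R$, i.i.d.\ $\Exp(\la M)$ --- crucially, a length is truncated only by the next point of $\Pi$ (or by $1/2$) and never by a later ring of $\mathcal R$, so no exponential is double counted. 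Then $H\le\sum_{j=1}^K L_j\le\sum_{j=1}^{1+N}L_j$, so conditionally on $\mathcal R$ the quantity $H$ is stochastically dominated by a $\Gamma(1+N,\la M)$ random variable. Applying the Gamma--Poisson identity $\P(\Gamma(k,\theta)>x)=\P(\Pois(\theta x)\le k-1)$ and averaging over $\mathcal R$ would give $\Pcp(H>1/4)\le\P(\Pois(\la M/4)\le N)$ with $N\sim\Pois(1/2)$ independent of the $\Pois(\la M/4)$.

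Finally I would bound this last probability by splitting on the event $\{N\ge\la M/8\}$. The piece $\P(N\ge\la M/8)$ is an upper-tail bound for $\Pois(1/2)$ far above its mean, which \eqref{eq:poisson} controls, and the piece $\P(\Pois(\la M/4)\le\la M/8)$ is a lower-tail bound at half the mean, which the Chernoff estimate $\P(\Pois(\Lambda)\le\tfrac12\Lambda)\le\exp(-\tfrac12(1-\log 2)\Lambda)$ controls at $\Lambda=\la M/4$. Since $\tfrac14\cdot\tfrac12(1-\log 2)>\tfrac1{32}$, and since the claimed inequality is vacuous once $2\exp(-\la M/32)\ge1$, a short computation using $\la M\ge 8$ shows the two pieces sum to at most $2\exp(-\la M/32)$.

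The hard part is the renewal step of the second paragraph: I would need to justify carefully that, after conditioning on the recovery clock $\mathcal R$, each healthy period starts at a stopping time of $\Pi$ and is delimited only by points of $\Pi$ and the endpoint $1/2$, so that the healthy-period lengths are genuinely i.i.d.\ exponentials and their count is at most $1+N$. With that settled, the rest is routine Poisson concentration.
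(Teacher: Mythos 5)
Your proposal is correct and follows essentially the same route as the paper. The paper's proof also conditions on the recovery clock $Y\sim\Pois(1/2)$ of $\rho$, asserts that the healthy gaps are i.i.d.\ $\Exp(\la M)$ waiting times $\eta_i$, and bounds $\P\bigl(\sum_{i=1}^Y\eta_i\ge 1/4\bigr)$ by the same split into $\P(Y\ge\la M/8)$ plus a tail bound on $\sum_{i=1}^{\la M/8}\eta_i$; your Gamma--Poisson reformulation of the second term is just the dual statement of the paper's MGF bound. What you add, and the paper elides, is the explicit justification that the gap lengths are genuinely i.i.d.\ exponentials conditional on the recovery clock (the stopping-time/strong-Markov argument, and the monotonicity step of discarding arrows from vertices other than the $M$ prescribed leaves), as well as the $+1$ in the count to account for a possible initial healthy period --- a case the paper's phrasing does not visibly handle, though it costs nothing in the final bound. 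So: same argument, with your version filling in two details the paper leaves implicit.
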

\begin{proof}
	Conditioned on the event that the $M$ leaves remain infected during $I:=[t, t+1/2]$, it holds that $\rho$ is infected with rate at least $\la M$. Let $Y\sim \Pois(1/2)$ be the number of healing clocks at $\rho$ in $I$. The probability that $\rho$ is healthy for at least $1/2$ of $I$ is at most
	\begin{eqnarray}
	\P\left (\sum_{i=1}^{Y} \eta_i \ge 1/4\right ),\nonumber
	\end{eqnarray}
	where $\eta_i\sim \exp(\la M)$ is the waiting time until the next infection after the $i$-th healing clock. We note that these $\eta_i$'s are iid. We have
	\begin{eqnarray*}
	\P\left (\sum_{i=1}^{Y} \eta_i \ge 1/4\right )&\le& \P\left (Y\ge \la M/8\right )  +\P\left (\sum_{i=1}^{\la M/8} \eta_i\ge 1/4\right )\\
	&=& \P(Y\ge \la M/8)  +\P\left (\sum_{i=1}^{\la M/8} \eta'_i\ge \la M/4\right ) \quad \text{where $\eta_i' = \eta_i \la M\sim \exp(1)$}\\
	&\le&  \exp(-\la M/32) +\P\left (\sum_{i=1}^{\la M/8} \eta'_i\ge \la M/4\right ) \quad \text{by \eqref{eq:poisson}}\\
		&\le&  \exp(-\la M/32) +\exp(-\la M/8)\E \exp\left (\sum_{i=1}^{\la M/8} \eta'_i/2\right )\\
			&\le&  \exp(-\la M/32) +2^{\la M/8}\exp(-\la M/8) \le 2 \exp(-\la M/32) .
	\end{eqnarray*}
This completes the proof.
\end{proof}

\begin{lemma}\label{lm:star:iteration}
	  Consider the contact process $(X_t)$ with infection rate $\la<1$ on the star graph $S_k$. Let $M$ be an integer satisfying $\frac{32}{\la}\le M\le \frac{\la k}{100}$. Assume that at least $M$ leaves are infected at time $t$. Then the probability that at least $\frac{\la k}{100}$ leaves are infected at time $t+1$ is at least $1 - 3\exp(-\la M/128)$.
\end{lemma}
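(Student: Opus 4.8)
The plan is a three-stage argument run on the interval $[t,t+1]$ that loses only constant factors at each stage — which is exactly why the target population $\la k/100$ is a small fraction of the $\Theta(\la k)$ ``equilibrium'' size kept by a well-infected root. Note first that the hypotheses $32/\la\le M\le \la k/100$ already force $\la^2 k\ge 3200$, equivalently $\la k\ge 100 M$, a slack I exploit freely together with $\la<1$. \emph{Stage 1 (the initial leaves keep a foothold).} Fix a set $L_0$ of exactly $M$ leaves infected at time $t$. Each one independently avoids a healing ring on $[t,t+1/2]$ with probability $e^{-1/2}$, so by the Chernoff bound \eqref{eq:chernoff} the subset $L_0'\subseteq L_0$ of leaves that are infected throughout $[t,t+1/2]$ has size $\ge M/4$ except with probability $e^{-cM}$ for an absolute constant $c>0$; crucially $L_0'$ is a function of $\{N_w:w\in L_0\}$ alone. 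Since $|L_0'|\ge M/4\ge 8/\la$, Lemma~\ref{lm:star:rho} is applicable to $L_0'$.

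\emph{Stage 2 (the root becomes well-infected).} Work on the event $\{|L_0'|\ge M/4\}$ and consider the \emph{reduced} root process on $[t,t+1/2]$ in which $\rho$ heals according to $N_\rho$ but is re-infected only by the arrows $N_{\vec{w\rho}}$ with $w\in L_0'$. Since every leaf of $L_0'$ is infected on the whole interval, this re-infects $\rho$ at the constant rate $\la|L_0'|\ge \la M/4$, so the proof of Lemma~\ref{lm:star:rho} applies verbatim and shows that the random set $\widehat G$ of times in $[t,t+1/2]$ at which the reduced $\rho$ is infected satisfies $|\widehat G|\ge 1/4$ except with probability $2e^{-\la (M/4)/32}=2e^{-\la M/128}$. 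Two features of $\widehat G$ drive the rest: (i) by monotonicity of the contact process, whenever the reduced $\rho$ is infected so is the true $\rho$, hence an arrow $\vec{\rho w}$ firing during $\widehat G$ genuinely infects $w$; and (ii) $\widehat G$ is a function of $N_\rho$, of $\{N_{\vec{w\rho}}:w\in L_0'\}$, and of the data up to time $t$ together with $\{N_w:w\in L_0\}$ (which determines $L_0'$), so $\widehat G$ is independent of the arrows $\{N_{\vec{\rho w}}:w\notin L_0\}$ and of the healing clocks $\{N_w:w\notin L_0\}$.

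\emph{Stage 3 (fresh leaves get infected and survive).} Condition on $\widehat G$ with $|\widehat G|\ge 1/4$. By (ii), each of the $k-M\ge 0.99k$ leaves $w\notin L_0$ independently receives at least one $\rho$-arrow during $\widehat G$ with probability $1-e^{-\la|\widehat G|}\ge 1-e^{-\la/4}\ge \la/5$; hence the number $|W|$ of such ``fresh'' leaves — each infected at some first time $\sigma_w\le t+1/2$ by (i) — stochastically dominates $\text{Bin}(k-M,\la/5)$, whose mean exceeds $\la k/6$, and so $|W|\ge \la k/12$ except with probability $e^{-c\la k}$ by \eqref{eq:chernoff}. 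Finally, conditionally on $W$ and on the times $\sigma_w$, each fresh leaf avoids a healing ring on $[\sigma_w,t+1]$ — a window of length $\le 1$ — independently with probability $\ge e^{-1}$, and is then still infected at $t+1$; thus the number of fresh leaves infected at time $t+1$ dominates $\text{Bin}(\la k/12,e^{-1})$ and is $\ge \la k/100$ except with probability $e^{-c'\la k}$.

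\emph{Conclusion and main obstacle.} On the intersection of these events, at least $\la k/100$ leaves are infected at time $t+1$. Using $\la k\ge 100M$ and $\la<1$, each of the three ``non-Lemma~\ref{lm:star:rho}'' error terms $e^{-cM}$, $e^{-c\la k}$, $e^{-c'\la k}$ is at most $\tfrac13 e^{-\la M/128}$ (indeed much smaller), so they sum to at most $e^{-\la M/128}$, which with the $2e^{-\la M/128}$ from Stage 2 gives the claimed $3e^{-\la M/128}$. The one genuinely delicate point is the decoupling in Stage 2: in the true process the leaf trajectories feed back into $\rho$, so ``when is $\rho$ infected'' and ``which fresh leaves $\rho$ infects'' are correlated; replacing the true root process by the reduced one driven solely by the guaranteed-surviving leaves $L_0'$ restores independence at the cost of only a constant factor in the infected-time of $\rho$, after which the proof is routine Chernoff and Poisson tail bookkeeping.
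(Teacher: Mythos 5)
Your proof is correct and follows essentially the same three-stage structure as the paper's Case~2: keep a fraction of the $M$ leaves alive through $[t,t+1/2]$, invoke Lemma~\ref{lm:star:rho} to make $\rho$ infected for a constant fraction of that interval (yielding the $2e^{-\la M/128}$ term), then use $\rho$ to freshly infect order $\la k$ leaves that survive to $t+1$. The two minor refinements you add over the paper — fixing $L_0$ to have exactly $M$ leaves so that $k-M\ge 0.99k$ always and no $m\lessgtr k/2$ case split is needed, and the explicit ``reduced root process'' to decouple $\widehat G$ from the outgoing arrows $N_{\vec{\rho w}}$, $w\notin L_0$ — tighten a point the paper leaves implicit but do not change the route.
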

\begin{proof}
	Let $m$ be the number of leaves infected at time $t$. We have $m\ge M$. Consider two cases.
	
	\begin{itemize}
		\item Case 1: $m\ge k/2$. Each of these infected leaves has a probability of $\P(\Pois(1) = 0) = e^{-1}$ to remain infected until time $t+1$. Thus, by Chernoff inequality, at least $m/4\ge \frac{\la k}{100}$ of them remain infected until time $t+1$ with probability at least $1 - \exp(-m/12) \ge 1 - 3\exp(-\la M/128)$.
		
		\item Case 2: $M\le m \le k/2$. By the same argument as in Case 1, with probability at least $1 - \exp(-m/16) $, at least $M/4$ of these $m$ infected vertices remain infected until time $t+1/2$. Under this event, by Lemma \ref{lm:star:rho}, with probability at least $1 - 2 \exp(-\la M/128)$, $\rho$ is infected for at least half of the time interval $[t, t+1/2]$. Thus, each of the remaining $k-m\ge k/2$ leaves receives an infection from $\rho$ during $[t, t+1/2]$ with probability at least
		$$\P(\Pois(\la/4)\ge 1) = 1- e^{-\la/4}\ge \la/8.$$
		Once they receive an infection, the probability that they remain infected until time $t+1$ is at least $e^{-1}$. Thus, the probability that at least $ \frac{\la k }{100}$ vertices are infected at time $t+1$ is at least $1 - \exp(-\la k/100)$.
	\end{itemize}
This gives the desired statement for both cases.
\end{proof}

Combining these lemmas, we obtain the following lower bound on the survival time of the contact process on star graphs.
\begin{lemma}\label{lm:star}
	 Consider the contact process $(X_t)$ with infection rate $\la$ on the star graph $S_k$. Assume that $\rho$ is infected at time 0. Then the contact process still survives at time $e^{\la^{2}k /10^5}$ with probability at least $1 - \frac{200 \log (\la k)}{\la^{2}k} - 3e^{-\la^{2}k /10^5}$ for all $k\ge \la^{-2}10^{5}$.
\end{lemma}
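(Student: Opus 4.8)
The plan is to steer the infection through an \emph{ignition} phase that quickly builds up a macroscopic number of infected leaves, and then a \emph{maintenance} phase that keeps the leaf-count bounded away from $0$ for exponentially many unit-time steps; Lemmas~\ref{lm:star:ini} and~\ref{lm:star:iteration} are precisely the inputs needed for these two phases. Two harmless reductions come first: we may assume $\la<1$ (as in the earlier star lemmas), and we may assume $\frac{200\log(\la k)}{\la^{2}k}<1$, since otherwise the claimed lower bound on the probability is already nonpositive. In particular we work in the regime $\la^{2}k>200\log(\la k)$, and writing $\ell:=\la k$ we have $\ell\ge 10^{5}/\la\ge 10^{5}$; both facts will be used repeatedly to absorb error terms.

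\emph{Ignition.} Apply Lemma~\ref{lm:star:ini} with $C$ chosen of order $\la^{-1}$ (up to a slowly growing factor). The hypotheses $k\ge\la^{-2}10^{5}$ and $\la^{2}k>200\log(\la k)$ guarantee $C\le\ell/(32\log\ell)$, so with probability at least $1-\frac{16C\log\ell}{\ell}-4\ell^{-C/4}$ there is a set of at least $C\log\ell$ leaves that are simultaneously infected throughout the interval $[\,16C\log\ell/\ell,\,1\,]$, hence in particular at time $1$. The term $\frac{16C\log\ell}{\ell}$, which records the event that $\rho$ has not healed by time $16C\log\ell/\ell$, is of order $\frac{\log(\la k)}{\la^{2}k}$ and is the source of the $\frac{200\log(\la k)}{\la^{2}k}$ in the final bound, while $4\ell^{-C/4}$ is negligible beside it. Now apply Lemma~\ref{lm:star:iteration} at time $t=1$ with $M:=\min\{\lfloor C\log\ell\rfloor,\lfloor\la k/100\rfloor\}$; this $M$ lies in the admissible range $[\,32/\la,\,\la k/100\,]$ by the choice of $C$ together with $k\ge\la^{-2}10^{5}$, so with probability at least $1-3\exp(-\la M/128)$ there are at least $\la k/100$ infected leaves at time $t=2$. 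The multiplicative constant in $C$ is tuned so that both failure probabilities here fit inside the target $\frac{200\log(\la k)}{\la^{2}k}+3\exp(-\la^{2}k/10^{5})$.

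\emph{Maintenance and conclusion.} Set $M^{\star}:=\lfloor\la k/100\rfloor\ge 1$ (using $\la k\ge 10^{5}$) and $N:=\lceil\exp(\la^{2}k/10^{5})\rceil$, and apply Lemma~\ref{lm:star:iteration} successively at the integer times $t=2,3,\dots,N$. Since that lemma's hypothesis is a deterministic condition on the configuration at time $t$, the Markov property gives: conditionally on having at least $M^{\star}$ infected leaves at time $t$ (no matter what else is infected), with probability at least $1-3\exp(-\la M^{\star}/128)\ge 1-3\exp(-\la^{2}k/12800+1/128)$ there are again at least $M^{\star}$ infected leaves at time $t+1$. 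Peeling the steps off one at a time and summing the failure probabilities, with probability at least $1-3N\exp(-\la^{2}k/12800+1/128)$ the leaf-count is positive at every integer time up to $N$, so --- the all-healthy state being absorbing --- the process still survives at time $N\ge\exp(\la^{2}k/10^{5})$. Using $\la^{2}k\ge 10^{5}$ and $\tfrac1{12800}\ge\tfrac2{10^{5}}$ one checks $3N\exp(-\la^{2}k/12800+1/128)\le 3\exp(-\la^{2}k/10^{5})$; this is the only place the constant $10^{5}$ in the exponent is used. Adding the ignition failure probability gives that the process survives at time $e^{\la^{2}k/10^{5}}$ with probability at least $1-\frac{200\log(\la k)}{\la^{2}k}-3\exp(-\la^{2}k/10^{5})$.

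\emph{Main obstacle.} The maintenance step and the final union bound are routine --- the conditioning is legitimate exactly because Lemma~\ref{lm:star:iteration} is quoted for an arbitrary configuration with enough infected leaves. The single delicate point is the calibration of the ignition phase: Lemma~\ref{lm:star:ini}'s failure $16C\log\ell/\ell$ prevents the initial batch from being as large as $\la k$, whereas Lemma~\ref{lm:star:iteration}'s estimate $3\exp(-\la M/128)$ only becomes useful once $M$ is a sufficiently large multiple of $1/\la$; one must therefore choose $C$ of order $\la^{-1}$ so that \emph{both} failure probabilities land inside the stated bound, leaning repeatedly on $k\ge\la^{-2}10^{5}$ and on the reduction to $\la^{2}k>200\log(\la k)$ to control the lower-order terms. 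This bookkeeping is where essentially all the work lies.
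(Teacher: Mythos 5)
Your proposal follows exactly the same route as the paper's own proof: ignite via Lemma~\ref{lm:star:ini} with $C$ of order $\la^{-1}$, boost to $\la k/100$ infected leaves with a single application of Lemma~\ref{lm:star:iteration}, iterate Lemma~\ref{lm:star:iteration} at unit time steps up to $\lceil e^{\la^2 k/10^5}\rceil$, and take a union bound. The paper sets $C=8/\la$ and defines the events $\mathfrak A_1,\mathfrak A_2,\dots,\mathfrak A_h$ that encode precisely your ignition/boost/maintenance steps; your slightly more careful handling of the hypothesis $M\le\la k/100$ (taking the minimum) and the explicit reduction to the regime $\la^2 k>200\log(\la k)$ are cosmetic improvements over the paper's bookkeeping, not a different method.
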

\begin{proof}
	Let $C$ be such that $\frac{8}{\la}\le C\le \frac{\la k}{32\log (\la k)}$ to be chosen (these inequalities are so that the bounds in the hypotheses of the above lemmas are satisfied). We consider the event $\mathfrak A$ that consists of the following events
	\begin{enumerate}
		\item [$\mathfrak A_1$.] The root $\rho$ sends its infection to at least $C\log (\la k)$ vertices before time $1/2$ who remain infected until time 1,
		\item [$\mathfrak A_2$.] At least $\frac{\la k}{100}$ vertices are infected at time $2$,
		\item [$\mathfrak A_h$.] At least $\frac{\la k}{100}$ vertices are infected at time $h$ for $h=3, 4, \dots, \lceil e^{\la^{2}k/10^5}\rceil$.
	\end{enumerate}
Under this event, it is clear that the contact process survives until time $e^{\la^{2}k /10^5}$. By Lemma \ref{lm:star:ini}, $\mathfrak A_1$ happens with probability at least $1 -  \frac{16C\log (\la k)}{\la k} - 4(\la k)^{-C/4}$.

By Lemma \ref{lm:star:iteration}, conditioned on $\mathfrak A_1$, the event $\mathfrak A_2$ happens with probability at least
$$1 - 3 \exp(-\la C\log(\la k)/128) = 1 - 3 (\la k)^{-\la C/128}.$$

Similarly, by Lemma \ref{lm:star:iteration}, each $h\ge 3$, conditioned on $\mathfrak A_{h-1}$, the event $\mathfrak A_h$ happens with probability at least
$$1 - 3 \exp(-\la^{2}k/(128\cdot 100)).$$

By the union bound, we obtain
\begin{eqnarray*}
1 - \P(\mathfrak A) &\le& \frac{16C\log (\la k)}{\la k} + 4(\la k)^{-C/4}+ 3 (\la k)^{-\la C/128}+3 \exp(-\la^{2}k/(128\cdot 100))\times \exp(\la^{2}k/10^5)\\
&\le& \frac{16C\log (\la k)}{\la k} + 7(\la k)^{-C/4} +3 \exp(-\la^{2}k/10^5)\\
&\le& \frac{23C\log (\la k)}{\la k} +3 \exp(-\la^{2}k/10^5)\quad\text{since  $C\ge 4$}\\
&\le& \frac{200\log (\la k)}{\la^2 k} +3 \exp(-\la^{2}k/10^5)\quad\text{by choosing $C=8/\la$.}
\end{eqnarray*}
 This completes the proof.
\end{proof}

	\section{Proof of the lower bound: Theorem \ref{thm:lower:gen}}\label{sec:proof:lower}
 		Our strategy is to show that in $G_n$, there exists a long path of high-degree vertices which we think about as a long path on which each vertex is attached to a big star (particularly, an $(A-2)$-star). For each of these vertices, Lemma \ref{lm:star} shows that the contact process on their attached star survives for a long time. So, each infected star has ample time to pass the infection to other stars on the path before being healed.
		Theorem \ref{thm:lower:gen} thus follows immediately from the following propositions. The first proposition asserts the existence of a long path of high-degree vertices.
		\begin{proposition}[Existence of long path]\label{prop:path} Let $\mu$ be any probability measure satisfying $\E_{D\sim\mu} D(D-2) >0$. Let $G_n\sim \mathcal G(n, \mu)$. Let $A>16 \E_{D\sim \mu} D$ and
			\begin{equation}\label{def:L}
			L = \frac{0.9\log n}{-\log \mu[A, \infty)}.
			\end{equation}
			With high probability, there exists a path in $G_n$ of length $L$ in which every vertex has at least $A-2$ neighbors outside of the path and these $L(A-2)$ neighbors are all distinct.
		\end{proposition}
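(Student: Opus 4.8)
The plan is to find the long path of high-degree vertices via a second-moment / branching-process exploration argument, exploiting the local tree-like structure of $\mathcal G(n,\mu)$ together with the fact that a vertex has degree $\ge A$ with probability $\mu[A,\infty)$. Since $A > 16\E_{D\sim\mu} D$, a size-biased vertex still has degree $\ge A$ with probability comparable to $\mu[A,\infty)$ (up to the factor coming from the normalizing constant $\E D$), so heuristically, exploring a breadth-first tree from any vertex, each vertex independently ``succeeds'' (has degree $\ge A$) with probability $p := \mu[A,\infty)$, and a path of length $L$ of successful vertices should exist once the exploration tree reaches size $\gg p^{-L}$, i.e. when $L \approx \frac{0.9\log n}{-\log p}$ as in \eqref{def:L}, since the giant component has $\Theta(n)$ vertices.

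First I would make this rigorous by a direct exploration argument rather than a moment computation. Fix a vertex $v_0$ in the giant component (or simply start the configuration-model pairing from an arbitrary half-edge). Reveal the graph by sequentially pairing half-edges in a breadth-first manner, building a tree $T$ rooted at $v_0$; as long as the total number of revealed half-edges is $o(\sqrt n)$ (in fact $o(n)$ suffices with care), the unpaired half-edges are close to i.i.d. size-biased, so each newly revealed vertex has degree distribution close to $\widetilde\mu$ and in particular degree $\ge A$ with probability at least $p/(2\E D) =: q > 0$, independently enough across the branching. I would prune $T$ to the subtree $T'$ of vertices all of whose ancestors (including themselves) have degree $\ge A$; this $T'$ stochastically dominates a Galton–Watson tree whose offspring distribution is $\widetilde\mu$ restricted to $\{\ge A-1\}$ and then thinned, which is supercritical because $A - 1 \ge 15\E D > 1 \ge$ the critical mean once we account for the thinning probability $q$ — more precisely the mean offspring of $T'$ is at least $(A-1)q$, and choosing $A$ large (which is implied by $\la$ small in Theorem~\ref{thm:lower:gen}, but here we only assume $A > 16\E D$, so I would instead argue directly that $(A-2)\cdot\frac{\mu[A,\infty)}{\E D}\cdot$(tree-likeness correction) exceeds $1$, or simply note that a single success already gives expected offspring $\ge (A-1)\cdot\mu[A,\infty) \cdot\frac1{2\E D}$ which can be made $>1$; if not, one runs the exploration for a few generations to amplify). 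A supercritical GW tree with mean offspring $>1$, conditioned to survive, reaches depth $L$ with probability bounded below by a constant, and reaches depth $L$ within its first $C\log n$ generations; more importantly, by exploring from $\Theta(n)$ distinct starting vertices (or one long run), the probability that \emph{no} depth-$L$ path of high-degree vertices exists is at most $(1-c)^{\Omega(n / \text{poly})}= o(1)$, provided $p^{-L} = n^{0.9} \ll n$, which is exactly the content of the choice \eqref{def:L}.

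The remaining bookkeeping is to upgrade ``a path of length $L$ in the exploration tree with all degrees $\ge A$'' to ``a path in $G_n$ on which every vertex has $\ge A-2$ neighbours \emph{outside the path}, all $L(A-2)$ of them distinct''. This is automatic: each path vertex $w$ has degree $\ge A$; two of its half-edges are used by the path edges (one for each neighbour on the path; the endpoints use only one), leaving $\ge A-2$ half-edges going elsewhere; and because the whole exploration used only $n^{0.9+o(1)}$ half-edges while pairing uniformly at random, the probability that any two of these $\le L(A-2) = O(\log n)$ side-neighbours coincide, or that a side-neighbour lands back on the path, is $O((\log n)^2 / n) = o(1)$ by a union bound over pairs of half-edges. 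So on the good event the path and its side-neighbourhoods form exactly the claimed ``caterpillar'' structure.

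The main obstacle is controlling the dependence introduced by the configuration-model pairing: the naive heuristic treats successive vertices as independent with success probability $\mu[A,\infty)$, but along a path of length $L = \Theta(\log n)$ with each vertex carrying $\Theta(1/\la^2)$ half-edges we reveal $n^{0.9+o(1)}$ half-edges, and I must ensure the conditional degree distribution of each freshly reached vertex stays within a constant factor of $\widetilde\mu$ throughout, and that the ``success'' events are genuinely (or dominated by) a branching process rather than merely having the right marginals. The standard fix is a coupling of the early stage of the configuration-model exploration with the Galton–Watson tree $\gw(\mu,\widetilde\mu)$ that is valid as long as $o(\sqrt n)$ half-edges are revealed (see e.g. \cite{vanderhofstad17}); since $n^{0.9} = o(\sqrt n)$ fails, I would instead use the sharper sequential-pairing bound where the error in each step's degree distribution is $O(\text{revealed}/n)$ and accumulates to $O(n^{0.9}/n) = o(1)$ total variation over the whole exploration, which is sufficient, or alternatively truncate $L$ slightly (the $0.9$ in \eqref{def:L} already leaves room) and run many independent short explorations from well-separated vertices so that each stays in the valid coupling window. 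Getting this quantitative coupling clean, and verifying supercriticality of the thinned offspring law under only the hypothesis $A > 16\E D$, is where the real work lies.
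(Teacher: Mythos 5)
The thinned Galton--Watson tree you invoke is \emph{not} supercritical, and running ``a few generations to amplify'' makes things worse, not better. After pruning to vertices of degree $\ge A$, a retained vertex has roughly $A-1$ forward half-edges, each leading to a vertex that is itself retained with probability $\widetilde\mu[A-1,\infty)\approx A\,\mu[A,\infty)/\E D$; so the mean offspring of the pruned tree is about $(A-1)A\,\mu[A,\infty)/\E D$. Since $\mu$ has an exponential tail, $\mu[A,\infty)$ decays exponentially in $A$, and this product is far below $1$ for the relevant $A$ (which in Theorem~\ref{thm:lower:gen} grows like $\la^{-2}\log\la^{-1}$); iterating generations only raises a number smaller than $1$ to a higher power. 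The hypothesis $A>16\E D$ gives you $(A-1)/\E D>15$, which is nowhere near enough to cancel $\mu[A,\infty)$. Consequently the claim that $T'$, conditioned to survive, ``reaches depth $L$ with probability bounded below by a constant'' is false: $T'$ dies out almost surely and the probability it reaches depth $L$ is exponentially small in $L$. Your final sentence already implicitly uses the correct (subcritical) rate --- a single attempt survives $L$ levels with probability roughly $q^L$ where $q \ge A\mu[A,\infty)/\E D \ge \mu[A,\infty)$, so per-attempt success is $\gtrsim n^{-0.9}$ and $n^{1-\varepsilon}$ attempts suffice --- but as written the middle of the argument rests on a premise that cannot be verified because it is not true.

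Once you replace supercriticality with the subcritical single-lineage estimate, your plan coincides with the paper's: greedily extend a path by choosing a uniformly random unexplored neighbour at each step, pay a factor $\gtrsim \mu[A,\infty)$ per step, and run $n^{1-\varepsilon}$ disjoint rounds. Where the paper diverges from your proposed execution is in how it controls the configuration-model dependence. Rather than trying to couple with $\gw(\mu,\widetilde\mu)$ for $n^{0.9}$ revealed half-edges (where, as you note, the standard $o(\sqrt n)$ coupling window has long since closed), it works directly with the sequential half-edge pairing via the cut-off line algorithm, bounds the revisit/collision probability per round explicitly, and uses the augmented-distribution Lemma~\ref{lem:aug} to show the conditional degree law of a fresh vertex is still comparable to $\widetilde\mu$ as long as $o(n)$ vertices have been exposed. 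That avoids any total-variation accumulation argument; indeed the fix you sketch --- per-step TV error of order ``revealed$/n$'' summing to $o(1)$ over $n^{0.9}$ steps --- does not hold as stated, since those errors would sum to order $n^{0.8}$, not $o(1)$. The direct configuration-model exploration is the cleaner route, and it is what the paper does.
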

	\begin{figure}[H]
		\begin{center}
			\begin{tikzpicture}[scale=0.5]
			\foreach \x in {0,1,2,3,4,5,6}{
				
			\draw[-] (2*\x, 0) to (2*\x+2, 0);
		}
	\foreach \x in {0,1,2,3,4,5,6,7}{
		\node[circle, fill, draw, inner sep=1pt]  at (2*\x,0) {};
		\draw[-] (2*\x, 0) to (2*\x, -1);
		\draw[-] (2*\x, 0) to (2*\x-0.2, -1);
		\draw[-] (2*\x, 0) to (2*\x+0.2, -1);
	}
 		
			\end{tikzpicture}
		\end{center}
		\caption{The path $\Gamma$}\label{fig:gm}
	\end{figure}
	Let $\Gamma$ be the union of a path of length $L$ together with $L$ disjoint $(A-2)$-stars attached to vertices on the path (see Figure \ref{fig:gm}). From now on, we simply refer to $\Gamma$ as {\it the long path}. The next proposition shows that the contact process on the long path survives for a long time.
		\begin{proposition}[Survival time on the long path]\label{prop:path:survival}
			Let $A = \frac{10^6}{\la^{2}} \log \frac{1}{\la}$. The contact process on $\Gamma$ starting with all vertices infected survives until time $1.01^{L} \ge n^{-c/\log \mu[A, \infty)}$ with high probability as $L\to \infty$, where $c = 1/120$.
		\end{proposition}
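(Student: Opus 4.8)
The plan is to show that the contact process on $P$ behaves like a supercritical one-dimensional contact process, where each site of the path is "alive" as long as a constant fraction of its attached star's leaves stay infected. Write $v_1, \dots, v_L$ for the vertices on the path, and let $S_i$ denote the $(A-2)$-star hanging off $v_i$ (with $v_i$ playing the role of the root of $S_i$); note the $S_i$ are vertex-disjoint by Proposition \ref{prop:path}. Since $A = 10^6 \la^{-2}\log(1/\la)$, we have $A - 2 \ge \la^{-2}10^5$ once $\la$ is small, so Lemma \ref{lm:star} applies on each $S_i$ with $k = A-2$; the key numerology is that $\la^2 k / 10^5 \ge \log(1/\la) \gg 1$, so a single star, once its root is infected, keeps $\gtrsim \la k /100 = \gtrsim \la^{-1}\log(1/\la)$ leaves infected for time $e^{\la^2 k/10^5} \ge \la^{-10}$ (say), which is a long time, with failure probability only $O(\log(\la k)/(\la^2 k)) + O(e^{-\la^2 k/10^5})$, which is polynomially small in $\la^{-1}$.

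The core step is a \emph{block/renormalization argument}. I would discretize time into unit steps and declare a site $i$ "good at time $t$" if at least $\la k/100$ leaves of $S_i$ are infected at time $t$. Two things need to be established: (i) \textbf{persistence} --- if site $i$ is good at time $t$, then by Lemma \ref{lm:star:iteration} it remains good at time $t+1$ with probability $1 - 3e^{-\la M/128}$ where $M = \la k/100$, i.e. with failure probability $3\exp(-\Theta(\la^2 k)) \le 3\exp(-\Theta(\log(1/\la))) = \text{poly}(\la)$; and (ii) \textbf{transmission} --- if site $i$ is good on the whole interval $[t, t+1]$, then since $v_i$ is infected for at least half of that interval (Lemma \ref{lm:star:rho}), the neighbour $v_{i+1}$ on the path receives an infection from $v_i$ during $[t,t+1]$ with probability $\ge \P(\Pois(\la/2)\ge 1) \ge \la/4$, and conditioned on that, $v_{i+1}$ then infects $\gtrsim \la k/100$ of its own leaves within the next unit of time with probability bounded below (re-running the argument of Lemma \ref{lm:star:ini}/\ref{lm:star:iteration} started from the root $v_{i+1}$ being infected --- the probability of going from "root infected" to "$\la k/100$ leaves infected" is $\ge 1 - 4(\la k)^{-C/4} - 3(\la k)^{-\la C/128}$, again polynomially small failure). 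So in each unit of time a good site makes its path-neighbour good with probability $\ge p$ for some $p = p(\la)$ bounded away from $0$ (of order $\la \cdot \text{const}$), and stays good with probability $\ge 1 - \text{poly}(\la)$.

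With these two estimates, I would stochastically dominate the set of good sites from below by an oriented-percolation / supercritical one-dimensional contact process on the path graph: starting from all sites good at time $0$ (valid since the process on $P$ starts all-infected, so in particular every root $v_i$ is infected and within one step every $S_i$ has $\ge \la k /100$ leaves infected with high probability by Lemma \ref{lm:star:iteration}), the dominating process survives for time $e^{\Omega(L)}$ with probability $1 - e^{-\Omega(L)}$ by a standard contour/Peierls argument for subcritical-dual oriented percolation, provided the per-step parameters are in the percolating regime --- which holds once $\la$ is small enough that the $\text{poly}(\la)$ healing-failure probabilities are much smaller than the transmission probability $p$. Since we only need to run until time $1.01^L$, and a crude comparison already gives survival for time $\exp(cL)$ with $c > \log 1.01$, this suffices; the bound $1.01^L \ge n^{-c/\log\mu[A,\infty)}$ with $c = 1/120$ is then just plugging in $L = 0.9\log n / (-\log\mu[A,\infty))$ from \eqref{def:L} and checking $0.9\log(1.01) \ge 1/120$.

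The main obstacle I expect is \textbf{setting up the stochastic domination cleanly enough that the events at different sites and different time-steps have the required (conditional) independence}, since the contact processes on neighbouring stars interact through the shared path-edge $(v_i, v_{i+1})$ and through the back-infection $v_{i+1}\to v_i$; the standard fix is to use the graphical representation and a monotone comparison (Lemma \ref{lem:graph rep2}) in which we \emph{discard} favourable information --- e.g. ignore any infection flowing "backwards" along the path and only track, for each site, the indicator of "at least $\la k/100$ leaves infected," which is a monotone functional --- so that the good-site process dominates a block process whose cells depend only on disjoint collections of Poisson clocks (the healing clocks of $v_i$ and of the leaves of $S_i$, and the edge clocks within $S_i$ and on $(v_i,v_{i+1})$, restricted to disjoint time windows). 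Getting the bookkeeping of these windows right --- so that "good on $[t,t+1]$" for site $i$ and "becomes good by $t+2$" for site $i+1$ use clocks in non-overlapping regions --- is the fiddly part, but it is exactly the kind of comparison carried out in Mourrat--Valesin and Lalley--Su for random regular graphs, and the one-dimensional geometry here makes the contour argument entirely standard.
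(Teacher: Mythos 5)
Your proposal shares the paper's high-level architecture: couple the contact process on the chain of stars to an oriented percolation on $[L]\times\mathbb Z_{\ge 0}$ using Lemmas~\ref{lm:star:ini}--\ref{lm:star:iteration}, then close with a Peierls contour bound. However, there is a genuine gap in your choice of time-block size, and it is precisely the quantity the paper tunes to make the contour argument go through.

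You take \emph{unit} time steps, which yields a per-step transmission probability of only $\P(\Pois(\la/2)\ge 1)\ge \la/4$. That means the diagonal edges of your oriented percolation are open with probability $\Theta(\la)$, i.e.\ closed with probability $1-\Theta(\la)$ which is close to $1$ for small $\la$. The Peierls/contour bound used in Lemma~\ref{lm:percolation:survival} requires \emph{every} relevant edge type to be closed with small probability (the paper uses $\le 0.01$); a dual contour built mostly from diagonal-dual edges would then have probability $(1-\Theta(\la))^{\Theta(\alpha)}$, which does not decay fast enough to beat the $3^\alpha$ entropy factor. Your observation that ``$\mathrm{poly}(\la)$ healing failure $\ll$ transmission probability $p$'' is the right necessary condition, but it does not by itself give the $1.01^L$ survival time, and the claim that ``a crude comparison already gives survival for time $\exp(cL)$ with $c>\log 1.01$'' is unjustified at your parameters.

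The fix --- and the one the paper actually uses --- is to coarse-grain time by the block length $a=\lfloor e^{\la^2(A-2)/(2\cdot 10^5)}\rfloor\approx\la^{-5}$ from \eqref{def:a} (any window of length $\gtrsim\la^{-1}\log(1/\varepsilon)$ would do). Over a window of length $a$, Lemma~\ref{lm:star:iteration} applied $a$ times keeps the star good throughout (failure $\le 3a\exp(-\la^2(A-2)/12800)$, still tiny because $a$ is small relative to $\exp(\la^2(A-2)/12800)$), Lemma~\ref{lm:star:rho} makes the root $v_i$ infected for $\ge a/4$ of the window, and the transmission probability becomes $\P(\Pois(\la a/4)\ne 0)=1-e^{-\la a/4}\ge 0.99$. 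With \emph{all} edge types open with probability $\ge 0.99$, the contour bound in Lemma~\ref{lm:percolation:survival} (which carefully extracts $\ge\alpha/4$ distinct closed edges from a contour of length $\alpha$, then sums $1.01^L\cdot 3^\alpha\cdot 0.01^{\alpha/4}$) closes without further work. So you have the right skeleton, but the missing idea is that the renormalization window must be long enough to boost the star-to-star transmission probability to an absolute constant close to $1$, not merely $\Theta(\la)$; without that, the Peierls step fails.
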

	In the rest of this section, we prove these two propositions.
\subsection{Existence of a long path}
 To prove Proposition \ref{prop:path}, we shall use the cut-off line algorithm introduced in \cite{k06} to find a uniform perfect matching of the half-edges in $G_n$.	

 \begin{figure}[H]
 	\centering
 	\begin{tikzpicture}[thick,scale=1, every node/.style={transform shape}]
 	
 	\node at (2,-0.4) {$v_3$};
 	\node at (4,-0.4) {$\dots$};
 	\draw[black] (0,0)--(8,0);
 	
 	\node at (1,-0.4) {$v_2$};

 	\node at (8,-0.4) {$v_n$};
 	\node at (6,-0.4) {$\dots$};
 	\draw[red] (-1,3.2)--(9,3.2);
 	\draw (2, 0.5) node[black]{\textsf{o}};
 	\foreach \x in {0,1, 2, 4, 6, 8}{
 		\draw[black] (\x,0) -- (\x,4.8);
 	}
 	\draw[red, dashed] (-1,2.8)--(9,2.8);
 	\node at (10.4,2.8) {new cut-off line};
 	\draw (0, 3.2) node[black]{\textsf{o}};
 	
 	\draw (4, 4) node[black]{\textsf{o}};
 	
 	\draw (2,1.6) node[cross=2.2pt,black]{};
 	\draw (0, 0.5) node[cross=2.2pt,black]{}; 
 	\draw (0,2.2) node[cross=2.2pt,black]{};

 	\draw (4,1.36) node[cross=2.2pt,black]{};
 	\draw (4,2.8) node[cross=2.2pt,red]{};
 	\node at (10,3.2) {cut-off line}; 
 	\draw (6,0.3) node[cross=2.2pt,black]{};
 	\draw (1,2.4) node[cross=2.2pt,black]{};
 	
 	\draw (8, 3.7) node[black]{\textsf{o}};
 	
 	\draw (6,2.5) node[cross=2.2pt,black]{};
 	
 	\node at (0,-0.4) {$v_1$};
 	
 	\draw[->, blue] (5.8,1.6) -- (4.2,2.7);
 	
 	\draw (8, 3) node[black]{\textsf{o}};
 	\draw (8,.88) node[cross=2.2pt,black]{};
 	\draw (6,1.5) node[cross=2.2pt,blue]{};
 	\draw (6, 3.5) node[black]{\textsf{o}};
 	\end{tikzpicture}
 	\caption{The circles `o' represent matched half-edges and the crosses `$\times$' represent unmatched half-edges. The blue half-edge is chosen and matched to the red half-edge which is the highest unmatched half-edge. Then the cut-off line is moved to the new cut-off line (the dashed line).} \label{fig2}
 \end{figure}
 \begin{definition}[Cut-off line algorithm]\label{def:cut-off line}
	A perfect matching of the half-edges of $G_n$ is obtained through the following algorithm (see Figure \ref{fig2}).
	\begin{itemize}
		\item Each half-edge of a vertex $v$ is assigned with a height uniformly chosen in $[0, 1]$ and is placed on the line of vertex $v$.
		\item The cut-off line is initially set at height 1.
		\item Pick an unmatched half-edge independent of the heights of all unmatched half-edges and match it to the highest unmatched half-edge. Move the cut-off line to the height of the latter half-edge.
	\end{itemize}
\end{definition}
	\begin{proof} [Proof of Proposition \ref{prop:path}]
		We will use the cut-off line algorithm to explore the edges of $G_n$. Firstly, the degrees of vertices in $G_n$ are sampled and each half-edge of these vertices is assigned a height uniformly chosen in $[0, 1]$ and is placed on the line of the corresponding vertex $v$ as described in the cut-off line algorithm. We shall repeat the following greedy algorithm to find a long path of length $L$ defined in \eqref{def:L}. Let $U$ denote the set of visited vertices. We start with $U = \emptyset$.
		
		Let $\ep= 0.05$. We shall perform the algorithm in $n^{1-\ep}$ rounds and show that at least one of them will successfully find a long path. In the $k$-th round where $k=1, 2, \dots, n^{1-\ep}$, pick a vertex $v_{k, 1}\notin U$ of degree at least $A$ uniformly at random to be the first vertex on the long path for this round. Put $v_{k, 1}$ into $U$ and pick $A$ half-edges of $v_{k, 1}$ uniformly at random and match them with the highest unmatched half-edges one by one. Put these $A$ newly found neighbors of $v_{k, 1}$ into $U$.
		If any of these visited vertices coincide or have already been in $U$, the round fails and we move to the next round. Otherwise, let $v_{k, 2}$ be chosen uniformly at random among these $A$ neighbors of $v_{k, 1}$.
		If $\deg(v_{k, 2})< A$, the round fails and we move to the next round. Otherwise, $v_{k, 2}$ will be the second vertex on the long path for this round. We repeat the above process to find $v_{k, 3}, \dots, v_{k, L}$. So, either the round fails during the process or we have found the path $v_{k,1}, \dots, v_{k, L}$ each of which has $A-2$ neighbors all distinct.
		
		We now show that each round succeeds with a decent probability. To this end, we first provide some elementary bounds on the number of edges of $G_n$.
	By Chebyshev's inequality, \whp, the total number of half-edges in $G_n$ is in $[nd, 4nd]$ where $d = \E_{D\sim \mu} D$.
	Let $\delta$ be a sufficiently small constant depending only on $A$ and $\mu$. Let $\Delta$ be such that
	$$ \E_{D\sim \mu} D\one_{D\ge \Delta}\le \delta/4.$$
		
 For each vertex $v\in G_n$, consider the random variable $X_v := \deg_{G_n}(v)\one_{\deg_{G_n}(v) \ge \Delta}$. They are independent random variables with mean at most $\delta/4$ and variance bounded by the second moment of $\mu$. By Chebyshev's inequality, \textsf{whp}
 $$\sum_{v\in G_n} X_v\le \delta n/2.$$
   Applying Chernoff inequality to the bounded random variables $\bar X_v := \deg_{G_n}(v)\one_{\deg_{G_n}(v) < \Delta}$ we obtain that \textsf{whp},
 $$\sum _{v\in G_n} \bar X_{v} \in (1-\delta, 1+\delta) nd.$$
		Thus, \whp, the total degree in $G_n$ is
		\begin{equation}\label{eq:sum:deg}
		\sum _{v\in G_n}\deg_{G_n}(v) = \sum _{v\in G_n} (X_v + \bar X_{v}) \in ((1-\delta)nd, (1+2\delta) nd)\subset (nd/2, 2nd).
		\end{equation}
 		During each round, the number of visited vertices is at most $2AL$ and so during the algorithm, the number of matched half-edges is at most $2n^{1-\ep} AL\le nd/4$ for sufficiently large $n$.

 		Conditioned on these events, we now provide a lower bound for the probability that a round succeeds. Let $W$ be the set of vertices in $G_n$ with degree at least $A$. With high probability, $|W|\ge \frac{n\mu[A, \infty) }{2}$ and so its number of half-edges is at least $\frac{nA\mu[A, \infty)}{2}$. For any $i\in [L]$ and $k\in [n^{1-\ep}]$, the probability that $v_{k, i}\in W$ is the same as the probability that the highest unmatched half-edge belongs to a vertex in $W$. This happens with probability $\ge \frac{nA\mu[A, \infty)}{2\cdot 4nd} = \frac{A\mu[A, \infty)}{8d}$ because by \eqref{eq:sum:deg}, the total number of unmatched half-edges is at most $4nd$.

 		The total number of half-edges that the vertices visited during the algorithm is $\le \Delta n^{1-\ep} AL+\sum_{v\in G_n} X_v\le \Delta n^{1-\ep} AL+\delta n/2$. And so, at any stage of the algorithm, the probability that a vertex is revisited is $\le \frac{\Delta n^{1-\ep} AL+\delta n/2}{nd/4}$, where the denominator bounds from below the number of unmatched half-edges. Thus, for each $i\in [L]$ and $k\in [n^{1-\ep}]$, the probability that the exploration at $v_{k, i}$ succeeds is at least $\frac{A\mu[A, \infty)}{8d} - A\frac{\Delta n^{1-\ep} AL+\delta n/2}{nd/4}\ge \frac{A\mu[A, \infty)}{16d}$ for sufficiently large $n$. Here we recall that $\delta$ is a sufficiently small constant depending on $A$ and $\mu$.
 		
 		Thus, the probability that a round $k\in [n^{1-\ep}]$ succeeds is at least $\frac{A^{L}(\mu[A, \infty))^{L}}{(16d)^{L}}\ge (\mu[A, \infty))^{L} = n^{-0.9}\ge n^{-1+2\ep}$, where we used $A\ge 16d$. And so with high probability, there is at least one round that succeeds. Each successful round gives a desired long path.
	\end{proof}

	\subsection{Long survival on long paths}
	To prove Proposition \ref{prop:path:survival}, we couple the contact process on $\Gamma$ with a percolation process (Lemma \ref{lm:dominate:percolation}) and then show that this percolation survives for a long time (Lemma \ref{lm:percolation:survival}).
	
	Recall the definition of $A$ in \eqref{def:A:lam} that 	$A = \frac{10^6}{\la^{2}} \log \frac{1}{\la}$. By Lemma \ref{lm:star}, the contact process on an $(A-2)$-star with the root initially infected survives until time $a^{2}$ with
	  \begin{equation}\label{def:a}
		a = \lfloor e^{\la^{2}(A-2) /(2\cdot 10^5)} \rfloor
	\end{equation}
 with probability at least $1 - \frac{200 \log (\la (A-2))}{\la^{2}(A-2)} - 3e^{-\la^{2}(A-2) /10^5}\ge 0.99$ because $\la<1/20$. This allows us to couple the contact process on $\Gamma$ with a (bond) percolation process on the following directed graph $\mathcal L$ on the vertex set $V_{\L} = [L]\times \N$ with directed edges from $(i, t)$ to $(i, t+1)$ and $(i\pm 1, t+1)$ (as long as they belong to $V_{\L}$)  for all $(i, t)\in V_{\L}$ (see Figure \ref{fig:L}). The vertex $(i, t)$ on $\L$ represents the $i$-th star of $\Gamma$ at time $ta$. Having $(i, t)$ belong to an open path from an initial state corresponds to having at least $\la (A-2)/100$ vertices in the $i$-th star of $\Gamma$ infected at time $ta$. In the latter event, these vertices keep the root $v_i$ of the $i$-th star infected for a long time during the time interval $[ta, (t+1)a]$, giving it an ample chance to infect the $(i\pm 1)$-th stars by Lemma \ref{lm:star}.
\begin{figure}[H]
	 \begin{center}
	 	\begin{tikzpicture}[scale=0.5]
	 	\foreach \x in {0,1,2,3,4,5,6,7}
	 	\foreach \y in {0,1,2,3}
	 		\node[circle, fill, draw, inner sep=1pt]  at (2*\x,2*\y) {};
	 	\node at (-2.5,0) {t=0};
	 	\node at (-2.5,2) {t=a};
	 	\node at (-2.5,4) {t=2a};
	 	\node at (-2.5,6) {t=3a};

	 	\foreach \x in {1,2,3,4,5,6}
	 	\foreach \y in {0,1,2,3} {
	 		\draw[postaction={decorate,decoration={markings,mark=at position 0.3 with {\arrow{>}}}}] (2*\x,2*\y) -- (2*\x-2,2*\y+2);
	 		\draw[postaction={decorate,decoration={markings,mark=at position 0.3 with {\arrow{>}}}}]  (2*\x,2*\y) -- (2*\x,2*\y+2);
	 		\draw[postaction={decorate,decoration={markings,mark=at position 0.3 with {\arrow{>}}}}]  (2*\x,2*\y) -- (2*\x+2,2*\y+2);
	}
 	
 	\foreach \x in {0}
 	\foreach \y in {0,1,2,3} {
 		\draw[postaction={decorate,decoration={markings,mark=at position 0.3 with {\arrow{>}}}}]  (2*\x,2*\y) -- (2*\x,2*\y+2);
 		\draw[postaction={decorate,decoration={markings,mark=at position 0.3 with {\arrow{>}}}}]  (2*\x,2*\y) -- (2*\x+2,2*\y+2);
 	}
 
  	\foreach \x in {7}
  	\foreach \y in {0,1,2,3} {
  		\draw[postaction={decorate,decoration={markings,mark=at position 0.3 with {\arrow{>}}}}] (2*\x,2*\y) -- (2*\x-2,2*\y+2);
  		\draw[postaction={decorate,decoration={markings,mark=at position 0.3 with {\arrow{>}}}}]  (2*\x,2*\y) -- (2*\x,2*\y+2);
  	}
 
	 	 	\end{tikzpicture}
	 \end{center}
 	 	\caption{The graph $\L$}\label{fig:L}
\end{figure}

 	\begin{lemma}[Coupling with Percolation]\label{lm:dominate:percolation} 
		Consider the bond percolation on the graph $\L$ where each of the directed edges is open with probability $.99$, independently.
		There exists a coupling of the contact process on $\Gamma$ with all vertices initially infected and the percolation such that if a vertex $(i, t)$ is connected with some vertex $(j, 0)$ by a directed open path in the percolation then there are at least $\la (A-2)/100$ leaves in the $i$-th star of $\Gamma$ that are infected at time $ta$ where $a$ is as in \eqref{def:a}.
	\end{lemma}
	\begin{proof}[Proof of Lemma \ref{lm:dominate:percolation}]
		For each $i\in [L]$, let $\A^{t}_i$ be the event that there are at least $\la (A-2)/100$ leaves in the $i$-th star of $\Gamma$ that are infected at time $ta$. We will show that
		\begin{equation}\label{eq:dominate:percolation}
			\P\left (\A_{i}^{t+1} \cap \A_{i-1}^{t+1}  \cap \A_{i+1}^{t+1} \big\vert \A_{i}^{t}  \right ) \ge 0.99 =:1 -16\ep.
		\end{equation}
		Consider the following process on $\L$ where we expose the edges as $t$ grows. At any time $ta$, we relabel the leaves so that the infected leaves in each star are placed first. We then say that the edges $(i, t)\to (j, t+1)$ with $j\in \{i\pm 1, i\}\cap [L]$ are open if $\A_{i}^{t+1} \cap \A_{i-1}^{t+1}  \cap \A_{i+1}^{t+1}$ assuming that the first $\la(A-2)/100$ leaves in the $i$-th star are infected at time $ta$. This is a random variable depending only on the Poisson clocks within the $i$-th and $(i\pm 1)$-th stars, so they are $3$-dependent. Note that this process satisfies the property that if a vertex $(i, t)$ is connected with some vertex $(j, 0)$ by a directed open path then there are at least $\la (A-2)/100$ leaves in the $i$-th star of $\Gamma$ that are infected at time $ta$. 
		By Theorem 0.0 (i) in \cite{liggett1997domination}, this process dominates the described percolation. Therefore, we obtain the stated lemma.

	To see \eqref{eq:dominate:percolation},	let $\mathcal B_i^t$ be the event that there are at least $\la (A-2)/100$ vertices in the $i$-th star of $\Gamma$ infected at time $ta+n$ for all positive integers $n\le a$. Applying Lemma \ref{lm:star:iteration} $a$ times, we get
		\begin{equation}
			\P\left (\A_{i}^{t+1}\big\vert \A_{i}^{t}  \right ) \ge \P\left ( \mathcal B_i^{t}\big\vert \A_{i}^{t}  \right )\ge 1- 3a\exp(-\la^{2}(A-2)/12800)\ge 1 - \ep.
		\end{equation}

		Note that each vertex infected at certain time $s$ has a probability of $\P(\Pois(1/2) = 0) \ge 1/e$ to remain infected until time $s+1/2$. Thus, under $\mathcal B_i^{t}$, by Chernoff inequality, for all positive integers $n\le a$, at least $M = 0.01 \la (A-2)/4$ vertices infected at time $ta+n$ remain infected until time $ta+n+1/2$ with probability at least $1 - a\exp(-\la (A-2)/10^4) \ge 1 - \ep$.
		Under this event, applying Lemma \ref{lm:star:rho} $a$ times for each of the time intervals $[ta+n, ta+n+1/2]$, the root $v_i$ of the $i$-th star is infected for more than $a/4$ time in the time interval $[ta, ta+a]$ with probability at least
		$$1- 2a\exp(-\la M/32) = 1- 2a\exp(-\la^{2} (A-2)/12800)\ge 1 -\ep.$$

		Thus, the probability that $v_i$ infects $v_{i+1}$ during $[ta, ta+a] $ is at least
		$$\P(\Pois(\la a/4)\neq 0) =1 - \exp(-\la a/4)\ge 1 - \ep,$$
		since $a \ge 40/\la$.
		
		Once $v_{i+1}$ is infected, in proving Lemma \ref{lm:star}, we proved that $\A_{i+1}^{t+1}$ occurs with probability at least
		$$1 - \frac{200 \log (\la (A-2))}{\la^{2}(A-2)} - 3e^{-\la^{2}(A-2) /10^5}\ge 1 - \ep.$$
		
		Combining all of these events by the union bound, we obtain
		\begin{equation}
		\P\left (\A_{i+1}^{t+1}\big\vert \A_{i}^{t}  \right ) \ge 1 - 5\ep.  \nonumber
		\end{equation}
		Similarly for $\A_{i-1}^{t+1}$. By the union bound, we obtain \eqref{eq:dominate:percolation}.
	\end{proof}

	\begin{lemma}[Long survival of percolation]\label{lm:percolation:survival}  With high probability, the percolation defined in Lemma \ref{lm:dominate:percolation} reaches time $1.01^{L}$. In other words, there exists a path of open edges connecting some vertex $(i, 0)$ and some vertex $(j, 1.01^{L})$ for some $i, j\in [L]$.
	\end{lemma}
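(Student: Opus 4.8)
The plan is to recognize the percolation of Lemma~\ref{lm:dominate:percolation} as (a stochastic minorant of) a two-dimensional oriented site percolation on the strip $[L]\times\Z_{\ge0}$ whose density is very close to $1$, and to exploit the familiar fact that such a process exits the strip only after a time exponential in the width. Write $E_{i,t}$ for the independent events of probability $\ge0.99$ that the three upward edges at $(i,t)$ are open (call $(i,t)$ \emph{open} if $E_{i,t}$ holds, \emph{closed} otherwise), and let $\eta_t\subseteq[L]$ be the set of $i$ for which $(i,t)$ is joined to the bottom row by an open path; then $\eta_0=[L]$ and $\eta_{t+1}=\bigcup\{\{i-1,i,i+1\}\cap[L]:i\in\eta_t,\ E_{i,t}\}$, so the lemma is precisely the statement $\eta_T\ne\emptyset$ \whp{} for $T=\lceil1.01^{L}\rceil$. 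Since closed sites have density at most $10^{-2}$ -- far below criticality for the dual -- the survival rate in $L$ is an absolute constant much larger than $\log1.01\approx0.0099$, so the factor $1.01$ is chosen with great room to spare.

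First I would run a Peierls/duality argument on the closed sites. If $\eta_T=\emptyset$, let $t^{*}\le T$ be the first time $\eta_{t^{*}}=\emptyset$; then the open sites in levels $0,\dots,t^{*}-1$ fail to connect level $0$ to level $t^{*}$, so the closed sites there form a vertex cut for the directed flow across the strip, which forces at least $L$ closed sites (one can route $L$ vertex-disjoint vertical directed paths). By the standard planar duality for two-dimensional oriented percolation, a minimal such cut is a connected ``staircase'' of closed sites of some length $\ell\ge L$; for a fixed starting site the number of staircases of length $\ell$ is at most $C_{0}^{\ell}$ for an absolute constant $C_{0}$, and, by independence, each occurs with probability at most $(10^{-2})^{\ell}$. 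Summing over $\ell\ge L$, over the $O(LT)$ possible starting sites, and over the $\le T$ candidate levels $t^{*}$ gives $\P(\eta_T=\emptyset)\le C\,L\,T^{2}\sum_{\ell\ge L}(C_{0}\cdot10^{-2})^{\ell}\le C'\,L\,T^{2}\,(C_{0}\cdot10^{-2})^{L}$, which tends to $0$ provided $(C_{0}\cdot10^{-2})\cdot1.01^{2}<1$ (recall $T\le 2\cdot1.01^{L}$); this inequality holds once one pins down the small combinatorial constant $C_{0}$.

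The main obstacle is therefore bookkeeping: making the duality step precise on this particular lattice, in which each site has the three forward neighbours $(i\pm1,t+1)$ and $(i,t+1)$ instead of the usual two (so one must identify the correct dual lattice and the correct notion of a blocking staircase, and bound its growth constant $C_{0}$), and then checking that the resulting geometric series still beats the $1.01^{L}$ union bound over time -- which is exactly where the very conservative $1.01$ is spent. Should one wish to bypass the duality, an alternative is block renormalization: tile space--time into blocks of a bounded size $K$, declare a block good if the infection on it spreads to fill its two horizontal neighbours within $K$ steps (which fails only if an atypical cut of order $K$ closed sites occurs inside the block, hence with probability at most $e^{-cK}$), note that the good-block field is finite-range dependent with marginals $\ge1-e^{-cK}$, dominate it from below by an i.i.d.\ supercritical oriented percolation (Liggett--Schonmann--Stacey), and apply the same contour estimate on the coarse lattice; the numerology still closes because $K$ may be kept bounded, of order $1/\log1.01$, while $e^{-cK}$ is forced as small as needed by taking that bounded $K$ large.
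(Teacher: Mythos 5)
Your proposal follows the paper's strategy -- a Peierls contour bound that beats a $1.01^{L}$ union bound over time -- but you frame it as a vertex cut in a high-density oriented site percolation rather than as an edge contour, and that shift is where the two arguments genuinely diverge. The paper forms the contour $B$ as the outer boundary of the fattened occupied set $\mathcal{C}+[-1/2,1/2]^2$, whose edges live in the dual lattice $(\Z+1/2)^2$; the price for that concreteness is that a single closed primal event can be crossed by several edges of $B$, and the paper must work to show the contour length $\alpha$ is at most $4$ times the number of \emph{distinct} closed events, via the direction decomposition $\alpha_1,\alpha_{3,3},\alpha_{4,4}$ and the key monotonicity observation $\alpha_2\le\alpha_1$. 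That is the entire technical content of the paper's proof, and it is precisely the work your version conceals inside the phrase ``by the standard planar duality \dots a minimal cut is a connected staircase.'' Your vertex-cut formulation neatly avoids the over-counting, since a cut is a set of distinct closed sites and hence costs $(10^{-2})^{\ell}$ exactly, and the resulting numerology ($C_{0}<98$, with realistic animal growth constants around $10$--$20$) has more slack than the paper's ($3\cdot10^{-1/2}\cdot1.01<1$, a much tighter margin). But you have not actually established that a minimal blocking set is $*$-connected on the specific lattice at hand (three forward neighbours, strip geometry, anchored bottom row), nor bounded the growth constant of such staircases -- and you say as much. The claim is plausible and of a standard type, but it is the exact analogue of the step the paper proves carefully, so it cannot be waved away as bookkeeping if the goal is a self-contained proof. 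Your fall-back via block renormalization and Liggett--Schonmann--Stacey is a legitimate third route (and one not taken by the paper), though again only sketched. In short: same Peierls skeleton, but the paper spends its effort on the repeated-edge accounting that your site formulation replaces with an unproved (if believable) duality and enumeration claim.
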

	
	\begin{proof} [Proof of Lemma \ref{lm:percolation:survival}] We use the standard Peierls argument (\cite{peierls1936ising}). We say that a vertex $(i, t)\in [L]\times \N$ is occupied if there exists a directed open path connecting some vertex $(j, 0)$ and $(i, t)$. Observe that the set $\mathcal C$ of occupied vertices is connected, as a subgraph of the lattice $\Z^{2}$ (rather than $\L$). Consider the subset $\mathcal C' = (\mathcal C + [-1/2, 1/2]^{2})\cap [0, L]\times [0, \infty)$ which is formed by surrounding each vertex in $\mathcal C$ by the box $[-1/2, 1/2]^{2}$. Let $B$ be the upper boundary of $\mathcal C'$ inside $[0, L]\times [0, \infty)$. Equivalently, $B$ is the path that connects the top left-most point and the top right-most point of $\mathcal C'$; note that in forming $B$, we do not at all look at the edges of $\L$ but rather think about $\mathcal C$ as a subset of the regular lattice $\Z^{2}$. See Figure \ref{fig:B}.
		\begin{figure}[H]
			\begin{center}
				\begin{tikzpicture}[scale=0.5]
				\foreach \x in {0,1,2,3,4,5,6,7}
								\foreach \y in {0,1,2,3, 4}
				\node[circle, fill, black, draw, inner sep=1pt]  at (2*\x,2*\y) {};
				
				\foreach \x in {0,1,2,3,4,5,6,7}
 			\node[circle, fill, green, draw, inner sep=1pt]  at (2*\x,0) {};
				
				\node[circle, fill, green, draw, inner sep=1pt]  at (0,2) {};
				\node[circle, fill, green, draw, inner sep=1pt]  at (0,4) {};

				\node[circle, fill, green, draw, inner sep=1pt]  at (4,2) {};
				\node[circle, fill, green, draw, inner sep=1pt]  at (4,4) {};
				\node[circle, fill, green, draw, inner sep=1pt]  at (4,6) {};
				
				\node[circle, fill, green, draw, inner sep=1pt]  at (6,6) {};

						\node[circle, fill, green, draw, inner sep=1pt]  at (8,4) {};
				\node[circle, fill, green, draw, inner sep=1pt]  at (8,6) {};

				\node[circle, fill, green, draw, inner sep=1pt]  at (12,2) {};
				\node[circle, fill, green, draw, inner sep=1pt]  at (12,4) {};
				\node[circle, fill, green, draw, inner sep=1pt]  at (14,2) {};
				\node[circle, fill, green, draw, inner sep=1pt]  at (14,4) {};
				
				\node[circle, fill, green, draw, inner sep=1pt]  at (14,6) {};
			 
				\draw[red, postaction={decorate,decoration={markings, mark=at position 0.5 with {\arrow{>}}}}] (0, 5) -- (1, 5);
				\node[circle, fill, red, draw, inner sep=1pt]  at (0,5) {};
				\node[circle, fill, red, draw, inner sep=1pt]  at (1,5) {};
				\draw[red, postaction={decorate,decoration={markings, mark=at position 0.5 with {\arrow{>}}}}] (1, 5) -- (1, 3);
				\node[circle, fill, red, draw, inner sep=1pt]  at (1,3) {};
				\draw[red, postaction={decorate,decoration={markings, mark=at position 0.5 with {\arrow{>}}}}] (1, 3) -- (1, 1);
				\node[circle, fill, red, draw, inner sep=1pt]  at (1,1) {};
				\draw[red, postaction={decorate,decoration={markings, mark=at position 0.5 with {\arrow{>}}}}] (1, 1) -- (3, 1);
				\node[circle, fill, red, draw, inner sep=1pt]  at (3,1) {};

				\draw[red, postaction={decorate,decoration={markings, mark=at position 0.5 with {\arrow{>}}}}] (3, 1) -- (3, 3);
				\node[circle, fill, red, draw, inner sep=1pt]  at (3,3) {};
				\draw[red, postaction={decorate,decoration={markings, mark=at position 0.5 with {\arrow{>}}}}] (3, 3) -- (3, 5);
				\node[circle, fill, red, draw, inner sep=1pt]  at (3,5) {};
				\draw[red, postaction={decorate,decoration={markings, mark=at position 0.5 with {\arrow{>}}}}] (3, 5) -- (3, 7);
				\node[circle, fill, red, draw, inner sep=1pt]  at (3,7) {};

				\draw[red, postaction={decorate,decoration={markings, mark=at position 0.5 with {\arrow{>}}}}] (3, 7) -- (5, 7);
				\node[circle, fill, red, draw, inner sep=1pt]  at (5, 7) {};
				\draw[red, postaction={decorate,decoration={markings, mark=at position 0.5 with {\arrow{>}}}}] (5, 7) -- (7, 7);
				\node[circle, fill, red, draw, inner sep=1pt]  at (7, 7) {};
				\draw[red, postaction={decorate,decoration={markings, mark=at position 0.5 with {\arrow{>}}}}] (7, 7) -- (9, 7);
				\node[circle, fill, red, draw, inner sep=1pt]  at (9,7) {};

				\draw[red, postaction={decorate,decoration={markings, mark=at position 0.5 with {\arrow{>}}}}] (9, 7) -- (9, 5);
				\node[circle, fill, red, draw, inner sep=1pt]  at (9, 5) {};
				\draw[red, postaction={decorate,decoration={markings, mark=at position 0.5 with {\arrow{>}}}}] (9, 5) -- (9, 3);
				\node[circle, fill, red, draw, inner sep=1pt]  at (9,3) {};
				
				\draw[red, postaction={decorate,decoration={markings, mark=at position 0.5 with {\arrow{>}}}}] (9, 3) -- (7, 3);
				\node[circle, fill, red, draw, inner sep=1pt]  at (7,3) {};

				\draw[red, postaction={decorate,decoration={markings, mark=at position 0.5 with {\arrow{>}}}}] (7, 3) -- (7, 5);
				\node[circle, fill, red, draw, inner sep=1pt]  at (7,5) {};

				\draw[red, postaction={decorate,decoration={markings, mark=at position 0.5 with {\arrow{>}}}}] (7, 5) -- (5, 5);
				\node[circle, fill, red, draw, inner sep=1pt]  at (5,5) {};

				\draw[red, postaction={decorate,decoration={markings, mark=at position 0.5 with {\arrow{>}}}}] (5, 5) -- (5, 3);
				\node[circle, fill, red, draw, inner sep=1pt]  at (5,3) {};	
				\draw[red, postaction={decorate,decoration={markings, mark=at position 0.5 with {\arrow{>}}}}] (5, 3) -- (5, 1);
				\node[circle, fill, red, draw, inner sep=1pt]  at (5,1) {};

				\draw[red, postaction={decorate,decoration={markings, mark=at position 0.5 with {\arrow{>}}}}] (5, 1) -- (7, 1);
				\node[circle, fill, red, draw, inner sep=1pt]  at (7, 1) {};	
				\draw[red, postaction={decorate,decoration={markings, mark=at position 0.5 with {\arrow{>}}}}] (7, 1) -- (9, 1);
				\node[circle, fill, red, draw, inner sep=1pt]  at (9, 1) {};
				\draw[red, postaction={decorate,decoration={markings, mark=at position 0.5 with {\arrow{>}}}}] (9, 1) -- (11, 1);
				\node[circle, fill, red, draw, inner sep=1pt]  at (11, 1) {};

				\draw[red, postaction={decorate,decoration={markings, mark=at position 0.5 with {\arrow{>}}}}] (11, 1) -- (11, 3);
				\node[circle, fill, red, draw, inner sep=1pt]  at (11, 3) {};
				\draw[red, postaction={decorate,decoration={markings, mark=at position 0.5 with {\arrow{>}}}}] (11, 3) -- (11, 5);
				\node[circle, fill, red, draw, inner sep=1pt]  at (11, 5) {};

				\draw[red, postaction={decorate,decoration={markings, mark=at position 0.5 with {\arrow{>}}}}] (11, 5) -- (13, 5);
				\node[circle, fill, red, draw, inner sep=1pt]  at (13, 5) {};
				
				\draw[red, postaction={decorate,decoration={markings, mark=at position 0.5 with {\arrow{>}}}}] (13, 5) -- (13, 7);
				\node[circle, fill, red, draw, inner sep=1pt]  at (13, 7) {};
				
				\draw[red, postaction={decorate,decoration={markings, mark=at position 0.5 with {\arrow{>}}}}] (13, 7) -- (14, 7);
				\node[circle, fill, red, draw, inner sep=1pt]  at (14, 7) {};

				\end{tikzpicture}
			\end{center}
			\caption{An example of $\mathcal C$ which consists of green points and its upper boundary $B$ which is the red path. The black points are points in $\L$ that do not belong to $\mathcal C$. }\label{fig:B}
		\end{figure}
		We observe that if the percolation does not reach time $1.01^L$ then $B$ lies entirely in $[L]\times [0, 1.01^L]$.
		
		Let $\alpha$ be the length of $B$. To bound the probability of having a path $B$ of length $\alpha$, we want to find a lower bound for the number of distinct closed edges in $\L$ that corresponds to $B$. Walking from the left-most vertex on $B$ gives an orientation to each of the edges in $B$. Let $\alpha_1, \alpha_2, \alpha_3, \alpha_4$ be the number of edges in $B$ that are in the rightward, leftward, upward, and downward directions, respectively. We have $\alpha = \sum_{i=1}^{4}\alpha_i.$
		
		A right-ward edge in $B$ must occur in this situation \begin{tikzpicture}[baseline=(current bounding box.center)]
		\node[circle, fill, green, draw, inner sep=1pt]  at (0,-0.5) {};
		\node[circle, fill, black, draw, inner sep=1pt]  at (0,0.5) {};
		\draw[red, postaction={decorate,decoration={markings, mark=at position 0.5 with {\arrow{>}}}}] (-0.5, 0) -- (0.5, 0);
		\node[circle, fill, red, draw, inner sep=1pt]  at (0.5, 0) {};
		\node[circle, fill, red, draw, inner sep=1pt]  at (-0.5, 0) {};
		\end{tikzpicture} where the green node right below is occupied and the black node right above is not. This means that the vertical edge from the green node to the black node in $\L$ is closed. Therefore, $B$ determines uniquely $\alpha_1$ vertical closed edges in $\L$.
		
		We further decompose $\alpha_3$ as follows. Let $\alpha_{3, 1}, \alpha_{3, 2}, \alpha_{3, 3}$ be the number of upward edges that follow an edge that is rightward, leftward,  upward, respectively. We have
		$$\alpha_3 = \alpha_{3, 1}+ \alpha_{3, 2}+ \alpha_{3, 3}.$$
		Similarly,  let $\alpha_{4, 1}, \alpha_{4, 2}, \alpha_{4, 4}$ be the number of downward edges that follow a rightward, leftward, downward edge, respectively. We have
		$$\alpha_4 = \alpha_{4, 1}+ \alpha_{4, 2}+ \alpha_{4, 4}.$$

		An edge $e$ counted in $\alpha_{3, 3}$ must occur in either this situation \begin{tikzpicture}[baseline=(current bounding box.center)]
		\node[circle, fill, green, draw, inner sep=1pt, label=right:$u$]  at (0.25, -0.25) {};
		\node[circle, fill, green, draw, inner sep=1pt]  at (0.25, 0.25) {};
		\node[circle, fill, black, draw, inner sep=1pt, label=left:$v$]  at (-0.25, 0.25) {};
		\node[circle, fill, black, draw, inner sep=1pt]  at (-0.25, -0.25) {};
		\draw[red, postaction={decorate,decoration={markings, mark=at position 0.5 with {\arrow{>>}}}}] (0, 0) -- (0, 0.5);
		\draw[red, postaction={decorate,decoration={markings, mark=at position 0.5 with {\arrow{>}}}}] (0, -0.5) -- (0, 0);
		\node[circle, fill, red, draw, inner sep=1pt]  at (0, 0.5) {};
		\node[circle, fill, red, draw, inner sep=1pt]  at (0, -0.5) {};
		\node[circle, fill, red, draw, inner sep=1pt]  at (0, 0) {};
		\end{tikzpicture} or this situation \begin{tikzpicture}[baseline=(current bounding box.center)]
		\node[circle, fill, black, draw, inner sep=1pt]  at (0.25, -0.25) {};
		\node[circle, fill, black, draw, inner sep=1pt, label=right:$v$]  at (0.25, 0.25) {};
		\node[circle, fill, green, draw, inner sep=1pt]  at (-0.25, 0.25) {};
		\node[circle, fill, green, draw, inner sep=1pt, label=left:$u$]  at (-0.25, -0.25) {};
		\draw[red, postaction={decorate,decoration={markings, mark=at position 0.5 with {\arrow{>>}}}}] (0, 0) -- (0, 0.5);
		\draw[red, postaction={decorate,decoration={markings, mark=at position 0.5 with {\arrow{>}}}}] (0, -0.5) -- (0, 0);
		\node[circle, fill, red, draw, inner sep=1pt]  at (0, 0.5) {};
		\node[circle, fill, red, draw, inner sep=1pt]  at (0, -0.5) {};
		\node[circle, fill, red, draw, inner sep=1pt]  at (0, 0) {};
		\end{tikzpicture}.
	Here $e$ is the edge with double arrows. Note that the occupied (green) vertices lie in the finite connected component of $\Z^{2}$ separated by $B$; so, $B$ determines which of the situations occur together with the lower occupied vertex $u$ and the upper unoccupied vertex $v$. The edge from $u$ to $v$ must be closed in $\L$. Thus, $B$ determines uniquely $\alpha_{3, 3}$ closed diagonal edges in $\L$. Similarly, there are $\alpha_{4, 4}$ closed diagonal edges in $\L$ determined by $B$ which are distinct from the aforementioned $\alpha_{3, 3}$ edges.

	Observe furthermore that
		\begin{enumerate}
			\item $\alpha_2\le \alpha_1$ because the directed path $B$ goes from its left-most vertex to its right-most one;
			\item $\alpha_{3, 1}+\alpha_{4, 1}\le \alpha_1$
			\item $\alpha_{3, 2}+\alpha_{4, 2}\le \alpha_2\le \alpha_1$.
		\end{enumerate}
		Thus,
		\begin{eqnarray}
			\alpha &=&(\alpha_1+ \alpha_{3, 3} + \alpha_{4, 4})+\alpha_2+ \alpha_{3, 1}+ \alpha_{3, 2}+\alpha_{4, 1}+ \alpha_{4, 2}\nonumber\\
			&\le& (\alpha_1+ \alpha_{3, 3} + \alpha_{4, 4}) + 3\alpha_1 \le 4 (\alpha_1+ \alpha_{3, 3} + \alpha_{4, 4}).\nonumber
		\end{eqnarray}
		Therefore, the probability that a given path $B$ occurs is at most $0.01^{\alpha/4}$.
		Given $\alpha$, the number of paths $B$ of length $\alpha$ is at most $1.01^L3^{\alpha}$, where the first factor is the number of ways to choose the left-most vertex and at any point on the path, there are at most 3 ways to determine the next direction.
		By the union bound, the probability that the percolation does not reach time $1.01^L$ is at most
		$$\sum_{\alpha = L}^{\infty} 1.01^L \cdot 3^{\alpha} 0.01^{\alpha/4} \ll 1.01^L \cdot 0.95^L = o(1),$$
		completing the proof.
\end{proof}

  \section{Survival time on trees}\label{sec:upper:tree}
  To prove Theorem~\ref{thm:uper}, we first derive an upper bound on the survival time on trees. Following the methods in \cite{BNNS} and \cite{NNS}, we can only get an upper bound of $n^{\la^{1+o(1)}}$ rather than $n^{\la^{2+o(1)}}$. The key new idea here is that as $\la$ is small compared to the degree of a typical vertex, a significant portion of infections are {\it weak} in the sense the vertex infected recovers before spreading the infection. Therefore, we introduce a modified process that focuses on {\it strong} infections and this allows us to get the power $\la^{2}$ in the upper bound.

  \subsection{The coupling with the starred contact process}
  In this section, we introduce the {\it starred} contact process which takes into account whether a new infection at a vertex $u$ is {\it strong}. If we ignore the weak infections, the contact process basically remains the same except that at any time, there could be vertices that are infected and then recover at a later time without passing the infection to any other vertices.

  To be specific, let us sample the Poisson clocks for recoveries and infections on $G_n$. Using these clocks, the contact process $(X_t)_{t\ge 0}$ with $X_0=S\subset V$ is read from the graphical representation (See Section \ref{sec:graphical}). We now construct the following Markov process $(X^*_t)_{t\ge 0}$ which we call the starred contact process from the same set of clocks.

  At time $0$, $X^*_0(u)=0$ if one of the following hold
  \begin{itemize}
  	\item $u\notin S$,
  	\item $u\in S$ and the recovery clock at $u$ rings before any infection clocks from $u$ to its neighbors. In other words, $u$ recovers before sending any infection out.
  \end{itemize}
Otherwise, we say that $u$ is strongly infected and set $X^*_0(u)=1^*$. All vertices remain in their states until any of the clocks ring, say at time $t$. Consider the following cases depending on the nature of the clock.

  \begin{itemize}
  	\item  If the clock is a recovery at a vertex $u$ then set $X^*_t(u)=0$.
  	\item If the clock is an infection from a healthy vertex $v$ (that is $X^*_{t^-}(v)=0$) to another vertex $u$, nothing changes.
  	\item If the clock is an infection from an infected vertex $v$ (that is $X^*_{t^-}(v)$ equals 1 or $1^*$) to another infected vertex $u$, we set $X^*_t(v)=1$.
  	\item If the clock is an infection from an infected vertex $v$ (that is $X^*_{t^-}(v)$ equals 1 or $1^*$) to a healthy vertex $u$, then if the first clock after time $t$ at $u$ is a recovery clock rather than an infection clock from $u$ to any of its neighbors, then $u$ remains healthy at time $t$. Otherwise, set $X^*_t(u)=1^*$. In either cases, set $X^*_t(v)=1$.
  \end{itemize}
In any cases, the state at other vertices remain the same.
  We observe that for any time $t$ and any vertex $u$, if $X_t(u)=1$ and $X^*_t(u)=0$ then in the process $(X_s)$, after this time, $u$ will recover before infecting any other vertices. Denote by $\mathbf R$ and $\mathbf R^*$ the survival time of the processes $(X_t)$ and $(X^*_t)$, respectively. The above observation implies that
  $$\mathbf R\le \mathbf R^*+\max_{i\leq n} \tau_i,$$
  where $\tau_1, \dots, \tau_n$ are iid $\Exp(1)$ random variables. The sum $\sum_{i=1}^{n} \tau_i$ bounds the total healing time of all vertices $u$ with $X_{\mathbf R^*}(u)=1$.

  Since \whp, $\max_{i\leq n} \tau_i = O(\log n)$, we conclude that for all $G_n$, \whp,
  \begin{equation}\label{eq:unstar}
  \mathbf R\le \mathbf R^*+O(\log n).
  \end{equation}

  Thus, it suffices to bound $\mathbf R^*$. For the rest of this paper, we shall only work on the starred contact process and so, for simplicity, we write $(X_t)$ for the starred contact process from now on.

  \subsection{Recursive survival time on trees}
  Before analyzing the process on graphs, we study it in the simpler case of trees. Then in passing from trees to graphs we will utilizes the fact that Galton-Watson trees are local weak limits of random graphs as discussed in Section~\ref{subsubsec:rgprelim}.

  Let $T$ be a (deterministic) tree and consider the starred process on $T$ with $\rho$ initially at $1^*$. Let $\mathbf R(T)$ be the survival time of this process. Let $R(T)=\E_{\CP} \mathbf R(T)$.

  Let $T^+$ be the tree $T$ together with an added vertex $\rho^+$ which is the parent of $\rho$. Consider the starred contact process on a tree $T^+$ with $\rho^+$ always infected and $\rho$ initially at state $1^*$ which means that $\rho$ will first send an infection to either $\rho^+$ or its children. Let $\bS(T)$ be the first time that the process returns to the all healthy state on $T$ and $S(T) = \E _{\CP} \bS(T)$ be the expected excursion time. The addition of $\rho^+$ makes sure that the contact process on $T^+$ never dies out and so the process no longer has an absorbing state.

  Note that if $\rho$ has at least 1 child, then with probability at least $\frac12$, the first infection from $\rho$ is sent to its children rather than to $\rho^+$. Thus,
  $$R(T)\le 2 S(T).$$

  Let $D$ be the number of children of $\rho$ and $\rho_1, \dots, \rho_D$ be these children. Let $d_i$ be the number of children of $\rho_i$. Let $T_i$ be the subtree of $T$ containing $\rho_i$ and all of its successors.

  An infection from $\rho$ to $\rho_i$ is strong if $\rho_i$ sends an infection to either $\rho$ or one of its children before recovering. This happens with probability $\frac{\la (d_i+1)}{1+\la (d_i+1)}$. We call this number the weight of $T_i$, denoted by $\wt(T_i)$,
  \begin{equation}\label{def:w:i}
  \wt(T_i)=\frac{\la (d_i+1)}{1+\la (d_i+1)}.
  \end{equation}
Similarly,
  \begin{equation}\label{def:w:G}
	\wt(T) = \frac{\la (D+1)}{1+\la (D+1)}.
\end{equation}

  The following recursion allows us to control the growth of $S(T)$.
  \begin{lemma}[Recursion for $S$]\label{lm:S:recursion} We have
  	\begin{eqnarray}
  	S(T) &\le& \frac{1}{1+\la (D+1)} - \frac{1}{\la(D+1)}+  \wt(T)^{-1} \prod_{i=1}^{D} (1 + \la \wt(T_i) S(T_i)).\nonumber
  	\end{eqnarray}
  \end{lemma}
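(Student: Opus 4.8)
The plan is to run a first‑step/renewal analysis on the starred process on $T^{+}$ (with $\rho^{+}$ held infected and $\rho$ starting at $1^{*}$), decoupling the root $\rho$ from the $D$ subtrees $T_1,\dots,T_D$ and showing that each subtree enters only through an independent excursion whose expected length is at most $S(T_i)$. The inequality is then obtained by organizing the bookkeeping ``expected number of $\rho$‑rounds'' $\times$ ``cost of a round, inflated by the active subtrees'' and taking $\E_{\CP}$.

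First I would record the backbone structure at $\rho$. Since $\rho$ starts at $1^{*}$, after the forced initial infection its clock rings form an i.i.d.\ sequence in which each ring is a recovery with probability $\tfrac1{1+\la(D+1)}$ and otherwise an infection sent to a uniformly random one of the $D+1$ neighbours, with $\mathrm{Exp}(1+\la(D+1))$ gaps; hence the number of infections $\rho$ emits before its first recovery is $1+\mathrm{Geom}$ with mean $1+\la(D+1)$, and $\rho$'s first busy period has mean $\tfrac{2+\la(D+1)}{1+\la(D+1)}$ — which, one checks, is exactly the value of the claimed right‑hand side when all $S(T_i)=0$ (and, taking $D=0$, recovers the equality $S(\{\rho\})=\tfrac{2+\la}{1+\la}$, so the recursion is tight in that case). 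Next I would attach the subtree excursions: whenever $\rho$ emits an infection to a child $\rho_i$ whose whole subtree is currently healthy, $\rho_i$ becomes strongly infected with probability $\wt(T_i)=\tfrac{\la(d_i+1)}{1+\la(d_i+1)}$ (the chance that the next clock at $\rho_i$ is an outgoing infection), which opens a sub‑excursion in $T_i$. Because distinct subtrees use disjoint Poisson clocks, these sub‑excursions are mutually independent and independent of the clocks at $\rho$; and because in the real process $\rho$ only recovers whereas in the definition of $\mathbf S(T_i)$ the parent $\rho_i^{+}=\rho$ is held infected forever, Lemma~\ref{lem:graph rep2} lets me dominate the total time $T_i$ carries an infection (over all its activations, including re‑infections of $\rho_i$ by $\rho$ within an activation) by independent copies of $\mathbf S(T_i)$, each with expectation $S(T_i)$.

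Then I would close the recursion. The $T$‑excursion ends at the first recovery of $\rho$ at which all subtrees are simultaneously clear; before that, each $\rho$‑busy period emits a geometric number of child‑infections, a number comparable to $\la\,\wt(T_i)$ of which strongly hit $T_i$ and thereby prolong the process, and each active $T_i$ while $\rho_i$ is infected feeds infections back to $\rho$ at rate $\la$, re‑igniting $\rho$ (strongly with probability $\wt(T)$) and spawning further busy periods. Summing this (super)geometric structure, the expected number of $\rho$‑busy periods until all subtrees have cleared is controlled by $\wt(T)^{-1}=1+\tfrac1{\la(D+1)}$, and — because an already‑activated subtree independently lengthens the window in which the remaining subtrees can be activated — the subtree contributions compose multiplicatively, yielding $\wt(T)^{-1}\prod_{i=1}^{D}(1+\la\,\wt(T_i)S(T_i))$; the two scalar terms $\tfrac1{1+\la(D+1)}-\tfrac1{\la(D+1)}$ are precisely the correction that converts this ``count $\times$ inflated round cost'' estimate into the stated inequality (the $-\tfrac1{\la(D+1)}$ cancelling the corresponding piece of $\wt(T)^{-1}$).

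The main obstacle is the feedback between $\rho$ and the subtrees: a sub‑excursion of $T_i$ may outlast the busy period of $\rho$ that created it and then re‑infect $\rho$, so $\rho$'s busy periods are neither independent of, nor a clean concatenation with, the sub‑excursions, and one must splice the discrete geometric structure of $\rho$'s clock rings together with the continuous‑time sub‑excursions without loss in the multiplicative constant. I expect this to be handled by a careful exploration of the graphical representation in which each subtree exploration is paused while $\rho$ is examined and resumed when $\rho$ next infects it (preserving independence), combined with the monotone coupling of Lemma~\ref{lem:graph rep2} to replace the true, $\rho$‑recovery‑truncated sub‑excursions by the cleaner always‑on‑parent quantities $\mathbf S(T_i)$; after that, the bound follows by taking expectations and summing the resulting series.
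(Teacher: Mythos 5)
Your high-level intuition is sound, but the proposal has a genuine gap precisely at the step it flags as ``the main obstacle,'' and the mechanism you sketch there is not what makes the multiplicative form work. You assert that ``the subtree contributions compose multiplicatively, yielding $\wt(T)^{-1}\prod_{i=1}^{D}(1+\la\,\wt(T_i)S(T_i))$'' because ``an already-activated subtree independently lengthens the window in which the remaining subtrees can be activated,'' but this is exactly the claim that needs proof; in the unmodified process, $\rho$'s heals interrupt and re-couple all the subtree excursions, and there is no obvious reason the expected excursion length should factorize.

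The paper gets the product structure through two reductions you do not carry out. First, it defines a dominating process $(X^\sharp_t)$ in which recoveries at $\rho$ are simply \emph{ignored} whenever any descendant is still infected, giving $S(T)\le S^\sharp(T)$ by Lemma~\ref{lem:graph rep2}; this is not a pause-and-resume scheme but a monotone modification that turns the inequality you want into an \emph{exact identity} for $S^\sharp$. Second, conditional on the first outgoing infection from $\rho$ hitting a child, the analysis reduces to the process $(X^\otimes_t)$ on $T$ with $\rho$ permanently infected. Because $\rho$ is held infected, $(X^\otimes_t)$ is literally a product of $D$ independent root-added starred processes on $T_1,\dots,T_D$; comparing the stationary probabilities $\pi(\mathbf 0)=\prod_i\pi_i(\mathbf 0)$ with the renewal-reward identities $\pi_i(\mathbf 0)=\bigl(1+\la\,\wt(T_i)S(T_i)\bigr)^{-1}$ and $\pi(\mathbf 0)=\bigl(1+\la D\theta S^\otimes\bigr)^{-1}$ yields the exact factorization
\[
1+\la D\theta S^\otimes=\prod_{i=1}^{D}\bigl(1+\la\,\wt(T_i)S(T_i)\bigr).
\]
Your proposal replaces this with a domination of ``the total time $T_i$ carries an infection (over all its activations)'' by independent copies of $\mathbf S(T_i)$, but $\mathbf S(T_i)$ is the length of a \emph{single} excursion of the root-added process on $T_i$, while $T_i$ may be re-activated many times, so that domination does not hold as stated. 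Relatedly, the identification of ``the expected number of $\rho$-busy periods'' with $\wt(T)^{-1}$ is not a count of busy periods: in the paper the factor $\wt(T)^{-1}=\tfrac{1+\la(D+1)}{\la(D+1)}$ emerges only after summing the geometric series over failed heal attempts of $\rho$ in the $\sharp$ process and performing the algebraic rearrangement in~\eqref{eq:S:T:2}, not from a direct renewal count. To complete the proof you would need to (i) introduce the $\sharp$ modification, (ii) set up the $\otimes$ process and the geometric-series expansion of $S^\sharp$ over heal attempts of $\rho$, and (iii) prove the product identity via the stationary-distribution comparison; without these, the multiplicative form and the exact constants remain unproved.
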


  \begin{proof}
  	Let $(X_t)$ be the starred contact process on $T^+$ with $\rho^+$ always infected and $\rho$ initially at $1^*$. We consider the so-called root-added process $(X^\sharp_t)$ on $T^+$ that also has $\rho^+$ always infected, also starts at $\rho$ being $1^*$ and generated by the same recovery and infection clocks as $(X_t)$ except that we ignore any recoveries of $(X_t)$ at $\rho$ if at least one of its successors is still infected at the time.
  	Let $ S^\sharp(T)$ be the expected excursion time of $(X^\sharp_t)$ that is the expected time difference from the time $\rho$ is strongly infected to the first time that all vertices in $T$ recover.
  	Since $S(T)\le S^{\sharp} (T)$ by Lemma \ref{lem:graph rep2}, it suffices to show that
  	\begin{eqnarray} \label{eq:lm:S:sharp}
  	S^{\sharp}(T) &=& \frac{1}{1+\la (D+1)} - \frac{1}{\la(D+1)}+  \wt(T)^{-1} \prod_{i=1}^{D} (1 + \la \wt(T_i) S(T_i)).
  	\end{eqnarray}
  	Let $t_0$ be the first time that an event at $\rho$ happens, which could either be a recovery at $\rho$ or an infection from $\rho$. We have
  	\begin{equation}
  	\E t_0 = \frac{1}{1+\lambda (D+1)}.\nonumber
  	\end{equation}
  	Since $\rho$ is at $1^*$ initially, this first event has to be an infection from $\rho$ to a child $\rho_i$ or to $\rho^+$. Note that conditioning on the event that $\rho$ is $1^*$ at time $0$ does not change the expectation of $t_0$.
  	
  	If $\rho $ infects $ \rho_i$ at time $t_0$, then the healing clock at $\rho$ is deactivated and the infection is spread across $T$ until all of $T\setminus \{\rho\}$ recovers. Denote by $t_1$ the first time that this happens. If at time $t_0$, $\rho $ infects $ \rho^+$ then $t_1 =t_0$. 	
  	
%
%
Consider the starred contact process on $T$ with $\rho$ always infected and one of its children initially at $1^*$. More precisely, let $\cp_{\rho} (T_i;\one^*_{\rho_i})$ (and $\cp_{\rho} (T_i;\zero)$) (the subscript $\rho$ indicates that it serves as the added parent above $\rho_i$) be the root-added contact processes on each $T_i$ with $\rho_i$ initially at $1^*$ (and with all vertices in $T_i$ initially healthy, respectively). Consider their product
\begin{equation}\label{eq:def:productchain}
\cp^{\otimes}(T;\one^*_{\rho_i}) :=
\left(\otimes_{\substack{j=1\\j\neq i}}^D \cp_{\rho} (T_j;\zero ) \right)
\otimes
\cp_{\rho} (T_i;\one^*_{\rho_i}),
\end{equation}
for each $i\in[D]$. Let $\textbf{S}_i^\otimes$ denote the  excursion time of this process, that is, the first return time to the all-healthy state $\otimes_{j=1}^D \zero_{T_j}$, and let $S_i^\otimes = \E \textbf{S}_i^\otimes$. Further, define the average of $S_i^\otimes$ by
\begin{equation} 
S^\otimes = \frac{1}{D} \sum_{i=1}^D S_i^\otimes.\notag
\end{equation}
  	Let
  	\begin{eqnarray}\label{eq:theta}
  	\theta &=& \frac{1}{D} \sum_{i=1}^{D} \P(\text{the infection from $\rho$ to $\rho_i$ is a strong infection}\vert \rho \text{ infects }\rho_i \text{ at time $t_0$})\notag\\
  	 &=&  \frac{1}{D} \sum_{i=1}^{D}  \wt(T_i)
  	\end{eqnarray}
  	be the probability that an infection from $\rho$ to its children is strong.
  	We observe that
  	\begin{equation}\label{eq:t1:t0}
  	\E (t_1 - t_0\vert \rho \text{ sends an infection to a child at time } t_0) = \theta S^\otimes .
  	\end{equation}
  	
%
%
%
  	
  	Using \eqref{eq:t1:t0}, we shall now prove that
  	\begin{equation}\label{eq:S:T}
  	S^{\sharp} (T) =  \E t_0 + \E (t_1- t_0) + \sum _{k=0}^{\infty} \left (\frac{\la D}{1+\la D}\right )^{k} \left (\frac{1}{1+\la D}\right ) \left (\frac{k+1}{1+\la D}+ k \theta S^\otimes  \right ).
  	\end{equation}

  	To see \eqref{eq:S:T}, at time $t_1$, only $\rho$ and $\rho^+$ remain infected. The next meaningful event would either be a recovery at $\rho$ or an infection from $\rho$ to a child $\rho_i$. The former corresponds to $k=0$. If the latter happens instead, say at a time $t_2$, then the infection will spread across $T$ again until everything in $T\setminus \{\rho\}$ is healed. Now, another attempt to heal $\rho$ happens again with probability of success $\frac{1}{1+\la D}$. If success, we set $k=1$. Otherwise, the whole process repeats and $k$ is correspondingly the number of fail attempts to heal $\rho$. The probability of $k$ is $\left (\frac{\la D}{1+\la D}\right )^{k} \left (\frac{1}{1+\la D}\right )$ and the time spent is $\frac{k+1}{1+\la D}+ k \theta S^\otimes$ according to \eqref{eq:t1:t0}.
  	
  	  	By algebraic manipulation, we obtain
  	\begin{eqnarray}\label{eq:S:T:2}
  	S^{\sharp} (T) &=& \frac{1}{1+\la (D+1)}+ \frac{D}{D+1}\theta S^\otimes + \sum _{k=0}^{\infty} \left (\frac{\la D}{1+\la D}\right )^{k} \left (\frac{1}{1+\la D}\right ) \left (\frac{k+1}{1+\la D}+ k \theta S^\otimes \right )\nonumber\\
  	&=& \frac{1}{1+\la (D+1)}+ \frac{D}{D+1}\theta S^\otimes + 1 + \la D  \theta S^\otimes\nonumber\\
  	&=&\frac{1}{1+\la (D+1)} - \frac{1}{\la(D+1)}+  \frac{1+\la (D+1)}{\la (D+1)} (1+\la D\theta S^\otimes).
  	\end{eqnarray}
  	
  	We now evaluate $S^{\otimes}$ using the observation that $(X^{\otimes}_t)$, as $\rho$ is always infected, is just the product of the root-added contact processes on the $T_i$'s, where $\rho$ plays the role of $\rho_i^{+}$ -- the added-root. Let $\pi_i$ be the stationary distribution of the root-added starred contact process on $T_i$. Then, $\pi_i({\bf 0})$ is the fraction of time that $T_i$ is all healthy and so
  	$$\pi_i({\bf 0}) = \frac{\left (\la \frac{\la (d_i+1)}{1+\la (d_i+1)}\right )^{-1}}{\left (\la \frac{\la (d_i+1)}{1+\la (d_i+1)}\right )^{-1} + S(T_i)} = \frac{1}{1 + \la \wt(T_i) S(T_i)},$$
  	where $\la \frac{\la (d_i+1)}{1+\la (d_i+1)}$ is the rate at which $\rho$ sends a strong infection to $\rho_i$.
  	
  	Similarly, let $\pi$ be the stationary distribution of $(X^{\otimes}_t)$. Then, $\pi({\bf 0})$, the fraction of time that $T\setminus \{\rho\}$ is all healthy, is
  	$$\pi({\bf 0}) = \frac{\left (\la D\theta\right )^{-1}}{\left (\la  D\theta \right )^{-1} + S^\otimes} = \frac{1}{1 + \la  D\theta S^\otimes},$$
  	where $\la D \theta$ is the rate at which $\rho$ sends a strong infection to one of the $\rho_i$.

  	Since $\pi = \prod_{i=1}^{D} \pi_i$, we have
  	$$\frac{1}{1 + \la D \theta S^\otimes} = \prod_{i=1}^{D} \frac{1}{1 + \la \wt(T_i) S(T_i)}$$
  	and so,
  	\begin{equation}\label{eq:S:prod}
  	1 + \la D \theta S^\otimes = \prod_{i=1}^{D} (1 + \la \wt(T_i) S(T_i)).
  	\end{equation}
  	Combining this with \eqref{eq:S:T:2}, we get \eqref{eq:lm:S:sharp}.
  \end{proof}

  \subsection{Recursion for the total number of infections at the leaves}
   To pass from Galton-Watson trees to random graphs, it is crucial to control the effect of cycles as they allow the infection to transfer from one local neighborhood to another. To this end, one needs to control the number of times the infection hits the boundary of a local neighborhood. Therefore, following \cite{NNS}, in this section, we study the \textit{total  infections at leaves} of (finite) trees defined as follows.

  \begin{definition}[Total infections at leaves]\label{def:tit}
  	Let $T$ be a finite tree rooted at $\rho$, set
  	 \begin{equation}\label{eq:depth def}
  	l=\max\{\textnormal{dist}(\rho,v):v\in T \}.
  	\end{equation}
  	 be the depth of the tree and $$\mathcal{L}:=\{v\in T: \textnormal{dist}(\rho,v)=l \}  $$
  	be the collection of depth-$l$ leaves of $T$. Suppose that $l\geq 1$ and consider the root-added, starred contact process $(X_t)$ on $T^+ = T\cup \{\rho^+\}$ with $\rho^+$ always infected and $\rho$ initially at state $1^*$. For $v\in \mathcal{L}$, define the \textit{total infections at} $v$, by
  	\begin{equation*}
  	\textbf{M}_{l, v}(T):=\,\textnormal{the number of infections at } v \textnormal{ in }(X_t) \textnormal{ during time } t\in[0,\textbf{S}(T)],
  	\end{equation*}
  	where  $\textbf{S}(T)$ is the first time that $(X_t)$ returns to the all healthy state on $T$. In other words, we count the number of times $t$ such that $X_t(v)=1^*$ and $X_{t-}(v)=0$ for $t\leq \textbf{S}(T)$. Then,
  	we define  the \textit{total infections at depth-$l$ leaves} (\textit{during a single excursion}) by
  	\begin{equation*}
  	\textbf{M}_{l}(T) = \sum_{v \in \mathcal{L}} \textbf{M}_{l, v}(T).
  	\end{equation*}
  	For $l'>l$, we set $\textbf{M}_{l'}(T)\equiv 0$.
  	
  	We also denote the \textit{expected total  infections at depth-$l$ leaves} by $M_{l}(T) = \E_{\textsc{cp}} \textbf{M}_{l}(T)$. Also, as above, we write $M_{l'}(T)=0$ for $l'>l$. Moreover, if the tree depth is $0$  (that is, $T$ consists of a single vertex), we set $\textbf{M}_{0}(T) \equiv 1$.
  \end{definition}

  Let $(T,\rho)$ be a finite rooted tree of \textit{depth} $l$.
  The goal of this section is to establish the following recursive bound.
    \begin{lemma}\label{lm:Mrecursion}
  	For a finite rooted tree $(T,\rho)$ of depth $l$, let $D=\deg(\rho)$ and $T_1,\ldots,T_D$ be the subtrees rooted at the children $v_1, \dots, v_D$ of $\rho$. Then, $M_{l}(T)$, the expected total infections at depth-$l$ leaves  on $T$, satisfies 
  	\begin{equation}\label{eq:Mrecursion atypical}
  	\wt(T) M_{l}(T) \leq  \la \sum_{i=1}^D \wt(T_i) M_{l-1}(T_i) \prod_{\substack{1\,\leq\, j\, \leq\, D\\ j\neq i}}   \left (1+\lambda \wt(T_i) S(T_j)\right ),
  	\end{equation}
  	where we recall that
  	\begin{equation}\label{eq:wi}
  	\wt(T_i) = \frac{\la(d_i+1)}{1+\la (d_i+1)}\quad\text{and}\quad  \wt(T) = \frac{\la(D+1)}{1+\la (D+1)}.
  	\end{equation}
  \end{lemma}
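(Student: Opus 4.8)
The plan is to mirror the proof of Lemma \ref{lm:S:recursion}, decomposing the excursion according to the sequence of infections that leave $\rho$ and reach its children, and tracking how many of those eventually propagate all the way to depth $l$. First I would pass to the auxiliary process $(X^\sharp_t)$ in which recoveries at $\rho$ are suppressed whenever some successor of $\rho$ is still infected; since $(X^\sharp_t)$ dominates $(X_t)$ pointwise until all of $T$ is healthy, it also dominates the count of infections reaching $\mathcal L$, so $\wt(T)M_l(T)\le \wt(T)M_l^\sharp(T)$ and it suffices to prove the bound for the sharp process. In the sharp process the excursion breaks into independent ``rounds'': each time $\rho$ is infected it repeatedly fires infection clocks to its children until a recovery clock at $\rho$ catches it with none of the subtrees infected. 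Conditioning on $\rho\to_{t_0}\rho_i$, by the product structure of the root-added processes on $T_1,\dots,T_D$ (exactly as in the proof of \eqref{eq:S:prod}), the expected number of depth-$l$ leaf infections generated downstream of this single infection into $T_i$ is $\wt(T_i)M_{l-1}(T_i)$ times a factor accounting for the other subtrees becoming reinfected off of $\rho$ while $T_i$'s excursion is running.

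The key computation is to identify that correction factor. While the excursion in $T_i$ (of expected length $S(T_i)$) is alive, $\rho$ stays infected, so each other child $\rho_j$ is being infected at rate $\la$, and a fraction $\wt(T_j)$ of those infections are strong; running the root-added starred process on $T_j$ with $\rho$ as the added root for total time $\approx S(T_i)$ produces, in expectation, $\la\wt(T_i)S(T_j)$-many excursions-worth — here the bookkeeping is cleanest if one argues via stationary distributions as in Lemma \ref{lm:S:recursion}: the fraction of time $T_j$ is healthy is $\pi_j(\mathbf 0)=1/(1+\la\wt(T_j)S(T_j))$ and the renewal structure gives the multiplicative factor $\prod_{j\ne i}(1+\la\wt(T_i)S(T_j))$ after one normalizes by the rate $\la\wt(T_i)$ at which a strong infection is sent into $T_i$. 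Summing over which child $\rho_i$ receives the triggering strong infection, and over the (geometrically many) rounds in which $\rho$ is re-infected, yields a geometric series whose sum, after the same algebraic manipulation that produced \eqref{eq:S:T:2} from \eqref{eq:S:T}, collapses to the stated bound with the prefactor $\wt(T)^{-1}$ on the left and the extra $\la$ on the right coming from the single clock that first sends the infection from $\rho$ into a child.

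I would organize the write-up as: (1) reduce to $(X^\sharp_t)$; (2) set up the renewal decomposition of the sharp excursion into rounds headed by an infection $\rho\to\rho_i$, noting each round's leaf-infection contribution is independent across rounds given the round structure; (3) within a round, use the product/stationary-distribution identity to get $\E[\text{leaf infections in this round}\mid \rho\to_{t_0}\rho_i] = \wt(T_i)M_{l-1}(T_i)\prod_{j\ne i}(1+\la\wt(T_i)S(T_j)) \big/ (\text{normalization})$; (4) sum the geometric series over rounds and over $i$, and simplify exactly as in \eqref{eq:S:T:2}. The inequality (rather than equality) enters only through $S(T)\le S^\sharp(T)$-type comparisons and through using $M_{l-1}(T_i)=\E_{\CP}\mathbf M_{l-1}(T_i)$ for the root-added subtree process, which upper-bounds the contribution inside the full tree.

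The main obstacle I expect is step (3): making the heuristic ``the number of strong infections into $T_i$ downstream-generating leaf hits is $\wt(T_i)M_{l-1}(T_i)$, inflated by re-excursions in the sibling subtrees'' fully rigorous. The difficulty is that the excursion in $T_i$ and the sibling re-infections are not literally independent — they interact through the shared vertex $\rho$ — so one cannot naively multiply expectations. The resolution, as in the $S$-recursion, is to exploit that with recoveries at $\rho$ suppressed while any subtree is infected, the subtrees $T_1,\dots,T_D$ evolve as \emph{independent} root-added processes driven off the common always-infected $\rho$, so the joint stationary measure factorizes; the count $\mathbf M_l$ is then a functional of this product chain and Wald-type / renewal-reward identities (rate of strong infections into $T_i$ times expected leaf hits per such infection, integrated against the fraction of time the relevant configuration holds) give the exact multiplicative form. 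Getting the normalization constants to cancel correctly — so that the final bound has precisely the factor $\wt(T)^{-1}$ and the product is over $j\ne i$ with argument $1+\la\wt(T_i)S(T_j)$ — is the delicate part and will require carefully re-deriving the analogues of \eqref{eq:S:prod} and \eqref{eq:S:T} for the leaf-count functional rather than just the excursion length.
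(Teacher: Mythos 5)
Your proposal follows essentially the same route as the paper's proof: reduce to the modified process $(X^\sharp_t)$ with recoveries at $\rho$ suppressed, decompose the excursion into geometrically many rounds each opened by an infection $\rho\to\rho_i$, and within a round use the product structure of the root-added processes on $T_1,\dots,T_D$ together with the stationary-distribution / renewal-reward identity (the analogue of \eqref{eq:S:prod}) to turn the leaf-hit rate into the stated product-form bound. One small point worth noting if you write this up carefully: the stationary argument you sketch naturally yields the factor $\prod_{j\ne i}(1+\la\wt(T_j)S(T_j))$ with matched indices (as in the paper's displayed formula for $M^\otimes(T)$), rather than the $\wt(T_i)S(T_j)$ appearing in the lemma's display, which is a typo you have transcribed rather than an output of your argument.
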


  \begin{proof} [Proof of Lemma \ref{lm:Mrecursion}]
  	We shall use the same notations as in the proof of Lemma \ref{lm:S:recursion}. In particular, we consider the following processes on $T$:
  	\begin{itemize}
  		\item $(X_t)$ the root-added process with $\rho$ initially at state $1^*$,
  		\item $(X^{\sharp}_t)$ the root-added process with $\rho$ initially at state $1^*$ and the recovery clock at $\rho$ being temporary disabled if $T\setminus \{\rho\}$ is not all healthy,
  		\item $(X^{\otimes}_{i, t})$ the process with $\rho$ always infected and the $\rho_i$ initially at state $1^*$. That means $(X^{\otimes}_{i, t})\sim \cp^{\otimes}(T;\one^*_{\rho_i})$ as in \eqref{eq:def:productchain}.
  	\end{itemize}
  	We denote the expected total infections at depth-$l$ leaves by $M_{l}(T)$, $M^\sharp(T)$ and $M_i^\otimes(T)$, correspondingly. We define 
  	$$M^\otimes(T) = \frac{1}{D}\sum_{i=1}^{D} M_i^\otimes(T).$$
  	By a standard coupling between $(X^\sharp_t)$ and $(X_t)$ based on their graphical representations, we have $M_{l}(T)\le M^\sharp(T) $.
  	
  	During one excursion of $(X^{\sharp}_t)$, $\rho$ first sends an infection to either $\rho^+$ or one of its children $\rho_i$. The probability that this is a strong infection to one of the $\rho_i$ is $\frac{D}{D+1}\theta$ where $\theta$ is defined in \eqref{eq:theta}. Between this first infection and the end of the excursion of $(X^{\sharp}_t)$, the number of excursions of $(X^{\otimes}_{i, t})$ is a geometric random variable with ``success" probability $\frac{1}{1+\la D \theta}$. This is because $\rho$ recovers at rate $1$ and sends a strong infection to the $\rho_i$ with rate $\la D\theta$; and the excursion of $(X^{\sharp}_t)$ ends precisely when $\rho$ recovers. So, the expected number of excursions of $(X^{\otimes}_{i, t})$ in one excursion of $(X^{\sharp}_t)$ is $\left ( \frac{D}{D+1} + \lambda D\right ) \theta$. Thus,
  	\begin{eqnarray}\label{eq:Msharp and Motimes}
  	M^\sharp(T)  &=& \frac{D\theta}{D+1}M^{\otimes}(T) +\sum_{k=0}^{\infty} \left (\frac{\lambda D\theta}{1+\lambda D\theta}\right )^{k} \frac{1}{1+\lambda D\theta}k M^{\otimes}(T)\notag\\
  	&=& \left ( \frac{1}{D+1} + \lambda \right ) D\theta M^\otimes(T).
  	\end{eqnarray}
  	
  	Now we control $M^\otimes(T)$ in terms of $\{M_{l-1}(T_i) \}$. Let $X^\otimes_t$ be the contact process on $T$ with $\rho$ always infected and all other vertices initially healthy. We observe that
  	\begin{equation}\label{eq:Motimes integral}
  	\begin{split}
  	&\lim_{t\rightarrow \infty} \frac{1}{t} \sum_{v\in\mathcal{L}}\E_{\textsc{cp}}
  	\left[\,
  	\left|\left\{ s \in [0,t]:\; X_{s}^\otimes(v)=1^* \textnormal{ and } X_{s-}^\otimes (v)=0 \right\} \right| \,
  	\right]\\
  	&=\lim_{t\rightarrow \infty} \frac{\text{the number of excursions of $(X^{\otimes}_s)$ in $[0, t]$}}{t} M^{\otimes}(T)  \\
  	&=
  	\frac{M^\otimes(T)}{(\lambda D\theta)^{-1} + S^\otimes(T)}.
  	\end{split}
  	\end{equation}
  	On the other hand, let $\mathcal{L}_i = \{v\in T_i: \textnormal{dist}(v,\rho_i) = l-1 \}$ for each $i\in [D]$ and $(X^{(i)}_t)$ be the restriction of $(X^\otimes_t)$ on $T_i$. We observe that  
  	\begin{equation*}
  	\begin{split}
  	&\sum_{v\in\mathcal{L}}\left|\left\{ s \in [0,t]:\; X_{s}^\otimes(v)=1^* \textnormal{ and } X_{s-}^\otimes (v)=0 \right\} \right| \\
  	&= \sum_{i=1}^D\sum_{v\in \mathcal{L}_i}
  	\left|\left\{ s \in [0,t]:\; X_{s}^{(i)}(v)=1^* \textnormal{ and } X_{s-}^{(i)} (v)=0 \right\} \right|.
  	\end{split}
  	\end{equation*}
  	Thus, the identity in (\ref{eq:Motimes integral}) is also equal to
  	\begin{equation*}
  	\sum_{i=1}^D \frac{M_{l-1}(T_i)}{(\lambda \wt(T_i))^{-1} + S(T_i)},
  	\end{equation*}
  	where $\wt(T_i)$, as defined in \eqref{def:w:i}, is the probability that an infection from $\rho$ to $\rho_i$ is strong.
  	Therefore,
  	\begin{eqnarray*}
  		M^\otimes(T) &=& \frac{1}{\lambda D\theta} \left (1+\lambda D\theta S^\otimes (T)\right )    \sum_{i=1}^D \la \wt(T_i) M_{l-1}(T_i) (1+\la \wt(T_i) S(T_i))^{-1}\nonumber\\
  		& =& \frac{1}{D\theta}  \sum_{i=1}^D \wt(T_i) M_{l-1}(T_i) \prod_{\substack{1\,\leq\, j\, \leq\, D\\ j\neq i}} (1+\lambda \wt(T_j) S(T_j)),
  	\end{eqnarray*}
  	where in the second equality we used equation~\eqref{eq:S:prod}.  Finally, combining this with (\ref{eq:Msharp and Motimes}) and $M^\sharp(T) \geq M_{l}(T)$, we deduce the conclusion.
  \end{proof}

 \subsection{The tail bound lemma} In the previous sections, we have derived recursive bounds for the survival time $S(T)$ and the number of infections at the leaves $M_l(T)$. These quantities are random variables defined on the probability space of the Galton-Watson tree. In this section, we provide a key lemma that gives tail estimates for $S(T)$ and $M_l(T)$ using the recursions. Let $D\sim \mu$ where $\mu$ is a probability measure on $\N$ satisfying $\E e^{3D}<\infty$. In particular, the measure $\mu$ in Theorem \ref{thm:uper} satisfies this condition.
For the rest of the paper, let $A, B, c$ be any constants that satisfy $A\ll \frac{1}{c}\ll B$. For definiteness, we set
\begin{equation}\label{eq:const:1}
A = \log \E e^{3D}+2e, \quad c = \frac{1}{A},\quad B = 24A.
\end{equation}
We assume that the infection rate $\la$ is sufficiently small compared to $B^{-1}$. Let
\begin{equation}\label{def:q}
q= \frac{c}{16\la^{2}\log (\la^{-1})}
\end{equation}
and
\begin{equation}\label{def:p}
p = (B \la)^{-q} \exp(-c \la^{-2}/B) .
\end{equation}

\begin{definition} \label{def:good:tail}
	A nonnegative random variable $X$ is said to have a good tail (with respect to $A, B, \lambda$) if
	\begin{eqnarray} \label{eq:lm:tail}
	\P(X\ge t) \le f(t):=\begin{cases}
	\exp(-c t\la^{-1}) \quad\text{for } A\la \le t\le \frac{1}{B\la }\\
	p  t^{-q} (\log (eB t\la))^{-2} \quad\text{for } \frac{1}{B\la } \le t.
	\end{cases}
	\end{eqnarray}
	A random variable $X$ is said to have a strong tail if
	\begin{eqnarray} \label{eq:lm:tail:1}
	\P(X\ge t) \le f(3t)/3 \quad\text{for all $t\ge A\la/3$}.
	\end{eqnarray}
	Since $f$ is decreasing, it is clear that a random variable with a strong tail also has a good tail. Moreover, if $X_1, X_2, X_3$ have strong tails then $X_1+X_2+X_3$ has a good tail because
	$$\P(X_1+X_2+X_3\ge t)\le \P(X_1\ge t/3)+\P(X_2\ge t/3)+\P(X_3\ge t/3).$$
\end{definition}
A random variable with a good tail starts with an exponential tail and then a power law tail. The extra term $(\log (eB t\la))^{-2}$ in the latter tail is merely a technical device (used in \eqref{eq:logsum}) for us to prove the following recursive lemma which is essential for our subsequent analysis.
\begin{proposition}[Recursive tail bound] \label{prop:tail} 
Let $D\sim \mu$ where $\mu$ is a probability measure on $\N$ satisfying $\E e^{3D}<\infty$.  There exists $\lambda_0(\mu)>0$ such that the following holds.	Let $\{X_i\}_{i=1}^{D}$ be independent, not necessarily identically distributed, nonnegative random variables with good tails. Then for $0<\lambda<\lambda_0(\mu)$,
	\begin{equation}
	X=  \prod_{i=1}^{D} (1 + \la X_i) - 1,
	\end{equation}
	 has a strong tail.
\end{proposition}
\begin{remark}
	We remark that this lemma works for any $q$ satisfying
\begin{equation}\label{def:q:1}
20\log \frac1\la<q< \frac{c}{8\la^{2}\log (\la^{-1})}.
\end{equation}
Here, we set $q$ as in \eqref{def:q} for clarity.
\end{remark}
The rest of this section is devoted to the proof of Proposition \ref{prop:tail}.

For notational convenience, we let $\La = \frac1\la$. We split the proof into steps depending on the range of $t$ in $\P(X\ge t)$.

\subsubsection{Small $t$: $\frac{A\la}{3} \le t\le \frac{\La}{3B}$}
We need to show that
\begin{equation}\label{eq:smallt}
\P(X\ge t) \le \exp(-3 c t\La)/3= f(3t)/3=:p_t.
\end{equation}
Let $M=t\La\ge A/3$ by \eqref{eq:const:1}.
Since $\E e^{3 D}<\infty$ by \eqref{cond:mu}, we have
\begin{eqnarray}
\P(D\ge M) \le \E e^{3D}\exp\left (-3M\right )\le \exp\left (-3c t\La\right )/24=p_t/8.\nonumber
\end{eqnarray}
So, it suffices to assume that $D\le M$.
The next lemma allows to use truncation on $X_i$.
\begin{lemma} \label{lm:M:X} We have
	$$	\P\left (\exists i\le M: X_i\ge  \frac{2\La}{B}\right ) \le  \exp(-3c t\La)/24=p_t/8.$$
\end{lemma}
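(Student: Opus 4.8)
The claim is a routine union bound over the (at most) $M$ indices $i\le M$, combined with the good-tail bound $\P(X_i\ge t)\le f(t)$ applied at the point $t=\frac{2\La}{B}$. The plan is as follows. First I would observe that since $M=t\La$ with $t\le \frac{\La}{3B}$, we have $M\le \frac{\La^2}{3B}$, but more importantly $M$ is a fixed bound on the number of terms, so $\P(\exists i\le M: X_i\ge \tfrac{2\La}{B})\le M\cdot \sup_{i}\P(X_i\ge \tfrac{2\La}{B})\le M\, f(\tfrac{2\La}{B})$. Next I would check which branch of the piecewise definition of $f$ applies at $t=\frac{2\La}{B}$: since $\frac{2\La}{B}=\frac{2}{B\la}>\frac{1}{B\la}$, we are in the power-law regime, so
\begin{equation}
f\!\left(\tfrac{2\La}{B}\right) = p\left(\tfrac{2\La}{B}\right)^{-q}\bigl(\log(eB\cdot \tfrac{2\La}{B}\cdot \la)\bigr)^{-2} = p\left(\tfrac{B}{2\La}\right)^{q}\bigl(\log(2e)\bigr)^{-2}\le p\left(\tfrac{B\la}{2}\right)^{q}.\nonumber
\end{equation}

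Now I would substitute the definition $p=(B\la)^{-q}\exp(-c\la^{-2}/B)$ from \eqref{def:p}, so that $p(B\la/2)^{q}=2^{-q}\exp(-c\La^2/B)$, giving the bound $\P(\exists i\le M:\dots)\le M\,2^{-q}\exp(-c\La^2/B)$. It remains to absorb the polynomial-in-$\La$ factor $M\le \frac{\La^2}{3B}$ and the harmless $2^{-q}$ into the exponential and arrive at $\exp(-3ct\La)/24$. Here I would use $M=t\La$, so the target is $t\La\,2^{-q}\exp(-c\La^2/B)\le \exp(-3ct\La)/24$; since $t\le \frac{\La}{3B}$ we have $3ct\La\le \frac{c\La^2}{B}$, so it suffices to show $t\La\le \tfrac{1}{24}\exp\bigl(\tfrac{c\La^2}{B}-3ct\La\bigr)$, and because $3ct\La\le \frac{c\La^2}{B}$ the exponent $\frac{c\La^2}{B}-3ct\La\ge 0$; comparing instead against a crude slice, I would note $\exp(-c\La^2/B)$ already beats any fixed power of $\La$ once $\la<\lambda_0(\mu)$ is small enough (equivalently $\La$ large), which kills $M\le \La^2$ with room to spare. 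Writing the numerics carefully with the constants in \eqref{eq:const:1} (recall $B=24A$, $c=1/A$) yields the stated $p_t/8=\exp(-3ct\La)/24$.

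The main obstacle, such as it is, is purely bookkeeping: one must make sure the power-law tail is evaluated in the correct regime (it is, since $2\La/B=2/(B\la)>1/(B\la)$), and then verify that the product of the $\exp(-c\La^2/B)$ gain against the at-most-$\La^2$ loss from the union bound, together with the factor $t\La$ that should appear on the right, all fits inside $\exp(-3ct\La)/24$ uniformly for $t$ in the range $[A\la/3,\La/(3B)]$ once $\la$ is chosen small depending only on $\mu$ (through $A$, hence $B$ and $c$). No genuinely hard step is involved; the inequality is comfortable with the exponential-in-$\La^2$ term to spare.
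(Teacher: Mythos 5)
Your proposal is correct and takes essentially the same route as the paper: union bound over $i\le M$, evaluation of $f$ at $2\La/B$ in the power-law regime (giving $p(B\la/2)^q = 2^{-q}\exp(-c\La^2/B)$), using $t\le \La/(3B)$ to convert $\exp(-c\La^2/B)$ into $\exp(-3ct\La)$, and absorbing the polynomial factor $M\le\La^2$ via the super-polynomial decay of $2^{-q}$ for $\la$ small. The paper's version is slightly tidier in the final step (it writes the chain $M2^{-q}\exp(-c\La^2/B)\le\La^2 2^{-q}\exp(-c\La^2/B)\le\tfrac1{24}\exp(-c\La^2/B)\le\exp(-3ct\La)/24$ directly, rather than attributing the polynomial absorption to the spare exponential), but this is only a presentational difference.
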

\begin{proof}
	Since the $X_i$ have good tails, by the union bound, we have
	\begin{eqnarray}
	\P\left (\exists i\le M: X_i\ge \frac{2\La}{B}\right ) &\le& M2^{-q}\exp(-c \La^2/B) \nonumber\\
	&\le& \La^2 2^{-q}  \exp(-c \La^2/B) \le \frac{1}{24} \exp(-c \La^2/B) \le \exp(-3c t\La)/24,
	\end{eqnarray}
	where we used \eqref{eq:const:1}, \eqref{def:q} and  $t\le \frac{\La}{3B}$.
\end{proof}

Lemma \ref{lm:M:X} allows us to get the tail bound for the sum of $X_i$ as follows.
\begin{lemma}\label{lm:sum:X} We have
	$$\P\left (\sum_{i=1}^{M} X_i \ge (\log 2) \La\right ) \le \exp(-3ct\La)/12=p_t/4.$$
\end{lemma}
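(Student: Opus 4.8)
The plan is to split the sum $\sum_{i=1}^M X_i$ according to whether each $X_i$ exceeds the truncation level $\tfrac{2\La}{B}$ or not, and to apply the exponential tail of the $X_i$ on the small range via a moment generating function (Chernoff) bound, while controlling the large values by Lemma~\ref{lm:M:X}. Concretely: first, by Lemma~\ref{lm:M:X}, outside an event of probability at most $p_t/8=\exp(-3ct\La)/24$ we may assume $X_i\le \tfrac{2\La}{B}$ for all $i\le M$. So it suffices to bound $\P\big(\sum_{i=1}^M \bar X_i \ge (\log 2)\La\big)$ where $\bar X_i := X_i\one_{X_i\le 2\La/B}$, and then absorb the exceptional probability into the final constant (using $\tfrac{1}{24}+\tfrac{1}{24}\le\tfrac{1}{12}$).

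For the truncated sum I would run an exponential Markov bound: for a parameter $s>0$ to be chosen of order $1/\La$ (say $s = \tfrac{B}{4\La}$ or similar, small enough that $s\cdot\tfrac{2\La}{B}\le \tfrac12$ so that $e^{sx}\le 1+2sx$ on the truncation range, hence $\E e^{s\bar X_i}\le 1+2s\,\E\bar X_i$), we get
\begin{equation}
\P\Big(\sum_{i=1}^M \bar X_i\ge (\log 2)\La\Big)\le e^{-s(\log 2)\La}\prod_{i=1}^M \E e^{s\bar X_i}\le \exp\big(-s(\log2)\La + 2sM\sup_i\E\bar X_i\big).\nonumber
\end{equation}
The key input is that each $\E\bar X_i$ (in fact $\E X_i$) is small: integrating the good-tail bound $f$ from Definition~\ref{def:good:tail}, the exponential part contributes $O(A\la)$ and the power-law part contributes $O(\la)$ (since $q>1$ and the tail kicks in only at $t\ge \tfrac{1}{B\la}$, and $p$ is exponentially small), so $\E X_i \le C\la = C/\La$ for an absolute constant $C$. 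Then $2sM\sup_i\E\bar X_i \le 2sM\cdot C/\La$, and since $M=t\La\le \tfrac{\La}{3B}\cdot\La$... actually more usefully $M\le \La^2/(3B)$, wait — here $M = t\La$ with $t\le \tfrac{\La}{3B}$ so $M\le \tfrac{\La^2}{3B}$; I would instead keep $M$ explicit and choose $s$ so that the $sM/\La$ term is dominated. Let me restructure: choose $s$ of order $\La/M$ up to logs, i.e. the right scaling is to note $\sum\bar X_i$ has mean $\le CM/\La$, and we want a deviation bound at level $(\log 2)\La \gg CM/\La$ precisely when $M\ll \La^2$, which holds since $M\le \La^2/(3B)$ and $B$ is large. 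So with $s=\tfrac{\La}{8M}$ (say), the bound becomes $\exp\big(-\tfrac{(\log 2)\La^2}{8M} + \tfrac{C\La}{4}\big)$, and since $M\le \La^2/(3B)$ this exponent is at most $-\tfrac{3B\log 2}{8} + \tfrac{C\La}{4}$... which is not obviously $\le -3ct\La$.

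I therefore expect the delicate point — and the main obstacle — to be the bookkeeping of constants and the choice of $s$: one must extract a bound of the precise form $\exp(-3ct\La)/12$, i.e. the exponent must beat $3ct\La = 3cM$. Since $c=1/A$ and $B=24A$, and $M\le \La/(3B)\cdot\La$, one has $\La \ge 3BM/\La$... I would pick $s$ proportional to $1/t = \La/M$ (not $\La/M$ scaled by an extra $\La$): take $s=\kappa/t$ for a suitable absolute $\kappa$. Then $s(\log 2)\La = \kappa(\log2)\La/t = \kappa(\log 2)\La^2/M$ — still has an extra $\La$. The cleaner route: the target is at absolute scale $(\log 2)\La$ in the sum, the sum has $M=t\La$ terms each with mean $O(\la)=O(1/\La)$, so the mean is $O(t)$, which is $\le \La/(3B)\ll \La$. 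Hence the event is a genuine large deviation by a factor $\gg 1$, and the exponential-moment bound with $s$ a small absolute constant times $\La$ gives room to spare; the $(\log B\la t)^{-2}$ factor in $f$ and the smallness of $p$ ensure the power-law tail contributes negligibly to $\E e^{sX_i}$ even at $s\asymp\La$, provided $sx = \La x$ stays controlled on the truncated range $x\le 2\La/B$ — here $s x \le 2\La^2/B$ is large, so one genuinely needs the truncation and the bound $\E e^{s\bar X_i}\le e^{s\cdot 2\La/B}$ trivially, combined with $\P$ of the bad event from Lemma~\ref{lm:M:X}. Balancing these two contributions — choosing $s$ so that $M s \cdot 2\La/B$ is dominated by $s(\log2)\La$, i.e. $s \ll \tfrac{(\log 2)B}{2M}\cdot$(something) — and then verifying the resulting exponent is $\le -3ct\La$ using $A\ll 1/c\ll B$ and $t\le \La/(3B)$, is the routine-but-fiddly computation I would carry out carefully, and it is where all the slack in the constants $A\ll 1/c\ll B$ and $\la<\lambda_0(\mu)$ gets consumed.
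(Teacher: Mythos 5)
Your high-level skeleton matches the paper's: truncate via Lemma~\ref{lm:M:X}, then apply an exponential Markov (Chernoff) bound to the truncated sum $\sum X_i'$ with $X_i'=X_i\one_{X_i\le 2\La/B}$. But the proposal never lands on the parameter choice and moment estimate that actually make this work, and every variant you try along the way would fail. The decisive point in the paper's proof is that one takes $\xi_i=\La X_i'$ and bounds $\E e^{c\xi_i/4}\le e^2$, i.e.\ one works with $s=c\La/4$ in $\E e^{sX_i'}$. This $s$ is chosen as a fixed fraction of the exponential decay rate $c\La$ in the good-tail bound $\P(X_i\ge x)\le e^{-c\La x}$ on $[A\la,\,1/(B\la)]$: integrating, the factor $e^{sx}$ is beaten by the tail $e^{-c\La x}$ on the exponential range, and on the truncated power-law range $[\,\La/B,\,2\La/B\,]$ the flat bound $\P(X_i\ge\La/B)\le e^{-c\La^2/B}$ overwhelms $e^{s\cdot 2\La/B}=e^{c\La^2/(2B)}$. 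That is what makes $\E e^{sX_i'}$ a bounded constant per factor, so the product contributes only $e^{2M}=e^{2t\La}$, and the Chernoff exponent $-s(\log 2)\La=-c(\log 2)\La^2/4$ dominates: $2t\La+3ct\La\le c(\log 2)\La^2/8$ since $t\le\La/(3B)$ and $B=24A$, $c=1/A$.

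None of the $s$ you try achieve this. The linearization $e^{sx}\le 1+2sx$ you invoke requires $s\cdot 2\La/B\le\log 2$, forcing $s=O(1/\La)$; then $s(\log 2)\La=O(1)$ is a bounded exponent, far short of the target $3ct\La$, which grows up to order $\La^2/(AB)$ as $t$ ranges up to $\La/(3B)$. The choice $s\asymp\La/M$ has the same defect (a bounded exponent). And when you do mention $s\asymp\La$, you back it with the crude bound $\E e^{s\bar X_i}\le e^{s\cdot 2\La/B}$, which at $s=c\La/4$ is $e^{c\La^2/(2B)}$ per factor; over $M=t\La$ factors this gives $e^{ct\La^3/(2B)}$, catastrophically larger than the $e^{-c(\log 2)\La^2/4}$ you need to extract. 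So what is missing is not ``routine-but-fiddly bookkeeping'': it is the qualitative observation that because $s$ is strictly below the exponential tail rate $c\La$, the truncated moment $\E e^{sX_i'}$ is $O(1)$, which the trivial bound you propose does not capture. Without that, the proof does not close at any choice of constants.
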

\prf Let
$X'_i = X_i\textbf{1}_{X_i\le \frac{2\La}{B}}$.
By Lemma \ref{lm:M:X}, it suffices to show that
$$\P\left (\sum_{i=1}^{M} X'_i \ge (\log 2)\La\right ) \le \exp(-ct\La)/24.$$
Let $\xi_i = \La X'_i\le \frac{2\La^{2}}{B}$ then
\begin{eqnarray}
\E e^{c\xi_i/4} &=& 1+ \frac{c}{4}\int_{0}^{\frac{2\La^2}{B}} e^{cx/4}\P(\xi_i\ge x)dx \nonumber\\
&\le& 1+e + \frac{c}{4}\int_{A}^{\frac{2\La^2}{B}} e^{cx/4}\P(\xi_i\ge x)dx \quad\text{since $cA\le 4$ by \eqref{eq:const:1}}\nonumber\\
& \le&1+ e + \frac{c}{4}\int_{A}^{\frac{\La^2}{B}} e^{-cx/2}dx + \frac{c}{4}\int_{\frac{\La^2}{B}}^{\frac{2\La^{2}}{B}} e^{cx/4} e^{-c\La^2/B}dx\no
& \le&1+ e+2+1\le e^{2}.\label{eq:xi}
\end{eqnarray}
And so
$$\P\left (\sum_{i=1}^{M} \xi_i\ge (\log 2) \La^2\right )\le e^{-c(\log 2) \La^2/4} e^{2M}= \exp\left (2t\La- c (\log 2)\La^2/4\right )\le \exp(-3c t\La)/24,$$
where we used \eqref{eq:const:1} to see that $2t\La + 3ct\La \le \La^{2}(2+c)/B\le \frac{c(\log 2 )\La^{2}}{8}$.
\qed

Combining the above lemmas, we get
\begin{equation}
\P(X\ge t)\le p_t/2+ \P\left (\prod_{i=1}^{M} (1 + \la X_i)-1\ge t,\quad \sum_{i=1}^{M} X_i\le (\log 2) \La,\quad X_i\le \frac{2\La}{B} \text{ for all $i$}\right ).\nonumber
\end{equation}

When $\sum_{i=1}^{M} X_i\le (\log 2) \La$, we have
\begin{eqnarray}\label{eq:sum:prod}
\prod_{i=1}^{M} (1 + \la X_i) \le e^{\la \sum_{i=1}^{M}  X_i} \le 1 + 2\la \sum _{i=1}^{M} X_i,
\end{eqnarray}
where in the second inequality, we used that $e^{s}\le 1+2s$ for all $0\le s\le \log 2$.

Thus,
\begin{equation}
\P(X\ge t)\le p_t/2+ \P\left (2\la\sum_{i=1}^{M} X_i \textbf{1}_{X_i\le \frac{2\La}{B}}\ge t\right ).\nonumber
\end{equation}
%
Let $\xi_i = \La X_i\textbf{1}_{X_i\le \frac{2\La}{B}}$ then as in \eqref{eq:xi},
\begin{eqnarray}
\E e^{c\xi_i/4} &\le&  e^{2}.\nonumber
\end{eqnarray}
and so for $\La$ sufficiently large,
$$\P\left (2\sum_{i=1}^{M} X_i\textbf{1}_{X_i\le \frac{2\La}{B}}\ge t\La \right ) \le e^{-ct \La^2/8} e^{2M} = \exp\left (2t\La-ct \La^2/8\right )\le \exp(-3ct\La)/6=p_t/2.$$
This proves \eqref{eq:smallt}.

\subsubsection{Large $t$: $ t>\frac{\La}{3B}  $.}\label{section:large:t}
We need to show that
\begin{equation}\label{eq:larget}
	\P(X\ge t) \le f(3t)/3=  p (3t)^{-q} (\log (3eB t\la))^{-2}/3=:p_t.
\end{equation}
Firstly, we note that for every $r>0$, by Markov's inequality and the assumption \eqref{cond:mu} on the exponent moment of $\mu$,
\begin{eqnarray}\label{eq:tail:mu}
\P\left (D\ge r\right ) \le \E e^{3 D}e^{-3 r}.
\end{eqnarray}

Let $Y_i = \log(1+\la X_i)$. Then $\log X\le \sum_{i=1}^{D} Y_i$ and by the good tail of $X_i$,
\begin{eqnarray}\label{tail:Y:1}
\P\left (Y_i\ge s\right )\le \begin{cases*}
\exp(-c(e^{s}-1)\La^{2}) \quad\text{if } \log(1+A\la^{2})\le s\le \beta,\\
p \la^{q}(e^{s}-1)^{-q}\log^{-2}(eB(e^{s}-1)) \quad\text{if } s > \beta,
\end{cases*}
\end{eqnarray}
where $\beta = \log\left (1+\frac1B\right )$.


Let $d = \E D$, we get
\begin{eqnarray}
\P(X\ge t) &=& \P(\log X\ge \log t)\le \P\left (\sum_{i=1}^{D} Y_i\ge \log t\right )\nonumber\\
&\le& \P \left (\sum_{i=1}^{2d} Y_i\ge \log t\right ) +\sum_{r = 2d+1}^{\infty} \P \left (\sum_{i=1}^{r} Y_i\ge \log t, D = r\right ).\label{eq:X}
\end{eqnarray}
To handle $\P \left (\sum_{i=1}^{r} Y_i\ge \log t\right )$, we decompose the sum $\sum Y_i$ into subsums in which $Y_i\le \beta$, and $Y>\beta$ and denote the probability that each subsum is large as follows
\begin{eqnarray*}
	p_{1, h, s} &=& \max_{1\le i_1<\dots< i_h\le r} \P\left (\sum_{j=1}^{h} Y_{i_j}\ge s, Y_{i_j}\le \beta\right ) \\
	p_{2, h, s} &=& \max_{1\le i_1<\dots< i_h\le r}  \P\left (\sum_{j=1}^{h} Y_{i_j}\ge s, Y_{i_j}>\beta\right ).
\end{eqnarray*}
Observe that if $\sum_{i=1}^{r} Y_i\ge \log t$ then by setting $s$ to be the integer part of the first subsum, the second subsum must be at least $\log t - s-1$. Thus,
\begin{eqnarray}
\P\left ( \sum_{i=1}^{r} Y_i\ge \log t\right )
&\le& \sum_{k=0}^{r} {r \choose k} \sum_{s \in \N, s \le\log t} p_{1, k, s} p_{2, r - k, \log t - s - 1}. \label{eq:PYr}
\end{eqnarray}

In the next two lemmas, we bound these probabilities.

\begin{lemma}\label{lm:p1}
	We have for every $k, s\in \N$,
	\begin{equation}
	p_{1, k, s} \le  8^{k} e^{-\frac12 c\La^{2}s}. \nonumber
	\end{equation}
\end{lemma}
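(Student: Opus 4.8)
The plan is to bound $p_{1,k,s}$ by an exponential moment (Chernoff) argument applied to the truncated variables $Y_{i_j}\one_{Y_{i_j}\le\beta}$. Since each $Y_i=\log(1+\la X_i)$ with $X_i$ having a good tail, on the event $Y_i\le\beta=\log(1+1/B)$ we have $e^{Y_i}-1=\la X_i\le 1/B$, i.e. $X_i\le \La/B$, which places us in the exponential-tail regime of the good tail, $\P(X_i\ge u)\le \exp(-cu\La)$ for $A\la\le u\le 1/(B\la)$. First I would compute the moment generating function $\E\exp\bigl(\tfrac12 c\La^2 Y_i\one_{Y_i\le\beta}\bigr)$. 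Writing $Y_i\one_{Y_i\le\beta}=\log(1+\la X_i)\one_{X_i\le\La/B}\le \la X_i\one_{X_i\le\La/B}$, it suffices to control $\E\exp\bigl(\tfrac12 c\La\, X_i\one_{X_i\le\La/B}\bigr)$. Integrating the tail bound as in the computation of \eqref{eq:xi}, splitting the integral at $u=A\la$ (where the exponential tail kicks in) and using $cA\le 4$ from \eqref{eq:const:1}, one gets $\E\exp\bigl(\tfrac12 c\La X_i\one_{X_i\le\La/B}\bigr)\le 1+e+ (\text{small})\le 8$, say — the constant $8$ being generous enough to absorb the contributions near $0$ and the tail.

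Given this per-variable bound, the proof is a one-line Markov/Chernoff step: for any indices $i_1<\dots<i_k$, by independence,
\begin{align*}
\P\Bigl(\sum_{j=1}^{k} Y_{i_j}\one_{Y_{i_j}\le\beta}\ge s\Bigr)
&\le e^{-\frac12 c\La^2 s}\,\prod_{j=1}^{k}\E\exp\Bigl(\tfrac12 c\La^2 Y_{i_j}\one_{Y_{i_j}\le\beta}\Bigr)
\le 8^{k}e^{-\frac12 c\La^2 s}.
\end{align*}
Taking the maximum over the choice of indices gives $p_{1,k,s}\le 8^{k}e^{-\frac12 c\La^2 s}$, as claimed. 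Note that the truncation $Y_{i_j}\le\beta$ is essential: without it the variables $X_i$ only have a power-law tail, so no exponential moment at scale $\La^2$ would exist.

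The main obstacle — really the only delicate point — is verifying the exponential-moment bound $\E\exp\bigl(\tfrac12 c\La^2 Y_i\one_{Y_i\le\beta}\bigr)\le 8$ with a clean absolute constant. Here one must be careful that $Y_i$ (equivalently $X_i$) is only assumed to have a \emph{good} tail, which for small arguments $t<A\la$ carries no information; this is handled by bounding the contribution of $\{X_i< A\la\}$ by $e^{\frac12 cA\la\cdot\La}=e^{cA/2}\le e^2$ crudely (using $cA\le 4$), and the contribution of the exponential-tail regime $A\la\le X_i\le \La/B$ by a convergent integral of $e^{\frac12 c\La u}e^{-c\La u}=e^{-\frac12 c\La u}$. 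Everything else is routine bookkeeping, and the somewhat wasteful constants ($8$ rather than something sharp) are harmless because they only enter multiplicatively as $8^k$, which will later be dominated by the binomial and Poisson factors in \eqref{eq:PYr}.
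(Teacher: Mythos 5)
Your proof is correct and follows essentially the same route as the paper: truncate at $\beta$, apply Markov with exponent $z=\tfrac12 c\La^2$, and bound the single-variable moment generating function by integrating the exponential tail between $A\la$ and $\La/B$ using $cA\le 1$. The only cosmetic difference is that you pass through $\log(1+\la X_i)\le\la X_i$ up front, whereas the paper writes $e^{zY'_i}\le 1+(1+\la X_i)^z\one_{X_i\le\La/B}$ and applies the same linearization $(1+\la t)^{z-1}\le e^{\la(z-1)t}$ inside the integral; both computations are identical in substance.
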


\begin{proof}
	Let $Y'_i = Y_i \textbf{1}_{Y_i\le \beta}$. Then
	$$\P\left (\sum_{i=1}^{k} Y_i\ge s, Y_i\le \beta\right ) \le \P\left (\sum_{i=1}^{k} Y'_i\ge s\right ).$$
	For a positive number $z$, we have $e^{zY'_i} \le 1+  e^{zY_i} \textbf{1}_{Y_i\le \beta}=1+(1+\la X_i)^{z} \textbf{1}_{X_i\le \La/B}$ and so
	\begin{eqnarray}
	\E e^{zY'_i}
	&\le& 1+(1+A\la^{2})^{z} +\la z\int_{A\la}^{\La /B} (1+\la t)^{z-1} \P(X_i\ge t)dt\nonumber\\
	&\le&1+ (1+A\la^{2})^{z} + \la z\int_{A\la}^{\La /B} (1+\la t)^{z-1} e^{-ct\La}dt\nonumber\\
	&\le& 1+(1+A\la^{2})^{z} + \la z\int_{A\la}^{\La /B}  e^{-t(c\La - \la(z-1))}dt\nonumber.
	\end{eqnarray}
	By choosing $z = \frac{1}{2} c\La^{2}$, we get
	\begin{eqnarray}
	\E e^{zY'_i}  &\le& 1+(1+A\la^{2})^{c\La^{2}/2} + \frac12 c\La \int_{A\la}^{\infty}  e^{-t c\La/2}dt\le 1+2e^{cA/2} \le  8\nonumber,
	\end{eqnarray}
	where we used $cA\le 1$.
	Thus, by Markov's inequality,
	\begin{eqnarray}
	\P\left (\sum_{i=1}^{k} Y'_i\ge s\right )\le e^{-zs} \prod_{i=1}^{k} \E e^{z Y'_i } \le 8^{k} e^{-\frac12 c\La^{2}s}\nonumber
	\end{eqnarray}
	giving the desired bound.
\end{proof}

\begin{lemma}\label{lm:p2}
	For every $m\ge 1$ and $s'>2\beta m$, we have
	\begin{equation}
	p_{2, m, s'} \le \left (\frac{8p 2^{q}\la^{q}}{\beta^{2+q}}\right )^{m}e^{-q (s'-\beta m)}  ( s' - \beta m)^{-2}.\nonumber
	\end{equation}
	
\end{lemma}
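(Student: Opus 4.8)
\textit{Proof proposal.} The plan is to prove Lemma~\ref{lm:p2} by induction on $m$, peeling off one summand at a time. Write $g(s):=p\lambda^{q}(e^{s}-1)^{-q}\log^{-2}(eB(e^{s}-1))$ for the tail bound on each $Y_{i}$ coming from \eqref{tail:Y:1}, and set $\rho_{0}:=g(\beta)=p\lambda^{q}B^{q}=\exp(-c\lambda^{-2}/B)$, which by \eqref{tail:Y:1} also bounds $\P(Y_{i}>\beta)$. The heuristic behind the statement is a ``one big jump'': since $\P(Y_{i}\geq s)$ decays like $e^{-qs}$ once $s$ exceeds $\beta$, the cheapest way to have $\sum_{j}Y_{i_{j}}\geq s'$ while keeping every $Y_{i_{j}}>\beta$ is to let a single variable absorb essentially the whole excess $s'-m\beta$, contributing $e^{-q(s'-m\beta)}(s'-m\beta)^{-2}$ (the polynomial factor coming from the $\log^{-2}$ in \eqref{tail:Y:1}), while each of the remaining $m-1$ variables only pays the constant price $\rho_{0}$ of clearing $\beta$. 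I therefore expect the stated constant $8p2^{q}\lambda^{q}/\beta^{2+q}$ to be precisely what makes the recursion close.

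The base case $m=1$ is immediate: for $s'>2\beta$ one has $p_{2,1,s'}=\max_{i}\P(Y_{i}\geq s')\le g(s')$, and the claimed inequality reduces, after dividing by $e^{-q(s'-\beta)}(s'-\beta)^{-2}$, to $e^{-q\beta}(1-e^{-s'})^{-q}(s'-\beta)^{2}\log^{-2}(eB(e^{s'}-1))\le 8\cdot 2^{q}\beta^{-2-q}$, which holds for all $s'>2\beta$ since $e^{\beta}-1=1/B$, $(1-e^{-s'})^{-q}\le\beta^{-q}$ (using $s'>2\beta$), and $(s'-\beta)^{2}\log^{-2}(eB(e^{s'}-1))\le 1$.

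For the inductive step, fix the set $S=\{i_{1},\dots,i_{m}\}$ attaining $p_{2,m,s'}$, condition on the value $y$ of $Y_{i_{m}}$, and use independence: the event forces $y>\beta$ and $\sum_{j<m}Y_{i_{j}}\geq s'-y$ with each $Y_{i_{j}}>\beta$. I would split the $y$-range into three pieces. (i) For $y>s'-(m-1)\beta$ the remaining-sum constraint holds automatically, so this part contributes at most $\rho_{0}^{m-1}\,g\!\left(s'-(m-1)\beta\right)$; this is the big-jump term, and bounding $e^{s'-(m-1)\beta}-1$ from below shows it is at most $\tfrac13$ of the target. (ii) For $s'-2(m-1)\beta<y\le s'-(m-1)\beta$ the remaining sum lies in $[(m-1)\beta,2(m-1)\beta)$, so its probability is at most $p_{2,m-1,(m-1)\beta}\le\rho_{0}^{m-1}$ by monotonicity, giving a contribution $\le\rho_{0}^{m-1}g\!\left(s'-2(m-1)\beta\right)$, again $\le\tfrac13$ of the target. (iii) For $\beta<y\le s'-2(m-1)\beta$ I apply the inductive hypothesis to the $m-1$ remaining variables and integrate against the law of $Y_{i_{m}}$ (using $\P(Y_{i_{m}}\ge\cdot)\le g$ and integration by parts); pulling out $e^{-q(s'-(m-1)\beta)}$, the resulting integral is dominated by $y$ near $\beta$, where $e^{qy}g(y)$ has size $\rho_{0}e^{q\beta}$ and decays at relative rate $q/(e^{\beta}-1)=qB$. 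The factor $e^{q\beta}=(1+1/B)^{q}$ then cancels the $e^{-q\beta}$ in $e^{-q(s'-(m-1)\beta)}=e^{-q(s'-m\beta)}e^{-q\beta}$, reconstituting the per-variable constant and producing the last $\tfrac13$ of the target.

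The part I expect to be the real work is the constant bookkeeping in all three ranges. The tail \eqref{tail:Y:1} is power-law-like near $s=\beta$, where $(e^{s}-1)^{-q}$ is as large as $B^{q}$, and exponential for large $s$, and one must check that the factors $2^{q}/(B\beta)^{q}$ and $\beta^{-2-q}$ in the claimed constant absorb this $B^{q}$ blow-up at the cutoff; each of the three comparisons ultimately reduces to a numerical inequality of the shape $1\lesssim\beta^{-2}(2/(B\beta))^{q}$, which holds because $B\beta=B\log(1+1/B)<1$ and $q$ is large once $\lambda<\lambda_{0}(\mu)$. Summing the three pieces then yields $p_{2,m,s'}\le\left(\tfrac{8p2^{q}\lambda^{q}}{\beta^{2+q}}\right)^{m}e^{-q(s'-\beta m)}(s'-\beta m)^{-2}$, completing the induction.
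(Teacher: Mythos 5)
Your proposal is a genuinely different route from the paper's. The paper does not induct on the statement itself; instead it discretizes each $Y_i$ into $\beta$-intervals, takes a union bound over integer tuples $u$ with $\sum u_i \approx s'/\beta - m$, factors out $e^{-q(s'-\beta m)}$ immediately, and then controls the leftover $\sum_u \prod_i (\beta u_i + \tau)^{-2}$ via the discrete convolution estimate \eqref{eq:logsum}, which is proved by an inner induction. Your approach conditions on a single summand $Y_{i_m}$ and splits its range into a ``big-jump'' region, a middle buffer, and an integral range where the lemma is applied inductively to the remaining $m-1$ variables, followed by a Stieltjes integration by parts; the continuous integral in your part~(iii) is essentially the continuum analogue of the paper's discrete convolution step. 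Both arguments are sound in structure. What the paper's discretization buys is that the exponential factor splits off cleanly at the start, reducing everything to a clean, $q$-free sub-problem solved by \eqref{eq:logsum}. What your route buys is a more transparent probabilistic interpretation (one big jump plus $m-1$ cheap threshold clearances), at the cost of having to verify separately that each of the three ranges contributes at most a third of the target; as you anticipate, those checks are somewhat delicate. One point to watch when you flesh out the constants: in your range~(ii) the factors $e^{q(m-2)\beta}$ and $(s'-m\beta)^2/(\tau+s'-2(m-1)\beta)^2$ both grow with $m$, and you need the $8^m 2^{q(m-1)}$ budget (using $e^{q\beta}=(1+1/B)^q\le 2^q$ since $B\ge 120$) to absorb them; the comparison is not quite of the form $1\lesssim \beta^{-2}(2/(B\beta))^q$ that you quote, but it does close. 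Also note that invoking $p_{2,m-1,(m-1)\beta}\le \rho_0^{m-1}$ cannot come from the lemma's formula, since $(m-1)\beta\not>2\beta(m-1)$; the trivial bound $p_{2,h,s}\le \rho_0^{h}$ for any $s$ is what you actually use, which is fine but worth stating explicitly.
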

\begin{proof}
	We have
	\begin{eqnarray}
	\P\left (\sum_{i=1}^{m} Y_i\ge s', Y_i > \beta\right ) &\le&   \sum_{u \N^m:  \sum_{i=1}^{m} u_i\ge (s'/\beta- m)} \prod_{i=1}^{m} \P(Y_i\in \beta  (u_i, u_{i}+1])\nonumber\\
	&\le & \sum_{u} \prod_{i=1}^{m} \P(Y_i>\beta u_i)\nonumber,
	\end{eqnarray}
	where the last sum runs over all $m$-tuples $(u_1, \dots, u_m)\in \N^{m}$ with $u_i\ge 1$ and $  \sum u_i = s'/\beta - m$ (note that the condition $s'>2\beta m$ implies that such a tuple exists).
	By \eqref{tail:Y:1} (with $s = \beta u_i\ge\beta$), we have
	\begin{eqnarray}
	p_{2, m, s'}\le p^{m} \la^{mq}\sum_{u} \prod_{i=1}^{m} \left (e^{\beta u_i} - 1\right )^{-q} \prod_{i=1}^{m} \log^{-2}(eB (e^{\beta u_i}-1)).\nonumber
	\end{eqnarray}
	We observe since $\beta\leq 1$ we have that for all $x\ge\beta$, we have $e^{x}-1\ge \frac{x}{x+1} e^{x} \ge \frac{\beta}{\beta+1} e^{x}\ge \frac{\beta}{2} e^{x}.$ Thus,
	\begin{eqnarray}
	p_{2, m, s'} &\le& p^{m} \la^{mq}\sum_{u} \prod_{i=1}^{m} \left (\frac{\beta}{2}e^{\beta u_i} \right )^{-q} \prod_{i=1}^{m} \log^{-2}(eB \beta e^{\beta u_i}/2)\nonumber\\
	&=& p^{m} \la^{mq}\left (\frac{2}{\beta} \right )^{qm}   \sum_{u}    e^{-q \beta \sum_{i=1}^{m} u_i}   \prod_{i=1}^{m} \left (\beta u_i + \tau\right )^{-2}\nonumber\\
	&\le & p^{m} \la^{mq}\left (\frac{2} {\beta}\right )^{qm}  e^{-q (s'-\beta m)}   \sum_{u}     \prod_{i=1}^{m} \left (\beta u_i + \tau\right )^{-2}\nonumber,
	\end{eqnarray}
	where $\tau = \log \left (eB \beta/2\right ) \ge \log (2e/5)>\beta$ as $B\ge 32$.
	
	To handle the sum-product, we use the following observation that holds for all integer $w\ge 2$:
	\begin{eqnarray}\label{eq:logsum}
		\sum_{v = 1}^{w-1} (\tau+\beta v)^{-2}(\tau + \beta (w - v))^{-2} &\le& 2\sum _{v = 1}^{w/2} (\tau+\beta v)^{-2}(\tau + \beta w/2)^{-2}\nonumber\\
		& \le& \frac{8}{\beta^{2}(2\tau+\beta w)^{2}}\sum_{v=1}^{\infty} \left (v+\frac\tau\beta\right )^{-2}  \le\frac{8}{\beta \tau(\tau + \beta w)^{2}}.
	\end{eqnarray}
	By induction over $m$, we have that
	\begin{eqnarray}
	\sum_{u}     \prod_{i=1}^{m} \left (\beta u_i + \tau\right )^{-2} = \sum_{u_1+\dots + u_m = s'/\beta  - m}     \prod_{i=1}^{m} \left (\beta u_i + \tau\right )^{-2} \le \frac{8^{m}}{\beta^{m}\tau^{m}} (\tau + s' - \beta m)^{-2}.\nonumber
	\end{eqnarray}
	Thus,
	\begin{equation}
	p_{2, m, s'} \le  \left (\frac{8p 2^{q}\la^{q}}{\beta^{q}\beta \tau }\right )^{m}e^{-q (s'-\beta m)}  (\tau + s' - \beta m)^{-2}\le \left (\frac{8p 2^{q}\la^{q}}{\beta^{q+2} }\right )^{m}e^{-q (s'-\beta m)}  (s' - \beta m)^{-2}.\nonumber
	\end{equation}
\end{proof}

We now put everything together in \eqref{eq:PYr}. We control $\sum Y_i$ by a case analysis writing
$$S_1 = \{s\in \N: s> (\log t)/2\},$$
$$S_2 = S_{2, k, r} =  \{s\in \N: s\le (\log t)/2, s'\le 2(r-k)\beta\},$$ and $$S_3 = S_{3, k, r} =\{s\in \N: s\le (\log t)/2, s'>2(r-k)\beta\},$$
where $s' = \log t - s-1$, and setting
\begin{eqnarray*}
	\P\left ( \sum_{i=1}^{r} Y_i\ge \log t\right )
	&\le& \sum_{k=0}^{r} {r \choose k} \sum_{s\in S_1} p_{1, k, s} +\sum_{k=0}^{r-1} {r \choose k} \sum_{s\in S_2}  p_{2, r - k, s'}\\
	&&\quad+ \sum_{k=0}^{r-1} {r \choose k} \sum_{s\in S_3}  p_{1, k, s}  p_{2, r - k, s'} =: T_1+T_2+T_3.
\end{eqnarray*}
Note that in the sums defining $T_2$ and $T_3$, the index $k$ is at most $r-1$ because otherwise $s$ would have been $[\log t]>\log t/2$.
By Lemma \ref{lm:p1} and $t\ge \La/(3B)$, we have
\begin{eqnarray*}
	T_1 	&\le&  \sum_{k=0}^{r} {r \choose k} \sum_{s\in S_1} 8^{k} e^{-\frac12 c\La^{2}s}\le 2^{r}8^{r}e^{-\frac14 c\La^{2} \log t}\\
	&\le&   16^{r} p (3t)^{-q} (\log (3eB t\la))^{-2}/(9 \La)=16^{r} p_t/(3\La).
\end{eqnarray*}

The contribution from $S_2$ is bounded by
\begin{eqnarray*}
	T_2   &\le&   	\sum_{k=0}^{r-1} {r \choose k} \sum_{s\in S_2}  \max_{1\le i_1<\dots< i_{r-k}\le r} \prod_{j=1}^{r-k}\P\left ( Y_{i_j} > \beta\right )  \\
	&\le&   	\sum_{k=0}^{r-1} {r \choose k} \sum_{s\in S_2} \left (p\la^{q} B^{q}\right )^{r-k} \quad\text{by \eqref{tail:Y:1}}.
\end{eqnarray*}
We note that for $s\in S_2$,
\begin{equation}
2(r - k)\beta \ge s' = \log t - s-1\ge \log t/3.\nonumber
\end{equation}
So,
$$r - k\ge \frac{\log t}{6\beta}\ge \frac{\log t}{3}.$$
Hence,
\begin{equation}
T_2\le 2^{r}|S_2| \left (p\la^{q} B^{q}\right )^{ \log t/3} \le 2^{r} (\log t)\left (p\la^{q} B^{q}\right )^{ \log t/3} \le 16^{r} p_t/(3 \La).\nonumber
\end{equation}

By Lemma \ref{lm:p2}, we obtain
\begin{eqnarray*}
	T_3	&\le&  t^{-q} e^{q }\sum_{k=0}^{r-1} {r \choose k}  8^k \left (\frac{8p 2^{q}\la^{q}e^{q\beta}}{\beta^{q+2} }\right )^{r-k}  \sum_{s \in S_3}  e^{-\frac14 c\La^{2}s}   (s'-\beta(r-k))^{-2}, \end{eqnarray*}
where we used $q\le \frac14 c\La^{2}$ to get $e^{-\frac14 c\La^{2}s}e^{-qs'}\le e^{-q(s+s')}=t^{-q}e^{q}$.
By the definition of $S_3$, we have $s\le  \log t/2$ and so $s'\ge \log t/3$. Since $s'\ge 2(r-k)\beta$, we have $s' - (r-k)\beta \ge  s'/2 \ge \log t/6$. Thus, $ (s'-\beta(r-k))^{-2}\le 36 (\log(3eBt\la))^{-2}$. So
\begin{eqnarray*}
	T_3	&\le&   36 t^{-q} (\log (3eB t\la))^{-2} e^{q } \sum_{k=0}^{r-1} {r \choose k} 8^k\left (\frac{8p 2^{q}\la^{q}e^{q\beta}}{\beta^{q+2} }\right )^{r-k}  \sum_{s=0}^{\infty}  e^{-\frac14 c\La^{2} s} \\
	&\le&  72 t^{-q} (\log (3eB t\la))^{-2} e^{q } 16^r\left (\frac{8p 2^{q}\la^{q}e^{q\beta}}{\beta^{q+2} }\right ) \le 16^{r} p_t /(3\Lambda),
\end{eqnarray*}
where in the second inequality, we used that $\frac{8p 2^{q}\la^{q}e^{q\beta}}{\beta^{q+2} }\le 1$.
So, in total,
\begin{eqnarray*}
	&&	\P\left ( \sum_{i=1}^{r} Y_i\ge \log t\right ) \le   16^{r} p_t/\La.
\end{eqnarray*}
Combining this with \eqref{eq:tail:mu} and \eqref{eq:X}, we obtain
\begin{eqnarray}
\frac{ \P(X\ge t)}{p_t}  &\le&   \frac{16^{2d}}{\La}+\sum_{r = 2d+1}^{\infty} \E e^{3 D}e^{-3r}\frac{16^{r}}{\La}\le 1\nonumber
\end{eqnarray}
for sufficiently small $\la$.
This completes the proof of Proposition \ref{prop:tail}.
 
\subsection{Tail bounds for the excursion time and the total infections at the leaves} Having proved the recursive Proposition \ref{prop:tail}, we are now ready to deduce tail estimates for $S(\T_{\le l})$ and $M(\T_{\le l})$ where $\mathcal{T}\sim \gw(\mu)$.

\subsubsection{Tail bounds for the excursion time}
   \begin{lemma}\label{lm:S tail bd}
 	Let $l\geq 0$ be an integer. Let $\mathcal{T}\sim \gw(\mu)$ and $S(\mathcal{T}_{\le l})$ be the expected survival time on the truncated tree $\mathcal{T}_{\le l}$. Then
 	$ \wt(\T_{\le l}) S(\mathcal{T}_{\le l}) $ has a good tail where $\wt(\T_{\le l})$ is the weight defined in \eqref{def:w:G}.
 \end{lemma}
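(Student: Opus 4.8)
The natural approach is induction on the truncation depth $l$, combining the recursion of Lemma~\ref{lm:S:recursion} with the recursive tail bound of Lemma~\ref{lm:tail} (throughout, $\la$ is small enough that Lemma~\ref{lm:tail} applies). For the base case $l=0$, $\T_{\le 0}$ is a single vertex, so $D=\deg(\rho)=0$ and a direct computation (equivalently the $D=0$ reading of Lemma~\ref{lm:S:recursion}) gives $S(\T_{\le 0})=\tfrac{2+\la}{1+\la}$, whence $\wt(\T_{\le 0})S(\T_{\le 0})=\tfrac{\la(2+\la)}{(1+\la)^2}<2\la<A\la$; being deterministic and strictly below $A\la$, this quantity satisfies $\P(\,\cdot\,\ge t)=0\le f(t)$ for all $t\ge A\la$, so it has a (strong, hence) good tail.

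For the inductive step, assume $\wt(\T_{\le l-1})S(\T_{\le l-1})$ has a good tail when $\T\sim\gw(\mu)$. Write $D\sim\mu$ for the number of children of the root of $\T$ and $\T_1,\dots,\T_D$ for the subtrees hanging off it; conditionally on $D$ these are i.i.d.\ copies of $\gw(\mu)$ independent of $D$, and inside $\T_{\le l}$ they appear as $(\T_i)_{\le l-1}$. Set $X_i:=\wt((\T_i)_{\le l-1})S((\T_i)_{\le l-1})$. By the induction hypothesis the $X_i$ are independent, nonnegative, with good tails, and independent of $D$, so Lemma~\ref{lm:tail} gives that $\prod_{i=1}^{D}(1+\la X_i)-1$ has a \emph{strong} tail. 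Meanwhile, inserting $T=\T_{\le l}$ into Lemma~\ref{lm:S:recursion} and using $\wt(T)^{-1}=1+\tfrac{1}{\la(D+1)}$ and $1-\wt(T)=\tfrac{1}{1+\la(D+1)}$, a short algebraic simplification yields
\begin{equation*}
\wt(\T_{\le l})\,S(\T_{\le l})\ \le\ \prod_{i=1}^{D}(1+\la X_i)-\bigl(1-\wt(T)\bigr)^{2}\ =\ \Bigl(\prod_{i=1}^{D}(1+\la X_i)-1\Bigr)+\delta_D,\qquad \delta_D:=1-\frac{1}{(1+\la(D+1))^{2}}.
\end{equation*}

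The crux is then to absorb $\delta_D$, the gap between the $-(1-\wt(T))^2$ supplied by the recursion and the $-1$ needed by Lemma~\ref{lm:tail}. One has $\delta_D=\wt(T)(2-\wt(T))\le 2\la(D+1)$ and $\delta_D<1$, and moreover $\{\delta_D\ge s\}\subseteq\{D\ge \tfrac{s}{2\la}-1\}$, so $\E e^{3D}<\infty$ and Markov's inequality give $\P(\delta_D\ge s)\le e^{3}\,\E e^{3D}\,e^{-3s/(2\la)}$. Because the constants are calibrated by $A=\log\E e^{3D}+2e$ and $c=1/A$ (note $2e>4+\log 3$), this decay is strong enough that $\delta_D/2$ has a strong tail: in the exponential range $t\in[A\la/3,1/(3B\la)]$ the required bound $e^{3}\E e^{3D}e^{-3t/\la}\le f(3t)/3$ reduces to $A-1\ge 3+\log 3+\log\E e^{3D}$, which holds, and on $t\ge 1/(3B\la)$ the super-polynomial decay of $e^{-3t/\la}$ beats the power-law part of $f$. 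Consequently
\begin{equation*}
\wt(\T_{\le l})\,S(\T_{\le l})\ \le\ \Bigl(\prod_{i=1}^{D}(1+\la X_i)-1\Bigr)+\frac{\delta_D}{2}+\frac{\delta_D}{2}
\end{equation*}
is a sum of three random variables each with a strong tail, so by the closure property recorded in Definition~\ref{def:good:tail} it has a good tail, completing the induction.

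I expect this last step — absorbing $\delta_D$ — to be the main obstacle. Replacing $(1-\wt(T))^2$ by $1$ costs $\delta_D$, which is negligible for typical $D$ but can be close to $1$ when $D$ is atypically large, so one must check that this is confined to an event of probability at most a constant times $f(t)$, uniformly in $t$, and in particular in the delicate window where $t$ is of order $A\la$ and $f(t)$ is only of constant order. This is precisely where the exact choice of $A,B,c$ relative to the exponential moment of $\mu$ enters; the remainder is routine bookkeeping with Lemmas~\ref{lm:S:recursion} and~\ref{lm:tail}.
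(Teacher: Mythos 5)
Your proof is correct and follows essentially the same route as the paper's: induction on $l$, base case handled by a direct computation, inductive step via Lemma~\ref{lm:S:recursion}, then the decomposition $\wt(\T_{\le l})S(\T_{\le l})\le\bigl(\prod_i(1+\la X_i)-1\bigr)+\text{(remainder)}$, where the product term gets a strong tail from Lemma~\ref{lm:tail} and the remainder is split into pieces with strong tails using the exponential moment of $\mu$. The paper's version of the remainder is $X_a=2\la(D+1)+\wt(T)^2$, split into two strong-tail pieces, whereas you keep $\delta_D=1-(1+\la(D+1))^{-2}$ exact and split it as $\delta_D/2+\delta_D/2$; both are equivalent up to cosmetics and both yield three strong-tail summands, so Definition~\ref{def:good:tail} applies. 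One small note: your base-case value $\wt(\T_{\le 0})S(\T_{\le 0})=\tfrac{\la(2+\la)}{(1+\la)^2}$ is actually the correct evaluation of the $D=0$ recursion (the paper states $\tfrac{\la}{1+\la}$, which appears to be a slip), but since both are deterministic constants below $A\la$ the base case holds either way.
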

\begin{proof}
	We prove the lemma by induction on $l$. The case that $l=0$ is trivial because $ \wt(\T_{\le l}) S(\mathcal{T}_{\le l})=\frac{\la}{1+\la}$. Assume that we have the statement for $l-1$.
Let $X = \wt(\T_{\le l}) S(\T_{\le l})$ and $X_i = \wt(\T_i) S(\T_i)$ where $\T_i$ is the subtree of $\T_{\le l}$ containing the $i$-th child of the root of $\T_{\le l}$ and its successors. By Lemma \ref{lm:S:recursion}, we have
\begin{eqnarray*}
	X
	&\le& \frac{-1}{(1+\la (D+1))^{2}}+ \prod_{i=1}^{D} (1 + \la X_i) \le  X_a+   \prod_{i=1}^{D} (1 + \la X_i) - 1,
\end{eqnarray*}
where $X_a= 2\la (D+1)+\left [\frac{\la(D+1)}{1+\la (D+1) }\right ]^{2}$ contains  the constant part.

By the induction hypothesis, $X_i$ has a good tail for all $i$. Thus, by Proposition \ref{prop:tail}, $\prod_{i=1}^{D} (1 + \la X_i) - 1$ has a strong tail. Since $\E e^{9D}<\infty$ by~\eqref{cond:mu}, one can see that both $2\la (D+1)$ and $\left [\frac{\la(D+1)}{1+\la (D+1) }\right ]^{2}$ have strong tails. Thus, the sum of those three has a good tail; so does $X$. This completes the proof of the lemma.
\end{proof}

%

\subsubsection{Tail bounds for the total infections at the leaves}
  \begin{lemma}\label{lm:M tail bd}
	Let $l\geq 0$ be an integer. Let $\mathcal{T} \sim \gw(\mu)$ and $M_{l}(\mathcal{T}_{\le l})$ be the expected total infections at depth-$l$ leaves on $\mathcal{T}_{\le l}$. Then
	$(1.5)^{l} \wt(\T_{\le l}) M_{l}(\mathcal{T}_{\le l}) $ has a good tail.
\end{lemma}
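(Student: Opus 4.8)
The plan is to mirror the inductive argument used for Lemma \ref{lm:S tail bd}, now carrying the extra factor $(1.5)^l$ through the recursion and feeding the result into the recursive tail Lemma \ref{lm:tail}. The base case $l=0$ is immediate: by convention $\mathbf M_0(T)\equiv 1$ on a single vertex, so $(1.5)^0\wt(\T_{\le 0})M_0(\T_{\le 0})=\frac{\la}{1+\la}$, which trivially has a good tail. Assume the statement holds for $l-1$, and write $D$ for the root degree and $\T_1,\dots,\T_D$ for the subtrees hanging off the root's children, so that $\T_i$ has depth (at most) $l-1$.

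The key step is to combine the two recursions at our disposal. Set $M_i:=(1.5)^{l-1}\wt(\T_i)M_{l-1}(\T_i)$ and $S_i:=\wt(\T_i)S(\T_i)$; by the two inductive hypotheses (this lemma and Lemma \ref{lm:S tail bd}), each $M_i$ has a good tail and each $S_i$ has a good tail. By Lemma \ref{lm:Mrecursion},
\begin{equation}
\wt(\T_{\le l})M_l(\T_{\le l})\le \la\sum_{i=1}^D \wt(\T_i)M_{l-1}(\T_i)\prod_{j\neq i}\bigl(1+\la \wt(\T_j)S(\T_j)\bigr).\nonumber
\end{equation}
Multiplying by $(1.5)^l$ and using $(1.5)^l=1.5\cdot(1.5)^{l-1}$, the left side becomes our target random variable $X:=(1.5)^l\wt(\T_{\le l})M_l(\T_{\le l})$ and the right side becomes
\begin{equation}
X\le 1.5\,\la\sum_{i=1}^D M_i\prod_{j\neq i}(1+\la S_j).\nonumber
\end{equation}
The plan is to dominate this by an expression of the form $\prod_{i=1}^D(1+\la Z_i)-1$ for suitable independent good-tailed $Z_i$, so that Lemma \ref{lm:tail} applies and yields a strong tail (hence a good tail, as required). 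Concretely, one expands $\prod_{i=1}^D(1+\la W_i)-1\ge \la\sum_i W_i\prod_{j\ne i}(1+\la W_j)/(1+\la W_i)\cdot(\dots)$; more cleanly, note $\prod_j(1+\la W_j)-1\ge \la\sum_i W_i$ and also $\prod_j(1+\la W_j)-1=\sum_i \la W_i\prod_{j<i}(1+\la W_j)\ge \la\sum_i W_i$, and to capture the product weights one takes $W_i$ to majorize both $1.5 M_i$ (for the leading factor) and $S_i$ (for the product factors) — e.g. $W_i:=1.5M_i+S_i$, which still has a good tail since a sum of good-tailed variables has a good tail (up to the factor-of-$3$ bookkeeping in Definition \ref{def:good:tail}; here one should use that $1.5M_i$ and $S_i$ each have good tails and that $W_i=1.5M_i+S_i$ has a good tail because $f(t/2)\le$ the required bound, or rescale constants). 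Then
\begin{equation}
1.5\,\la\sum_i M_i\prod_{j\ne i}(1+\la S_j)\le \la\sum_i W_i\prod_{j\ne i}(1+\la W_j)\le \prod_{i=1}^D(1+\la W_i)-1,\nonumber
\end{equation}
where the last inequality is the elementary identity $\prod_i(1+a_i)-1=\sum_i a_i\prod_{j<i}(1+a_j)\ge\sum_i a_i\prod_{j\ne i}(1+a_j)^{?}$ — this needs a moment's care; the correct clean bound is $\prod_i(1+a_i)-1\ge \sum_i a_i\prod_{j\ne i}\mathbf 1=\sum_i a_i$ is too weak, so instead I would argue $\la\sum_i W_i\prod_{j\ne i}(1+\la W_j)\le \sum_i\la W_i\prod_{j}(1+\la W_j)/(1+\la W_i)\le D\cdot(\prod_j(1+\la W_j))$ and then absorb, or more simply bound $\prod_{j\ne i}(1+\la W_j)\le\prod_j(1+\la W_j)$ and use $\la\sum_i W_i\le\prod_j(1+\la W_j)-1$ together with a second copy of the product — I would finalize this by choosing $W_i$ slightly larger (absorbing the $1.5$ and a harmless constant into $W_i$) so that $\la\sum_i W_i\prod_{j\ne i}(1+\la W_j)\le\prod_i(1+\la W_i')-1$ with $W_i'$ still good-tailed for small $\la$.

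The main obstacle is precisely this last domination: matching the asymmetric right-hand side of Lemma \ref{lm:Mrecursion} (a \emph{sum} of terms, each a single $M$-factor times a \emph{product} of $S$-factors) to the symmetric product form $\prod(1+\la Z_i)-1$ required by Lemma \ref{lm:tail}, while keeping the comparison variables good-tailed and tracking the $(1.5)^l$ growth — the factor $1.5$ is what must be shown to be absorbable, and this is where the specific value (as opposed to, say, $2$) matters, since the expansion of $\prod(1+\la Z_i)$ produces the needed product weights only with a multiplicative slack that is $O(1)$ but must beat $1.5$ after the induction closes. Once the domination is set up, invoking Lemma \ref{lm:tail} gives a strong tail for $\prod_i(1+\la W_i')-1$, and adding in any leftover constant-order terms (there are none of substance here, unlike in Lemma \ref{lm:S tail bd}) preserves the good-tail property, completing the induction.
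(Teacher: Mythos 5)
Your plan follows the same broad route as the paper (inductive step via Lemma \ref{lm:Mrecursion}, then feed the result into Lemma \ref{lm:tail}), but there is a real gap in how you close the induction, and you also leave the key elementary inequality unresolved.

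First, the unresolved inequality. What you need is
\begin{equation}
\la\sum_{i=1}^{D} W_i\prod_{j\ne i}(1+\la W_j)\;\le\;\prod_{i=1}^{D}(1+2\la W_i)-1,\nonumber
\end{equation}
which follows by expanding both sides: the left side is $\sum_{\emptyset\ne T\subset[D]}|T|\prod_{j\in T}\la W_j$ while the right side is $\sum_{\emptyset\ne T\subset[D]}2^{|T|}\prod_{j\in T}\la W_j$, and $|T|\le 2^{|T|}$ termwise. The point is that the doubling goes \emph{inside} the factors $(1+2\la W_i)$; your hesitation about whether the factor $1.5$ ``can beat the slack'' is misplaced — the overall prefactor $1.5$ sits harmlessly outside and is eaten by the factor-of-$3$ room in Definition \ref{def:good:tail} (if $Y$ has a strong tail then $\P(\tfrac32 Y\ge t)=\P(Y\ge 2t/3)\le f(2t)/3\le f(t)$).

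Second, and more importantly, the gap in the induction. You set $W_i:=1.5M_i+S_i$ (or a max) with $M_i=(1.5)^{l-1}\wt(\T_i)M_{l-1}(\T_i)$ from the inductive hypothesis and $S_i=\wt(\T_i)S(\T_i)$ from Lemma \ref{lm:S tail bd}, and you assert $W_i$ has a good tail ``up to bookkeeping.'' It does not: if $\P(M_i\ge t)\le f(t)$ and $\P(S_i\ge t)\le f(t)$, then $\P(\max(M_i,S_i)\ge t)\le 2f(t)$ and $\P(M_i+S_i\ge t)\le 2f(t/2)$, neither of which is $\le f(t)$. The factor-of-$3$ slack in Definition \ref{def:good:tail} only lets you combine variables that have \emph{strong} tails; the inductive hypothesis only gives good tails. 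This is not a cosmetic issue — it is exactly what prevents the recursion from closing. The paper circumvents it by taking a different inductive statement: it inducts directly on
\begin{equation}
W(\T_{\le l}):=\max\Bigl\{(1.5)^{l}\,\wt(\T_{\le l})M_{l}(\T_{\le l}),\ \wt(\T_{\le l})S(\T_{\le l})\Bigr\},\nonumber
\end{equation}
so that the hypothesis hands you a \emph{single} good-tailed random variable $W^{l-1}(\T_i)$ which simultaneously dominates the $M$-factor and the $S$-factor. The two recursions (Lemma \ref{lm:S:recursion} and Lemma \ref{lm:Mrecursion}) are then both bounded by $X_a+\tfrac32\bigl[\prod_i(1+2\la W^{l-1}(\T_i))-1\bigr]$, and the good tail for $W(\T_{\le l})$ is proved directly from this bound (via Lemma \ref{lm:tail} and the strong-tail arithmetic), never by ``combining'' the separate tails of $M$ and $S$. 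You should adopt this joint inductive quantity; without it, the step from the hypothesis to a good-tailed $W_i$ is unjustified.
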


\begin{proof} [Proof of Lemma \ref{lm:M tail bd}] Let
	$$W(\T_{\le l}) = \max\left \{(1.5)^l \wt(\T_{\le l}) M_{l}(\T_{\le l}), \wt(\T_{\le l}) S(\T_{\le l})\right \}.$$
	We shall prove by induction that $W(\T_{\le l})$ has a good tail. This will imply that $(1.5)^l \wt(\T) M_{\le l}(\mathcal{T}_{\le l})$ also has a good tail.
	For $l=0$, we have $W(\T_{\le 0})=\frac{\la}{1+\la}$ which trivially has a good tail. Assume that the statement holds for $l-1$.

	Using the same notations as in the  proof of Lemma \ref{lm:S tail bd}, we have
	\begin{eqnarray*}
		\wt(\T_{\le l}) S(\T_{\le l}) \le  X_a+   \prod_{i=1}^{D} (1 + \la \wt(\T_i) S(\T_i)) - 1 \le X_a+\prod_{i=1}^D (1+2\la W^{l-1}(\T_i))-1,
	\end{eqnarray*}
where we noted that $\T_i$, being the subtree of $\T_{\le l}$, is the same as $(\T_i)_{\le l-1}$.
	By Lemma \ref{lm:Mrecursion},
	\begin{eqnarray*}
		(1.5)^l\wt(\T_{\le l}) M(\T_{\le l})
		&\le& 1.5\la \sum_{i=1}^D W^{l-1}(\T_i) \prod_{\substack{1\,\leq\, j\, \leq\, D\\ j\neq i}} (1+\lambda W^{l-1}(\T_j))\\
		& \le& \frac32\left [\prod_{i=1}^D (1+2\la W^{l-1}(\T_i))-1\right ].
	\end{eqnarray*}
Thus,
$$  W(\T_{\le l})\le X_a+\frac32\left [\prod_{i=1}^D (1+2\la W^{l-1}(\T_i))-1\right ].$$

By induction hypothesis, $W^{l-1}(\T_i)$ has a good tail for all $i$. By  Proposition \ref{prop:tail},  $\prod_{i=1}^D (1+2\la W^{l-1}(\T_i))-1$ has a strong tail.
 Moreover, $X_a$ also has a strong tail.
So, for all $t\ge A\la$,
$$\P(W(\T_{\le l})\ge t)\le \P(\prod_{i=1}^D (1+2\la W^{l-1}(\T_i))-1\ge \frac{t}{3}) +\P(X_a\ge \frac{t}{3})\le   2f(t)/3\le f(t).$$
Thus, $W(\T_{\le l})$ also has a good tail.
\end{proof}

 \section{Proof of Theorem \ref{thm:uper}: upper bound for survival time on graphs}\label{sec:upper}
 As mentioned in Lemma \ref{lem:lwc}, the local neighborhood of a vertex in $G_n\sim \mathcal G(n, \mu)$ is asymptotically a truncated GW tree. In Section \ref{sec:upper:tree}, we have derived upper bounds for the expected survival time and number of infections at the leaves on truncated GW trees. To pass this result to the actual local neighborhoods, we show  in Section \ref{subsec:coupling local nbd} that all local neighborhoods of $G_n$ are contained in the so-called edge-added Galton-Watson trees ($\egw$) which consist of truncated GW trees and cycles ($\gwc$). We then derive similar upper bounds for the expected survival time and number of infections at the leaves on $\gwc$ in Section \ref{sec:unicycle} and on $\egw$ in Section \ref{sec:egw}. The derivation of these bounds are similar (though slightly more complicated) to the bounds for GW trees that we have done in Section \ref{sec:upper:tree}. We will combine all of these ingredients and finish the proof of Theorem \ref{thm:uper} in Section \ref{section:graph}.

 \subsection{Coupling the local neighborhood of $G_n$}\label{subsec:coupling local nbd}
 In this section, we follow \cite{BNNS, NNS} and show the coupling between random graphs and variants of GW trees. We utilize the notion of \textit{augmented distributions}, which allows us to stochastically dominate $N(v,l)$ by a larger geometry.

 \begin{definition}[Definition 4.2, \cite{BNNS}](Augmented distribution)\label{def:aug}
 	Let $\ep\in(0,1)$ and $\mu\equiv \{p_k\}_{k\in\mathbb{N}}$ be a probability measure on $\mathbb{N}$ such that $\E_{D\sim \mu} e^{cD} <\infty$ for some $c>0$.  Let
 	$$k_0 := \max \{k: \sum_{j\geq k} j\sqrt{p_j} \geq \ep/10 \}<\infty\quad\text{and} \quad k_{\textnormal{max}} :=\sup\{k:p_k>0 \}\in \N\cup\{\infty\}.$$
 If $k_0 < k_{\textnormal{max}}$, we define the \textit{augmented distribution} $\mu^\sharp$ of $\mu$ by
 	\begin{equation*}
 	\begin{split}
 	\mu^\sharp (k) := \frac{1}{Z}
 	\begin{cases}
 	p_k/2  &\textnormal{if } k\leq k_0 ;\\
 	\sqrt{p_k} &\textnormal{if } k>k_0,
 	\end{cases}
 	\end{split}
 	\end{equation*}	
 	where $Z= \sum_{k\leq k_0}  p_k/2 + \sum_{k>k_0} \sqrt{p_k}$ is the normalizing constant. When $k_0 = k_{\textnormal{max}}$, we set
 	\begin{equation*}
 	\mu^\sharp (k) := \frac{1}{Z}
 	\begin{cases}
 	p_k/2  &\textnormal{if } k< k_0 ;\\
 	\sqrt{p_k} &\textnormal{if } k=k_0,
 	\end{cases}
 	\end{equation*}
 	for the normalizing constant $Z= \sum_{k<k_0} p_k/2 + \sqrt{p_{k_0}}$.
 \end{definition}

 Here are some basic properties of augmented distributions.

 \begin{lemma}[Lemma 4.3, \cite{BNNS}]\label{lem:aug}
 	Let $\mu$ be a probability distribution on $\mathbb{N}$.
 	\begin{enumerate}
 		\item[(1)] If $\mu$ has an exponential tail, then so does $\mu^\sharp$. More specifically, if $\E _{D\sim \mu} e^{3cD}<\infty$ then $\E _{D\sim \mu^\sharp} e^{cD}<\infty$.
 		
 		\item[(2)] 	Let $D_1,\ldots,D_n$ be $n$ independent samples of $\mu$. For a subset $\Delta \subset [n]$, let $\{p_k^\Delta \}_k$ denote the empirical distribution of $\{D_i \}_{i\in [n]\setminus \Delta}$. With high probability over the choice of $D_i$'s, $\{p_k^\Delta \}_k $ is stochastically dominated by $\mu^\sharp,$ for any $\Delta \in [n]$ with $|\Delta| \leq n/3$.
 	\end{enumerate}
 \end{lemma}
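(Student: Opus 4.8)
\textbf{Part (1)} is a short computation. Writing the expectation out from the definition,
\[
\E_{D\sim\mu^\sharp}e^{cD}=\frac1Z\Big(\sum_{k\le k_0}\tfrac{p_k}{2}e^{ck}+\sum_{k> k_0}\sqrt{p_k}\,e^{ck}\Big),
\]
the first sum is finite (finitely many finite terms), and for the second I would factor $\sqrt{p_k}\,e^{ck}=\sqrt{p_k e^{3ck}}\cdot e^{-ck/2}$ and apply Cauchy--Schwarz to get
\[
\sum_{k> k_0}\sqrt{p_k}\,e^{ck}\le\Big(\sum_{k\ge0}p_ke^{3ck}\Big)^{1/2}\Big(\sum_{k\ge0}e^{-ck}\Big)^{1/2}=\big(\E_{D\sim\mu}e^{3cD}\big)^{1/2}(1-e^{-c})^{-1/2}<\infty ,
\]
which is finite by hypothesis. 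Hence $\E_{D\sim\mu^\sharp}e^{cD}<\infty$.

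\textbf{For Part (2)} the plan is to first reduce the ``for all $\Delta$'' statement to a single event about the degree sequence $\{D_i\}$, and then prove that event by concentration. Since $|[n]\setminus\Delta|\ge\tfrac23 n$ and $\#\{i\notin\Delta:D_i\ge m\}\le N_m:=\#\{i\in[n]:D_i\ge m\}$ for every admissible $\Delta$, it suffices to show that with high probability, simultaneously for every $m\ge 0$, one has $N_m\le\tfrac23\,n\,\mu^\sharp[m,\infty)$; this immediately yields $p^\Delta[m,\infty)=\#\{i\notin\Delta:D_i\ge m\}/|[n]\setminus\Delta|\le\mu^\sharp[m,\infty)$ for all $m$, i.e.\ $p^\Delta\preceq\mu^\sharp$. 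Now each $N_m$ is, up to the (harmless) parity conditioning, $\mathrm{Bin}(n,\bar\mu_m)$ with $\bar\mu_m:=\mu[m,\infty)=\E N_m/n$, so I would bound it by Chernoff and union bound over $m$. The mechanism that makes this work is that $\mu^\sharp$ strictly dominates $\mu$ with a quantitative margin at every threshold, coming from two sources: for $k\le k_0$ the definition halves the mass and $\tfrac12<Z<1$ only partially compensates, so $\mu^\sharp[m,\infty)\ge(1+\kappa)\mu[m,\infty)$ for $m$ in the bulk (some $\kappa=\kappa(\mu,\ep)>0$), while for $m> k_0$ one has $\mu^\sharp[m,\infty)\ge \tfrac1Z\sqrt{p_m}\gg p_m\ge (\text{const})\cdot\bar\mu_m$ by the exponential tail of $\mu$. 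In the bulk this gives $\tfrac23 n\mu^\sharp[m,\infty)\ge\tfrac23(1+\kappa)\,\E N_m$ with $\E N_m=\Theta(n)$, so Chernoff gives failure probability $e^{-\Omega(n)}$ for each of finitely many bulk thresholds; in the tail $\tfrac23 n\mu^\sharp[m,\infty)/\E N_m\to\infty$, so Chernoff gives a stretched-exponential-in-$n$ bound that still sums to $o(1)$ over $m> k_0$ since $n\sqrt{p_m}\to\infty$. The intermediate range is handled by interpolating these two arguments.

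\textbf{The main obstacle} is the quantitative bulk estimate: one must check that the factor $\tfrac32$ produced by renormalizing by $|[n]\setminus\Delta|\ge\tfrac23 n$ (equivalently, by deleting up to a $\tfrac13$–fraction of vertices) is genuinely swallowed by the margin between $\mu^\sharp$ and $\mu$. This forces careful bookkeeping of where that margin comes from — the halving of $p_k$ for $k\le k_0$, the bound $Z<1$, and $\sqrt{p_k}\ge p_k$ in the tail — and a choice of the bulk/tail cutoff so that in every regime the Chernoff saving beats both the renormalization factor and the number of thresholds in the union bound. Part (1), and the large-$m$ regime of Part (2) where $\mu^\sharp$ outweighs $\mu$ by an unbounded factor, are routine; the delicate accounting is confined to the bulk and the transition region near $k_0$.
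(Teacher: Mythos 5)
The paper does not prove this; it quotes Lemma~4.3 from \cite{BNNS}, so there is no in-paper proof to compare against. Judging your proposal on its own terms:

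\textbf{Part (1)} is correct. The Cauchy--Schwarz step is clean; one should add that $Z\ge\tfrac12>0$ (since $\sqrt{p_k}\ge p_k$), so the $1/Z$ factor is harmless, and that the $k_0=k_{\max}$ case is a finite sum and hence trivial.

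\textbf{Part (2)} has a genuine gap. Your reduction to the event $\{N_m\le\tfrac23\,n\,\mu^\sharp[m,\infty)\ \forall m\ge1\}$ (with $m=0$ trivial) is the right first move, and you correctly identify that it forces you to show $\mu^\sharp[m,\infty)\ge\tfrac32\,\mu[m,\infty)$ (up to concentration slack) in the bulk. But the margin you invoke does not exist. A direct computation gives, for $m\le k_0$,
\[
\mu^\sharp[m,\infty)-\mu[m,\infty)=\frac{\bigl(S-\tfrac{T}{2}\bigr)\,\mu[0,m)}{Z},
\qquad S:=\sum_{k>k_0}\sqrt{p_k},\quad T:=\sum_{k>k_0}p_k,
\]
and by definition of $k_0$ one has $0\le S-\tfrac{T}{2}\le S<\ep/10$, while $Z\ge\tfrac12$. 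So the additive gain over $\mu$ is at most $\ep/5$, and the halving-plus-renormalization you appeal to actually \emph{cancels out} ($1/(2Z)\approx1$ when $Z\approx\tfrac12$): the entire gain comes from the tail boost and is bounded by the (by construction small) tail mass. In particular, $\mu^\sharp[m,\infty)\ge\tfrac32\mu[m,\infty)$ can fail badly: e.g.\ for $p_0=p_1=\tfrac12$ and any $\ep\in(0,1)$ one has $k_0=k_{\max}=1$, $Z=\tfrac14+\sqrt{\tfrac12}$, hence $\mu^\sharp[1,\infty)=\sqrt{1/2}/Z\approx0.739<0.75=\tfrac32\mu[1,\infty)$. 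So the event you reduced to fails at $m=1$ with probability tending to $1$, and your plan cannot close. You flag the bulk estimate as ``the main obstacle,'' but the mechanism you describe for beating the $\tfrac32$ factor (a constant $\kappa$ coming from halving and $Z<1$) is not there; either a different route is needed, or the constraint on $|\Delta|$ must be much smaller (say $o(n)$) than $n/3$ for your reduction to go through. This is exactly the piece that has to be supplied, and the proposal does not supply it.
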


 \begin{remark}
 	The i.i.d. $D_i $ in Lemma \ref{lem:aug} can be viewed as a degree sequence of $G_n \sim \mathcal{G}(n,\mu)$. Consider the exploration procedure starting from a single fixed vertex $v$, which, at each step, reveals a vertex adjacent to the current explored neighborhood and the half-edges incident to the new vertex. Then the second statement says that when the exploration process revealed $N \leq n/3$ vertices inside the local neighborhood of $v$, the empirical degree distribution of the $n-N$ unexplored vertices is stochastically dominated by $\mu^\sharp$, with high probability.
 \end{remark}

 Using the above properties of augmented distributions, we develop a coupling argument to dominate $N(v,L) \subset G_n$ where $G_n\sim \mathcal G(n, \mu)$ by a Galton-Watson type branching process. To this end, we first take account of the effect of emerging cycles in $N(v,L)$.

 For a constant $\gamma>0$, let $\mathcal{A}(\gamma)$ be the event that $N(v,\gamma \log n)$ in $G_n$ contains at most one cycle   for all $ v\in G_n$. The following lemma shows that we typically have $\mathcal{A}(\gamma)$ for some constant $\gamma$.

 \begin{lemma}[Lemma 4.5, \cite{BNNS}]\label{lem:1cyc}
 	There exists $\gamma=\gamma(\mu)>0$ such that for $G_n\sim \mathcal{G}(n,\mu)$,
 	$$\P(G\in \mathcal{A(\gamma)})  = 1-o(1).$$
 \end{lemma}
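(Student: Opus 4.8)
The plan is a first-moment (union bound) argument over short cycles combined with a growth estimate for the exploration of a local neighborhood. First I would fix a vertex $v$ and run the standard breadth-first exploration of $N(v, \gamma\log n)$ in the configuration model, pairing half-edges one at a time. Since $\mu$ has an exponential tail (in particular a finite second moment), the offspring at each exploration step is stochastically dominated by the augmented/size-biased distribution $\widetilde\mu$ (cf. Lemma~\ref{lem:aug}), which has mean $m := \E_{D\sim\widetilde\mu} D < \infty$; choosing $\gamma$ small enough that $m^{\gamma\log n} = n^{\gamma\log m} \le n^{1/10}$, say, one shows that with probability $1 - o(n^{-1})$ the neighborhood $N(v,\gamma\log n)$ contains at most $n^{1/10}$ vertices and at most $n^{1/10}$ half-edges. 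A union bound over the $n$ choices of $v$ then lets us condition on the event $\mathcal E$ that \emph{every} $N(v,\gamma\log n)$ is this small.

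Next I would bound the probability that a given $N(v,\gamma\log n)$ contains two or more cycles. A second (independent) cycle appears only when, during the exploration, a half-edge being paired is matched to another half-edge already inside the explored neighborhood, and this happens at least twice. At any step of the exploration, conditioned on $\mathcal E$, the number of ``open'' half-edges inside the neighborhood is at most $n^{1/10}$, while the total number of unpaired half-edges is $\Theta(n)$ (by Chebyshev, the total degree is $\Theta(n)$ \whp, as in \eqref{eq:sum:deg}), so the conditional probability that a given pairing creates a cycle is $O(n^{1/10}/n) = O(n^{-9/10})$. Since the exploration performs at most $O(n^{1/10})$ pairings, the expected number of cycle-creating pairings in $N(v,\gamma\log n)$ is $O(n^{1/10} \cdot n^{-9/10}) = O(n^{-4/5})$, and hence the probability of \emph{two or more} such pairings is $O(n^{-8/5})$ by a second-moment/Markov bound on the number of cycle-closing events (they are ``negatively correlated enough,'' or one can simply bound the probability of two specified bad pairings by $O(n^{-9/5})$ and sum over the $O(n^{1/5})$ ordered pairs of steps). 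Either way the per-vertex failure probability is $o(n^{-1})$.

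Finally, a union bound over all $v \in G_n$ gives $\P(N(v,\gamma\log n) \text{ has} \ge 2 \text{ cycles for some } v) = n \cdot o(n^{-1}) = o(1)$, and combined with $\P(\mathcal E) = 1 - o(1)$ this yields $\P(\mathcal A(\gamma)) = 1 - o(1)$. The main technical point — and the step I would be most careful with — is getting the exploration growth bound uniform over all $n$ starting vertices with a failure probability that genuinely beats $1/n$, which is exactly what forces $\gamma$ to be a sufficiently small constant depending on $m$ (equivalently on the second moment of $\mu$); the cycle-counting estimate itself is then routine once the neighborhoods are known to be polynomially small in $n$. Since this is Lemma~4.5 of \cite{BNNS}, I would in practice just cite that reference, but the argument above is the self-contained version.
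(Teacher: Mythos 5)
The paper does not prove this lemma itself; it imports it verbatim from \cite{BNNS}. Your proposal gives the natural self-contained argument (BFS exploration of $N(v,\gamma\log n)$, domination by a branching process, first-moment bound on cycle-closing pairings, union bound over $v$), and this is structurally the right approach. Two points deserve more care, one of which is a genuine gap.

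The gap is in the growth estimate. You assert that ``with probability $1 - o(n^{-1})$ the neighborhood $N(v,\gamma\log n)$ contains at most $n^{1/10}$ vertices,'' and you parenthetically attribute this to $\mu$ having ``in particular a finite second moment.'' Finite second moment of $\mu$ gives finite mean $m$ for $\widetilde\mu$, hence $\E Z_s \le m^s$, and Markov then only yields $\P(Z_s > n^{1/10}) = O(n^{\gamma\log m - 1/10})$; Chebyshev (using finite variance of the branching process at generation $s$) gives $O(n^{2\gamma\log m - 1/5})$. Neither is $o(n^{-1})$ for any positive $\gamma$. To beat $n^{-1}$ per vertex you genuinely need the exponential tail (or at least high moments): one needs a uniform moment bound on the martingale $Z_s/m^s$, which is precisely what the paper's Lemma~\ref{lm:tree:bound} provides ($\sup_s \E\exp(tZ_s\tilde d^{-s}) < \infty$) and which it proves via a generating-function estimate that uses $\E_{N\sim\mu}e^{cN}<\infty$. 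So you should invoke Lemma~\ref{lm:tree:bound} (or reproduce that supermartingale argument) at this step; the reduction to ``finite second moment'' is not enough, even though you correctly identify this growth bound as the crux.

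A smaller point: the phrasing ``condition on the event $\mathcal E$'' and then bound cycle-closing probabilities is a little loose, since conditioning on $\mathcal E$ perturbs the law of the pairings. The clean version interleaves the two bounds inside a single exploration: run the BFS from $v$, pair half-edges one at a time, and stop if either (a) more than $n^{1/10}$ half-edges have been revealed (an event of probability $o(n^{-1})$ by the above), or (b) two cycle-closing pairings have occurred. Before stopping, the number of revealed half-edges is at most $n^{1/10}$ and the total number of unpaired half-edges is $\Theta(n)$ on the global event $\sum_v\deg(v)\in(nd/2,2nd)$ (as in \eqref{eq:sum:deg}), so each pairing closes a cycle with conditional probability $O(n^{-9/10+o(1)})$; the rest of your union-bound and pair-counting then goes through as written and gives $o(n^{-1})$ per vertex, hence $o(1)$ overall. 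With those two adjustments — invoking the exponential moment bound for the growth estimate and replacing the conditioning by a stopped exploration — the argument is correct.
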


 We now define the three variants of GW trees that will be used. We use the definitions from \cite[Section 6.1]{NNS}.

 \begin{definition}[$\gwc^{1}$]
 	Let $\xi$ be a positive, integer-valued random variable, and let $m,l\geq 1$ be nonnegative integers. Then, $\mathcal{H}^{m,l} \sim \gwc^1(\xi, m)_{\le l}$, the \textit{Galton-Watson-on-cycle process of type one} (in short, $\gwc^1$-\textit{process}), is generated according to the following procedure:
 	\begin{enumerate}
 		\item [1.] Consider a length-$m$ cycle $C= v_1 v_2 \ldots v_m v_1$.
 		
 		\item[2.] At each $v_j$ for $j=1,\ldots,m$, attach $\mathcal{T}_{j}\sim$ i.i.d.$\; \gw(\xi)_{\le l}$ by setting $v_j$ as its root.
 	\end{enumerate}
 	We designate vertex $v_1$ as the root of $\mathcal{H}^{m,l}$ and denote $\rho = v_1$. Note that $m=1$ corresponds to the usual Galton-Watson trees.
 \end{definition}

 Next, we introduce the \textit{Galton-Watson-on-cycle process of type two} (in short, $\gwc^2$-process), which can
 be thought as a certain subgraph of $\gwc^1$-processes.
 \begin{definition}[$\gwc^{2}$]\label{def:gwc2}
 	Let $\xi$ be a positive, integer-valued random variable, and let $m,l\geq 1$ be integers. Then, $\dot{\cH}^{m,l} \sim \gwc^2(\xi, m)_{\le l}$, the \textit{Galton-Watson-on-cycle process of type two} (in short, $\gwc^2$-\textit{process}), is generated according to the following procedure:
 	\begin{enumerate}
 		\item [1.] Consider a length-$m$ cycle $C= v_1 v_2 \ldots v_m v_1$.
 		
 		\item[2.] At each $v_j ,\; j\in\{2,\ldots,m\}$, attach $\mathcal{T}_{j}\sim$ i.i.d.$\; \gw(\xi)_{\le l}$ by setting $v_j$ as its root. At $v_1$, we do nothing.
 	\end{enumerate}
 	We designate vertex $v_1$ as the root of $\dot{\cH}^{m,l}$ and denote $\rho = v_1$.
 \end{definition}

 We now introduce the Galton-Watson type process which will be coupled with the local neighborhoods of $G_n$.

 \begin{definition}[$\egw$]
 	Let $l,L, m$ be positive integers with $m\geq 2$ and $l\leq L$, and let $\xi$ be a probability distribution on $\mathbb{N}$. We define  $\egw(\xi;l,m)_{\le L}$, the edge-added Galton-Watson process (in short, $\egw$-process) as follows:
 	\begin{enumerate}
 		\item Generate a $\gw(\xi)_{\le L}$ tree, conditioned on survival until depth $l$.
 		
 		\item At each vertex $u$ at depth $l$, add an independent $\gwc^2(\xi,m)_{\le L-l}$ process (see Definition \ref{def:gwc2}) rooted at $u$. Here we preserve the existing subtrees at $u$ which comes from $\gw(\xi)_{\le L}$ tree from the above step.
 	\end{enumerate}
 	Let $\xi'$ be another probability measure on $\mathbb{N}$. Then $\egw(\xi,\xi';l,m)_{\le L}$ denotes the $\egw$-process where the root has degree distribution $\xi$, and all the descendants have $\xi'$. And in the second step of the definition, we add $\gwc^{2}(\xi';m)_{\le L-l}$ instead.
 \end{definition}

 The next step is to show that the local neighborhood $N(v,L)$ is dominated by a combined law of the above graphs. In what follows, for two probability measures $\nu_1$ and $\nu_2$ on graphs, we say $\nu_1$ \textit{stochastically dominates} $\nu_2$ and write $\nu_1 \geq_{st} \nu_2$ if there exists a coupling between $S_1 \sim \nu_1$ and $S_2 \sim \nu_2$ such that $S_2 \subset S_1 $ in terms of isomorphic embeddings of graphs, i.e., there exists an injective graph homomorphism from $S_2$ into $S_1$.

 Fix a vertex $v \in G_n$, and consider its local neighborhood $N(v, \gamma \log n)$ with $\gamma$  as in Lemma \ref{lem:1cyc}. Let $\mathcal{A}=\mathcal{A}(\gamma)$ be the event in the same lemma.
  For each $l,s$ with $s\geq 2$, we define the event $\mathcal{B}_{l,s}$ to be the intersection of $ \mathcal{A}$ and the event that $N(v, \gamma \log n)$ forms a cycle of length $s$ at distance $l$ from $v$.

 For the given degree distribution $\mu$, let $\tilde \mu$ be its size-biased distribution (see \eqref{eq:def:sizebiased} for its definition), and $\widetilde{\mu}' := \tilde \mu_{[1, \infty)}$ denote the distribution $\tilde \mu$ conditioned on being in the interval $[1, \infty)$. Let $\mu^\sharp$ and $\widetilde{\mu}^\sharp$ be the augmented distributions of $\mu$ and $\widetilde{\mu}'$, respectively. Further, let $\eta$, $\eta_{l,s}$ and  $\eta_0$ denote the probability measures on rooted graphs describing the laws of $N(v, \gamma \log n)$, $\egw(\mu^\sharp, \widetilde{\mu}^\sharp;l,s)_{\le \gamma \log n}$ and $\gw(\mu^\sharp, \widetilde{\mu}^\sharp)_{\le \gamma \log n}$, respectively.

 \begin{lemma}[The coupling, Lemma 4.8, \cite{BNNS}]\label{lm:nbdcoupling} 
 	Under the above setting, we have the following stochastic domination:
 	\begin{equation*}
 	\eta \one_{\mathcal{A}} \leq_{st} \sum_{l,s:s\geq 2} b_{s,l} \eta_{s,l} + b_0 \eta_0,
 	\end{equation*}
 	where $b_{l,s}=\P(\mathcal{B}_{l,s})$, $b_0 = 1-\sum_{l,s} b_{s,l}$.
 \end{lemma}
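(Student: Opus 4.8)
The plan is to prove the domination by a single breadth-first exploration of $N(v,\gamma\log n)$ inside $G_n$, carried out via the configuration-model half-edge pairing, run in parallel with the construction of a dominating graph. On the event $\mathcal A=\mathcal A(\gamma)$, which by Lemma~\ref{lem:1cyc} holds with probability $1-o(1)$, the revealed neighborhood contains at most one cycle, so it is either a tree or unicyclic; I want to show that in the first case it embeds (as a subgraph, i.e.\ via an injective graph homomorphism) into a sample of $\eta_0$, and in the second, if the cycle has length $s$ and nearest vertex at depth $l$, into a sample of $\eta_{l,s}$, with the probabilities matching the stated weights.

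First I would set up the exploration: starting from $v$, repeatedly select an unmatched half-edge at a frontier vertex of least depth and match it to a uniformly random unmatched half-edge; this either reveals a fresh vertex (whose remaining half-edges become new frontier half-edges) or closes a cycle by landing on an already-explored vertex. By Lemma~\ref{lem:1cyc}, on $\mathcal A$ at most one cycle-closing match occurs within radius $\gamma\log n$. I would reveal the \emph{location} of this match — equivalently, whether $\mathcal B_{l,s}$ holds and for which $l,s$ — as a separate piece of information, keeping the remaining matches, which only reveal fresh vertices and their degrees, independent of it as much as possible; the point of revealing the cycle-closing half-edge separately is that conditioning on $\mathcal B_{l,s}$ then merely designates which half-edge closes the cycle and does not otherwise disturb the degree statistics below.

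The heart of the argument is the degree domination, which is where the augmented distributions enter. I would first note that, after possibly shrinking $\gamma$, with probability $1-o(1)$ the exploration from every $v$ reveals at most $n/3$ vertices: the depth $\gamma\log n$ keeps the typical size $n^{o(1)}$ by the exponential tail of $\widetilde\mu$, and standard Galton--Watson size bounds plus a union bound over the $n$ roots handle the deviations. On this event Lemma~\ref{lem:aug}(2) applies at every step: the empirical degree distribution of the still-unexplored vertices is stochastically dominated by $\mu^\sharp$. Reading off the consequences for the exploration — the root's degree is dominated by a $\mu^\sharp$ variable, and each freshly revealed descendant, picked with probability proportional to its residual degree, contributes a number of new frontier half-edges dominated by a $\widetilde\mu^\sharp$ offspring variable (this is exactly why $\widetilde\mu^\sharp$ is taken to be the augmentation of the size-biased, shifted distribution $\widetilde\mu'$) — one couples so that the tree part of $N(v,\gamma\log n)$ embeds into a $\gw(\mu^\sharp,\widetilde\mu^\sharp)_{\le\gamma\log n}$ tree, level by level.

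Finally I would assemble the cases. When the neighborhood is a tree, the level-by-level coupling embeds it into a sample of $\eta_0$. On $\mathcal B_{l,s}$, the revealed graph is a depth-$(\gamma\log n)$ tree plus one extra edge creating a length-$s$ cycle whose top vertex lies at depth $l$; since $\egw(\mu^\sharp,\widetilde\mu^\sharp;l,s)_{\le\gamma\log n}$ is a $\gw(\mu^\sharp,\widetilde\mu^\sharp)$-type tree conditioned to reach depth $l$ with an independent $\gwc^2(\widetilde\mu^\sharp,s)$ grafted onto \emph{every} depth-$l$ vertex while retaining the subtrees, the same degree-by-degree coupling — together with the two free half-edges the grafted cycle provides at its root — embeds $N(v,\gamma\log n)$ into a sample of $\eta_{l,s}$. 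These outcomes are disjoint and, together with $\mathcal A^c$, exhaustive; $\mathcal B_{l,s}$ has probability $b_{l,s}$, the tree outcome has probability at most $b_0$, and on $\mathcal A^c$ the left side vanishes because of the factor $\one_{\mathcal A}$, while on the right $\eta_0$ dominates the empty graph, absorbing the slack. Averaging over the exploration outcome therefore yields
\[
\eta\,\one_{\mathcal A}\;\leq_{st}\;\sum_{l,s:\,s\ge 2} b_{l,s}\,\eta_{l,s}\;+\;b_0\,\eta_0 .
\]
I expect the main obstacle to be the degree-domination step carried out jointly with the cycle bookkeeping: one must verify that revealing which match closes the (unique) cycle does not break the stochastic domination furnished by Lemma~\ref{lem:aug}(2), and that the size-biasing inherent in configuration-model exploration is correctly absorbed by passing from $\mu^\sharp$ to $\widetilde\mu^\sharp$ for non-root vertices.
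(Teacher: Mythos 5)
The paper does not prove this lemma: it is quoted verbatim as Lemma~4.8 of \cite{BNNS}, and that reference is where the proof lives. So there is no ``paper's own proof'' to compare against here. That said, your proposed argument is in line with what the cited proof does: configuration-model half-edge exploration of $N(v,\gamma\log n)$, restriction to $\mathcal A$ (via Lemma~\ref{lem:1cyc}) so that the revealed graph is a tree or unicyclic, Lemma~\ref{lem:aug}(2) to dominate the residual empirical degree distribution by $\mu^\sharp$ (and size-biasing to $\widetilde\mu^\sharp$ for non-root vertices), and then a case split embedding the revealed graph into $\gw(\mu^\sharp,\widetilde\mu^\sharp)_{\le\gamma\log n}$ in the tree case and into $\egw(\mu^\sharp,\widetilde\mu^\sharp;l,s)_{\le\gamma\log n}$ when a cycle of length $s$ is found at depth $l$, with the probabilities of the cases supplying the mixture weights $b_{l,s}$ and $b_0$. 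The two places you flag as needing care are the genuine delicate points, and a complete writeup would have to nail both down: (i) that revealing which match closes the single cycle (i.e.\ conditioning on $\mathcal B_{l,s}$) can be decoupled from the remaining fresh-vertex matches so that Lemma~\ref{lem:aug}(2) still applies step by step, and this is where the ``conditioned on survival until depth $l$'' clause in the $\egw$ definition earns its keep; and (ii) that the one cycle of the actual neighborhood, together with the trees hanging off it, embeds into the $\gwc^2(\widetilde\mu^\sharp,s)$ piece grafted at the appropriate depth-$l$ vertex of the $\egw$ (the grafting at \emph{every} depth-$l$ vertex only makes the dominating object larger, which is fine for $\le_{st}$). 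With those two verifications spelled out, the argument is complete and, as far as I can tell, agrees with the BNNS proof in structure.
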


\subsection{Recursive analysis for unicyclic graphs $\gwc^{1}$ and $\gwc^{2}$}\label{sec:unicycle}

In this section, we derive results similar to Section \ref{sec:upper:tree} for $\gwc^{1}$ and $\gwc^{2}$. Let us start with the definitions of survival time and number of infection at the leaves, first for $\gwc^{1}$  in Definition \ref{def:SMunicyclic} and then for $\gwc^{2}$  in Definition \ref{def:SMdef gwc2}.

\begin{definition}[Root-added contact process on $\gwc^1$-processes]\label{def:SMunicyclic}
	Let $m,l\geq 1$ be integers, and $H$ be a graph that consists of a length-$m$ cycle $C=v_1v_2\ldots v_m v_1$ and the trees $T_1,\ldots,T_m$ rooted at $v_1,\ldots,v_m$, respectively.  Consider the \textit{root-added starred contact process} $(X_t)$ on $H$ which is the starred contact process on the graph $H\cup{v_1^+}$ with the permanently infected parent $v_1^+$ having a single connection with $v_1$. The vertex $v_1$ is initially at state $1^*$.
	\begin{enumerate}
		\item [1.]  Let $\textbf{S}(H)$ be the excursion  time which is the first time when $H$ returns to the all-healthy state. $S(H)=\E_{\textsc{cp}} \textbf{S}(H)$  denotes  the expected excursion time.
		
		\vspace{2mm}
		\item [3.] Let $\mathcal{L}_j = \{v\in T_j : \textnormal{dist}(v,v_j)=l \}$. For $v\in \mathcal{L}_j$ for some $j$, we define \textit{the total infections at} $v$ by
		\begin{equation*}
		\begin{split}
		\textbf{M}_{l, v}(H)&:=\left|\left\{s\in [0,\textbf{S}(H)]:\; X_s(v)=1^* \textnormal{ and } X_{s-}(v)=0 \right\}\right|.
		\end{split}
		\end{equation*}
		Then, the total infections at depth-l leaves and its expectation are given as
		\begin{equation*}
		\begin{split}
		\textbf{M}_{l}(H) &:= \sum_{j=1}^m \sum_{v\in\mathcal{L}_j} \textbf{M}_{l, v}(H), \quad \textnormal{and} \quad M_{l}(H) := \E_{\textsc{cp}}\textbf{M}_{l}(H) .
		\end{split}
		\end{equation*}
	\end{enumerate}
\end{definition}

\begin{definition}[Root-added contact process on $\gwc^2$-processes]\label{def:SMdef gwc2}
	For $\dot{\cH}^{m,l} \sim \gwc^2(\xi, m)_{\le l}$, we define the \textit{root-added (starred) contact process} on $\dot{\cH}^{m,l}$ without adding a new parent to the root. Instead, we fix the root $\rho=v_1$ to be permanently infected. Assume that $v_2$ is initially at state $1^*$. Then, we define  $S_2(\dot{\cH}^{m,l})$, $M_{l, 2}(\dot{\cH}^{m,l})$ to be the expected excursion time and the expected total number of infections at depth-$l$ leaves as in Definition \ref{def:SMunicyclic}. If $v_m$ is initially at state $1^*$ instead, the corresponding identities are denoted by $S_m(\dot{\cH}^{m,l})$, $M_{l, m}(\dot{\cH}^{m,l})$.
	
	We also write
	\begin{equation*}
	S(\dot{\cH}^{m,l}) = \frac{1}{\wt(\dot{\cH}^{m, l})} \left (w_2(\dot{\cH}^{m, l}) S_2(\dot{\cH}^{m,l})+ w_m(\dot{\cH}^{m, l}) S_m(\dot{\cH}^{m,l}) \right ), \quad
	\end{equation*}
	and
	\begin{equation*}
	M_{l}(\dot{\cH}^{m,l}) = \frac{1}{\wt(\dot{\cH}^{m, l})}  \left (w_2(\dot{\cH}^{m, l}) M_{l, 2}(\dot{\cH}^{m,l})+ w_m(\dot{\cH}^{m, l}) M_{l, m}(\dot{\cH}^{m,l})\right ),
	\end{equation*}
	where we define the weights to be
	\begin{equation}\label{eq:wcH}
	w_2(\dot{\cH}^{m, l})= \frac{  \la(D_2+2)}{1+\la(D_2+2)}, \quad w_m(\dot{\cH}^{m, l})=\frac{ \la (D_m+2)}{1+\la(D_m+2)}, \quad \wt(\dot{\cH}^{m, l})= w_2(\dot{\cH}^{m, l})+w_m(\dot{\cH}^{m, l}),
	\end{equation}
with $D_j$ being the number of children of $v_j$ in $\T_j$.
\end{definition}

The goal of this section is to establish the following analog of Lemmas \ref{lm:S tail bd} and \ref{lm:M tail bd}.

\begin{lemma}\label{lm:cyclic}
Assume that $ \E e^{3\xi}<\infty$.	Let $m,l\geq 1$ be  integers and $\mathcal{H}^{m,l}\sim \gwc^1(\xi,m)_{\le l}$ and $\dot{\cH}^{m,l}\sim \gwc^2(\xi,m)_{\le l}$. The following random variables have good tails:
	$$\wt({\cH}^{m, l}) S(\mathcal{H}^{m,l}),\  \wt(\dot{\cH}^{m, l}) S(\dot{\cH}^{m,l}),\ (1.5)^{l} 	\wt({\cH}^{m, l})  M_{ l}(\mathcal{H}^{m,l})\quad\text{and}\quad (1.5)^{l-1}\wt(\dot{\cH}^{m, l}) M_{ l}(\dot{\cH}^{m,l}),$$
	where the constants $A, B, c$ in the definition of goodness (Definition \ref{def:good:tail}) are any constants satisfying \eqref{eq:const:1} with $\xi$ in place of $D$ and where
	\begin{equation}
	\wt({\cH}^{m, l}) = \frac{\la (D_1+3)}{1+\la (D_1+3)}.\nonumber
	\end{equation}
\end{lemma}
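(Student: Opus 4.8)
The plan is to mirror the tree analysis of Section~\ref{sec:upper:tree}, establishing recursive bounds for $S$ and $M_l$ on the unicyclic graphs $\gwc^1$ and $\gwc^2$ and then feeding these into the recursive tail Lemma~\ref{lm:tail}. The key structural observation is that a $\gwc^1$-process $\mathcal H^{m,l}$ decomposes: conditioned on the root $v_1$, deleting $v_1$ leaves a path $v_2,\dots,v_m$ with i.i.d.\ GW-trees hanging off it, together with the trees $T_1$ hanging directly off $v_1$. When $v_1$ is temporarily pinned infected (the ``$\sharp$''-trick from Lemma~\ref{lm:S:recursion}), the rest of $\mathcal H^{m,l}$ becomes exactly a $\gwc^2$-process $\dot{\cH}^{m,l}$ with $v_1$ as its pinned root. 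So the first step is to prove the recursion
\begin{equation}\label{eq:plan:cyc1}
S(\mathcal H^{m,l}) \le \frac{1}{1+\la(D_1+3)} - \frac{1}{\la(D_1+3)} + \wt(\mathcal H^{m,l})^{-1}\,(1+\la\,\wt(\dot{\cH}^{m,l})S(\dot{\cH}^{m,l}))\prod_{i=1}^{D_1}(1+\la\,\wt(T_i)S(T_i)),
\end{equation}
by repeating verbatim the stationary-distribution / excursion-counting argument of Lemma~\ref{lm:S:recursion}, now with the root having $D_1+2$ ``effective children'' (its $D_1$ tree-children plus the two cycle-neighbors $v_2$ and $v_m$), the latter two contributing the single factor involving $\dot{\cH}^{m,l}$ since once $v_1$ infects $v_2$ the excursion on the rest of $\dot{\cH}$ runs until $v_2$-side dies. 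An entirely analogous recursion holds for $M_l(\mathcal H^{m,l})$ following Lemma~\ref{lm:Mrecursion}.

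The second, and genuinely new, step is the recursion internal to $\gwc^2$. Here the root $v_1$ is pinned infected and feeds into $v_2$ (or $v_m$); the process on $\dot{\cH}^{m,l}$ is a contact process on the path $v_2,\dots,v_m$ with trees attached, and $v_1$ pinned at both ends. I would set up a recursion in $m$: the process at $v_2$ sees its subtree $T_2$ and its path-neighbor $v_3$ (which itself roots a $\gwc^2$-type structure on $v_3,\dots,v_m,v_1$), plus the pinned $v_1$. Running the same ``$\sharp$''-coupling at $v_2$ and computing stationary fractions of the root-added product process, $\wt(\dot{\cH}^{m,l})S(\dot{\cH}^{m,l})$ is bounded by a product $\prod_{i}(1+\la X_i)-1$ plus a bounded constant term, where the $X_i$ range over $\wt(T_2)S(T_2),\dots$ and the $S/M$ of the shorter $\gwc^2$-graph $v_3\dots v_m v_1$, i.e.\ a structure with one fewer cycle vertex; similarly for $M_l$. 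The bookkeeping is fussier than in the tree case because there are two distinguished starting points ($v_2$ and $v_m$), which is precisely why the statement bundles them via the weighted averages $S(\dot{\cH}^{m,l})$, $M_l(\dot{\cH}^{m,l})$ in Definition~\ref{def:SMdef gwc2}; I would carry both $S_2,S_m$ (resp.\ $M_{l,2},M_{l,m}$) through the induction and only at the end combine.

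The third step is the induction to conclude good tails. We induct on the total depth $l$ (with, for $\gwc^2$, a nested induction on the cycle length $m$, the base case $m=1$ being the plain GW tree already handled by Lemmas~\ref{lm:S tail bd} and~\ref{lm:M tail bd}, or $m=2$ reducing to a single attached tree). At the inductive step, the subtree quantities $\wt(T_i)S(T_i)$ and $(1.5)^{l-1}\wt(T_i)M_{l-1}(T_i)$ have good tails by Lemmas~\ref{lm:S tail bd},~\ref{lm:M tail bd}; the shorter-cycle quantities have good tails by the inner induction hypothesis; so by Lemma~\ref{lm:tail} the product minus one has a strong tail, and adding the bounded ``constant'' term $X_a$ (which has a strong tail since $\xi$ has an exponential moment, exactly as in Lemma~\ref{lm:S tail bd}) keeps it a good tail. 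The extra geometric factors $1.5^l$ and $1.5^{l-1}$ are absorbed exactly as in Lemma~\ref{lm:M tail bd}: the recursion for $M$ carries a factor $\la$ out front and a sum over $\le D_1$ terms, and the $1.5$ beats the combinatorial loss, so one works with $W(\cdot)=\max\{(1.5)^{(\cdot)}\wt M, \wt S\}$ and shows $W$ has a good tail. The main obstacle I anticipate is not conceptual but combinatorial: correctly accounting for the two entry points into the cycle and verifying that the ``one fewer cycle vertex'' quantity that appears in the $\gwc^2$ recursion is itself a $\gwc^2$-quantity (with root still pinned), so that the induction on $m$ actually closes — in particular making sure the weights $w_2,w_m,\wt$ in \eqref{eq:wcH} are the right normalizations for the stationary-distribution identities analogous to \eqref{eq:S:prod}. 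Once that is pinned down, every tail estimate is a direct citation of Lemma~\ref{lm:tail} and the argument is essentially a transcription of Section~\ref{sec:upper:tree}.
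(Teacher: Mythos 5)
Your plan is essentially the paper's own: derive recursions for $S$ and $M_l$ on $\gwc^1$ and $\gwc^2$ modelled on Lemmas~\ref{lm:S:recursion}--\ref{lm:Mrecursion}, track $S_2,S_m$ (resp.\ $M_{l,2},M_{l,m}$) separately, feed the resulting products into Lemma~\ref{lm:tail}, and close an induction equivalent to the paper's single induction on $m+l$. The one step you flag but do not resolve is the real crux of the $\gwc^2$ recursion: because the pinned root of $\dot{\cH}^{m,l}$ is adjacent to both $v_2$ and $v_m$, the excursion started at $v_2$ produces a cross-term in $S_m$ (and symmetrically), and the system closes only because those cross-coefficients are at most $\la\le 1/2$, so that adding the two inequalities leaves a factor $\tfrac12\,\wt(\dot{\cH}^{m,l})S(\dot{\cH}^{m,l})$ on the left-hand side — this absorption is the content of Lemma~\ref{lm:rec:Sdot}.
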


 Though the proof of Lemma \ref{lm:cyclic} is technically more complicated, the idea is similar to the derivation of Lemmas \ref{lm:S tail bd} and \ref{lm:M tail bd}. Therefore, we defer its proof to Appendix A.

 \subsection{Recursive analysis for $\egw$} \label{sec:egw}
We now prove similar results for \egw-processes. We start with the definitions of survival time and total number of infections.
\begin{definition}[Excursion time and total infections at leaves for \egw]\label{def:SMegw}
	Let $h, l, m$ be nonnegative integers with $m\geq 2$, $\nu, \nu'$ be probability measures on $\mathbb{N}$, and $\mathcal{H}\sim \egw(\nu,\nu';h, m)_{\le l}$. We connect a permanently infected parent $\rho^+$ to the root $\rho$ of $\mathcal{H}$. Consider the root-added starred contact process $(X_t)$ on $\cH$ with $\rho^+$ always infected and $\rho$ initially at $1^*$.
	\begin{itemize}
		\item   The \textit{excursion time} $\textbf{S}(\mathcal{H})$ is the first time when $(X_t)$ becomes all-healthy on $\mathcal{H}$, and  $S(\mathcal{H}) = \E_{\textsc{cp}} \textbf{S}(\mathcal{H})$ denotes the \textit{expected excursion time} on $\mathcal{H}$.
		
		\item Let $\mathcal{L}$ be the collection of \textit{bottom leaves} of $\mathcal{H}$, that is, denoting $\{C_j\}$ to be the length-$m$ cycles in $\mathcal{H}$,
		\begin{equation*}
			\mathcal{L}= \{v\in\mathcal{H}: \textnormal{dist}(v,\rho)\geq l \textnormal{ and dist}(v,C_j)\geq l-h \textnormal{ for all }j \}.
		\end{equation*}
		
		\item Let $v\in \mathcal{L}$ and denote	the \textit{total infections at} $v$ by
		\begin{equation*}
			\textbf{M}_{l, v}(\mathcal{H}):=
			\left|\left\{s \in [0, \textbf{S}(\mathcal{H})]:\;
			X_s(v)=1^* \textnormal{ and } X_{s-}(v)=0
			\right\}\right|.
		\end{equation*}
		Then, we set  the \textit{total infections  at depth-$l$ leaves}  as
		\begin{equation*}
			\textbf{M}_{l}(\mathcal{H}) := \sum_{v\in\mathcal{L}} \textbf{M}_{l, v}(\mathcal{H}).
		\end{equation*}

		\item We also let $M_{l}(\mathcal{H}):=\E_{\textsc{cp}} \textbf{M}_{l}(\mathcal{H})$ be the \textit{expected total infections at leaves}.
	\end{itemize}
\end{definition}

For an \egw-process $\mathcal{H}$, the tails of $S(\mathcal{H})$ and $M_{ l}(\mathcal{H})$ are good, up to appropriate rescales.

\begin{proposition}\label{prop:egw tail}
	Let $\mu$ be a probability measure satisfying the hypothesis of Theorem \ref{thm:uper}. Let $m,l,h$ be nonnegative integers such that $m\geq 2$ and $l\geq h+1$. Let $\mathcal{H}\sim \egw(\mu^\sharp, \widetilde{\mu}^\sharp;h,m)_{\le l}$. Then, it holds that $\wt(\cH) S(\mathcal{H})$ and $1.5^{l}\wt(\cH) M_{ l}(\mathcal{H})$ have good tails where $\wt(\cH) = \frac{\la(D+1)}{1+\la (D+1)}$ and $D$ is the number of children of the root.
\end{proposition}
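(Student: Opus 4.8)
The plan is to prove, by induction on the cycle‑depth $h$, that
\begin{equation*}
W(\mathcal{H})\ :=\ \max\bigl\{\,1.5^{\,l}\,\wt(\mathcal{H})\,M_{l}(\mathcal{H})\,,\ \wt(\mathcal{H})\,S(\mathcal{H})\,\bigr\}
\end{equation*}
has a good tail, for one choice of the constants $A,B,c$ of \eqref{eq:const:1} that serves simultaneously for the finitely many offspring laws that occur — namely $\mu^{\sharp}$, $\widetilde{\mu}^{\sharp}$ and their size‑biasings — each of which has $\E e^{3D}<\infty$ by Lemma \ref{lem:aug}(1) together with \eqref{cond:mu} (this is where the slack between $e^{10D}$ and $e^{3D}$ is spent), and such a uniform choice exists since enlarging $A$ only weakens the good‑tail requirement. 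Because $1.5^{l}\wt(\mathcal{H})M_{l}(\mathcal{H})\le W(\mathcal{H})$ and $\wt(\mathcal{H})S(\mathcal{H})\le W(\mathcal{H})$, this statement yields the proposition. The argument is the exact analogue of the proofs of Lemmas \ref{lm:S tail bd} and \ref{lm:M tail bd}, with the unicyclic bound Lemma \ref{lm:cyclic} serving as the base case.

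Base case $h=0$: unwinding the definition, $\mathcal{H}\sim\egw(\mu^{\sharp},\widetilde{\mu}^{\sharp};0,m)_{\le l}$ is precisely a $\gwc^{1}(\widetilde{\mu}^{\sharp},m)_{\le l}$ process, except that the tree hanging at the root of the cycle has root‑offspring law $\mu^{\sharp}$ rather than $\widetilde{\mu}^{\sharp}$. The proof of Lemma \ref{lm:cyclic} treats the root of the cycle separately and uses nothing about its offspring count beyond a finite third exponential moment, which $\mu^{\sharp}$ has; hence it applies verbatim and gives good tails for $\wt(\mathcal{H})S(\mathcal{H})$ and $1.5^{l}\wt(\mathcal{H})M_{l}(\mathcal{H})$, so $W(\mathcal{H})$ has a good tail.

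Inductive step $h\ge 1$: first I would observe that Lemmas \ref{lm:S:recursion} and \ref{lm:Mrecursion} remain valid verbatim when the subtrees $T_{1},\dots,T_{D}$ at the root are allowed to contain cycles, since their proofs only use that the $T_{i}$ are vertex‑disjoint and joined to the rest of the graph solely through $\rho$ (so that with $\rho$ held infected the process on $\bigcup_{i}T_{i}$ factorizes as the product of the root‑added processes on the $T_{i}$, giving $\pi=\prod_{i}\pi_{i}$), while the weight $\wt(T_{i})=\tfrac{\la(d_{i}+1)}{1+\la(d_{i}+1)}$ depends only on the degree of the root of $T_{i}$. Second, the conditioning on survival to depth $h$ in the definition of $\egw$ is handled, following \cite{NNS}, by a spine decomposition: the conditioned process is generated by a spine $\rho=u_{0},\dots,u_{h}$ along which each $u_{j}$ ($0\le j<h$) has offspring count dominated by the size‑biased version of the relevant law — still with $\E e^{3D}<\infty$ — one child continuing the spine and the others rooting independent $\gw(\widetilde{\mu}^{\sharp})_{\le l-j-1}$ subtrees, with the cycle gadget attached at the terminal vertex $u_{h}$. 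Thus the subtree at the spine child of $\rho$ is an $\egw(\widetilde{\mu}^{\sharp},\widetilde{\mu}^{\sharp};h-1,m)_{\le l-1}$ process, the remaining subtrees at $\rho$ are ordinary truncated $\gw(\widetilde{\mu}^{\sharp})$ trees, and all are independent. Now, exactly as in the proof of Lemma \ref{lm:M tail bd}, combining the two (extended) recursions gives
\begin{equation*}
W(\mathcal{H})\ \le\ X_{a}\ +\ \tfrac{3}{2}\Bigl(\,\prod_{i=1}^{D}\bigl(1+2\la\,W(T_{i})\bigr)-1\,\Bigr),
\end{equation*}
where $X_{a}$ is a polynomial in $D$ and hence has a strong tail, $W(T_{i})$ is the corresponding maximum quantity of $T_{i}$, which has a good tail by the inductive hypothesis (spine child) or by Lemmas \ref{lm:S tail bd} and \ref{lm:M tail bd} (other children), and the $T_{i}$ are independent. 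By Lemma \ref{lm:tail} (with $2\la$ in place of $\la$) the product term minus $1$ has a strong tail, and summing the two strong tails as in Definition \ref{def:good:tail} shows $W(\mathcal{H})$ has a good tail, closing the induction.

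The step I expect to require the most care is the second item of the inductive step: checking that the spine decomposition of the survival‑conditioned $\egw$ process is compatible with the placement of the cycle gadget (which sits exactly at the terminal spine vertex, since the conditioning is survival to the cycle‑depth $h$), and that the size‑biasing introduced along the spine preserves the exponential‑moment hypothesis that Lemma \ref{lm:tail} needs — this is precisely where the margin between $\E e^{10D}<\infty$ in \eqref{cond:mu} and the $\E e^{3D}<\infty$ demanded by \eqref{eq:const:1} is consumed. The remaining ingredients — extending the recursion lemmas to cyclic subtrees and making a uniform choice of the good‑tail constants — are routine once the bookkeeping of \cite{NNS} is followed.
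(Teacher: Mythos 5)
Your overall skeleton — induction on the cycle depth $h$, with the base case $h=0$ reducing to Lemma~\ref{lm:cyclic} and the inductive step combining recursions analogous to Lemmas~\ref{lm:S:recursion}--\ref{lm:Mrecursion} with Lemma~\ref{lm:tail} — agrees with the paper's. You part ways with the paper in how the conditioning on survival to depth $h$ built into $\egw$ is unwound: the paper simply treats the $D_1$ root subtrees as i.i.d.\ $\egw(\widetilde{\mu}^\sharp;h,m)_{\le l-1}$ and moves on, while you propose a spine decomposition with size-biased spine offspring, an $\egw(\widetilde{\mu}^\sharp,\widetilde{\mu}^\sharp;h-1,m)_{\le l-1}$ subtree at the spine child, and ordinary $\gw(\widetilde{\mu}^\sharp)$ trees off the spine. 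Using a spine decomposition here is a defensible alternative, but your description of the $\egw$ geometry does not match the paper's definition, and this creates a genuine gap.

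By definition, $\egw(\xi;h,m)_{\le l}$ attaches an independent $\gwc^2(\xi,m)_{\le l-h}$ gadget at \emph{every} vertex at depth $h$, not only at the terminal spine vertex $u_h$. In your decomposition the off-spine subtree rooted at a child of $u_j$ is a plain $\gw(\widetilde{\mu}^\sharp)_{\le l-j-1}$ tree; it is truncated at depth $l-j-1\ge h-j$, so with positive probability it reaches depth $h$ from $\rho$, and every depth-$h$ vertex inside it must carry a gadget that your model omits. Since the contact process is monotone in the edge set, dropping those gadgets stochastically \emph{reduces} both $\mathbf{S}$ and $\mathbf{M}_{l}$, so a good tail for your spine model is a lower-bound statement, not the upper-tail estimate Proposition~\ref{prop:egw tail} asserts. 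The repair is mechanical: replace the off-spine subtree law by the unconditioned analogue of $\egw$ — a $\gw(\widetilde{\mu}^\sharp)$ tree that carries a $\gwc^2$ gadget at every vertex at the appropriate relative depth whenever such a vertex is present — and observe that, being unconditioned, these pose no spine issue, so the same induction gives them good tails. With that correction, the rest of your argument (the extension of Lemmas~\ref{lm:S:recursion} and \ref{lm:Mrecursion} to cyclic subtrees via the product identity $\pi=\prod_i\pi_i$, the uniform choice of good-tail constants, and the application of Lemma~\ref{lm:tail}) is sound and, if anything, is more careful than the paper's one-line claim that the root subtrees are i.i.d.\ $\egw$ processes, which quietly absorbs the joint conditioning into a constant.
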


\begin{proof} Let $\mathcal{H}'\sim \egw( \widetilde{\mu}^\sharp;h,m)_{\le l}$. We shall establish the same conclusion for both $\cH$ and $\cH'$ by an induction on $h$. When $h=0$, since $\cH'\sim \gwc^1 (\widetilde{\mu}^\sharp,m)_{\le l}$ and $\cH\sim \gwc^1(\mu^\sharp,\widetilde{\mu}^\sharp,m)_{\le l}$, the $\gwc^1$-process in which only the root of the Galton-Watson tree has offspring $\mu^\sharp$ and all others have offspring $\widetilde{\mu}^\sharp$.
	Therefore, Lemma \ref{lm:cyclic} gives the desired statement for both $\egw(\widetilde{\mu}^\sharp;0,m)_{\le l}$ and $\egw(\mu^\sharp,\widetilde{\mu}^\sharp;0,m)_{\le l}$ for all $m\geq 2$, $l\geq 0$.
	
	Suppose that we have the statement for $\mathcal{H}'\sim \egw(\widetilde{\mu}^\sharp;h,m)_{\le l}$ with $m\geq 2$, $h\geq 0$ and $l\geq h+1$. Let $\rho_1$ be the root of $\mathcal{H}_1\sim \egw(\widetilde{\mu}^\sharp;h+1,m)_{\le l}$ and $D_1\sim \widetilde{\mu}^\sharp$ be its degree. Then the subgraphs $H_1^1,\ldots , H_{D_1}^1$  rooted at each child of $\rho_1$  are i.i.d. $\egw(\widetilde{\mu}^\sharp;h,m)_{\le l-1}$, and hence, by the induction hypothesis, the desired tail statement holds for $H_1^1,\ldots,H_{D_1}^1$. The argument in proving Lemma \ref{lm:S:recursion} applies verbatim to prove the same recursive bound with $S(\cH_1)$ playing the role of $S(T)$ and the $S(H_1^1), \dots, S(H_{D_1}^1)$ playing the role of the $S(T_i)$. Likewise, Lemma \ref{lm:Mrecursion} applies to get a recursive bound for $M_{l}(\cH_1)$ in terms of $M_{l-1}(H_{1}^1), \dots, M_{l-1}(H_{D_1}^1)$ and $S(H_{1}^1), \dots, S(H_{D_1}^1)$.

	This allows us to apply Proposition \ref{prop:tail}, using an argument identical to that in the proof of Lemmas \ref{lm:S tail bd} and \ref{lm:M tail bd}, to conclude that $\mathcal{H}_1$ satisfies the tail statement.

	The result for $\mathcal{H}_2 \sim \egw(\mu^\sharp,\widetilde{\mu}^\sharp;h+1,m)_{\le l}$ follows similarly as $\mathcal{H}_2$ has subgraphs $H^2_1,\ldots, H^2_{D_2} \sim$ i.i.d. $\egw(\widetilde{\mu}^\sharp; h,m)_{\le l-1}$ with $D_2 \sim \mu^\sharp.$ We note that by Condition \ref{cond:mu} and the first part of Lemma \ref{lem:aug}, $\mu^\sharp$ and $\widetilde{\mu}^{\sharp}$ satisfy the hypothesis of Proposition \ref{prop:tail} which allows the above arguments to go through.
\end{proof}

   \subsection{Finishing the proof of Theorem \ref{thm:uper}}\label{section:graph}
     We are now ready to finish the proof of Theorem \ref{thm:uper}.
 It suffices to show that with high probability on the randomness of $G_n$, for all vertices $v\in V(G_n)$, with probability at least $1 - o(\frac{1}{n})$, the starred contact process $(X^{(v)}_t)$ with $v$ being strongly infected initially dies out before time $n^{C\la^{2}\log(\la^{-1})}$. We then take the union bound over all $v\in V(G_n)$ and note that by using the graphical representation, the survival time of the contact process starting with all vertices infected is the maximum of the survival times of the individual $(X^{(v)}_t)$ to obtain the statement of the theorem (we also use \eqref{eq:unstar} to go from the starred contact process to the original contact process).

 \subsubsection{Handling individual local pieces}
 Recall the definition of $q$ in \eqref{def:q} as
  $$q= \frac{c}{16\la^{2}\log (\la^{-1})}.$$
  Let $\delta = \frac{10}{q}, r = \delta \log n.$  We note that for $\la$ sufficiently small (depending on $\mu$), $\delta$ is also sufficiently small. In particular, we assume $\lambda$ is small enough so that $\delta\le \gamma/100$, the constant appearing in Lemma \ref{lem:1cyc}. By this lemma, with high probability, $N(v, r)$ contains at most one cycle for all $v\in V(G_n)$.
 For each $v$, consider the local neighborhood $B_v$ of $v$ as follows.
 \begin{itemize}
 	\item $B_v := N(v, r)$, if $N(v, r)$ is a tree.
 	
 	\item If $N(v, r)$ contains the unique cycle $C_v$ at distance $h$ from $v$, then
 	$$B_v:= N(v, r)  \cup \left[\bigcup_{u\in C_v}  N(u, r -h) \right]. $$
 \end{itemize}
 We define $B_v^+$ the same way except replacing $r$ with $10r$.  With this definition, $B_v \subset B_u^+$ for all $u\in B_v$. We first provide a bound on the size of $B_v^+$.

 \begin{lemma}\label{lm:Bv}
 	With high probability, for all $v$, it holds that
 	$$|B_v^+|\le n^{C\delta}/2,$$
 	where $C$ is some constant depending on $\mu$.
 \end{lemma}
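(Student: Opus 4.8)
The plan is to bound $|B_v^+|$ by the size of a suitable Galton--Watson ball of depth $\Theta(\delta\log n)$, and to control that size with a moment generating function bound iterated through the $\Theta(\delta\log n)$ generations, obtaining a tail small enough for a union bound over all $v$.

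\emph{Reduction to a metric ball.} Work on the event $\mathcal{A}(\gamma)$ of Lemma \ref{lem:1cyc}, which holds \whp. First I would check that $B_v^+\subseteq N(v,20r)$ for every $v$: this is immediate when $N(v,10r)$ is a tree, and when $N(v,10r)$ contains the unique cycle $C_v$ at distance $h\le 10r$ from $v$, every $u\in C_v$ has $\mathrm{dist}(v,u)\le 10r$, so $N(u,10r-h)\subseteq N(v,10r+(10r-h))\subseteq N(v,20r)$. Since $\delta\le\gamma/100$, we have $20r=20\delta\log n\le\gamma\log n$, so $N(v,20r)$ is covered by the coupling of Section \ref{subsec:coupling local nbd}. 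Hence it suffices to find $C=C(\mu)$ with $\P\big(|N(v,20r)|>n^{C\delta}/2\big)=o(1/n)$ for each fixed $v$; a union bound over the $n$ vertices together with $\P(\mathcal{A}(\gamma)^c)=o(1)$ then yields the lemma.

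\emph{Domination by a Galton--Watson tree.} By Lemma \ref{lm:nbdcoupling}, $N(v,20r)\one_{\mathcal A}$ is stochastically dominated by the depth-$20r$ truncation of a mixture of a $\gw(\mu^\sharp,\widetilde\mu^\sharp)$ process and $\egw(\mu^\sharp,\widetilde\mu^\sharp;l,s)$ processes, where $\mu^\sharp,\widetilde\mu^\sharp$ have exponential moments by Lemma \ref{lem:aug}(1) and \eqref{cond:mu}, and only cycle lengths $s=O(\log n)$ (and depths $l\le\gamma\log n$) occur, since any cycle inside $N(v,\gamma\log n)$ has length $O(\log n)$. Because the cycles of an $\egw$-process sit at a single depth level, each contributing a factor $O(\log n)$ and re-branching the subtrees below it, a crude estimate shows that the size of any of these truncated graphs is at most $(\log n)^{O(1)}W^{2}$, where $W$ is the total progeny up to depth $20r$ of a Galton--Watson tree with root offspring $\leq_{st}\mu^\sharp$, all further offspring $\leq_{st}\widetilde\mu^\sharp$, and mean offspring $M:=\E_{\widetilde\mu^\sharp}D>1$ (positivity of $M-1$ following from $\E_{\mu}D(D-2)>0$). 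It therefore suffices to show $\P\big(W\ge n^{C\delta/3}\big)=o(1/n)$.

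\emph{Tail bound for $W$.} Let $\psi(\theta)=\log\E_{D\sim\widetilde\mu^\sharp}e^{\theta D}$, which is finite and smooth on $[0,\theta_0]$ for some $\theta_0>0$, with $\psi(0)=0$, $\psi'(0)=M$, and (after shrinking $\theta_0$) $\psi(\theta)\le 2M\theta$ there. Decomposing $W$ at depth $k$ as $1+\sum_{i=1}^{Z_1}W^{(i)}$ into the root's subtree progenies gives, for $g_k(\theta):=\log\E e^{\theta W}$ at depth $k$, the recursion $g_k(\theta)=\theta+\psi(g_{k-1}(\theta))$, $g_0(\theta)=\theta$. Inductively, while the iterates lie in $[0,\theta_0]$ one has $g_{20r}(\theta)\le\theta\sum_{j=0}^{20r}(2M)^{j}\le\theta(2M)^{20r+1}$, so with $\theta=\theta_0(2M)^{-(20r+1)}$ one gets $\E e^{\theta W}\le e^{\theta_0}$ and hence
\begin{equation*}
\P\big(W\ge t\big)\le e^{\theta_0}\exp\!\big(-\theta_0\,t\,(2M)^{-(20r+1)}\big).
\end{equation*}
Since $(2M)^{20r+1}=(2M)\,n^{20\delta\log(2M)}$, choosing $C=C(\mu)$ large enough (e.g.\ $C>60\log(2M)$, which also forces $n^{C\delta}/2<n/3$ once $\lambda$ is small, legitimising the $n/3$-stopping behind Lemma \ref{lm:nbdcoupling}) makes $\theta_0\,n^{C\delta/3}(2M)^{-(20r+1)}$ a positive power of $n$, so $\P(W\ge n^{C\delta/3})=\exp(-n^{\Omega(\delta)})=o(1/n)$. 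Combined with the previous step this gives $\P(|N(v,20r)|>n^{C\delta}/2)=o(1/n)$, completing the argument. The delicate point is exactly this moment generating function iteration: one must verify that the admissible exponent $\theta$ contracts only geometrically, by a factor $\asymp(2M)^{-1}$ per generation, across all $20r=\Theta(\delta\log n)$ generations, so that it stays as large as $n^{-O(\delta)}$ and the resulting stretched-exponential tail still beats $1/n$ after union-bounding over the $n$ vertices; the reduction $B_v^+\subseteq N(v,20r)$ and the passage through Lemma \ref{lm:nbdcoupling} are routine by comparison.
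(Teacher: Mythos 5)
Your proposal is correct and reaches the conclusion by a genuinely different, though closely related, route. Both proofs ultimately rest on iterating an exponential-moment bound for a Galton--Watson tree through $\Theta(\delta\log n)$ generations, but the iteration is organised differently. The paper keeps the moment-generating-function parameter essentially constant and instead \emph{rescales the level sizes}: Lemma \ref{lm:tree:bound} shows $\sup_s\E\exp(t Z_s\tilde d^{-s})<\infty$, from which Markov's inequality gives a per-level tail $\P(Z_s>t^{-1}C\tilde d^s\log n)=O(n^{-C})$, and one then sums $Z_s$ over $s\le 20r$. You control the \emph{total progeny} $W$ directly via the recursion $g_k(\theta)=\theta+\psi(g_{k-1}(\theta))$, letting $\theta$ \emph{shrink geometrically} to $\theta_0(2M)^{-(20r+1)}$ to compensate for the same exponential growth. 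The paper's version is a bit cleaner to check, because the rescaling keeps the relevant exponent uniformly bounded, and the per-level control is packaged into a free-standing Lemma \ref{lm:tree:bound}; your version is more direct and gives a single stretched-exponential tail on $W$ without a union bound over levels, but requires the delicate bookkeeping that $g_j(\theta)$ stays in $[0,\theta_0]$ across all $20r$ levels, which you address correctly. You are also slightly more careful about the reduction step, explicitly passing through Lemma \ref{lm:nbdcoupling} and the augmented distributions $\mu^\sharp,\widetilde\mu^\sharp$, whereas the paper simply asserts the domination of $|B_v^+|$ by the total size of $\T_{\le 20r}$ with $\T\sim\gw(\mu,\widetilde\mu)$; one minor caveat is that your crude $(\log n)^{O(1)}W^2$ bound should be stated with the $\egw$ truncation depth taken as $20r$ rather than $\gamma\log n$ (restricting the coupling to the smaller ball), but this is a cosmetic point and the argument goes through as intended.
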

For the rest of this section, we assume (wlog) that $C>2$.
To prove the lemma, we follow \cite[Theorem 2.3]{allan2009} and will show the following in Appendix \ref{app:treebound}.
\begin{lemma}\label{lm:tree:bound}
	Let 	$\T\sim \gw(\mu, \widetilde{\mu})$ where $\mu$ has an exponential tail $\E_{N\sim \mu} e^{cN}<\infty$ for some $c>0$. Then there exists $t>0$ such that
	\begin{equation}\label{eq:tree:bound}
	\sup_{s}\E \exp(t Z_s \tilde d^{-s})<\infty,
	\end{equation}
	where $\tilde d = \E_{D\sim \widetilde \mu} D$ and $Z_s$ is the number of vertices at level $s$ of $\T$.
\end{lemma}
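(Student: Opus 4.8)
The plan is to bound, uniformly in $s$, the moment generating function $\phi_s(\theta):=\E\exp(\theta Z_s\tilde d^{-s})$ by a recursion in $s$. Write $g(u)=\E_{\xi\sim\widetilde\mu}e^{u\xi}$ and $h(u)=\E_{D\sim\mu}e^{uD}$ for the offspring moment generating functions. Since $\widetilde\mu(k-1)=k\mu(k)/\E_{D\sim\mu}D$ and $ke^{uk}\le C_{u,u'}e^{u'k}$ for $u<u'$, the exponential tail of $\mu$ passes to $\widetilde\mu$, so $g$ and $h$ are finite and $C^2$ on some interval $(-u_0,u_0)$, with $g(0)=1$ and $g'(0)=\tilde d$; Taylor's theorem then gives a constant $C=C(\mu)$ with $\log g(u)\le \tilde d\,u+Cu^2$ for $|u|\le u_0$. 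First I would condition on $Z_s$: for $s\ge1$ the offspring counts of the $Z_s$ vertices are i.i.d.\ $\widetilde\mu$, whence $\E[\exp(\theta Z_{s+1}\tilde d^{-(s+1)})\mid Z_s]=g(\theta\tilde d^{-(s+1)})^{Z_s}$. Substituting the quadratic bound on $\log g$ (legitimate once $\theta\le u_0$, since $\tilde d>1$ gives $\tilde d^{-(s+1)}\le1$) and taking expectations yields the key recursion
\[
\phi_{s+1}(\theta)\ \le\ \phi_s(\theta+C\theta^2\tilde d^{-s-2}),\qquad 0<\theta\le u_0 .
\]

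Next, for a fixed $S\ge1$ and a small $\theta_0>0$, I would iterate this recursion downward from level $S$ to level $1$, producing a sequence $\theta^{(S)}=\theta_0$, $\theta^{(k)}=\theta^{(k+1)}+C(\theta^{(k+1)})^2\tilde d^{-k-2}$ with $\phi_S(\theta_0)\le\phi_1(\theta^{(1)})$. A downward induction on $k$ shows $\theta^{(k)}\le2\theta_0$ for all $k$: the base case is $\theta^{(S)}=\theta_0$, and if $\theta^{(j)}\le2\theta_0$ for every $j>k$ then
\[
\theta^{(k)}\ \le\ \theta_0+4C\theta_0^2\sum_{j\ge1}\tilde d^{-j-2}\ =\ \theta_0+\frac{4C\theta_0^2}{\tilde d^2(\tilde d-1)}\ \le\ 2\theta_0
\]
provided $\theta_0\le \tilde d^2(\tilde d-1)/(4C)$; convergence of this geometric series is exactly where supercriticality $\tilde d>1$ enters. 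Taking in addition $\theta_0\le u_0/2$ keeps every intermediate argument inside $(-u_0,u_0)$, so the iteration is valid, and we arrive at
\[
\phi_S(\theta_0)\ \le\ \phi_1(2\theta_0)\ =\ \E_{D\sim\mu}\exp(2\theta_0 D/\tilde d)\ =\ h(2\theta_0/\tilde d)<\infty ,
\]
finite for $\theta_0$ small and, crucially, independent of $S$. Since $\phi_0(\theta_0)=e^{\theta_0}$ trivially, setting $t=\theta_0$ gives $\sup_s\E\exp(tZ_s\tilde d^{-s})<\infty$, as claimed.

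The routine ingredients — that extinction is harmless (on $\{Z_s=0\}$ the integrand equals $1$) and that $\widetilde\mu$ inherits an exponential tail from $\mu$ — I would dispose of in a line each. The one genuine obstacle is the bootstrap controlling the drift $\theta^{(k)}$: one must simultaneously ensure that the accumulated increments form a summable series (which forces the use of $\tilde d>1$, available from the standing hypothesis $\E_{D\sim\mu}D(D-2)>0$) and that every $\theta^{(k)}$ stays in the region where $\log g(u)\le\tilde d u+Cu^2$ holds. Both are arranged by taking $\theta_0$ small, but the two constraints have to be handled together rather than separately. (Alternatively, one could deduce the statement from the Kesten--Stigum theorem together with Biggins' bounds on the exponential moments of the martingale limit $W=\lim_s Z_s\tilde d^{-s}$, but the recursive estimate above is self-contained and shorter.)
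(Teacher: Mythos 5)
Your proof is correct and takes essentially the same approach as the paper: both condition on $Z_s$ to write the level-$(s+1)$ MGF in terms of the level-$s$ one, bound the offspring MGF by an exponentiated quadratic (your $\log g(u)\le \tilde d u + Cu^2$ versus the paper's $\E e^{uN_1}\le \exp(u\tilde d(1+u\alpha))$), and control a geometrically-decaying sequence of exponents; the paper packages this by defining an explicit decreasing sequence $t_n = t\exp(2\alpha t\sum_{i>n}\tilde d^{-i})$ so that $\E\exp(t_n Z_n\tilde d^{-n})$ is monotone in $n$, which is a closed-form version of the bootstrap you run to keep $\theta^{(k)}\le 2\theta_0$.
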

 \begin{proof}[Proof of Lemma \ref{lm:Bv}] We recall the notion of augmented distributions $\mu^\sharp$ and $\widetilde \mu^\sharp$ before the statement of Lemma \ref{lm:nbdcoupling}. For any vertex $v$, we explore its local neighborhood where at each step, we reveal a vertex adjacent to the current explored neighborhood and the half-edges incident to the new vertex. The second part of Lemma \ref{lem:aug} states that when the exploration process
revealed $N\le n/3$ vertices inside the local neighborhood of $v$, the empirical degree distribution of the $n-N$ unexplored vertices is stochastically dominated by $\mu^\sharp$, with high probability. 
 	Therefore, $|B_v^+|$ is stochastically dominated by the number of vertices in $\T_{\le 20r}$ where $\T\sim \gw(\mu^{\sharp}, \widetilde{\mu^{\sharp}})$ as long as the latter has less than $n/3$ vertices. By the first part of Lemma \ref{lem:aug}, the measure $\mu^{\sharp}$ has an exponential tail. Hence, applying Lemma \ref{lm:tree:bound} to this distribution together with Markov's inequality, with probability $1-  O(\log n\cdot n^{-C})$ for all $0\le s\le 20r$,
 	$$Z_s\le t^{-1} C\tilde d^{s} \log n.$$
 	On this event,
 	$$|B_v^+|\le \sum_{s=1}^{20r} Z_s \le t^{-1}C(\log n)^{2}\cdot  n^{20\delta \log \widetilde d}\le n^{C\delta}/2.$$
 	Taking the union bound over $v$, we obtain the result.
 \end{proof}

 For the contact process $(X_t^{(v)})$ on $G_n$ and for $A\subset V(G_n)$, let $(X^{(v)}_{A}(t))$   be the contact process restricted on $A$, that is we only use the recovery and infection clocks inside $A$. Let $S_v$  be the expected survival time of the contact process $(X^{(v)}_{B_v^+}(t))$. Under $(X^{(v)}_{B_v}(t))$, let $M_v$ be the expected total infections at the leaves $\mathcal{L}(B_v)$ which is defined by
 \begin{equation}\label{eq:def blockbottomleaves}
 \mathcal{L}(B_v) := \begin{cases}
 \{u\in B_v : \textnormal{dist}(u,v) = r \} &\textnormal{if } B_v \textnormal{ is a tree},\\
 \{u\in B_v : \textnormal{dist}(u,v) \geq r \textnormal{ and } \textnormal{dist}(u,C_v) \geq r-h \} &\textnormal{if } C_v \textnormal{ exists},
 \end{cases}
 \end{equation}
 where $C_v$ is the unique cycle (if exists) in $N(v, r)$ and $h =\textnormal{dist}(v,C_v)$. Note that this definition is consistent with the definition of leaves of \egw-processes.
 \begin{lemma}\label{lm:Bv:Mv} We have that
 	\begin{equation}
 		\P\left [\max_{v\in V(G_n)} S_{v}\le n^{\delta}, \max_{v\in V(G_n)}\quad M_{v}\le\frac{1}{\log n}\right ]\geq 1-o(1).
 	\end{equation}
 \end{lemma}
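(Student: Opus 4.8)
The plan is to reduce the whole statement to the good--tail estimates already established for $\egw$-- and $\gw$--processes (Proposition~\ref{prop:egw tail}, and Lemmas~\ref{lm:S tail bd}, \ref{lm:M tail bd}, \ref{lm:cyclic}) via the local coupling of Lemma~\ref{lm:nbdcoupling}, and then to union bound over the $n$ vertices. First I would condition on the event $\mathcal A(\gamma)$ of Lemma~\ref{lem:1cyc}, which holds \whp{} and --- since $10r = 10\delta\log n \le \gamma\log n$ by the choice $\delta\le\gamma/100$ --- guarantees that $N(v,10r)$ has at most one cycle for every $v$. Fixing $v$, Lemma~\ref{lm:nbdcoupling} (applied at radius $r$, resp.\ $10r$) gives a coupling under which $B_v\subseteq\mathcal H$ and $B_v^+\subseteq\mathcal H^+$, where $\mathcal H$ is distributed as $\egw(\mu^\sharp,\widetilde\mu^\sharp;h,s)_{\le r}$ for the cycle--depth $h$ and cycle--length $s\ge 2$ of $B_v$ (or as $\gw(\mu^\sharp,\widetilde\mu^\sharp)_{\le r}$ if $B_v$ is a tree), and $\mathcal H^+$ is the analogue of depth $10r$; moreover the embedding can be taken so that $\mathcal L(B_v)$ lands inside the bottom--leaf set of $\mathcal H$, since the two notions of bottom leaf coincide with $l=r$. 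By monotonicity of the starred contact process under graph inclusion, by the inequality $R(G)\le 2S(G)$ from just before Lemma~\ref{lm:S:recursion}, and because $\mu^\sharp,\widetilde\mu^\sharp$ satisfy the hypotheses of those lemmas (via Lemma~\ref{lem:aug}(1) and Condition~\eqref{cond:mu}), this yields the pointwise bounds $S_v\le 2\,S(\mathcal H^+)$ and $M_v\le M_r(\mathcal H)$.

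Next I would substitute into the good--tail function $f$. For $S_v$: since $\wt(\mathcal H^+)=\frac{\la(D+1)}{1+\la(D+1)}\ge\la/2$, the good tail of $\wt(\mathcal H^+)S(\mathcal H^+)$ gives $S_v\le\frac4\la\,\wt(\mathcal H^+)S(\mathcal H^+)$; for $n$ large we have $\frac\la4 n^{\delta}\ge\frac1{B\la}$, so we are in the power--law part of $f$, and --- the constants $A,B,c,p,q$ not depending on $h,s$ --- the mixture in Lemma~\ref{lm:nbdcoupling} is dominated termwise, giving
\[
\P\big(S_v\ge n^{\delta},\,\mathcal A(\gamma)\big)\ \le\ \sup_{h,s}\P\Big(\wt(\mathcal H^+)S(\mathcal H^+)\ge\tfrac\la4 n^{\delta}\Big)\ \le\ p\big(\tfrac\la4 n^{\delta}\big)^{-q}\ =\ n^{-\delta q+o(1)}.
\]
With $\delta q = 10$ this is $o(1/n)$, so a union bound over $v\in V(G_n)$, together with $\P(\mathcal A(\gamma)^c)=o(1)$, gives $\max_v S_v\le n^{\delta}$ \whp. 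For $M_v$: likewise $\wt(\mathcal H)\ge\la/2$, so the good tail of $1.5^{\,r}\wt(\mathcal H)M_r(\mathcal H)$ gives $M_v\le\frac2\la\,1.5^{-r}\big(1.5^{\,r}\wt(\mathcal H)M_r(\mathcal H)\big)$; writing $1.5^{\,r}=n^{\delta\log 1.5}$, the event $\{M_v\ge 1/\log n\}$ forces $1.5^{\,r}\wt(\mathcal H)M_r(\mathcal H)\ge\frac\la2 n^{\delta\log 1.5}/\log n$, which lies in the power--law regime for large $n$, whence
\[
\P\big(M_v\ge\tfrac1{\log n},\,\mathcal A(\gamma)\big)\ \le\ p\Big(\tfrac\la2 n^{\delta\log 1.5}/\log n\Big)^{-q}\ =\ n^{-\delta q\log 1.5+o(1)},
\]
which is $o(1/n)$ because $\delta q\log 1.5 = 10\log 1.5\approx 4.05>1$. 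A final union bound over $v$, intersected with the $S_v$ event, completes the proof.

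The step I expect to be the main obstacle is the reduction in the first paragraph: making precise that the stochastic domination of \emph{graphs} in Lemma~\ref{lm:nbdcoupling}, used at the two radii $r$ and $10r$, passes to domination of the \emph{starred} quantities $S_v$ and $M_v$, which are defined with the dynamics restricted to $B_v^{\pm}$ and with no permanently infected parent, and in particular that $\mathcal L(B_v)$ embeds into the leaf set of the dominating $\egw$--process so that $M_v\le M_r(\mathcal H)$ with no loss of scaling (and handling the degenerate case where the cycle sits at the boundary, where $\egw(\cdot)_{\le r}$ reduces to a $\gwc^1$--type graph covered by Lemma~\ref{lm:cyclic}). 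The constants lost along the way ($2$, $4/\la$, $2/\la$) are harmless, since the resulting exponents $\delta q = 10$ and $\delta q\log 1.5\approx 4.05$ are both bounded away from $1$; beyond this reduction, everything is a direct substitution into $f$ and the union bounds just performed.
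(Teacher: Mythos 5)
Your proof is correct and follows essentially the same route as the paper's: couple $B_v$ and $B_v^+$ to $\egw$/$\gw$ structures via Lemma~\ref{lm:nbdcoupling} on the event $\mathcal A(\gamma)$, invoke the good-tail bound of Proposition~\ref{prop:egw tail}, observe that the weight $\wt$ is bounded below by $\asymp\la$ so it can be pulled out, substitute into the power-law part of $f$, and union bound over $v$ using $\delta q=10$. Your bookkeeping (using $\delta q\log 1.5\approx 4.05$ in place of the paper's cruder $\delta/4$, and making explicit the step $S_v\le R(\mathcal H^+)\le 2S(\mathcal H^+)$ to pass from the plain process to the root-added excursion time, which the paper leaves implicit) is slightly more careful than what appears in the paper but does not change the argument in substance.
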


 \begin{proof} Fix $v\in V(G_n)$. By Lemma \ref{lm:nbdcoupling}, $B_v$ and $B_v^+$ are dominated by $\egw(\mu^\sharp, \widetilde{\mu}^\sharp;l,s)_{\le \delta \log n}$ and $\gw(\mu^\sharp, \widetilde{\mu}^\sharp)_{\le \delta \log n}$. By Proposition \ref{prop:egw tail}, it holds that $\frac{\la(D+1)}{1+\la(D+1)} S_v$ and $1.5^{\delta \log n}\frac{\la(D+1)}{1+\la(D+1)} M_v$ have good tails where $D=\deg_{G_n}(v) \sim \mu$. Thus,
 	$$\P_{\text{\tiny RG}}(S_{v}>n^{\delta})\le \P_{\text{\tiny RG}}\left (\frac{\la(D+1)}{1+\la(D+1)} S_v\ge \la n^{\delta} \right ).$$
 	For $n$ sufficiently large, we have $\la n^{\delta}\ge \frac{1}{B\la}$ and so for sufficiently large $n$,
 	$$\P_{\text{\tiny RG}}(S_{v}>n^{\delta})\le C_{\la, \mu} n^{-\delta q+o(1)} \leq n^{-3/2},$$
 		where $C_{\la, \mu}$ is some constant depending on $\mu$ and $\la$.
 	Similarly,
 	$$\P_{\text{\tiny RG}}\left (M_{v}>\frac{1}{\log n}\right ) \le \P_{\text{\tiny RG}}\left (1.5^{\delta \log n}\frac{\la(D+1)}{1+\la(D+1)} M_v>\la n^{\delta/4}\right )\le C_{\la, \mu} n^{-q\delta/4} \leq C_{\la, \mu} n^{-2}.$$	
 	Taking the union bound over $n$ vertices of $G_n$, we obtain the conclusion.
 \end{proof}

Let $\mathcal E_n$ be the intersection of the good events in Lemmas \ref{lm:Bv} and \ref{lm:Bv:Mv}. By Markov's inequality and Lemma \ref{lm:Bv:Mv}, we obtain the following.

 \begin{lemma}\label{lm:Bv:Mv:1} Let $G_n\in \mathcal E_{n}$. Let $C$ be the constant in Lemma \ref{lm:Bv}. For each vertex $v\in V(G_n)$, let $\bs_v$ be the survival time of $(X^{(v)}_{B_v})(t)$ and $\bm_v$ be the number of infections at the leaves of $B_v$ of $(X^{(v)}_{B_v})(t)$. Then
 	\begin{equation}\label{eq:bv}
 		\P_{\CP}(\bs_v \ge jn^{(C+1)\delta}) \le 2^{-j},\quad\text{for all } j\ge 1
 	\end{equation}
 	and
 	 	\begin{equation}\label{eq:mv}
 	\P_{\CP}(\bm_v \ge jn^{(3C+1)\delta}) \le 2^{-j+1},\quad\text{for all } 1\le j\le C\log n.
 	\end{equation}
 \end{lemma}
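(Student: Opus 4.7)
The plan is to prove the two bounds in order: first the survival-time tail by iterating Markov's inequality along the Markov structure of the starred process on $B_v$, and then the leaf-infection tail by combining this with a Poisson bound on infection clock rings from the graphical representation.

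For the first bound, write $T = n^{(C+1)\delta}$. Since $B_v \subset B_v^+$, the starred process restricted to $B_v$ is dominated by the one on $B_v^+$, so $\E_\CP \bs_v \le S_v \le n^\delta$ on $\mathcal{E}_n$, and Markov's inequality gives $\P_\CP(\bs_v \ge T) \le n^{-C\delta} \le 1/2$. To iterate, condition on $\bs_v \ge kT$ and let $s \subset B_v$ be the (nonempty) set of vertices carrying the label $1$ or $1^*$ at time $kT$. By the Markov property of the starred process together with the ``max'' identity from the graphical representation (the analogue of Lemma \ref{lem:graph rep1}), the conditional expected residual lifetime is at most the sum over $u \in s$ of the expected starred survival on $B_v$ when only $u$ is initially infected. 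Using $B_v \subset B_u^+$ for each $u \in B_v$, the single-vertex starred survival on $B_v$ is bounded by $S_u$, so the total is at most $|B_v| \max_{u \in B_v} S_u \le (n^{C\delta}/2)\cdot n^\delta = T/2$. A second application of Markov's inequality gives $\P_\CP(\bs_v \ge (k+1)T \mid \bs_v \ge kT) \le 1/2$, and iterating yields $\P_\CP(\bs_v \ge jT) \le 2^{-j}$ for every $j \ge 1$.

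For the second bound I decompose
\[
\P_\CP(\bm_v \ge j n^{(3C+1)\delta}) \le \P_\CP(\bs_v \ge jT) + \P_\CP\bigl(\bm_v \ge j n^{(3C+1)\delta},\ \bs_v < jT\bigr).
\]
The first term is at most $2^{-j}$ by the previous step. For the second, every fresh $0 \to 1^*$ transition at a leaf corresponds to at least one infection clock ring along a directed edge from $B_v$ into $\mathcal{L}(B_v)$, so $\bm_v \mathbf{1}_{\bs_v < jT} \le N$, where $N$ counts all such clock rings in the time window $[0, jT]$. The random variable $N$ is Poisson with mean at most $\lambda |B_v|^2 \cdot jT \le \lambda j n^{(3C+1)\delta}/4$, which is below the threshold $j n^{(3C+1)\delta}$ by the factor $4/\lambda \gg 1$. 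The standard Chernoff bound $\P(\Pois(\mu) \ge k) \le (e\mu/k)^k$ (valid for $k > e\mu$) then gives $\P(N \ge j n^{(3C+1)\delta}) \le (e\lambda/4)^{j n^{(3C+1)\delta}} \le 2^{-j}$ for $\lambda$ sufficiently small. Combining the two terms yields $\P_\CP(\bm_v \ge j n^{(3C+1)\delta}) \le 2^{-j+1}$, valid throughout the range $1 \le j \le C\log n$.

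The main technical subtlety is justifying the Markov-restart step for the starred process, since the label $1^*$ versus $1$ of a newly infected vertex is defined through a one-clock look-ahead and is not obviously a function of the past alone. I plan to handle this by recasting the starred process as a bona fide continuous-time Markov chain on $\{0,1,1^*\}^{B_v}$: each fresh infection at a $0$-vertex $w$ is flipped to $1^*$ with probability $\lambda d_w/(\lambda d_w+1)$ and to $0$ otherwise; a $1^*$-vertex $u$ infects a uniformly chosen neighbor and drops to $1$ at total rate $\lambda d_u+1$; and a $1$-vertex recovers at the same rate. A short computation using the memoryless property verifies that this Markov chain has the same law as the graphical-representation definition, and it enjoys both the standard Markov property and the monotone coupling needed for the ``max'' identity used in the iteration above.
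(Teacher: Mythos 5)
Your proof is correct and follows essentially the same route as the paper's: iterate a Markov-plus-restart bound to get the geometric tail on $\bs_v$, then couple $\bm_v$ to a Poisson count of infection clock rings. A few remarks. For \eqref{eq:bv}, the paper applies Markov's inequality to each single-vertex restart probability and then takes a union bound, whereas you first bound the conditional expected residual lifetime by $|B_v|\max_u S_u \le T/2$ and then apply Markov once; these are logically equivalent. For \eqref{eq:mv}, the paper bounds $\bm_v$ by \emph{all} infection clock rings in $B_v$ up to time $\bs_v$ and then uses \eqref{eq:poisson}, while you count only rings into $\mathcal{L}(B_v)$ and use a multiplicative Chernoff form; both give more than enough decay. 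Your extra paragraph on the Markov property of the starred process is a genuine improvement in rigor: the paper silently uses the restart at time $kT$, but because the $1$/$1^*$ label is defined by a look-ahead, the Markov property deserves exactly the justification you sketch (reformulating it as a chain on $\{0,1,1^*\}^{B_v}$ whose law matches the graphical definition by memorylessness, so that both the restart and the max/monotone coupling go through).

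One inaccuracy in your Markovian reformulation: the clause ``a $1$-vertex recovers at the same rate'' is wrong as written. A vertex in state $1$ is an ordinary infected vertex in the starred dynamics — it recovers at rate $1$ and sends infections to each neighbor at rate $\lambda$ (which can flip a $0$-neighbor to $1^*$), for a total event rate $\lambda d_u + 1$; it does not recover at rate $\lambda d_u+1$. If you intended the latter (so that $1$-vertices never infect), the reformulated chain would \emph{not} have the same law as the starred process, and since state-$1$ vertices can and do create new $1^*$ infections, your bound on $\bm_v$ would be unjustified. The fix is simply to state the correct transition rates for state $1$; the structure of the argument is then sound.
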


 \begin{proof}
 	To see \eqref{eq:bv}, at any time $t\le \bs_v$, the probability that $(X^{v}_{B_v})$ survives until time $t + n^{C\delta}$ is bounded as follows.
 	\begin{eqnarray*}
 		\P_{\CP}(\bs_v >t+n^{(C+1)\delta}\vert \bs_v \ge t)  &\le& \sum_{u\in B_v} \P_{\CP}(\bs_v >n^{(C+1)\delta}\vert X^{(v)}_{B_v} \text{ starts with $u$ initially at $1^*$}) \\
 		&\le&   |B_v| n^{-C\delta}\le 1/2,
 	\end{eqnarray*}
 	where in the second inequality we use Markov's inequality, Lemma \ref{lm:Bv:Mv} and the fact that $B_u^+$ contains $B_v$. Applying this probability for $t = k n^{(C+1)\delta}$, $k=0, \dots j-1$, we obtain \eqref{eq:bv}.

 	Next, for \eqref{eq:mv}, let ${\bf C}_{v, t_0}$ be the total number of infection clock rings in $B_v$ before time $t_0$. We observe that $\bm_v\le {\bf C}_{v, \bs_v}$.
 	Thus,
 	\begin{eqnarray}\label{eq:bmv}
 		&& \P_{\CP}(\bm_v \ge jn^{(3C+1)\delta}) \le \P_{\CP}(\bs_v \ge jn^{(C+1)\delta}) + \P_{\CP}({\bf C}_{v, jn^{(C+1)\delta}}\ge  jn^{(3C+1)\delta}).
 	\end{eqnarray}
 	By \eqref{eq:bv}, the first term is bounded by $2^{-j}$. Since $|B_v|\le n^{C\delta}$, ${\bf C}_{v, jn^{(C+1)\delta}}\le \Pois(\la j n^{(3C+1)\delta})$. Thus, by the tail bound \eqref{eq:poisson} for Poisson distributions, the last term in \eqref{eq:bmv} is also bounded by $2^{-j}$.
 \end{proof}

 \begin{lemma} \label{lm:Bv:Mv:2} Let $k=n ^{(6C+6)\delta}$. Let $\bm_1, \dots, \bm_k$ be independent random variables satisfying \eqref{eq:mv} and $\E \bm_i \le \frac{1}{\log n}$. Then
 	\begin{eqnarray*}
 		\P\left (\sum_{i=1}^{k} \bm_i\ge k-1\right )\le n^{-C/2}.
 	\end{eqnarray*}
 \end{lemma}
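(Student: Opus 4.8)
The plan is a routine truncation plus exponential-moment (Chernoff) estimate. The key point is that the mean $\E\bm_i\le 1/\log n$ is small, so $\E\sum_i\bm_i\le k/\log n$ is far below the target $k-1$, and since $k=n^{(6C+6)\delta}$ is a \emph{large} power of $n$ compared with the per-variable scale $n^{(3C+1)\delta}$ appearing in \eqref{eq:mv}, a Chernoff bound will in fact yield a super-polynomially small probability, comfortably below $n^{-C/2}$.

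First I would fix the truncation level $M:=\lfloor C\log n\rfloor\,n^{(3C+1)\delta}$ (the largest scale at which \eqref{eq:mv} gives information) and set $\bm_i':=\min(\bm_i,M)$. Since $\bm_i'=\bm_i$ for every $i$ on the event that all $\bm_i\le M$, we have
\[
\P\Big(\sum_{i=1}^k\bm_i\ge k-1\Big)\le \P\big(\exists\,i\le k:\ \bm_i>M\big)+\P\Big(\sum_{i=1}^k\bm_i'\ge k-1\Big).
\]
For the first term, \eqref{eq:mv} with $j=\lfloor C\log n\rfloor$ and a union bound give $\P(\exists\,i:\bm_i>M)\le k\,2^{-\lfloor C\log n\rfloor+1}\le 4\,n^{(6C+6)\delta}\,n^{-C\log 2}$; since $\log 2>\tfrac12$ and $\delta$ can be taken as small as we wish (it is a fixed multiple of $\la^2\log(\la^{-1})$, hence $\to 0$ as $\la\to 0$), the exponent $(6C+6)\delta-C\log 2$ is below $-C/2$, so this term is at most $\tfrac12 n^{-C/2}$ once $n$ is large.

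For the truncated sum I would use that the $\bm_i'$ are independent (being functions of the independent, nonnegative $\bm_i$) and apply a Chernoff bound with parameter $\theta=1/M$. From the elementary convexity bound $e^{\theta x}\le 1+(e-1)\theta x$, valid for $0\le x\le M=1/\theta$, together with $\E\bm_i'\le\E\bm_i\le 1/\log n$, one gets $\E e^{\theta\bm_i'}\le 1+\tfrac{e-1}{M\log n}\le\exp\!\big(\tfrac{e-1}{M\log n}\big)$, whence
\[
\P\Big(\sum_{i=1}^k\bm_i'\ge k-1\Big)\le e^{-(k-1)/M}\prod_{i=1}^k\E e^{\theta\bm_i'}\le\exp\!\Big(\frac{(e-1)k}{M\log n}-\frac{k-1}{M}\Big)\le\exp\!\Big(-\frac{k}{2M}\Big)
\]
for $n$ large, because $\tfrac{e-1}{\log n}+\tfrac1k\to 0$. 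Since $\tfrac{k}{2M}=\tfrac{n^{(3C+5)\delta}}{2\lfloor C\log n\rfloor}$ grows polynomially in $n$, this last quantity is $\le\tfrac12 n^{-C/2}$ for $n$ large; adding the two estimates gives $\P(\sum_i\bm_i\ge k-1)\le n^{-C/2}$ and completes the proof. The only thing requiring care is the bookkeeping of exponents: the truncation must be placed so that $\P(\bm_i>M)$ is polynomially small of the correct order while $k/M$ stays a large positive power of $n$, and this is precisely what the smallness of $\delta$ relative to $C$ secures; there is no substantive obstacle beyond this arithmetic.
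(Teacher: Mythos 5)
Your proof is correct and follows the same basic strategy as the paper's: truncate each $\bm_i$ at the maximal scale $M\asymp C\log n\cdot n^{(3C+1)\delta}$ governed by \eqref{eq:mv}, dispose of the exceptional event by a union bound, and then apply a concentration inequality to the truncated, bounded, small-mean sum. The only genuine difference is the choice of concentration inequality in the second step. The paper applies Azuma's inequality to the bounded variables $X_i=\bm_i\one_{\bm_i\le M}$, obtaining a bound of the form $\exp\bigl(-\tfrac{k}{8M^2}\bigr)$; you instead compute the moment generating function at scale $\theta=1/M$ using the convexity inequality $e^{\theta x}\le 1+(e-1)\theta x$ and the small mean $\E\bm_i'\le 1/\log n$, yielding the sharper $\exp\bigl(-\tfrac{k}{2M}\bigr)$. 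Your Chernoff-type bound makes more efficient use of the smallness of the means (Azuma, as the paper uses it, effectively only exploits boundedness and the centering), so the exponent gains a factor of $M$; but since both are super-polynomially small and only $n^{-C/2}/2$ is needed, the improvement is cosmetic here. One small point of bookkeeping: with $j=\lfloor C\log n\rfloor$, \eqref{eq:mv} gives $\P(\bm_i>M)\lesssim n^{-C\log 2}$, not $n^{-C}$ as the paper loosely writes; you correctly flag that $\log 2>\tfrac12$ and that $\delta$ is small, which is exactly what is needed to close the gap, so your version of this step is actually more careful than the paper's.
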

 \begin{proof}
 	By \eqref{eq:mv},
 	\begin{equation}
 		\P(\bm_i \ge C\log n \cdot n^{(3C+1)\delta}) \le 2n^{-C}.\nonumber
 	\end{equation}
 	And so, as $\delta$ is sufficiently small,
 	\begin{equation}
 		\P(\bm_i \le C\log n \cdot n^{(3C+1)\delta}, \ i=1, \dots, k) \ge 1 - 2kn^{-C}\ge 1 - \frac{1}{2n^{C/2}}.\nonumber
 	\end{equation}
 	Applying the Azuma's inequality to the random variables $X_i : = \bm_i\textbf{1}_{\bm_i \le C\log n \cdot n^{(3C+1)\delta}}$ with $\E X_i\le \frac{1}{\log n}$, we get
 	\begin{eqnarray*}
 		\P(\sum_{i=1}^{k} X_i\ge k-1)\le \exp\left (-\frac{k^{2}}{8\sum_{i=1}^{k}||X_i||^{2}_{\infty}}\right )\le \exp\left (-\frac{k}{8C^{2}(\log n)^{2} \cdot n^{(6C+2)\delta}}\right ) \le \frac{1}{2n^{C/2}},
 	\end{eqnarray*}
where $||X_i||_{\infty} := \inf\{x\in \R: X_i\le x \text{\ a.s.}\}\le C\log n \cdot n^{(3C+1)\delta}$, proving the lemma.
 \end{proof}

\subsubsection{Combining the local pieces}
Finally, we show how to combine the local information to bound the survival time on the whole graph. We use the following {\it decomposed process}, which is a decomposition of the contact process $(X_t^{(v)})$ on $G_n$ by stochastically dominating it by a collection of processes running on local neighborhoods $\{B_u\}_u$.
 	\begin{itemize}
 		\item [1.] Initially, run $(X_{B_v}^{(v)}(t))$.
 		
 		\item[2.] In $B_v$, when some $u\in \mathcal{L}(B_v)$ becomes strongly infected in $B_v$ at time $t$ (and has been healthy until time $t-$), initiate an independent copy of $(X_{B_u}^{(u)})$ on $B_u$.
 		
 		\item[3.] Repeat Step 2 on every running copies of $(X_{B_u}^{(u)})$ until all the processes have terminated, that is, when all vertices in every generated copy are healthy.
 	\end{itemize}

 The survival time of this decomposed process stochastically dominates the process on $G_n$.
 Let ${\bf R}_v$ denote the termination time of the decomposed process and let $R_v$ be its expectation.
 Let $\textbf{U}_v$ be the enumeration of vertices $u$ of which a copy of $(X_{B_u}^{(u)})$ has been generated during the decomposed process, counted with multiplicities.
 The size of  $\textbf{U}_v$ can be controlled by ${\bf M}_v$.
 \begin{lemma} \label{lm:u}
 Let $C$ be the constant in Lemma \ref{lm:Bv}. For each $v\in V(G_n)$,
 	\begin{equation}
 		\P(|\bu_v|\ge n^{(6C+6)\delta})\le n^{-C/2}.
 	\end{equation}
 	Thus, with high probability, for all $v\in V$, we have $|\bu_v|\le n^{(6C+6)\delta}$.
 \end{lemma}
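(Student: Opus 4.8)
The plan is to identify $|\bu_v|$ with the total progeny of a subcritical branching-type exploration and to bound it using the leaf-infection tail estimate \eqref{eq:mv} together with Lemma \ref{lm:Bv:Mv:2}. Throughout we work on the event $\mathcal E_n$ (the intersection of the good events of Lemmas \ref{lm:Bv} and \ref{lm:Bv:Mv}), which has probability $1-o(1)$; on $\mathcal E_n$, for every vertex $u$ the number of leaf-infections $\bm_u$ produced by a copy $(X^{(u)}_{B_u})$ obeys the tail bound \eqref{eq:mv} and has $\E_{\CP}\bm_u\le 1/\log n$, uniformly in $u$.

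First I would make precise the genealogy of the decomposed process of Steps 1--3 rooted at $v$: its nodes are the copies $(X^{(u)}_{B_u})$ that get initiated, and a copy's parent is the copy at whose leaf the strong infection that spawned it occurred. This is a rooted tree with $v$ at the root (distinct copies sitting at the same graph vertex are distinct nodes), and $|\bu_v|$ is exactly its number of nodes. Exploring the tree in a fixed (say breadth-first) order and letting $\bm^{(i)}$ denote the number of children of the $i$-th explored copy, the usual queue bookkeeping --- the queue has size $1-j+\sum_{i\le j}\bm^{(i)}$ after $j$ pops and the exploration stops exactly when this hits $0$ --- shows that $\{|\bu_v|\ge k\}$ entails $\sum_{i=1}^{k}\bm^{(i)}\ge k-1$.

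The delicate point is the dependence structure: the graph vertex carrying the $i$-th explored copy is determined by the first $i-1$ copies, so the $\bm^{(i)}$ are not i.i.d. However, by construction each copy in the decomposed process runs on a fresh, independent family of Poisson clocks, so conditionally on the first $i-1$ copies the law of $\bm^{(i)}$ is that of $\bm_u$ for the already-determined vertex $u$; since \eqref{eq:mv} and the mean bound hold for $\bm_u$ \emph{uniformly} in $u$, the sequence $(\bm^{(i)})$ is stochastically dominated by, and can be coupled with, a sequence of independent random variables each satisfying \eqref{eq:mv} with mean $\le 1/\log n$. Equivalently, the truncation-plus-Azuma argument proving Lemma \ref{lm:Bv:Mv:2} applies verbatim to $(\bm^{(i)})$ viewed as a bounded-difference martingale. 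I expect this reduction to be the main (though routine) obstacle.

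With this in hand I would set $k=n^{(6C+6)\delta}$ and apply Lemma \ref{lm:Bv:Mv:2} to the coupled independent sequence, obtaining $\P_{\CP}(\sum_{i=1}^{k}\bm^{(i)}\ge k-1)\le n^{-C/2}$ for every $G_n\in\mathcal E_n$, hence $\P_{\CP}(|\bu_v|\ge n^{(6C+6)\delta})\le n^{-C/2}$. A union bound over the $n$ vertices $v$, together with $\P(G_n\notin\mathcal E_n)=o(1)$ and $C>2$, then gives that with probability $1-o(1)$ one has $|\bu_v|<n^{(6C+6)\delta}$ for all $v\in V(G_n)$, which is the claim.
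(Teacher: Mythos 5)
Your proposal is correct and follows essentially the same route as the paper: identify $|\bu_v|$ with the total progeny of an exploration tree, reduce $\{|\bu_v|\ge k\}$ to $\sum_{i\le k}\bm^{(i)}\ge k-1$ by queue bookkeeping, and invoke Lemma \ref{lm:Bv:Mv:2}. One small point worth noting: the paper's proof simply asserts that the $\mathbf{M}_i$ are independent and satisfy \eqref{eq:mv}, whereas you are more careful to observe that the $\bm^{(i)}$ are only conditionally distributed as some $\bm_u$ (with $u$ history-dependent) and that the argument goes through because the tail and mean bounds hold uniformly in $u$ --- this is a legitimate refinement and does not change the conclusion.
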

  \begin{proof}[Proof of Lemma \ref{lm:u}]
	We think about ${\bf U}_v$ as the total number of vertices in the tree with a root at $v$ and for each vertex $u$ of the tree, the number of its children equals the number of infections at the leaves of $B_u$. In particular, ${\bf U}_v$ can be thought of as a Galton-Watson tree in which ${\bf M}_v$ is the number of vertices in the first generation, denoted by $v_1, \dots, v_{k}$. Then ${\bf M}_{v_1}+\dots+{\bf M}_{v_k}$ is the number of vertices in the second generation and so on. Assume that the total number of vertices of the tree is $|\bu_v| = K$. We then have
	$$1+{\bf M}_1+{\bf M}_2 + \dots+ {\bf M}_K = K,$$
	where ${\bf M}_i$ is the number of children of the $i$-th vertex of ${\bf U}_v$. These ${\bf M}_i$ are independent and satisfies \eqref{eq:mv} by Lemma \ref{lm:Bv:Mv:1}. Note that for every $l\le K$, we have $1  +{\bf M}_1+\dots +{\bf M}_l\ge l$ as the LHS is at least the number of vertices in the tree formed by vertices $1,\dots, l$. Thus, letting $k = n^{(6C+6)\delta}$, by Lemma \ref{lm:Bv:Mv:2}, we obtain
	\begin{eqnarray}
	\P(|\bu_v|\ge n^{(6C+6)\delta})\le \P(1+{\bf M}_1+\dots+ {\bf M}_k \ge k)\le n^{-C/2}
	\end{eqnarray}
	proving the lemma.
\end{proof}

Back to the proof of Theorem \ref{thm:uper}, thanks to Lemma \ref{lm:u}, we conclude that with high probability, for all $v\in V(G_n)$, at any time $t$, the number of infected local neighborhoods $B_u$ is at most $n^{(6C+6)\delta}$. By Lemma \ref{lm:Bv:Mv}, conditioned on this event and on $(X^{(v)}_t)$, the expected survival time of $(X^{(v)}_s)_{s\ge t}$ is at most
 $$\sum _{|X^{(v)}_t\cap {\bf U}_v|} n^{\delta} \le n^{(6C+7)\delta}$$
 and so $(X_s)$ survives until time $t+ 2n^{(6C+7)\delta}$ with probability at most $1/2$. Repeating this argument for $t = a\cdot 2n^{(6C+7)\delta}$ with $a \in [1, 2, \dots, 2\log n]$, we conclude that the process $(X^{(v)}_t)$ survives until time $4 \log n \cdot n^{(6C+7)\delta}$ with probability at most $n^{-2}$.

 Taking the union bound over all $n$ vertices $v$, with probability at most $n^{-1}$, the contact process starting at all vertices infected survives until time $4 \log n \cdot n^{(7C+72)\delta}$, concluding the proof of Theorem~\ref{thm:uper}.

\section{Appendix A: Proof of Lemma \ref{lm:cyclic}: good tails}\label{app:proof}

Let $v_1\ldots v_m v_1$ be its cycle part in $\mathcal{H}^{m,l}$, and let $\mathcal{T}_{j}, \; j\in[m]$ be i.i.d. $\gw(\xi)_{\le l}$ rooted at $v_j$. Let $D_j$ be the degree of $v_j$ in $\T_{j}$, and let $\T_{j, 1},\ldots,\T_{j, D_j}$ be the subtrees rooted at $u_{j, 1},\ldots u_{j, D_j}$, the children of $v_j$ in $\mathcal{T}_j$. Then, $\dot{\cH}^{m, l}$ is simply the graph that consists of the cycle $v_1\ldots v_m v_1$ and the trees $\mathcal{T}_j$ with $j\in\{2,\ldots,m\}$.  Let $d_{j, 1}, \dots, u_{j, D_j}$ be the number of children of $u_{j, 1}, \dots, u_{j, D_j}$, respectively.

We recall the weights
	\begin{equation}
w_2(\dot{\cH}^{m, l})= \frac{  \la(D_2+2)}{1+\la(D_2+2)}, \quad w_m(\dot{\cH}^{m, l})=\frac{ \la (D_m+2)}{1+\la(D_m+2)}, \quad \wt(\dot{\cH}^{m, l})= w_2(\dot{\cH}^{m, l})+w_m(\dot{\cH}^{m, l}).\nonumber
\end{equation}

We prove Lemma \ref{lm:cyclic} by induction on $m+l$. If $m=l=1$, the result follows directly from Lemmas \ref{lm:S tail bd} and \ref{lm:M tail bd} because $\mathcal H^{m, l} \sim \gw(\xi)_{\le l}$ and $\dot{\cH}^{m, l}$ consists of a single vertex $v_1$. Assume that we have the stated result for all $m', l'$ such that $m'+l'<m+l$.


\subsection{Proof of Lemma \ref{lm:cyclic}  for $\wt(\dot{\cH}^{m, l}) S(\dot{\cH}^{m,l})$}
We first derive a recursive inequality for $S(\dot{\cH}^{m,l})$ which is similar to Lemma \ref{lm:S:recursion}.
\begin{lemma}\label{lm:rec:Sdot} We have
	\begin{eqnarray*}
		\frac{1}{2} \wt(\dot{\cH}^{m,l}) S (\dot{\cH}^{m,l}) &\le&  \frac{2\la (D_2+2)+\la^{2}(D_2+2)^{2}}{(1+\lambda (D_2+2))^{2}} +\frac{2\la (D_m+2)+\la^{2}(D_m+2)^{2}}{(1+\lambda (D_m+2))^{2}}\\
		&&\quad+  (\Pi_2-1) + (\Pi_m-1),
	\end{eqnarray*}
	where
	\begin{eqnarray}
	\Pi_{2}  &&= \left (1+ \la \wt(\dot{\cH}^{m-1, l}) S(\dot{\cH}^{m-1, l})\right )   \prod_{i=1}^{D_2} \left (1+\la w (\T_{2, i}) S(\T_{2, i})\right )\nonumber
	\end{eqnarray}
	and
	\begin{eqnarray}
	\Pi_{m}  &&= \left (1+ \la \wt(\dot{\cH}^{m-1, l}) S(\dot{\cH}^{m-1, l})\right )   \prod_{i=1}^{D_m} \left (1+\la w (\T_{m, i}) S(\T_{m, i})\right ).\nonumber
	\end{eqnarray}
\end{lemma}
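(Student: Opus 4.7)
The plan is to mirror the proof of Lemma \ref{lm:S:recursion} separately for the two starting configurations underlying $S_2(\dot{\cH}^{m,l})$ and $S_m(\dot{\cH}^{m,l})$, add the two resulting bounds, and invoke the identity $\wt(\dot{\cH}^{m,l})S(\dot{\cH}^{m,l}) = w_2(\dot{\cH}^{m,l})S_2 + w_m(\dot{\cH}^{m,l})S_m$. By the symmetry between $v_2$ and $v_m$, only the $v_2$ side needs to be worked out in detail; the factor $\tfrac{1}{2}$ on the left of the lemma is slack that absorbs any minor looseness introduced along the way.

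For the $v_2$ side, introduce the sharp process $(X_t^\sharp)$ which starts with $v_1$ permanently infected and $v_2$ at state $1^*$ but suppresses recoveries at $v_2$ as long as any vertex of $\dot{\cH}^{m,l}\setminus\{v_1,v_2\}$ is infected, and the auxiliary process $(X_t^\otimes)$ in which both $v_1$ and $v_2$ are held permanently infected. The graphical coupling gives $S_2\le S_2^\sharp$. The key structural observation is that under $(X_t^\otimes)$, the dynamics on $\dot{\cH}^{m,l}\setminus\{v_1,v_2\}$ decomposes into independent components: one root-added starred contact process on each subtree $\T_{2,i}$ with $v_2$ as its always-infected parent, together with a root-added $\gwc^2$ process on the smaller graph $\dot{\cH}^{m-1,l}$ whose cycle is $v_3\,v_4\,\cdots\,v_m$ and whose always-infected cycle root is the merged super-vertex $\{v_1,v_2\}$, incident to the cycle at $v_3$ via the edge $v_2v_3$ and at $v_m$ via the edge $v_1v_m$.

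With this independence established, the stationary-distribution identity parallel to \eqref{eq:S:prod} yields
\begin{equation*}
1 + \la\Bigl(\sum_{i=1}^{D_2} w(\T_{2,i}) + \wt(\dot{\cH}^{m-1,l})\Bigr) S^\otimes_2 \;=\; \Pi_2,
\end{equation*}
where $S^\otimes_2$ denotes the expected length of a single active excursion of the composite sub-process. Substituting into the tree-style recursion of Lemma \ref{lm:S:recursion}, with $D$ replaced by $D_2+1$ (the $D_2$ tree-children plus the cycle-neighbor $v_3$) and $D+1$ replaced by $D_2+2$ (the full degree of $v_2$), produces $S_2^\sharp = \frac{1}{1+\la(D_2+2)} - \frac{1}{\la(D_2+2)} + \frac{1+\la(D_2+2)}{\la(D_2+2)}\Pi_2$. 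Multiplying by $w_2(\dot{\cH}^{m,l}) = \la(D_2+2)/(1+\la(D_2+2))$ and using $(1+\la(D_2+2))^2 - 1 = 2\la(D_2+2)+\la^2(D_2+2)^2$ converts the right-hand side into $(\Pi_2-1) + \frac{2\la(D_2+2)+\la^2(D_2+2)^2}{(1+\la(D_2+2))^2}$, which is precisely the $v_2$-term in the lemma. Running the symmetric argument with $v_m$ at $1^*$ gives the matching $v_m$-term, and summing the two bounds completes the proof.

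The principal obstacle is the structural reduction in the second step: verifying that once $v_1$ and $v_2$ are both pinned to the infected state, the graph $\dot{\cH}^{m,l}\setminus\{v_1,v_2\}$, equipped with the strong-infection rates coming from the merged boundary, is genuinely a product of $D_2$ independent subtrees and a $\gwc^2(\xi,m-1)_{\le l}$ process. This amounts to identifying $\{v_1,v_2\}$ with a single always-infected cycle root of the smaller structure and checking that the rates $\la w_2(\dot{\cH}^{m-1,l})$ and $\la w_m(\dot{\cH}^{m-1,l})$ into its two cycle-neighbors $v_3$ and $v_m$ agree exactly with those prescribed by Definition \ref{def:gwc2}. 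A related subtlety, absent in the tree case, is that $v_1$ can trigger activity on the $v_m$ side even before $v_2$'s first event; this is already absorbed into $\Pi_2$ through the factor $(1+\la\wt(\dot{\cH}^{m-1,l})S(\dot{\cH}^{m-1,l}))$, but it does force the first-step analysis at $v_2$ to be done a little more carefully than in the tree case, which is where the $\tfrac{1}{2}$ slack on the left-hand side of the lemma proves useful.
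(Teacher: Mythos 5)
Your structural observations are correct and match the paper: under the $\otimes$ process the graph $\dot{\cH}^{m,l}\setminus\{v_1,v_2\}$ decomposes into the $D_2$ independent subtrees and a $\gwc^2(\xi,m-1)_{\le l}$ process rooted at the merged super-vertex, and your stationary-distribution identity $1 + \la\bigl(\sum_i w(\T_{2,i}) + \wt(\dot{\cH}^{m-1,l})\bigr)S_2^\otimes = \Pi_2$ is exactly the analogue of \eqref{eq:S:prod} used in the paper (equation \eqref{eq:rec:S:prod:2}).

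However, there is a genuine gap at the step where you write $S_2^\sharp = \frac{1}{1+\la(D_2+2)} - \frac{1}{\la(D_2+2)} + \frac{1+\la(D_2+2)}{\la(D_2+2)}\Pi_2$ by ``substituting into the tree-style recursion,'' and the gap is precisely the subtlety you name at the end but then wave away. The derivation of Lemma \ref{lm:S:recursion} hinges on the fact that after the initial wait $t_0$ (for $\rho$'s first infection), the state of $T\setminus\{\rho\}$ at time $t_0$ is guaranteed all-healthy, because $\rho^+$ has no edge into $T\setminus\{\rho\}$. In the cyclic case this fails: during $[0,t_0]$ the always-infected $v_1$ may already have sent a strong infection to $v_m$. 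You assert that this ``is already absorbed into $\Pi_2$ through the factor $(1+\la\wt(\dot{\cH}^{m-1,l})S(\dot{\cH}^{m-1,l}))$,'' but that factor only describes the stationary occupancy of the $\gwc^2(\xi,m-1)$ component \emph{during} $\otimes$-excursions triggered by the super-vertex; it does not account for activity launched before $t_0$. In the paper's proof this gives rise to the event $\{\bar t < t_0\}$, the bound $\E(t_1-t_0\mid\bar t<t_0)\le S_m(\dot{\cH}^{m,l})+\theta_2^\otimes S_2^\otimes$, and hence a genuine cross-term $\frac{\la w_m(\dot{\cH}^{m,l})}{1+\la(D_2+2)}\,S_m(\dot{\cH}^{m,l})$ in the bound for $S_2$ — note this involves $S_m$ of the \emph{same} graph $\dot{\cH}^{m,l}$, which is not a constituent of $\Pi_2$. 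Your derivation has no such term, and as a result your claimed bound $w_2 S_2 \le \frac{-1}{(1+\la(D_2+2))^2}+\Pi_2$ (and its $v_m$ counterpart) would, upon adding, yield the lemma with coefficient $1$ on the left, which is a strictly stronger statement than the lemma and is not actually established by the argument.

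The factor of $\tfrac{1}{2}$ is therefore not discretionary slack; it is the mechanism that eliminates the cross-coupling between $S_2$ and $S_m$. After multiplying by the weights, the two inequalities read $w_2 S_2 \le a_2' + \frac{\la w_2 w_m}{1+\la(D_2+2)}S_m + \Pi_2$ and $w_m S_m \le a_m' + \frac{\la w_2 w_m}{1+\la(D_m+2)}S_2 + \Pi_m$, and the coefficients $\frac{\la w_2}{1+\la(D_2+2)} = \frac{\la^2(D_2+2)}{(1+\la(D_2+2))^2}$, $\frac{\la w_m}{1+\la(D_m+2)}$ are each at most $\tfrac12$; subtracting the cross-terms from both sides of the sum then yields the stated $\frac12\wt(\dot{\cH}^{m,l})S(\dot{\cH}^{m,l})$. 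To repair your proof you need to (i) introduce $\bar t$ and split the analysis of $\E(t_1-t_0)$ on $\{\bar t < t_0\}$ versus $\{\bar t > t_0\}$, producing the $S_m$ cross-term, and (ii) close the system of two inequalities by using the smallness of the cross-term coefficients rather than by asserting the $\tfrac12$ is looseness to be absorbed.
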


\begin{proof}  Let us start by analyzing $S_2(\dot{\cH}^{m, l}) $. The analysis basically follows Lemma \ref{lm:S:recursion} except that  $v_1$ may send an infection to $v_m$ before $v_2$ sends out an infection. In that case, $S_{m} (\dot{\cH}^{m, l})$ also comes into play.
	
	Let $(X_t)$ be the root-added starred contact process on $\dot{\cH}^{m, l}$ as in Definition \ref{def:SMdef gwc2}. Let $(X_{2, t}^\sharp)$ be the process for which the healing clock at $v_2$ is disabled until all vertices of $\dot{\mathcal H}^{m, l}\setminus\{v_1, v_2\}$ are healthy. Let $S^{\sharp}_2(\dot{\cH}^{m, l})$ be the expected excursion time of $(X_{2, t}^\sharp)$. We have  $S_2(\dot{\cH}^{m, l})\le S^{\sharp}_2(\dot{\cH}^{m, l})$.

	Let $t_0$ be the first time that an event at $v_2$ happens, which must be an infection from $v_2$ to $\{v_1, v_3\}\cup \bigcup_{i=1}^{D_2} v_{2, i}$ rather than a recovery at $v_2$ because $v_2$ is initially at state $1^*$. We have
	\begin{equation}\label{eq:e:t_0}
	\E t_0 = \frac{1}{1+\lambda (D_2+2)}.
	\end{equation}

	Let $\bar t$ be the first time that $v_1$ sends a strong infection to $v_m$.  Let $t_1$ be the first time in $(X_t)$ after time $t_0$ that all vertices in $\dot{\mathcal H}^{m, l}\setminus\{v_1, v_2\}$ are healthy.
	Let $S_2^\otimes$ be the expected excursion time of the contact process $(X_{2, t}^\otimes)$ on $\dot{\mathcal H}^{m, l}$ with $v_1$ and $v_2$ always infected and one of their neighbors initially at $1^*$. For this product process, it is helpful to think about $v_1$ and $v_2$ as a giant vertex whose neighbors are $v_3, v_m$ and $v_{2, i}$ with $i=1, \dots, D_2$. Let $\theta_2^\otimes$ be the probability that an infection from $\{v_1, v_2\}$ to $\{v_3, v_m\}\cup\{v_{2, i}\}$ is a strong infection. Let $\theta_2$ be the probability that an infection from $v_2$ to $\{v_3\} \cup\{v_{2, i}\}$ is a strong infection. By writing down the formula for $\theta_2^\otimes$ and $\theta_2$ in terms of $D_m, D_3, d_{2, i}$, it is clear that
	$$\theta_2\le \frac{D_2+2}{D_2+1} \theta_2^\otimes .$$

	If $\bar t>t_0$, then
	\begin{equation}
	\E (t_1 - t_0\vert \bar t>t_0) \le \frac{D_2+1}{D_2+2} \theta_2 S_2^\otimes\le \theta_2^\otimes S_2^\otimes\nonumber,
	\end{equation}
	where $\frac{D_2+1}{D_2+2}$ is the probability that the infection from $v_2$ at time $t_0$ is not sent to $v_1$.
	
	If $\bar t<t_0$, we need to take into account the spread of infection from $v_m$ and so by using the graphical representation, we get
	\begin{equation}\label{eq:t1:t0:Sdot:2}
	\E (t_1 - t_0\vert \bar t<t_0) \le \max\{S_m(\dot{\cH}^{m, l}), \frac{D_2+1}{D_2+2}  \theta_2 S_2^\otimes \}\le S_m(\dot{\cH}^{m, l})+\theta_2^\otimes S_2^\otimes.
	\end{equation}
	Combining these equations, we get
	\begin{equation} \label{eq:t1:t0:Sdot:3}
	\E (t_1 - t_0) \le \P(\bar t<t_0)S_m(\dot{\cH}^{m, l})+ \theta_2^\otimes S_2^\otimes \le\frac{\la w_m(\dot{\cH}^{m, l})}{ 1+\la(D_2+2)}S_m(\dot{\cH}^{m, l})+ \theta_2^\otimes S_2^\otimes,
	\end{equation}
	where in the last equality, we noted that $v_1$ sends a strong infection to $v_m$ with rate  $\la w_m(\dot{\cH}^{m, l})$.

	From time $t_1$, we bound the survival time of $(X_t)_{t\ge t_1}$ by $(X_{2, t}^{\sharp})_{t\ge t_1}$ and use the same argument as for \eqref{eq:S:T} to obtain
	\begin{eqnarray*}
		&&S_2 (\dot{\cH}^{m,l})\\
		&\le&  \E t_0 + \E (t_1- t_0) + \sum _{k=0}^{\infty} \left (\frac{\la (D_2+2)}{1+\la (D_2+2)}\right )^{k} \left (\frac{1}{1+\la (D_2+2)}\right ) \left (\frac{k+1}{1+\la (D_2+2)}+ k \theta_2^\otimes S_2^\otimes  \right )\\
		&\le & \frac{1}{1+\lambda (D_2+2)} + \frac{\la w_m(\dot{\cH}^{m, l})}{1+\la(D_2+2)}S_m(\dot{\cH}^{m, l})+ \theta_2^\otimes S_2^\otimes + 1 + \la (D_2+2)  \theta_2^\otimes S_2^\otimes\\
		&= & \frac{1}{1+\lambda (D_2+2)} - \frac{1}{\la(D_2+2)} + \frac{\la w_m(\dot{\cH}^{m, l})}{1+\la(D_2+2)}S_m(\dot{\cH}^{m, l}) + \frac{1+\la(D_2+2)}{\la(D_2+2)}(1 + \la (D_2+2)  \theta_2^\otimes S_2^\otimes)
	\end{eqnarray*}

	For $S_2^{\otimes}$, we have the following analog of \eqref{eq:S:prod}
	\begin{eqnarray}
&&	1 + \la (D_2+2)\theta_2^\otimes S_2^{\otimes} = \left (1+ \frac{\la^{2} (D_3+2)}{1+\la(D_3+2)} S_3(\dot{\cH}^{m-1, l}) +\frac{\la^{2} (D_m+2)}{1+\la(D_m+2)} S_m(\dot{\cH}^{m-1, l})\right )  \nonumber\\
	&&\times \prod_{i=1}^{D_1} \left (1+\la \wt(\T_{2, i}) S(\T_{2, i})\right )= \left (1+ \la \wt(\dot{\cH}^{m-1, l}) S(\dot{\cH}^{m-1, l})\right )   \prod_{i=1}^{D_2} \left (1+\la w (\T_{2, i}) S(\T_{2, i})\right )=\Pi_{2}\label{eq:rec:S:prod:2},
	\end{eqnarray}
	where $\dot{\cH}^{m-1, l}\sim \gwc^{2}(\xi, m-1)^{l}$ contains the cycle $v_2, v_3, \dots, v_m$ and the trees $\T_{i}$ attached to $v_3, \dots, v_m$.
	
	So,
	\begin{eqnarray*}
		w_2(\dot{\cH}^{m, l}) S_2 (\dot{\cH}^{m,l}) 	&\le & \frac{-1}{(1+\lambda (D_2+2))^{2}}  + \frac{\la w_2(\dot{\cH}^{m, l})w_m(\dot{\cH}^{m, l})}{1+\la(D_2+2)} S_m(\dot{\cH}^{m, l})+  \Pi_2\\
		&\le&  \frac{-1}{(1+\lambda (D_2+2))^{2}}  + w_m(\dot{\cH}^{m, l}) S_m(\dot{\cH}^{m, l})+  \Pi_2
	\end{eqnarray*}
	Similarly, we have
	\begin{eqnarray*}
		w_m(\dot{\cH}^{m, l})	S_m (\dot{\cH}^{m,l}) 	&\le & \frac{-1}{(1+\lambda (D_m+2))^{2}}  + w_2(\dot{\cH}^{m, l})S_2(\dot{\cH}^{m, l})+  \Pi_m.
	\end{eqnarray*}
	Adding these up and arranging, we get
	\begin{eqnarray*}
		&& \wt(\dot{\cH}^{m,l}) S (\dot{\cH}^{m,l}) \le \frac{-1}{(1+\lambda (D_2+2))^{2}} + \frac{-1}{(1+\lambda (D_m+2))^{2}}+  \Pi_2 + \Pi_m \\
		&=& \frac{2\la (D_2+2)+\la^{2}(D_2+2)^{2}}{(1+\lambda (D_2+2))^{2}} +\frac{2\la (D_m+2)+\la^{2}(D_m+2)^{2}}{(1+\lambda (D_m+2))^{2}}+  (\Pi_2-1) + (\Pi_m-1).
	\end{eqnarray*}
completing the proof of the recursion.
\end{proof}

	By induction hypothesis and Proposition \ref{prop:tail}, $\Pi_2-1$ and $\Pi_m-1$ have strong tails. So does the constant part. Thus, $\wt(\dot{\cH}^{m,l})	S (\dot{\cH}^{m,l})$ has a good tail, proving Lemma \ref{lm:cyclic}  for $\wt(\dot{\cH}^{m, l}) S(\dot{\cH}^{m,l})$.

\subsection{Proof of Lemma \ref{lm:cyclic} for $\wt(\cH^{m, l})S({\mathcal{H}}^{m,l})$ }\label{subsec:prof:uni S}

By the same argument as in Lemmas \ref{lm:S:recursion} and \ref{lm:rec:Sdot}, we obtain the following recursion.
\begin{lemma}\label{lem:gwc1 recursion atypical}
	Under the hypothesis of Lemma \ref{lm:cyclic}, it holds that
	\begin{eqnarray}\label{eq:gwc1 Srecursion atypical}
	S(\mathcal{H}^{m,l}) \le \frac{1}{1+\la (D_1+3)} -\frac{1}{\la(D_1+3)}+  \wt(\mathcal{H}^{m,l})^{-1}\left (1+ \la \wt(\dot{\cH}) S(\dot{\cH})\right ) \prod_{i=1}^{D_1} \left (1+\la \wt(\T_{1, i}) S(\T_{1, i})\right )\nonumber.
	\end{eqnarray}
\end{lemma}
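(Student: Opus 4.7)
\textbf{Proof proposal for Lemma \ref{lem:gwc1 recursion atypical}.}

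The plan is to mirror the proof of Lemma \ref{lm:S:recursion} almost verbatim, treating the two cycle neighbors $v_2,v_m$ of $v_1$ as if they were two additional ``subtrees'' attached at $v_1$. Indeed, the root $v_1$ of $\mathcal{H}^{m,l}$ has $D_1+2$ neighbors inside $\mathcal{H}^{m,l}$ (the $D_1$ children $u_{1,i}$ in $\T_1$ together with $v_2$ and $v_m$), so with the root-added parent $v_1^+$ it has $D_1+3$ neighbors total; this accounts for the factors $D_1+3$ appearing in the statement and in the definition of $\wt(\mathcal{H}^{m,l})=\frac{\la(D_1+3)}{1+\la(D_1+3)}$.

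First, define $(X^\sharp_t)$ to be the coupled process on $\mathcal{H}^{m,l}\cup\{v_1^+\}$ in which we disable the recovery clock of $v_1$ whenever some vertex of $\mathcal{H}^{m,l}\setminus\{v_1\}$ is still infected; then $S(\mathcal{H}^{m,l})\le S^\sharp(\mathcal{H}^{m,l})$. Let $t_0$ be the first event at $v_1$, so $\E t_0=\frac{1}{1+\la(D_1+3)}$ and this event is an infection from $v_1$ (either to $v_1^+$, to one of the $u_{1,i}$, or to $v_2$ or $v_m$). Exactly as in the proof of Lemma \ref{lm:S:recursion}, between successive attempted recoveries of $v_1$ (each of which succeeds with probability $\frac{1}{1+\la(D_1+2)}$), the excursion alternates with independent excursions of the process $(X^\otimes_t)$ in which $v_1$ is held permanently infected; summing the geometric series yields
\begin{equation*}
S^\sharp(\mathcal{H}^{m,l}) \;=\; \frac{1}{1+\la(D_1+3)}-\frac{1}{\la(D_1+3)} + \frac{1+\la(D_1+3)}{\la(D_1+3)}\bigl(1+\la(D_1+2)\,\theta^\otimes S^\otimes\bigr),
\end{equation*}
where $\theta^\otimes$ is the probability that an infection from $v_1$ to one of its $D_1+2$ non-parent neighbors is a strong infection and $S^\otimes$ is the expected excursion time of $(X^\otimes_t)$.

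The key structural step is to identify $1+\la(D_1+2)\theta^\otimes S^\otimes$ with the product appearing in the statement. Because $v_1$ is held permanently infected in $(X^\otimes_t)$, the graph $\mathcal{H}^{m,l}\setminus\{v_1\}$ splits into $D_1+1$ connected components that evolve \emph{independently}: the $D_1$ rooted subtrees $(\T_{1,i},u_{1,i})$, each evolving as a root-added starred contact process with $v_1$ playing the role of the added parent, together with the $\gwc^2$-process $\dot{\cH}^{m,l}$ built from the cycle $v_1v_2\cdots v_mv_1$ and the trees $\T_2,\ldots,\T_m$, in which $v_1$ again plays the role of the permanently infected root required by Definition~\ref{def:SMdef gwc2}. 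Writing $\pi_i$ and $\pi_{\dot{\cH}}$ for the stationary distributions of these independent pieces, and $\pi$ for that of $(X^\otimes_t)$ on $\mathcal{H}^{m,l}\setminus\{v_1\}$, the argument used for \eqref{eq:S:prod} gives
\begin{equation*}
\pi_i(\mathbf 0)=\frac{1}{1+\la\wt(\T_{1,i})S(\T_{1,i})},\quad \pi_{\dot{\cH}}(\mathbf 0)=\frac{1}{1+\la\wt(\dot{\cH})S(\dot{\cH})},\quad \pi(\mathbf 0)=\frac{1}{1+\la(D_1+2)\theta^\otimes S^\otimes},
\end{equation*}
and the product identity $\pi=\pi_{\dot{\cH}}\prod_i\pi_i$ yields
\begin{equation*}
1+\la(D_1+2)\theta^\otimes S^\otimes \;=\; \bigl(1+\la\wt(\dot{\cH})S(\dot{\cH})\bigr)\prod_{i=1}^{D_1}\bigl(1+\la\wt(\T_{1,i})S(\T_{1,i})\bigr).
\end{equation*}
Substituting this into the expression for $S^\sharp(\mathcal{H}^{m,l})$, using $S(\mathcal{H}^{m,l})\le S^\sharp(\mathcal{H}^{m,l})$ and $\frac{1+\la(D_1+3)}{\la(D_1+3)}=\wt(\mathcal{H}^{m,l})^{-1}$, gives the claimed recursion.

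The main obstacle is the independence claim in the previous paragraph: one must check that because each of the neighbors of $v_1$ (namely $v_2,v_m,u_{1,1},\ldots,u_{1,D_1}$) lies in a distinct component of $\mathcal{H}^{m,l}\setminus\{v_1\}$, the graphical representation restricted to those components uses disjoint Poisson clocks, so that the processes $(X^\otimes_t)$ restricted to each $\T_{1,i}$ and to $\dot{\cH}^{m,l}$ are genuinely independent with $v_1$ acting as a common permanently infected neighbor. This is precisely why the decomposition holds cleanly in the $\gwc^1$ case (where the single cycle is wholly contained in the $\dot{\cH}$ piece), and is what makes the same recursion template as Lemma \ref{lm:S:recursion} applicable; the remaining algebra is mechanical.
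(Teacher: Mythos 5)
Your proof is correct and fills in exactly the argument the paper has in mind, which it only sketches by saying the recursion ``follows by the same argument as in Lemmas \ref{lm:S:recursion} and \ref{lm:rec:Sdot}''. The two key observations you correctly supply are that $\mathcal{H}^{m,l}\setminus\{v_1\}$ splits into $D_1+1$ genuinely independent components once $v_1$ is held permanently infected (the $D_1$ subtrees $\T_{1,i}$ plus the $\dot{\cH}^{m,l}$ piece), and that the $w_2,w_m$-weighted average $S(\dot{\cH})$ from Definition~\ref{def:SMdef gwc2} is precisely the quantity for which $\pi_{\dot{\cH}}(\mathbf 0)=\bigl(1+\la\,\wt(\dot{\cH})S(\dot{\cH})\bigr)^{-1}$, so that the product identity for $\pi(\mathbf 0)$ carries over from the tree case unchanged.
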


By the induction hypothesis, each $\wt(\T_{1, i}) S(\T_{1, i})$ has a good tail. As proved in the previous section, $\wt(\dot{\cH}) S(\dot{\cH})$ also has a good tail. Combining this with Lemma \ref{lem:gwc1 recursion atypical} and Proposition \ref{prop:tail} as in the proof of Lemma \ref{lm:S tail bd}, we conclude that $\wt(\cH^{m, l})S(\mathcal{H}^{m,l})$ has a good tail.

\subsection{Proof of Lemma \ref{lm:cyclic} for $(1.5)^{l-1} \wt(\dot{\cH}^{m, l}) M_{ l}(\dot{\cH})$}\label{subsubsec:unic3}

Following the proof of Lemma \ref{lm:Mrecursion}, we derive a recusive bound for $M_{ l}(\dot{\cH})$.
\begin{lemma}\label{lm:recur:dot M:Hml} 	Under the hypothesis of Lemma \ref{lm:cyclic}, it holds that
	\begin{eqnarray*}
		\frac{1}{2}	\wt(\dot{\cH}^{m, l})M_{l}(\dot{\cH}^{m, l}) &\le& \la(\Sigma_2 +  \Sigma_m),
	\end{eqnarray*}
	where
	\begin{eqnarray*}
		\Sigma_2 &=&\left [1+\la \wt(\dot{\cH}^{m-1, l}) S(\dot{\cH}^{m-1, l}) \right ]\Bigg[ \sum_{i=1}^{D_2}   w_{2, i} M_{l-1}(\T_{2, i})\prod_{j\neq i, 1\le j\le D_2} (1+\la \wt(\T_{2, j}) S(\T_{2, j})) \Bigg] \\
		&&  +  \wt(\dot{\cH}^{m-1, l}) M_{l}(\dot{\cH}^{m-1, l})   \prod_{j=1}^{D_2} (1+\la \wt(\T_{2, j}) S(\T_{2, j})),
	\end{eqnarray*}
	and similarly for $\Sigma_m$.
\end{lemma}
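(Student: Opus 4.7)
My plan is to adapt the proof of Lemma \ref{lm:Mrecursion} to the unicyclic setting, deriving first an upper bound on $w_2(\dot{\cH}^{m,l}) M_{l,2}(\dot{\cH}^{m,l})$, then the symmetric bound for $w_m(\dot{\cH}^{m,l}) M_{l,m}(\dot{\cH}^{m,l})$, and finally combining them via $\wt(\dot{\cH}^{m,l}) M_l(\dot{\cH}^{m,l}) = w_2 M_{l,2} + w_m M_{l,m}$. To control $M_{l,2}$, I would follow the three-level hierarchy used in the tree case. First introduce the \emph{sharpened} process $(X_t^{2,\sharp})$ that agrees with the root-added starred process on $\dot{\cH}^{m,l}$ started with $v_2$ at $1^*$, except that recoveries at $v_2$ are ignored while $\dot{\cH}^{m,l}\setminus\{v_1,v_2\}$ is not all healthy; a standard graphical coupling gives $M_{l,2}(\dot{\cH}^{m,l}) \leq M_{l,2}^{\sharp}(\dot{\cH}^{m,l})$. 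Next, pass to the product process $(X_t^{2,\otimes})$ in which both $v_1$ and $v_2$ are permanently infected, as in the definition of $S_2^{\otimes}$ in the proof of Lemma \ref{lm:rec:Sdot}.

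The crucial geometric observation is that under $(X_t^{2,\otimes})$, the graph $\dot{\cH}^{m,l}\setminus\{v_1,v_2\}$ splits into $D_2+1$ \emph{independent} pieces: the $D_2$ subtrees $\T_{2,i}$ hanging off $v_2$ (each running a root-added contact process with $v_2$ as its permanent parent) and the sub-cycle on $v_3,\ldots,v_m$ together with the trees $\T_3,\ldots,\T_m$. Under the correspondence that treats the super-vertex $\{v_1,v_2\}$ as a single always-infected root with cycle-neighbors $v_3$ and $v_m$, this sub-cycle piece is exactly a copy of $\dot{\cH}^{m-1,l}$, and the total rate of strong infections from $\{v_1,v_2\}$ out equals $\la(D_2+2)\theta_2^{\otimes} = \la w_m(\dot{\cH}^{m,l}) + \la w_2(\dot{\cH}^{m-1,l}) + \la\sum_{i=1}^{D_2}\wt(\T_{2,i})$, matching the structure already encoded in the identity $1+\la(D_2+2)\theta_2^{\otimes}S_2^{\otimes}=\Pi_2$ from \eqref{eq:rec:S:prod:2}.

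Applying the stationary/time-average argument of \eqref{eq:Motimes integral} to this independent decomposition yields
\begin{equation*}
\frac{M^{2,\otimes}}{(\la(D_2+2)\theta_2^{\otimes})^{-1}+S_2^{\otimes}} = \sum_{i=1}^{D_2}\frac{\la \wt(\T_{2,i})M_{l-1}(\T_{2,i})}{1+\la \wt(\T_{2,i})S(\T_{2,i})} + \frac{\la \wt(\dot{\cH}^{m-1,l})M_l(\dot{\cH}^{m-1,l})}{1+\la \wt(\dot{\cH}^{m-1,l})S(\dot{\cH}^{m-1,l})}.
\end{equation*}
Solving for $M^{2,\otimes}$ and substituting the product identity for $\Pi_2$ factors the right side exactly into $\Sigma_2$ with an overall $\la$ pulled out. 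Finally, relating $M_{l,2}^{\sharp}$ to $M^{2,\otimes}$ by counting the expected number of $\otimes$-excursions that begin during one $\sharp$-excursion --- precisely the analogue of \eqref{eq:Msharp and Motimes} --- produces the desired bound $w_2(\dot{\cH}^{m,l})M_{l,2}^{\sharp} \le 2\la\,\Sigma_2$. The symmetric argument with $v_m$ in place of $v_2$ gives $w_m(\dot{\cH}^{m,l})M_{l,m}^{\sharp} \le 2\la\,\Sigma_m$, and summing these two inequalities together with the definition $\wt M_l = w_2 M_{l,2}+w_m M_{l,m}$ closes the proof.

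The main technical obstacle lies in the last step: unlike the tree case, there are now \emph{two} always-infected sources $v_1$ and $v_2$, so in the $\sharp$-process started from $v_2$ at $1^*$ the vertex $v_1$ can independently initiate an $\otimes$-excursion via a strong infection to $v_m$ before $v_2$ has even completed its first outgoing infection. This produces a bookkeeping term analogous to the case split $\bar{t}<t_0$ versus $\bar{t}>t_0$ that appeared in \eqref{eq:t1:t0:Sdot:2}--\eqref{eq:t1:t0:Sdot:3} during the proof of Lemma \ref{lm:rec:Sdot}. The factor $\tfrac12$ on the left-hand side of the claimed inequality provides exactly the slack required to absorb this extra contribution without needing to track the precise constant, which is why the recursion is stated as an inequality rather than an equality.
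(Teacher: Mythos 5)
The decomposition into $\sharp$- and $\otimes$-processes, the independence of the $D_2$ subtrees together with the $\dot{\cH}^{m-1,l}$ piece under $(X^{2,\otimes}_t)$, and the stationary time-average identity that yields $(D_2+2)\theta_2^\otimes M_2^\otimes = \Sigma_2$ are all on track and match the paper's derivation. The gap is in the final combination step.

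Your stated intermediate bound $w_2(\dot{\cH}^{m,l})M_{l,2}^{\sharp}\le 2\la\Sigma_2$ does not follow from the argument and is in general false. What the analogue of \eqref{eq:Mdot:1}--\eqref{eq:Mdot:2} actually gives, once you track the contribution from a strong $v_1\to v_m$ infection occurring before the first infection emitted by $v_2$, is
\begin{equation*}
w_2(\dot{\cH}^{m,l})\,M_{l,2}(\dot{\cH}^{m,l})\;\le\;\tfrac12\,w_m(\dot{\cH}^{m,l})\,M_{l,m}(\dot{\cH}^{m,l})+\la\Sigma_2.
\end{equation*}
The cross-term $\tfrac12 w_m M_{l,m}$ is of the same order as the left-hand side and cannot be absorbed into a multiplicative slack; it is not controlled by $\Sigma_2$, and trying to bound it via the symmetric estimate $w_m M_{l,m}\le\tfrac12 w_2 M_{l,2}+\la\Sigma_m$ is circular if you insist on an inequality in terms of $\Sigma_2$ alone. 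What closes the proof is not absorption but cancellation: you keep both coupled inequalities, add them, and the sum $\tfrac12(w_2 M_{l,2}+w_m M_{l,m})$ on the right is subtracted to the left, producing the $\tfrac12\,\wt(\dot{\cH}^{m,l}) M_l$ of the statement. So the $\tfrac12$ is an algebraic artifact of solving the linear system in the two unknowns $w_2 M_{l,2}$ and $w_m M_{l,m}$, not slack that absorbs a bookkeeping term. You already identified the correct case split $\bar t<t_0$ versus $\bar t>t_0$; just carry the resulting $M_{l,m}$ term explicitly into a coupled pair of inequalities rather than attempting to bound $M_{l,2}^\sharp$ by $\Sigma_2$ alone.
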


\begin{proof} [Proof of Lemma \ref{lm:recur:M:Hml}]
	We shall reuse the notations in the proof of Lemma \ref{lm:rec:Sdot}. We denote the corresponding expected total number of infections at depth-$l$ leaves of the processes $(X_{2, t}^{\sharp})$ and $(X_{2, t}^\otimes)$ by $ M_2^\sharp$ and $M_{2}^{\otimes}$.

	Similar to the derivation of \eqref{eq:Msharp and Motimes}, we observe that after the recovery of the first infection from $v_2$ in $(X^\sharp_{2, t})$, the number of excursions of $(X_{2, t}^\otimes)$ included in a single excursion of $(X^\sharp_{2, t})$ is the same as a geometric random variable with success probability $(1+\lambda (D_2+2) \theta_2^\otimes)^{-1}$. Therefore, the expected number of infections at depth-$l$ leaves after time $t_1$ is
	\begin{equation}
	\lambda (D_2+2)  \theta_2^\otimes M_{2}^\otimes(\dot{\cH}^{m,l}) \label{eq:Mdot:1}.
	\end{equation}
	By the same argument as in \eqref{eq:t1:t0:Sdot:3}, we bound from above the expected number of infections at depth-$l$ leaves of $(X_t)$ during $[0, t_1]$ by
	\begin{equation}
	\frac{\la \wt_m(\dot{\cH}^{m, l})}{1+\la(D_2+2)}M_{l, m}(\dot{\cH}^{m, l})+ \theta_2^\otimes M_2^\otimes  \label{eq:Mdot:2}.
	\end{equation}

	Now we attempt to control $M^\otimes(\dot{\cH}^{m,l})$ in terms of $\{M(\T_{2, i}) \}$ and $M(\dot{\cH}^{m-1, l})$ where $\dot{\cH}^{m-1, l}\sim \gwc^{2}(\xi, m-1)^{l}$ contains the cycle $v_2, v_3, \dots, v_m$ and the trees $\T_{i}$ attached to $v_3, \dots, v_m$. Let $\mathcal L$ be the collection of depth-$l$ leaves of $\dot{\cH}^{m,l}$.
	We observe that
	\begin{equation*}
	\begin{split}
	\lim_{t_0\rightarrow \infty} \frac{1}{t_0} \sum_{v\in\mathcal{L}}\E_{\textsc{cp}} &
	\left[\,
	\left|\left\{ s \in [0,t_0]:\; X_{2, s}^\otimes(v)=1^* \textnormal{ and } X_{2, s-}^\otimes (v)=0 \right\} \right| \,
	\right]\\
	&=
	\frac{M_2^\otimes(\dot{\cH}^{m,l})}{(\lambda (D_2+2)\theta_2^\otimes)^{-1} + S_2^\otimes(\dot{\cH}^{m,l})}.
	\end{split}
	\end{equation*}
	On the other hand, by splitting $\mathcal L$ into parts corresponding to $\T_{2, i}$ and $\dot{\cH}^{m-1, l}$, the above is also equal to
	\begin{equation*}
	\sum_{i=1}^{D_2} \frac{M_{l-1}(\T_{2, i})}{(\lambda\wt(\T_{2, i}))^{-1} + S(\T_{2, i})}+\frac{M_{l}(\dot{\cH}^{m-1, l})}{(\la \wt(\dot{\cH}^{m-1, l}))^{-1} + S(\dot{\cH}^{m-1, l})}.
	\end{equation*}
	
	Combining these equations and \eqref{eq:rec:S:prod:2}, we get
	\begin{eqnarray*}
		&&(D_2+2)\theta_2^\otimes M_2^\otimes(\dot{\cH}^{m,l})\\
		& =&   \left [1+\la \wt(\dot{\cH}^{m-1, l}) S(\dot{\cH}^{m-1, l}) \right ]\Bigg[ \sum_{i=1}^{D_2}  \wt(\T_{2, i}) M_{l-1}(\T_{2, i})\prod_{j\neq i, 1\le j\le D_2} (1+\la \wt(\T_{2, j}) S(\T_{2, j})) \Bigg] \\
		&&  +  \wt(\dot{\cH}^{m-1, l}) M_{l}(\dot{\cH}^{m-1, l})   \prod_{j=1}^{D_2} (1+\la \wt(\T_{2, j}) S(\T_{2, j})) = \Sigma_2.\nonumber
	\end{eqnarray*}
	By this together with \eqref{eq:Mdot:1}, \eqref{eq:Mdot:2}, and $M^\sharp(\dot{\cH}^{m,l}) \geq M_{ l}(\dot{\cH}^{m,l})$, we conclude  that
	\begin{eqnarray*}
		M_{l, 2}(\dot{\cH}^{m, l}) &\le&  \frac{\la \wt_m(\dot{\cH}^{m, l})}{1+\la(D_2+2)}M_{l, m}(\dot{\cH}^{m, l}) + \frac{1+\la(D_2+2)}{D_2+2} \Sigma_2.
	\end{eqnarray*}
	Thus,
	\begin{eqnarray*}
		\wt_2(\dot{\cH}^{m, l}) M_{l, 2}(\dot{\cH}^{m, l}) &\le& \frac{1}{2} \wt_m(\dot{\cH}^{m, l})M_{l, m}(\dot{\cH}^{m, l}) +\la\Sigma_2.
	\end{eqnarray*}
	Adding this with the analog for $M_{l, m}(\dot{\cH}^{m, l})$ and rearranging, we obtain
	\begin{eqnarray*}
	 \wt(\dot{\cH}^{m, l})M_{l}(\dot{\cH}^{m, l}) &\le& 2\la\Sigma_2 +2\la \Sigma_m
	\end{eqnarray*}
	as claimed.
\end{proof}

To conclude that $(1.5)^{l-1} \wt(\mathcal H^{m, l}) M_l(\mathcal H^{m, l})$ has a good tail, we proceed as  in the proof of Lemma \ref{lm:M tail bd}. We set
\begin{equation*}
W_i = \max\{(1.5)^{l-1} \wt(\T_{2, i})M_{l-1}(\T_{2, i}),  \wt(\T_{2, i}) S(\T_{2, i})\}, \quad i=1, \dots, D_2
\end{equation*}
and
\begin{equation*}
W_{0} = \max\left \{(1.5)^{l-1}\wt(\dot{\cH}^{m-1, l}) M_{l}(\dot{\cH}^{m-1, l}),  \wt(\dot{\cH}^{m-1, l}) S(\dot{\cH}^{m-1, l})\right  \},
\end{equation*}
where we used the subscript $0$ in place of $\dot{\mathcal H}^{m-1, l}$ for notational convenience.
Then, the quantity $\Sigma_2$ in Lemma \ref{lm:recur:dot M:Hml} satisfies
\begin{equation*}
2(1.5)^{l-1}\la \Sigma_2 \le 3\sum_{i=0}^{D_2} \la W_i \prod_{0\le j\le D_2, j\neq i} (1+\la W_{j})\le \frac{3}{2}\left (\prod_{i=0}^{D_2} (1+2\la W_{i})-1\right ).
\end{equation*}

We have by the induction hypothesis that $W_i$ has a good tail for all $0\le i\le D_2$. By  Proposition \ref{prop:tail},  $\la\Sigma_2$ has a strong tail. Similarly, $\la\Sigma_m$ also has a strong tail. And so does, $(1.5)^{l-1} \wt(\dot{\cH}^{m, l}) M_l(\dot{\cH}^{m, l})$.

\subsection{Proof of Lemma \ref{lm:cyclic} for $(1.5)^{l}\wt(\mathcal H^{m, l})M_{ l}(\mathcal{H}^{m,l})$ }\label{subsec:prof:uni M}

For the tail of $M_{ l}(\mathcal{H}^{m,l})$, we have the following recursive inequality which can be derived analogously as Lemma \ref{lm:Mrecursion}.
\begin{lemma}\label{lm:recur:M:Hml} 	Under the hypothesis of Lemma \ref{lm:cyclic}, we have
\begin{eqnarray}
&& \wt(\cH^{m, l})  M_{ l}(\mathcal{H}^{m,l}) \le \wt(\dot{\cH}^{m, l}) M_{l}(\dot{\cH}^{m, l}) \prod_{j=1}^{D_1} \left (1+\la \wt(\T_{1, j}) S(\T_{1, j})\right ) \nonumber\\
  &&\quad	\quad\quad+\left [1+ \la \wt(\dot{\cH}^{m, l}) S(\dot{\cH}^{m, l})    \right ]  \left [ \sum_{i=1}^{D_1}   \wt(\T_{1, i}) M_{l-1}(\T_{1, i})\prod_{j\neq i, 1\le j\le D_1} \left (1+\la \wt(\T_{1, j}) S(\T_{1, j})\right ) \right ].\nonumber
\end{eqnarray}

\end{lemma}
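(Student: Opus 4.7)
The plan is to adapt the argument of Lemma \ref{lm:Mrecursion} verbatim, treating the $D_1$ subtrees $\T_{1,1},\dots,\T_{1,D_1}$ together with the $\gwc^2$-piece $\dot{\cH}^{m,l}$ as $D_1+1$ independent children of the root $v_1$ of $\cH^{m,l}$. First I would introduce the modified process $(X^\sharp_t)$ obtained from $(X_t)$ by disabling the recovery clock at $v_1$ whenever any vertex of $\cH^{m,l}\setminus\{v_1\}$ is infected; since this can only increase infections at the leaves, it suffices to bound $M^\sharp\ge M_l(\cH^{m,l})$. I would then introduce the product process $(X^\otimes_t)$ on $\cH^{m,l}\setminus\{v_1\}$ with $v_1$ forced permanently infected and a single non-$\rho^+$ neighbour initially at $1^*$, writing $M^\otimes$ and $S^\otimes$ for its expected total infections at depth-$l$ leaves and its expected excursion length. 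The key observation is that with $v_1$ always infected, $(X^\otimes_t)$ factorises into $D_1+1$ mutually independent pieces: the root-added processes on each $\T_{1,i}$ (with $v_1$ as the added always-infected parent) and the process on $\dot{\cH}^{m,l}$ itself, for which $v_1$ is already the permanently infected root in the sense of Definition \ref{def:SMdef gwc2}.

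Next I would relate $M^\sharp$ to $M^\otimes$ by counting excursions exactly as in Lemma \ref{lm:Mrecursion}. Set $\theta:=\frac{1}{D_1+2}\left[\sum_{i=1}^{D_1}\wt(\T_{1,i})+\wt(\dot{\cH}^{m,l})\right]$. The first event at $v_1$ is an infection, which hits a non-$\rho^+$ neighbour with probability $\frac{D_1+2}{D_1+3}$ and is strong given that with probability $\theta$, contributing $\frac{D_1+2}{D_1+3}\theta$ expected $(X^\otimes_t)$-excursions. After this first infection the recovery clock at $v_1$ is disabled while any descendant is infected, and during its enabled intervals the geometric race between recovery (rate $1$) and outgoing strong infections (rate $\la(D_1+2)\theta$) contributes another $\la(D_1+2)\theta$ excursions in expectation, so that $M^\sharp = \left(\frac{1}{D_1+3}+\la\right)(D_1+2)\theta\,M^\otimes$. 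The same reasoning applied to the excursion time together with the factorisation of the stationary measure of $(X^\otimes_t)$ yields the analogue of \eqref{eq:S:prod}:
\[
1+\la(D_1+2)\theta\, S^\otimes \;=\; (1+\la\wt(\dot{\cH}^{m,l})S(\dot{\cH}^{m,l}))\prod_{i=1}^{D_1}(1+\la\wt(\T_{1,i})S(\T_{1,i})).
\]

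Finally I would perform the stationary-distribution calculation of Lemma \ref{lm:Mrecursion}. The long-run density of fresh $1^*$-arrivals at depth-$l$ leaves under $(X^\otimes_t)$ equals both $M^\otimes/[(\la(D_1+2)\theta)^{-1}+S^\otimes]$ and, by partitioning the leaves according to the independent factor containing them,
\[
\sum_{i=1}^{D_1}\frac{M_{l-1}(\T_{1,i})}{(\la\wt(\T_{1,i}))^{-1}+S(\T_{1,i})}+\frac{M_l(\dot{\cH}^{m,l})}{(\la\wt(\dot{\cH}^{m,l}))^{-1}+S(\dot{\cH}^{m,l})}.
\]
Solving for $M^\otimes$, substituting into $M_l(\cH^{m,l})\le M^\sharp$, multiplying by $\wt(\cH^{m,l})=\frac{\la(D_1+3)}{1+\la(D_1+3)}$, and using the identity $\wt(\cH^{m,l})\left(\frac{1}{D_1+3}+\la\right)=\la$, all the $D_1+3$ factors cancel and yield the claimed recursion. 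The only non-routine point is to justify the factorisation of the $(X^\otimes_t)$-dynamics into its $D_1+1$ components, but this is immediate from the graphical construction because excising the always-infected vertex $v_1$ disconnects $\cH^{m,l}$ into graph-disjoint pieces whose driving Poisson clocks are independent.
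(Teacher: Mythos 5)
Your proposal is correct and follows the same route the paper indicates (the paper gives no details, saying only that the recursion ``can be derived analogously as Lemma~\ref{lm:Mrecursion}''). You correctly identify the two features that make the $\gwc^1$ case mildly different from the pure-tree case: (i) the always-infected root $v_1$ has $D_1+2$ non-added-parent neighbours, two of which ($v_2$ and $v_m$) lead into the single piece $\dot{\cH}^{m,l}\setminus\{v_1\}$, so that piece must be treated as one block with weight $\wt(\dot{\cH}^{m,l})=w_2+w_m$, expected excursion $S(\dot{\cH}^{m,l})$ and leaf-count $M_l(\dot{\cH}^{m,l})$ exactly as in Definition~\ref{def:SMdef gwc2}; and (ii) excising the permanently infected $v_1$ disconnects $\cH^{m,l}$ into $D_1+1$ graph-disjoint components driven by independent clocks, which is precisely what is needed for the stationary-measure factorisation and for partitioning the long-run rate of fresh $1^*$-arrivals at depth-$l$ leaves. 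Your definitions of $\theta$, the excursion-counting identity $M^\sharp=\bigl(\tfrac{1}{D_1+3}+\lambda\bigr)(D_1+2)\theta\,M^\otimes$, and the product formula for $1+\lambda(D_1+2)\theta S^\otimes$ are all the right analogues of \eqref{eq:Msharp and Motimes}, \eqref{eq:theta} and \eqref{eq:S:prod}.

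One point worth flagging: following your arithmetic to the end, the identity $\wt(\cH^{m,l})\bigl(\tfrac{1}{D_1+3}+\lambda\bigr)=\lambda$ produces a bound of the form $\wt(\cH^{m,l})M_l(\cH^{m,l})\le\lambda\cdot[\text{RHS of the lemma}]$, i.e.\ with an extra factor of $\lambda$ that the lemma as printed does not display. Since $\lambda<1$, your bound is strictly stronger and certainly implies the stated inequality; moreover the companion recursions (Lemma~\ref{lm:Mrecursion} and Lemma~\ref{lm:recur:dot M:Hml}) both carry this $\lambda$, and the subsequent good-tail argument (reducing to $\prod(1+2\lambda W_i)-1$ via Lemma~\ref{lm:tail}) actually relies on it. So the printed statement almost certainly has a typographical omission of the leading $\lambda$, and your derivation recovers the form that is actually used.
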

From here, the conclusion that $(1.5)^{l}\wt(\mathcal H^{m, l}) M_{l}(\mathcal H^{m, l})$ has a good tail follows from the same argument as above. This completes  the proof of Lemma \ref{lm:cyclic}.

\section{Appendix B: Proof of Lemma \ref{lm:tree:bound}}\label{app:treebound}
	Let $N_i\sim \widetilde \mu$ be independent. We have for all $s\ge 1$,
	\begin{eqnarray}
	\E \exp(tZ_{s+1} \tilde d^{-(s+1)}) &=& \E  \exp\left (\sum_{i=1}^{Z_s} t N_i \tilde d^{-(s+1)}\right ) =\E \left [\E  \exp\left (\sum_{i=1}^{Z_s} t N_i \tilde d^{-(s+1)}\right )\big\vert Z_s\right  ]\nonumber\\
	&=&  \E \left [\left (\E  \exp\left (t N_1 \tilde d^{-(s+1)}\right )\right )^{Z_s} \right  ]=  \E \exp\left (Z_s \log \left (\E  \exp\left (t N_1 \tilde d^{-(s+1)}\right )\right )  \right  ).\nonumber
	\end{eqnarray}
	Since $\E_{N\sim \mu} e^{cN}<\infty$, we have $\E_{N\sim \widetilde \mu} e^{uN}<\infty$ for all $u<c$. Let $b=c/2$.
	For all $u\in (0, b]$, we have
	\begin{eqnarray}
	\E e^{uN_1} &=& \E \sum_{i=0}^{\infty} \frac{u^{i}N^{i}}{i!} \le 1 + u\widetilde d + \frac{u^{2}}{b^{2}}\sum_{i=2}^{\infty}\frac{b^{i} N^{i}}{i!} \nonumber \\
	&\le& e^{u\widetilde d} + \frac{u^{2}}{b^{2}} \E e^{bN_1} \le e^{u\widetilde d} +  \exp\left (\frac{u^{2}}{b^{2}} \E e^{bN_1}\right )-1\le \exp\left (u\widetilde d(1+u\alpha)\right )\label{eq:uN},
	\end{eqnarray}
	where $\alpha$ is a large constant. Let $t>0$ be such that
	$$\exp\left (2 \alpha t\sum_{i=1}^{\infty} \widetilde d^{-i}\right )<2\quad \text{and}\quad 2t\le b.$$
	We define $t_n = t\exp\left (2 \alpha t\sum_{i=n+1}^{\infty} \widetilde d^{-i}\right )\le t\exp\left (2 \alpha t\sum_{i=1}^{\infty} \widetilde d^{-i}\right )<2t\le b$. Then, for all $s\ge 1$,
	\begin{eqnarray*}
		\E \exp(t_{s+1}Z_{s+1} \tilde d^{-(s+1)}) &=& \E \exp\left (Z_s \log \left (\E  \exp\left (t_{s+1} \tilde d^{-(s+1)}N_1 \right )\right )  \right  )\\
		&\le&  \E \exp\left (Z_s \widetilde d^{-s}t_{s+1}  (1+\alpha t_{s+1} \tilde d^{-(s+1)}) \right  )\quad \text{ by \eqref{eq:uN}}\\
		&\le&  \E \exp\left (Z_s \widetilde d^{-s}t_{s+1} (1+2 t \alpha  \tilde d^{-(s+1)}) \right  )\quad \text{ because $t_{s+1}\le 2t$}\\
		&\le&  \E \exp\left (t_{s}Z_s \widetilde d^{-s} \right  ).
	\end{eqnarray*}
	Thus,
	$$\sup_{s\ge 2} \E \exp\left (t_{s}Z_s \widetilde d^{-s} \right  ) = \E \exp\left (t_{1}Z_1 \widetilde d^{-1} \right  ) <\infty,$$
	for $t$ sufficiently small. This completes the proof.

\section{Acknowledgments} We thank the anonymous referees for their thorough reviews and suggestions.

\bibliographystyle{plain}
\bibliography{reference}
\end{document}